\newcommand\norm[1]{\left\lVert#1\right\rVert}
\renewcommand{\qedsymbol}{$\blacksquare$}
\newcommand\restr[2]{{
		\left.\kern-\nulldelimiterspace 
		#1 
		\vphantom{\big|} 
		\right|_{#2} 
}}
\newcommand{\vas}{\bBigg@{3}}
\newcommand{\vast}{\bBigg@{4}}
\newcommand{\Vast}{\bBigg@{5}} 
\newcommand{\upperRomannumeral}[1]{\uppercase\expandafter{\romannumeral#1}}
\newcommand{\lowerRomannumeral}[1]{\lowercase\expandafter{\romannumeral#1}}
\theoremstyle{plain}
\newtheorem{theorem}{Theorem}
\newtheorem{hyp}{Hypothesis}
\newtheorem{lemma}[theorem]{Lemma}
\newtheorem{corollary}[theorem]{Corollary}
\newtheorem{proposition}[theorem]{Proposition}
\theoremstyle{definition}
\newtheorem{rem}{Remark}
\newtheorem{defini}{Definition}
\newenvironment{myproof}[1] {{
		
		\noindent\it{Proof of {#1}}. }}{\hfill\qedsymbol}
\def\ps@pprintTitle{ 
	\let\@oddhead\@empty
	\let\@evenhead\@empty
	\def\@oddfoot{\footnotesize\itshape
		\ifx\@empty\@empty
		\else\@journal\fi\hfill\today}%
	\let\@evenfoot\@oddfoot
}
\def\nor{\color{black}}
\def\R{\mathbb R}
\let\save@mathaccent\mathaccent
\newcommand*\if@single[3]{%
	\setbox0\hbox{${\mathaccent"0362{#1}}^H$}%
	\setbox2\hbox{${\mathaccent"0362{\kern0pt#1}}^H$}%
	\ifdim\ht0=\ht2 #3\else #2\fi
}
\newcommand*\rel@kern[1]{\kern#1\dimexpr\macc@kerna}
\newcommand*\widebar[1]{\@ifnextchar^{{\wide@bar{#1}{0}}}{\wide@bar{#1}{1}}}
\newcommand*\wide@bar[2]{\if@single{#1}{\wide@bar@{#1}{#2}{1}}{\wide@bar@{#1}{#2}{2}}}
\newcommand*\wide@bar@[3]{%
	\begingroup
	\def\mathaccent##1##2{%
		\let\mathaccent\save@mathaccent
		\if#32 \let\macc@nucleus\first@char \fi
		\setbox\z@\hbox{$\macc@style{\macc@nucleus}_{}$}%
		\setbox\tw@\hbox{$\macc@style{\macc@nucleus}{}_{}$}%
		\dimen@\wd\tw@
		\advance\dimen@-\wd\z@
		\divide\dimen@ 3
		\@tempdima\wd\tw@
		\advance\@tempdima-\scriptspace
		\divide\@tempdima 10
		\advance\dimen@-\@tempdima
		\ifdim\dimen@>\z@ \dimen@0pt\fi
		\rel@kern{0.6}\kern-\dimen@
		\if#31
		\overline{\rel@kern{-0.6}\kern\dimen@\macc@nucleus\rel@kern{0.4}\kern\dimen@}%
		\advance\dimen@0.4\dimexpr\macc@kerna
		\let\final@kern#2%
		\ifdim\dimen@<\z@ \let\final@kern1\fi
		\if\final@kern1 \kern-\dimen@\fi
		\else
		\overline{\rel@kern{-0.6}\kern\dimen@#1}%
		\fi
	}%
	\macc@depth\@ne
	\let\math@bgroup\@empty \let\math@egroup\macc@set@skewchar
	\mathsurround\z@ \frozen@everymath{\mathgroup\macc@group\relax}%
	\macc@set@skewchar\relax
	\let\mathaccentV\macc@nested@a
	\if#31
	\macc@nested@a\relax111{#1}%
	\else
	\def\gobble@till@marker##1\endmarker{}%
	\futurelet\first@char\gobble@till@marker#1\endmarker
	\ifcat\noexpand\first@char A\else
	\def\first@char{}%
	\fi
	\macc@nested@a\relax111{\first@char}%
	\fi
	\endgroup
}
\def\vv {\vskip 1mm}
\def\bb{{\bf C}}
\def\qq{{\bf Q}}
\def\pj{{\bf  \text{proj}_{\bb} \, }}
\title{ Regular stochastic flow and Dynamic Programming Principle for jump diffusions\footnote{The research of Alessandro Bondi benefited from the financial support of the chair ``Statistiques et Modèles pour le Régulation'' of École polytechnique. The study for this paper began during the Ph.D. of Alessandro Bondi at Scuola Normale Superiore di Pisa, which the authors thank. Enrico Priola  is a  member of GNAMPA  of the Istituto Nazionale di Alta Matematica (INdAM). }}  
\author{Alessandro Bondi\thanks{Centre de Mathématiques Appliquées (CMAP), CNRS, École polytechnique, Institut Polytechnique de Paris. \textbf{Email:} alessandro.bondi@polytechnique.edu} 
	\and {Enrico Priola\thanks{Dipartimento di Matematica, Università di Pavia. \textbf{Email:} enrico.priola@unipv.it}}}
\begin{document}

\maketitle

	\begin{abstract} 
Given a  Brownian motion $W$ and a stationary Poisson point process $p$ with values in ${\mathbb R}^d$, we prove a  Dynamic Programming Principle (DPP)  in a strong formulation  for  a stochastic control problem involving 
controlled  SDEs of the form 
\begin{align} \label{ci1} 
\notag dX_{t}=&\,b(t, X_{t}, a_t) dt + 
\alpha \left(t, X_{t}, a_t \right) dW_t+ \!
\! 
\int_{  |z| \le 1}  g\left(X_{t-},t,z, a_t \right)\widetilde{N}_p\left(dt,dz\right)  
\\  & +  
\int_{  |z| >1  } f\left(X_{t-},t,z, a_t \right){N}_p\left(dt,dz\right), \quad \;  X_s=x\in\mathbb{R}^d,\,0\le s \le t \le T.\vspace{-.5em} 
\end{align}     
Here  $N_p$  [resp., $\widetilde{N}_p$] is the Poisson [resp., compensated Poisson] random measure associated with $p$.  We consider arbitrary  predictable controls $a  \in {\mathcal P}_T$ with values in a closed convex set $C \subset \R^{l}$. 
The coefficients  {$b$, $\alpha$, and $g$}    satisfy  linear growth and Lipschitz--type  conditions in the $x-$variable,  and are continuous in the control variable. 
 To prove  the DPP for the value function $ v(s,x)=\sup_{a \in {\mathcal P}_T} \, \mathbb{E}\big[\int_{s}^{T}h\left(r,X_r^{s,x,a}, a_r\right)dr + 
 j\left(X_T^{s,x,a}\right)\big]     
 $, 
  assuming that $h$ and $j$ are bounded and continuous,
  we establish the existence of a regular stochastic flow for \eqref{ci1} when the coefficients    are independent of the control $a$. Notably, this regularity result is new even
  when there is no 
  large--jumps component, i.e., $f\equiv0$ (cf.  Kunita's recent book on stochastic flows).   
The proof of the DPP is completed by introducing an approach that relies  on a suitable subclass of finitely generated step controls in $\mathcal{P}_T$.
 These controls allow us  to  apply a basic measurable selection theorem by L. D. Brown and R. Purves. We believe that this novel method is of independent interest and could be adapted  to prove  DPPs arising in other stochastic control problems.
\\[1ex] 
\noindent\textbf {Keywords:}  controlled SDEs with jumps, regular stochastic flow,  stochastic control, dynamic programming principle
\\ 
	\noindent\textbf{MSC2020:}   60H10; 60J75; 49L20  
\end{abstract}
   
{\small\tableofcontents}

\section{Introduction}\label{Intro}
In this paper we prove a new dynamic programming principle (DPP)  in a strong form.   The DPP, also known in the literature as \emph{Bellman's principle of optimality}, is a fundamental concept in the theory of stochastic control.  It can be seen as an extension of the tower property of Markov processes in the context of optimization. This principle also allows to characterize the optimal control  process, to show that the value function is a viscosity solution of the associated Hamilton--Jacobi--Bellman (HJB) equation and to derive numerical resolution schemes; 
 see, for instance, \cite{archi, BS,touzi, cosso, CP, ISHI, karoui, Kr, Pham, PW, praga, YZ}.  For  applications in physics and mathematical finance
 we refer to \cite{archi, cosso, ISHI, Pham,  YZ} and references therein. \\
  The DPP obtained in this work concerns controlled jump diffusions,
  for which, as already pointed out in the introduction of \cite{ISHI}, the literature is limited and not sufficiently detailed,  even in the case   when  there is no  large--jumps component, i.e., when $f\equiv 0$ in \eqref{anchelei0} (cf. Remark \ref{compare}).  
  To prove it, we also  need to establish original  results on the existence of  a regular stochastic flow. This   is a delicate issue,  mainly because  one cannot use the well--known Kolmogorov--Chentsov test  in the jump case under consideration (see  the comments around \eqref{kol}). 
   
We start with  a Brownian motion $W$ and a stationary Poisson point process $p$ with values on  a Polish space $(U,\mathcal{U})$, where $\mathcal{U}$ is the  Borel $\sigma-$algebra, and characteristic measure $\nu(dz)$. We denote by $N_p$ [resp., $\widetilde{N}_p$] the Poisson [resp., compensated Poisson] random measure associated with $p$.  
Given a set $U_0\in\mathcal{U}$ such that $\nu(U\setminus U_0)<\infty$, we consider 
   controlled stochastic differential equations (SDEs) of the form 
 \begin{multline}\label{anchelei0} 
 X^{s,x, a}_{t} = x+ \int_{s}^{t}b\left(r, X_{r}^{s,x, a},a_r \right) dr + 
 \int_{s}^{t}\alpha\left(r,X_{r}^{s,x, a} ,a_r  \right) dW_r\\+
 \int_{s}^{t}\!\int_{U_0}g\left(X_{r-}^{s,x, a} ,r,z,a_r\right)\widetilde{N}_p\left(dr,dz\right) 
 +  
 \int_{s}^{t}\!\int_{U\setminus U_0 } f\left(X_{r-}^{s,x, a} ,r,z,a_r \right){N}_p\left(dr,dz\right), 
 \end{multline}
where the control $a $ is an arbitrary predictable process with values in a fixed nonempty closed convex set $\bb \subset \R^l$, either bounded or unbounded (see Section \ref{sec_DPP} for the precise setting). We  write $a \in {\mathcal P}_T$.
  We  require  the coefficients $b$, $\alpha$, $g$ and $f$ to be measurable. In addition, we assume Lipschitz--type and  linear growth conditions on the $x-$variable of these  coefficients, which ensure the existence of a pathwise unique strong solution $X = (X^{s,x,a}_t)_{t\ge s}$ of \eqref{uno}. 
We  assume that the coefficients in \eqref{anchelei0} depend in a continuous way on the control variable (cf. Hypothesis \ref{base22}). 
 We refer to  \cite{BLP, Bicht, IW, KU04, situ} for the theory of  SDEs with jumps; see also Section \ref{preliminare} for more details.

  The value  function $v$  associated with our stochastic control problem is 
\begin{equation}\label{value_def_Intro}
v(s,x) = \sup_{a 
\in {\mathcal P}_T} \mathbb{E}\bigg[\int_{s}^{T}h\left(r,X_r^{s,x,a}, a_r\right)dr + j\left(X_T^{s,x,a}\right) \bigg], 
\end{equation}
where  $h\colon[0,T] \times \mathbb{R}^d\times \bb \to \mathbb{R}$ is a given measurable and bounded map, which is continuous in the second  and third  variables, and $j\colon \mathbb{R}^d\to \mathbb{R}$ is a given bounded and continuous map.  With this $v$, we prove the following   DPP: 
\begin{equation}\label{DPP_intro}
v(s,x)= 
\sup_{a \in {\mathcal P}_T} \sup_{\theta\in \mathcal{T}_{s,T}}
\mathbb{E}\bigg[\int_{s}^{\theta}h\left(r,X_r^{s,x,a}, a_r\right)dr
+v\left(\theta, X_\theta^{s,x,a}\right)\bigg]
,\quad s\in [0,T),\,x\in\mathbb{R}^d.
\end{equation}
Here  $\theta$ is a  stopping time taking values in $(s,T)$, i.e., $\theta \in  \mathcal{T}_{s,T}$. We refer to  
Theorem \ref{DPP_t} for the complete assertion.  In particular,  the identity in \eqref{DPP_intro} continues to hold replacing $\sup_{\theta\in \mathcal{T}_{s,T}}$ with  $\inf_{\theta\in \mathcal{T}_{s,T}}$.  
\\  
{To the best of our knowledge, this result is  new even
 when there are no large jumps in \eqref{anchelei0} (i.e., $f\equiv0$) and when  the stopping time $\theta$ is fixed (hence there is no $\sup_{\theta\in \mathcal{T}_{s,T}}$ in \eqref{DPP_intro}). We refer to  \cite{touzi, ISHI, praga} for Bellman's principles involving special dynamics that do not cover \eqref{anchelei0}; see also Remark \ref{compare} for more information  and references.  
Notice that our formulation of DPP \eqref{DPP_intro} 
is  stronger than the usual one, 
which assumes  the stopping time $\theta$ to be fixed, see Remark \ref{pham2}. One can compare \eqref{DPP_intro} with the DPP proved in  the continuous  diffusion case in  
\cite[Theorems 3.1.10 and 
3.1.11]{kry}.
    
The proof of the DPP is divided into two parts, corresponding to two opposite inequalities. To prove {\it the first part} (see Subsection \ref{sub_firstDPP}), i.e.,  
  \begin{gather} \label{s22}
v(s,x)\le \sup_{a\in{\mathcal{P}_T}} \inf_{\theta\in \mathcal{T}_{s,T}}
\mathbb{E}\bigg[\int_{s}^{\theta}h\left(r,X_r^{s,x,a}, a_r\right)dr
+v\left(\theta, X_\theta^{s,x,a}\right)\bigg],
\end{gather}
we need to establish the existence of a regular stochastic flow for \eqref{anchelei0} when the coefficients are independent of the control $a$.
  We treat this issue in   Sections \ref{preliminare}, \ref{sec_small} and \ref{large_sec}.  
\\
More specifically, we consider  
  the following SDE of It\^o's type:
\begin{align} \label{uno}
dX_{t}=b\left(t, X_{t}\right) dt +   
\alpha\left(t, X_{t} \right) dW_t+
\int_{U_0}g\left(X_{t-},t,z\right)\widetilde{N}_p\left(dt,dz\right) 
+ 
\int_{U\setminus U_0 } f\left(X_{t-},t,z\right){N}_p\left(dt,dz\right),
\end{align}     
with $X_s=x\in\mathbb{R}^d,\,0\le s \le t \le T.$  We remark that the Lipschitz--assumption on the coefficient $f$ corresponding to the large--jumps part can be dispensed with (cf. Section IV.9 in \cite{IW} and see Hypothesis \ref{base}).     
We  deal with the  problem of finding a   version of the solution of \eqref{uno} which depends in a regular way on all the variables  $(s,t,x)$. 
We prove, in particular,   that there exists a  version of the solution $X$  which is \emph{regular} (or \emph{sharp}) in the following sense: 
there exists  an almost sure event $\Omega'$ 
such that, for every $\omega \in \Omega'$, the map  $(s,x,t)\mapsto  X^{s,x}_t(\omega)$ is càdlàg in $s$ (for $t$ and $x$ fixed), càdlàg in $t$ (for $s$ and $x$ fixed) and  continuous in $x$ (for $s$ and $t$ fixed). Moreover, we  prove  the flow property
\begin{equation}\label{fl11}
 X^{s,x}_t(\omega)=X^{u,X^{s,x}_u(\omega)}_t(\omega),\quad s<u<t\le T,
\end{equation}
  as well as  the stochastic continuity in $s$, locally uniformly in ${x}$ and uniformly in $t$ (see \eqref{prob1}). When $f\equiv0$, this result extends the regularity properties obtained in \cite{Ku19} by H. Kunita (see the comments after \eqref{kol} and Remark \ref{kuu} for more details).  We call this version a  \emph{regular stochastic flow} (or \emph{sharp stochastic flow}) generated by  \eqref{uno}. 
  
We refer to  Definition \ref{sharpala} and Theorems \ref{main1}-\ref{cadlag1} for  more general   assertions.  
 \\
 Going back to \eqref{anchelei0}, in Subsection \ref{sub5.2}, we show  that  there exists a regular stochastic flow for  SDEs controlled by suitable step  processes  $a\in\mathcal A \subset   {\mathcal P}_T$. This, in particular, enables us to handle  
   identities like  
\begin{gather*}  
X_r^{s,x,a} = X_r^{\theta,X^{s,x,a}_\theta,a},
\end{gather*}
which are meaningful even when  $\theta$ is a  stopping time in  $\mathcal{T}_{s,T}$ (see also Remark \ref{flow1no}). 
 Such identities  are useful to deduce \eqref{s22} when ${\mathcal P}_T$ is replaced by $\mathcal A$, which implies that \eqref{s22} holds, as well.

To prove {\it the second  part of the DPP}, i.e.,  
  \begin{equation}\label{opposite1}
v(s,x)\ge \sup_{a\in{\mathcal{P}}_T} \sup_{\theta\in \mathcal{T}_{s,T}}
\mathbb{E}\bigg[\int_{s}^{\theta}h\left(r,X_r^{s,x,a}, a_r\right)dr
+v\left(\theta, X_\theta^{s,x,a}\right)\bigg],
\end{equation}
we   introduce a new approach. It relies on a suitable subclass of
 predictable finitely generated step controls $\mathcal{B}^s\subset \mathcal{P}_T$,
which allows us to apply a basic measurability selection theorem from \cite{BP} (see Theorem \ref{BP_Sim}  and Remark \ref{seq}). 
 We believe that this method is of independent interest and could be adapted  to prove  DPPs arising in other stochastic control problems.\\ In this second  part, we also employ the lower semicontinuity of the value function $v$, which we prove in Lemma \ref{lsc}. Additional properties of $v$ 
 might be investigated and  will be the subject of a future research.

We now complete this introduction by presenting  further  discussions on the regular stochastic flow and the proof of the  DPP. In these paragraphs we  also describe the structure of the paper.    
    
\paragraph{Novelty and significance of  the regular stochastic flow}  
 We  recall that,  for 
 SDEs driven by a Brownian motion, namely Equation \eqref{uno} with $f\equiv0$ and $g\equiv0$, it is well known that there exists a regular   stochastic flow  
$X = (X^{s,x}_t)_{t\ge s}$ such that, for  $\mathbb P-$a.s. $\omega$,  the mapping  $(s,x,t)\mapsto  X^{s,x}_t(\omega)$ is  
continuous in $s$,  $t$ and $x$, and such that the flow property \eqref{fl11} holds. This is a consequence of the Kolmogorov--Chentsov test, which can be applied thanks to the well--known estimate   (see, for instance, \cite{Ku19})
\begin{equation}\label{kol}
{\mathbb E} \big|X^{s,x}_t - X^{s',x'}_{t'}\big|^p \le C \left (  |x-x'|^p + |s-s'|^{p/2} + | t-t'  
|^{p/2} \right), \quad p \ge 2.
\end{equation} 
This continuous stochastic flow  is  deeply  investigated in \cite{KU90}, where, in particular, it is employed to study first order stochastic PDEs when the coefficients are sufficiently smooth.  The previous  technique based on the  Komogorov--Chentsov test, however,  can only give  a continuous modification; 
it  cannot be applied  to SDEs with jumps to obtain a version of the solution that depends on $(s,t,x)$  in a regular way.

We are interested in the  case of {time--dependent coefficients}, where the main challenge in analyzing the flow regularity  with respect to $(s,t,x)$ is the dependence on   the initial time $s$.   Such an issue has  been    mentioned in \cite[Remark 1.2]{P20} and 
 \cite[Introduction]{AIHP18}. 
 Concerning \eqref{uno} with $f\equiv0$, the  problem of finding a regular (or sharp) version of the solution also appears in
    Kunita's book \cite{Ku19}. More precisely, in \cite{Ku19}, SDEs with small jumps are treated in Theorem 3.3.1 
(see also Theorems 3.4.1 and  3.4.2, where the differentiability of the flow is addressed); it is  shown that the solution has a modification continuous 
 in $x$  (for $t$ fixed) and càdlàg in $t$ (for $x$ fixed). 
 As we mentioned before,  for SDEs driven by Brownian motions, \cite[Theorem 3.4.3]{Ku19} gives a modification of the solution which is continuous in $(s,t,x)$, hence a stronger result.
   Thanks to Theorems  \ref{main1}-\ref{cadlag1}, we study the regularity in $s$ even in the jump case, obtaining all the conditions required in the definition of right--continuous stochastic flow  on   
\cite[Page 86]{Ku19}. We  refer to Remark \ref{kuu}
for a discussion about \cite{KU04}.    
    \\
 We also mention 
Theorem 5 in \cite{KS}, which provides the existence of a regular stochastic   flow for SDEs like \eqref{uno}  under the assumption that all the coefficients     are time--independent. Additionally, to apply  this theorem,  it is required that both $g$ and $f$ have a specific form (see Remark \ref{Scheutzow} for more details). This result cannot be used  to  study controlled SDEs. 

   We state the main results on regular stochastic flows in Section \ref{preliminare}.  Here, we deal with   a complete probability space  endowed with a general filtration $\mathbb{F}=\left(\mathcal{F}_t\right)_{0\le t\le T}$ satisfying the usual hypotheses. 
To prove Theorem \ref{main1},    we first consider the case  $f\equiv0$, corresponding to SDEs without large--jumps component. 
In this case, the result can be deduced from a stronger one (see Theorem \ref{cadlag1}), which shows that  the solution of \eqref{uno} can be obtained employing a càdlàg stochastic process $Z=\left(Z_s\right)$ with values in ${ C}(\R^d; {\mathcal D}_0)$ (see Section \ref{preliminare}-\ref{sec_small} for more details). In particular, for $\mathbb P-$a.s. $\omega$,
$$
[Z_s(\omega)](x) = X^{s, x}_{\, \cdot} (\omega) \in {\mathcal D}_0,\;\; x \in \R^d.
$$  
Here  $\mathcal{D}_0$ stands for the non--separable metric space of $\mathbb{R}^d-$valued, càdlàg functions endowed with the uniform norm  in $[0,T]$.  Indeed, we cannot use the Skorokhod topology $J_1$ on ${\mathcal D}_0$ to get our results (cf. Remark \ref{skor}).
In order to prove    Theorem \ref{cadlag1} we   employ  an extension of a càdlàg criterium from \cite{bez}, which   can be applied to the process $Z$ taking values in the non--separable metric space ${ C}(\R^d; {\mathcal D}_0)$. Such an extension is proved in the appendix (see Appendix \ref{ap_BZ}), which also contains additional measure theoretic results that we have not found in the literature  (see in particular Appendix \ref{ap_sigma}).  
\\
The proof of   Theorem \ref{cadlag1} requires also Proposition \ref{not_sep}, which is a variant of Theorem \ref{imk_t}, a generalized
Garsia--Rodemich--Rumsey type lemma due to \cite{imk}. This proposition and its Corollary \ref{cor5} enable us to estimate integrals like 
\begin{equation}\label{4eee}
\mathbb{E}\bigg[\sup_{\left|x\right|\le N}\sup_{s\le t\le T}\left|\int_{s}^{t}\!\int_{U_0}g\left(X^{s,x}_{r-},r,z\right)\widetilde{N}_p\left(dr,dz\right)\right|^\gamma \bigg], 
\end{equation} 
which are crucial for the proof of Theorem \ref{cadlag1} given in Subsections  \ref{st_cont_section}-\ref{cadlag_sec}.


In Section \ref{large_sec} we consider the full SDE \eqref{uno}, i.e., the SDE \eqref{uno} including also the large--jumps component determined by the coefficient $f$. 
This part is quite involved.  The issue is that we  cannot follow the standard interlacing procedure, see for instance  \cite[Section IV.9]{IW}  and   \cite[Section 3.2]{Bre}, to preserve our sharp stochastic flow. Specifically, the main difficulty is 
to maintain  the regularity of $X$ with respect to  $s$.
To overcome this challenge, we carefully modify the interlacing method using the stochastic flow  already obtained in Section \ref{sec_small}. This also gives formulae for  the solution of \eqref{uno} which could be  of independent interest (see, e.g., \eqref{meas}). 





 \paragraph{Novelty and significance of the DPP}  
 In Section \ref{sec_DPP},    we investigate the  controlled SDE \eqref{anchelei0} and state the associated  DPP, see Subsection \ref{sub_DPP}. As previously mentioned,  we suppose that the control $a$  takes values in a closed convex set  $\bb \subset \R^l$  and is  predictable with respect to the augmented filtration $\mathbb{F}^{W,N_p}$ generated by $W$ and $N_p$. We write $a \in {\mathcal P}_T$ and extensively use the orthogonal projection $\pj : \R^l  \to \bb$.  
  In Subsection \ref{sub_controlledg}, we introduce a notion of convergence in ${\mathcal P}_T$, which we use to show a continuity property  -- uniformly in probability -- of  the solution  $X^{s,x,a}$ of \eqref{anchelei0} with respect to the control $a\in\mathcal{P}_T$, see Theorem \ref{estrap1}. Thanks to this stability result, in Subsection \ref{sub5.2} we consider a suitable subclass of step controls $\mathcal A\subset\mathcal{P}_T$, whose corresponding solutions approximate an arbitrary solution process $X^{s,x,a},\,a\in\mathcal{P}_T$, in the sense of Corollary \ref{estrap}. This enables us to compute the value function $v$ in \eqref{value_def_Intro} as the $\sup_{a\in\mathcal{A}}$ (instead of $\sup_{a\in\mathcal{P}_T}$), see Corollary \ref{l20}. The advantage in introducing the subclass of controls $\mathcal{A}$ is that, for every $a\in\mathcal{A}$, 
  we can rely on results in Section \ref{preliminare} to construct a  stochastic flow $X^{s,x,a}_t$ associated with the controlled SDE \eqref{anchelei0} which is \emph{regular} in the sense of Definition \ref{sharpala}, see Lemma \ref{regular_control}. 
  
  The regularity properties of the flows $X^{s,x,a}_t,\,a\in\mathcal{A}$,   associated with \eqref{anchelei0} are essential for the arguments in Section \ref{sec_proofDPP}, which  is dedicated to the proof of the DPP. In particular,  they constitute the basis for the proof of the first part of the DPP \eqref{s22}, which is carried out in Subsection \ref{sub_firstDPP}.
   On this respect, we also refer to Remark \ref{flow1no}, where we discuss controlled stochastic flows that can be found in the literature.  \\
Subsection \ref{mtr} clarifies the importance of choosing the filtration $\mathbb{F}^{W,N_p}$, together with the requirement on $(U,\mathcal{U})$ to be a Polish space with its Borel $\sigma-$algebra.  In particular,  Lemma \ref{l_21} demonstrates that, in this framework, 
\begin{equation}\label{d55}
	H= L^2(\Omega,  \mathcal{F}^{W,N_p}_{s,t};\mathbb{R}^l) \;\; \text{is separable}.
\end{equation}
This fact is fundamental for 
 the new strategy that we develop in Subsections \ref{sub_mst} and \ref{subsect_2} to prove the second part of the DPP \eqref{opposite1}. In fact, the novel approach  we propose hinges on \eqref{d55} to define a special subclass of finitely generated step controls (denoted by  $ {\mathcal B}^s$) which allows us to apply a basic measurable selection theorem from \cite{BP} (see Theorem \ref{BP_Sim}). As is the case for $\mathcal{A}$, the subclass of controls $\mathcal{B}^s \subset \mathcal{P}_T$ does not change the value function $v$ in \eqref{value_def_Intro}
  (see Lemma \ref{ind_c_l}). Moreover, also considering Remark \ref{finite}, $\mathcal{B}^s$ enables us to obtain controls such as the one in \eqref{eccolo}, which is a crucial passage in the argument to deduce \eqref{opposite1}.

\begin{rem} \label{compare}   Here we comment on  some  works that  consider    DPPs for jump diffusions.  The paper \cite{praga}  analyzes a special case of \eqref{anchelei0}. It treats non--degenerate controlled SDEs with regular coefficients and small jumps  of stable type (large jumps are not included).  In particular, Lemma 3.5 in \cite{praga} states a DPP that  is  then used  to prove the  existence and  smoothness of the 
solution to the corresponding HJB equation. 
\\
  As for \cite{ISHI}, in Section 4 the author   proves  a DPP for an optimal portfolio/control problem in  two--dimensions in a model of interest in finance. 
  According to the introduction of this paper, in the  literature,   the Bellman's principle for jump processes is often just stated 
to hold, or expected to hold, in order to proceed in the study of 
solutions to  HJB equations (see also the  references in \cite{ISHI}). 
 \\
 The paper \cite{touzi} establishes  a general  DPP in a weak form. This result  is then  applied  
 in  Section 5.1 to  a class of controlled SDEs driven by independent 
   Brownian motions and  compound Poisson processes (it is also assumed a Lipschitz--type condition in the control variable). \\
 A class of  controlled SDEs without large--jumps part is considered in infinite dimensions in \cite{SZab}, where the authors  establish a DPP in a weak form.
 \\
 Finally, we  mention that the assumptions on the coefficients of \cite{praga} have been relaxed  in \cite{soner}, which proves that the value function is the unique viscosity solution of  the associated HJB equation and, additionally, it is a Lipschitz continuous function (see also \cite{lions}).  
\end{rem}

\section{Preliminaries and main results on the regular stochastic flow }
\label{preliminare}
In this work, $| \cdot |$ denotes the Euclidean norm in any $\mathbb{R}^m$, $m \ge 1$. Fix $T>0$ and let $\left(\Omega,\mathcal{F}, \mathbb{P}\right)$ be a complete probability space  endowed with a filtration $\mathbb{F}=\left(\mathcal{F}_t\right)_{0\le t\le T}$ satisfying the usual hypotheses. On this probability space, we take an $m-$dimensional $\mathbb{F}-$Brownian motion $W=\left(W_t\right)_{0\le t\le T}$. Moreover, given  a measurable space $(U,\mathcal{U})$, we consider a stationary Poisson point process $p$ on $U$ with intensity measure $dt \otimes \nu(dz)$, where  $\nu(dz)$ is a $\sigma-$finite measure on $U$ (see \cite[Section 9, Chapter \upperRomannumeral{1}]{IW}).  In particular, for every $\omega\in \Omega$, $p(\omega)\colon D_p(\omega)\to U$, where $D_p(\omega)$ is a countable subset of $(0,\infty)$. Let  $N_p$ be the counting measure associated with $p$, namely
\[
N_p\left(\left(0,t\right]\times V\right)(\omega)=\# \left\{s\in D_p(\omega)\cap (0,t] : [p(\omega)](s)\in V\right\},\quad t>0,\,V\in\mathcal{U},\,\omega\in\Omega;
\]
this is a Poisson random measure on $(0,\infty)\times U$. In the sequel, we write $p_s(\omega)=[p(\omega)](s)$ to have a compact notation. We denote by $\widetilde{N}_p(dt,dz)=N_p(dt,dz)-dt\otimes \nu(dz)$ the compensated Poisson random measure. We suppose that $p$ is $\mathbb{F}-$adapted, in the sense that $N_p((0,t]\times V)$ is $\mathcal{F}_t-$measurable for every $V\in \mathcal{U}$ and $t>0$. 

Fix a measurable set $U_0\in\mathcal{U}$ such that $\nu(U\setminus U_0)\in(0,\infty)$. In this section, we concentrate on  SDEs like \eqref{anchelei0} with coefficients independent of the controls that satisfy the following requirements. 
\begin{hyp} \label{base}  
	Let $b\left(t,x\right)=\left(b_j\left(t,x\right)\right)_{j=1,\dots,d}$ be the drift coefficient,  $\alpha\left(t,x\right)=\left(\alpha_{i,j}\left(t,x\right)\right)_{i=1,\dots,d;\, j=1,\dots,m}$ be the diffusion matrix and 
	$g\left(x,t,z\right)=\left(g_j\left(x,t,z\right)\right)_{j=1,\dots,d}$ be the  small--jumps coefficient. We require  $b\colon[0,T]\times \mathbb{R}^d\to \R^d$, $\alpha\colon [0,T]\times \R^d\to \R^{d\times m}$ and $g\colon\R^d\times [0,T]\times U\to \R^d$ to be jointly measurable in their domains.
		
		We assume that $b,\alpha$ and $g$ satisfy linear growth and Lipschitz--type conditions, see \cite{IW}. More precisely, for every $p\ge2$, there exists a constant $K_p$  such that 
		\begin{equation}\label{lg1}
			\left|b\left(t,x\right)\right|^p+
			\left|\alpha\left(t,x\right)\right|^p+
			\int_{U_0}\left|g\left(x,t,z\right)\right|^p\nu\left(dz\right)
			\le K_p\left(1+\left|x\right|^p\right),\quad x\in\mathbb{R}^d,\,t\in\left[0,T\right],	 
		\end{equation}
		and   
		\begin{multline}\label{lip1}
			\left|b\left(t,x\right)-b\left(t,y\right)\right|^p+
			\left|\alpha\left(t,x\right)-\alpha\left(t,y\right)\right|^p\\+
			\int_{U_0}\left|g\left(x,t,z\right)-g\left(y,t,z\right)\right|^p\nu\left(dz\right)\le K_p\left|x-y\right|^p,\quad x,y\in\mathbb{R}^d,\,t\in\left[0,T\right].
		\end{multline}
		%
	Here, $\left|\alpha\right|^2=\sum_{i,j}\left|\alpha_{i,j}\right|^2$. We also consider a large--jumps coefficient $f\colon \mathbb{R}^d\times [0,T] \times U\to \mathbb{R}^d$, supposing that $f$ is a jointly measurable function which is continuous in the first argument. 
\end{hyp} 
In this paper, we study the SDE
\begin{multline}\label{SDEgeneral}
	X_{t}^{} = x+ \int_{s}^{t}b\left(r, X_{r}^{} \right) dr + 
	\int_{s}^{t}\alpha\left(r, X_{r}^{} \right) dW_r\\+
	\int_{s}^{t}\!\int_{U_0}g\left(X_{r-}^{},r,z\right)\widetilde{N}_p\left(dr,dz\right) 
	+ 
	\int_{s}^{t}\!\int_{U\setminus U_0 } f\left(X_{r-}^{},r,z\right){N}_p\left(dr,dz\right),\quad t\in[s,T],
\end{multline} 
where $s\in [0,T)$ and $x\in\mathbb{R}^d$. In particular, the small--jumps case $f\equiv0$ is investigated in Section \ref{sec_small}, while the large--jumps case $f\neq 0$ is analyzed in Section \ref{large_sec}. 
\\
A solution to \eqref{SDEgeneral} is a c\`adl\`ag, $\mathbb{R}^d-$valued, $\mathbb{F}-$adapted process $X=\left(X_t\right)_{s\le t\le T}$ satisfying \eqref{SDEgeneral} up to indistinguishability. We extend the trajectories of $X$ in the whole interval $[0,T]$ by setting $X_t=X_s,\,t\in[0,s]$. 
Under Hypothesis \ref{base}, it is known that \eqref{SDEgeneral} admits a pathwise unique solution for every initial condition $(s,x)$ (see Sections \ref{sec_small}-\ref{large_sec} for the details). Our goal is to prove the existence of a \emph{regular} (or \emph{sharp}) version of the solution $X$ which is  simultaneously càdlàg in the time variables $s,t,$ and continuous in the space variable $x$;  furthermore, $X$ is {stochastically continuous in $s$.}   More precisely, we search for a \emph{regular} (or \emph{sharp}) \emph{stochastic flow} generated by \eqref{SDEgeneral} according to the following definition, where we denote by $\mathcal{D}_0$ the complete metric space of $\mathbb{R}^d-$valued, càdlàg functions on $[0,T]$ endowed with the uniform norm.

\begin{defini}\label{sharpala}
	Let  $X\colon \Omega\times [0,T]\times \mathbb{R}^d\times [0,T]\to \mathbb{R}^d$ be an $\mathcal{F}_{{}}\otimes \mathcal{B}([0,T]\times \mathbb{R}^d\times [0,T])-$measurable function and denote by $X^{s,x}_t(\omega)=X(\omega,s,x,t)$. We say that  $X$ is the \emph{regular} (or \emph{sharp}) \emph{stochastic flow} generated by \eqref{SDEgeneral} if there exists an a.s. event $\Omega'$ --independent of $s,t,x$-- such that the four following requirements are fulfilled for every $\omega\in \Omega'$, $s\in [0,T)$ and $x\in\mathbb{R}^d$.
	\begin{enumerate}[ref=\arabic*.]
		\item \label{comunala}The process $(X^{s,x}_t)_{t\in[s,T]}$ satisfies  \eqref{SDEgeneral} in $\Omega'$;
		\item \label{regularity}
		\begin{enumerate}[label=(\roman*)]
			\item\label{pure} { The map $X^{{s},{x}}_\cdot\left(\omega\right)\colon[0,T]\to \mathbb{R}^d$ is càdlàg;}
			\item\label{conx}{ The map $X^{{s},\cdot}\left(\omega\right)\colon \mathbb{R}^d\to \mathcal{D}_0$ is continuous;}
			\item\label{cons} { The map $X^{\cdot, {x}}\left(\omega\right)\colon [0,T] \to \mathcal{D}_0$ is càdlàg, locally uniformly in ${x}$;}
		\end{enumerate}
		\item \label{flow_1} The flow property holds:
		$\qquad X^{s,x}_t(\omega)=X^{u,X^{s,x}_u(\omega)}_t(\omega),\quad s<u<t\le T.$
		{\item \label{constoch} \color{black}The function $X$ is stochastically continuous in the following sense: for every $\epsilon>0$ and $M>0$, 
			\begin{equation} \label{prob1}
			\lim_{r\to s}\mathbb{P}\Big(\sup_{\left|x\right|\le M}\sup_{0\le t\le  T}\left|X^{r,x}_t-X^{s,x}_t\right|>\epsilon\Big)=0,\quad s\in [0,T].
			\end{equation} } 
	\end{enumerate}		    
\end{defini}
Notice that, by the pathwise uniqueness of \eqref{SDEgeneral} and Point \ref{regularity} in Definition \ref{sharpala}, a regular stochastic flow generated by \eqref{SDEgeneral} is unique up to an a.s. event. 


The next theorem shows the existence of the regular stochastic flow associated with  \eqref{SDEgeneral}.
	\begin{theorem} \label{main1} Under Hypothesis \ref{base}, the regular stochastic flow generated  by \eqref{uno} exists.
	\end{theorem}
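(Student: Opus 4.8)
The plan is to treat the two jump regimes separately. When $f\equiv0$, Theorem~\ref{cadlag1} already gives more: a càdlàg process $Z=(Z_s)_{s\in[0,T]}$ with values in the non-separable metric space $C(\mathbb{R}^d;\mathcal{D}_0)$ such that, on an a.s.\ event $\Omega_0$, $[Z_s(\omega)](x)=Y^{s,x}_\cdot(\omega)$ is the pathwise unique solution of \eqref{SDEgeneral} with $f\equiv0$. Setting $X(\omega,s,x,t):=[Z_s(\omega)](x)(t)$ — jointly measurable thanks to the measure-theoretic results of Appendix~\ref{ap_sigma} — one reads off all the requirements of Definition~\ref{sharpala} for $Y$: property~\ref{pure} ($\mathcal{D}_0$-valued), \ref{conx} ($C(\mathbb{R}^d;\cdot)$-valued), \ref{cons} ($Z$ càdlàg, locally uniformly in $x$), together with the flow property~\ref{flow_1} and the stochastic continuity~\ref{constoch}. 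Hence Theorem~\ref{main1} holds for $f\equiv0$.

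For $f\not\equiv0$ I would avoid the standard interlacing, which only yields an $s$-dependent a.s.\ event $\Omega_s$, and instead write the solution of the full equation as an explicit functional of the sharp flow $Y$. Since $\nu(U\setminus U_0)<\infty$, on an a.s.\ event the large jumps in $(0,T]$ are finitely many, occurring at times $0<\tau_1<\dots<\tau_N\le T$ with marks $z_1,\dots,z_N\in U\setminus U_0$, and these times are disjoint from the jump times of the small-jumps solutions; let $\Omega'$ be its intersection with $\Omega_0$ and with the a.s.\ event $\{\sup_{|x|\le N}\sup_{s,t}|Y^{s,x}_t|<\infty\text{ for all }N\}$, available from Corollary~\ref{cor5}. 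For $\omega\in\Omega'$, $s<t$ and $x\in\mathbb{R}^d$, let $\tau_{i_1}<\dots<\tau_{i_k}$ be the large-jump times in $(s,t]$ and set $x_0=x$, $\sigma_0=s$, and recursively $\sigma_j=\tau_{i_j}$, $x_j=Y^{\sigma_{j-1},x_{j-1}}_{\tau_{i_j}-}(\omega)+f\bigl(Y^{\sigma_{j-1},x_{j-1}}_{\tau_{i_j}-}(\omega),\tau_{i_j},z_{i_j}\bigr)$ for $j=1,\dots,k$, and finally $X^{s,x}_t(\omega):=Y^{\sigma_k,x_k}_t(\omega)$, with $X^{s,x}_t:=x$ for $t\le s$. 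By construction $X$ solves \eqref{SDEgeneral} on $\Omega'$ (cf.\ the formula \eqref{meas}), and its joint measurability is inherited from that of $Y$ through finitely many compositions with the measurable data $\tau_j(\omega)$ and $x_j(\omega)$.

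It then remains to lift the regularity of $Y$ through these finitely many steps, which is the bulk of the work. Properties~\ref{pure} and \ref{conx} are straightforward: on each of the sub-intervals cut out by the $\tau_j$, the process $X$ coincides with a $Y$-flow whose spatial argument $x_j$ depends continuously on $x$ — here one uses that $\phi\mapsto\phi(u-)$ is $1$-Lipschitz on $\mathcal{D}_0$ for each $u$, that $f$ is continuous in its first variable, and that $Y^{\sigma,\cdot}$ is continuous into $\mathcal{D}_0$ — while the set of relevant $\tau_j$ does not depend on $x$. The flow property~\ref{flow_1} follows from that of $Y$ on each sub-interval, treating $u=\tau_j$ separately, where the jump at $\tau_j$ is already incorporated in $X^{s,x}_{\tau_j}$ and $\tau_j\notin(\tau_j,t]$. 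The delicate property is \ref{cons}: on an open interval $(\tau_i,\tau_{i+1})$ the set of $\tau_j$ in $(s,t]$ is constant, so $s\mapsto X^{s,x}_t$ is a composition of $s\mapsto Y^{s,x}_{\tau_{i+1}-}$ (càdlàg, locally uniformly in $x$) with continuous maps, hence càdlàg there; at $s=\tau_{i+1}$ right-continuity holds because $Y$ is right-continuous in $s$ and $\tau_{i+1}\notin(\tau_{i+1},t]$ (so the limiting recursion matches $X^{\tau_{i+1},x}_t$), while the left limit exists because $s\mapsto Y^{s,x}_\cdot$ has one in $\mathcal{D}_0$ there; the boundedness built into $\Omega'$ supplies the uniform continuity of $f$ on compacts needed to keep these estimates locally uniform in $x$. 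Finally, \ref{constoch} is obtained by intersecting with $A_\delta=\{\text{no large jump in }[s-\delta,s+\delta]\}$, for which $\mathbb{P}(A_\delta)\to1$ as $\delta\to0$ (the large jumps form a Poisson process with finite intensity, hence have no atom at the fixed time $s$); on $A_\delta$ the solutions $X^{r,x}$ and $X^{s,x}$ with $|r-s|<\delta$ carry the same large jumps, so that, through the flow property and the continuity of the composition maps, $\sup_{|x|\le M}\sup_t|X^{r,x}_t-X^{s,x}_t|$ is dominated by a quantity that vanishes with $\sup_{|x|\le M}\sup_t|Y^{r,x}_t-Y^{s,x}_t|$, reducing \eqref{prob1} to the stochastic continuity of $Y$. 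The real obstacle throughout — and the reason for building $X$ out of $Y$ rather than by $s$-by-$s$ interlacing — is to carry all of this out on a single a.s.\ event independent of $(s,t,x)$; the detailed execution is the content of Section~\ref{large_sec}.
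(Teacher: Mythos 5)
Your construction of $X$ from the small--jumps sharp flow is essentially the paper's own: the recursion you describe is exactly \eqref{missala}--\eqref{rec} and the formula \eqref{meas}, the regularity arguments in your third paragraph mirror \emph{Step \upperRomannumeral{3}} of the proof of Theorem \ref{thm_big}, and your treatment of stochastic continuity via the event of no large jump near $s$ is a legitimate variant of the paper's argument (which instead shows the absence of fixed--time discontinuities, $X^{s-,x}_t=X^{s,x}_t$, and combines this with the càdlàg property in $s$).

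The genuine gap is the sentence ``By construction $X$ solves \eqref{SDEgeneral} on $\Omega'$\,''. This is not automatic, and it is where the bulk of the paper's work lies (\emph{Step \upperRomannumeral{5}}). On $[\tau_{i_j},\tau_{i_{j+1}})$ your process is $Y^{\sigma_j,x_j}_t$ with a \emph{random} initial time $\sigma_j$ and an $\mathcal{F}_{\sigma_j}$--measurable initial state $x_j$; to conclude that this coincides with a solution of the SDE restarted at $\sigma_j$ one must substitute a random time and a merely measurable random initial condition into the stochastic integrals $\int\alpha\,dW$ and $\int\!\int g\,d\widetilde{N}_p$, and the substitution identities available (Proposition \ref{p2}, Lemma \ref{le_se}) only cover deterministic initial times. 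The paper handles this by approximating $\tau^s_m$ from above by simple random variables, applying Lemma \ref{le_se} on each piece, and passing to the limit using the càdlàg regularity in $s$ not only of $Z$ but also of the auxiliary integral processes $Z_1,Z_2,Z_3$ (Corollary \ref{cadlag_Z}) --- an ingredient entirely absent from your proposal. Two smaller omissions of the same nature: (a) you assert that a.s.\ the large--jump times avoid the jump times of \emph{all} small--jumps solutions, but for fixed $(s,x)$ this holds only up to a null set depending on $(s,x)$, and a single event requires the reduction to rational $(s,x)$ plus the $\mathcal{C}_0$--regularity of $Z$, as in \emph{Step \upperRomannumeral{1}}; (b) your flow--property argument applies the flow property of $Y$ at the random times $\tau_{i_j}$, which is only legitimate because \eqref{flow_X} holds on one event simultaneously for all deterministic triples --- this uniformity (Lemma \ref{common}) must be invoked, whereas the paper derives \eqref{flow_finale} from pathwise uniqueness for random initial data and then removes the null--set dependence via the regularity in Point \ref{regularity} of Definition \ref{sharpala}.
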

		When $f\equiv0$, i.e., in the small--jumps case,  we deduce the previous result from a stronger one, which is presented in Theorem \ref{cadlag1} after introducing some notations. Let $\mathcal{C}_0=(C(\mathbb{R}^d;\mathcal{D}_0), d_0^{lu})$ be the metric space of continuous functions defined on $\mathbb{R}^d$ with values in $\mathcal{D}_0$ with  the usual distance $d_0^{lu}$ (defined below in \eqref{d0_un}).
   We endow $\mathcal{C}_0$ with the $\sigma-$algebra $\mathcal{C}$ generated by the projections $\pi_x\colon C(\mathbb{R}^d;\mathcal{D}_0) \to (\mathcal{D}_0,\mathcal{D}),\,x\in\mathbb{R}^d$, defined by $\pi_x(f)=f(x),\,f\in \mathcal{C}_0$. Here $\mathcal{D}$ is the $\sigma-$algebra on $\mathcal{D}_0$ generated by the Skorokhod topology (see the discussion around \eqref{pil}).
	\begin{theorem}\label{cadlag1}
		When $f\equiv0$, under Hypothesis \ref{base}, the  regular stochastic flow generated  by \eqref{uno} exists and is a stochastically continuous, càdlàg  $(\mathcal{C}_0,\mathcal{C})-$valued process.
	\end{theorem}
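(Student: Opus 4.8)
The strategy is to encode the solution of \eqref{uno} with $f\equiv0$ into a single process $Z=(Z_s)_{s\in[0,T]}$ with values in the non-separable metric space $\mathcal{C}_0=(C(\mathbb{R}^d;\mathcal{D}_0),d_0^{lu})$, characterised by $[Z_s(\omega)](x)=X^{s,x}_\cdot(\omega)$, and then to show that $Z$ admits a càdlàg modification via a pathwise regularity criterion adapted to non-separable spaces. As a preliminary, for each fixed $(s,x)$ Hypothesis \ref{base} yields --- through the standard Picard iteration and a Gronwall argument for SDEs with jumps --- the pathwise unique strong solution $X^{s,x}$ of \eqref{uno}, so that requirement \ref{comunala} of Definition \ref{sharpala} is automatic. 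Freezing $s$ and combining the moment estimate \eqref{kol} in the space variable with the Kolmogorov--Chentsov continuity theorem, one produces, for each fixed $s$, a modification of $(x,t)\mapsto X^{s,x}_t$ that is continuous in $x$ into $\mathcal{D}_0$ and càdlàg in $t$; this gives requirements \ref{pure} and \ref{conx} and realises, for each fixed $s$, a random element $Z_s\in\mathcal{C}_0$. The flow property \ref{flow_1} holds for rational triples $s<u<t$ by pathwise uniqueness and extends to all triples on the appropriate a.s.\ event thanks to the continuity in $x$ and càdlàg property in $t$ just established.

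The crux is the regularity in the initial time $s$, invisible to Kolmogorov--Chentsov since that test only delivers continuous modifications. I would treat it in two stages. \emph{Stage 1: stochastic continuity.} I would first prove \eqref{prob1}, i.e.\ that $s\mapsto Z_s$ is stochastically continuous for $d_0^{lu}$, which reduces to the convergence $\mathbb{E}\big[\sup_{|x|\le N}\sup_{s\le t\le T}|X^{r,x}_t-X^{s,x}_t|^{\gamma}\big]\to 0$ as $r\to s$. Writing the SDE for the difference $X^{r,x}-X^{s,x}$ and applying Gronwall together with the Burkholder--Davis--Gundy and Doob maximal inequalities leaves one with controlling stochastic-integral terms of the type \eqref{4eee}, namely the expectation of a supremum over $|x|\le N$ of a small-jumps stochastic integral. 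Here the tools announced in the introduction come in: Proposition \ref{not_sep} and Corollary \ref{cor5}, the Garsia--Rodemich--Rumsey-type lemma of \cite{imk} adapted to the present non-separable setting, upgrade a Hölder-in-$x$ bound on the $\gamma$-th moments of those stochastic integrals (itself a consequence of \eqref{lip1} and BDG) to the required bound on the expected supremum over $x$.

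\emph{Stage 2: càdlàg modification.} With stochastic continuity in hand, I would apply the extension of the càdlàg criterion of \cite{bez} proved in Appendix \ref{ap_BZ}, which is tailored to processes with values in a possibly non-separable metric space such as $\mathcal{C}_0$. In addition to stochastic continuity, it requires a Chentsov-type three-point moment estimate of the form $\mathbb{E}\big[d_0^{lu}(Z_s,Z_u)^{\beta}\,d_0^{lu}(Z_u,Z_t)^{\beta}\big]\le C\,|t-s|^{1+\delta}$ for $s\le u\le t$, which again follows from the SDE through the moment bounds of Stage 1, after localising in $|x|\le N$ and summing over $N$ against the weights in the definition \eqref{d0_un} of $d_0^{lu}$. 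The criterion then yields an a.s.\ event $\Omega'$, independent of $(s,t,x)$, on which $s\mapsto Z_s$ is càdlàg in $\mathcal{C}_0$; evaluating at a point $x$ gives requirement \ref{cons}, while requirement \ref{constoch} is precisely the stochastic continuity of Stage 1. Finally one checks that the assembled map $X\colon\Omega\times[0,T]\times\mathbb{R}^d\times[0,T]\to\mathbb{R}^d$ is $\mathcal{F}\otimes\mathcal{B}([0,T]\times\mathbb{R}^d\times[0,T])$-measurable and that $Z$ is a bona fide $(\mathcal{C}_0,\mathcal{C})$-valued process; both are consequences of the càdlàg/continuity structure together with the measure-theoretic facts in Appendix \ref{ap_sigma}, notably concerning the $\sigma$-algebra $\mathcal{C}$ generated by the projections $\pi_x$. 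Theorem \ref{main1} with $f\equiv0$ follows at once.

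I expect the main obstacle to be twofold: first, passing from the pointwise-in-$x$ moment estimates to the estimates \eqref{4eee} uniform over $|x|\le N$ \emph{inside} the expectation --- this is exactly what forces the use of the generalised Garsia--Rodemich--Rumsey lemma rather than plain Kolmogorov--Chentsov; and second, checking that the \cite{bez}-type càdlàg criterion can genuinely be carried out in the non-separable space $\mathcal{C}_0$, equipped with the uniform (not the Skorokhod $J_1$) topology on $\mathcal{D}_0$ --- this is where the bulk of the technical effort, and the appendix material, is concentrated.
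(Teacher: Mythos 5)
Your overall architecture coincides with the paper's: encode the solution as a $(\mathcal{C}_0,\mathcal{C})$-valued process, use the non-separable Garsia--Rodemich--Rumsey variant (Proposition \ref{not_sep}, Corollary \ref{cor5}) to control suprema in $x$ of the stochastic integrals, prove stochastic continuity in $s$, and conclude via the Bezandry--Fernique-type criterion (Theorem \ref{thm_BZ}, Corollary \ref{cor_4.2}). However, there is a genuine gap at the decisive step, the three-point estimate $\mathbb{E}\big[d_0^{lu}(Z_s,Z_u)^{q}\,d_0^{lu}(Z_u,Z_v)^{q}\big]\le C|v-s|^{1+r}$. You assert that it ``follows from the SDE through the moment bounds of Stage 1, after localising in $|x|\le N$ and summing over $N$''. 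It does not: for the compensated-Poisson integral, any moment of order $\gamma$ over an interval of length $\rho$ is only $O(\rho)$ (see \eqref{ku3.6}: the bound is $c\,(\rho^{\gamma/2}K_2^{\gamma/2}+\rho\,K_\gamma)$, and the second term dominates for small $\rho$), so a single increment $d_0^{lu}(Z_s,Z_u)^{q}$ can never by itself produce an exponent strictly larger than $1$; and if you split the product of the two factors by Hölder or Cauchy--Schwarz, the jump contributions give at most $\rho^{1/2}\cdot\rho^{1/2}=\rho$, again falling short of $1+r$. This is exactly the obstruction the paper circumvents: in the proof of Theorem \ref{cadlag} one uses that $Y^{u,x}$ is a \emph{strong} solution, adapted to $\mathbb{F}^{W,N_p}_{u}$ (see \eqref{strong}), so that the supremum terms over $[u,v]$ are $\mathcal{F}^{W,N_p}_{u,v}$-measurable and hence independent of $\mathcal{F}^{W,N_p}_{s,u}$; conditioning on $\mathcal{F}^{W,N_p}_{s,u}$ (equation \eqref{Mark}) factorises the expectation of the product of increments over the \emph{disjoint} intervals $[s,u]$ and $[u,v]$, and it is the product of two small quantities of order $\rho$ coming from the jump integrals on disjoint intervals that yields the exponent $1+r$. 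Without this independence/conditioning mechanism your Stage 2 estimate fails.

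A second, smaller point: even granting the factorisation, the passage from the pathwise bound to $d_0^{lu}$ is not a routine ``summing over $N$ against the weights''. The GRR estimate \eqref{est23} introduces factors growing like $N^{(2d+1)/p}$, so the series in \eqref{d0_un} must be cut at a $\rho$-dependent level $\bar N=[\rho^{-\sigma}]$, the terms capped by $\wedge 1$ using elementary inequalities \eqref{min}, and the tail controlled by an auxiliary leverage parameter $\gamma$; the final exponents only exceed $1$ after the calibrated choice of $(p,q,\sigma,\gamma)$ and $\eta=5/4$ made at the end of the proof of Theorem \ref{cadlag}. Also, in Stage 1 the paper does not run Gronwall on the difference $X^{r,x}-X^{s,x}$: it splits $[0,T]$ into $[s,s_n]$ (small-interval estimates for the three integral terms) and $[s_n,T]$, where the flow property $Y^{s,x}_t=Y^{s_n,Y^{s,x}_{s_n}}_t$ combined with the GRR bound at the random point $Y^{s,x}_{s_n}$ is used; this is worth noting because the two processes solve the equation from different initial times, so a direct difference-equation Gronwall argument is not available in the form you describe.
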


\begin{rem} \label{kuu}  
We make some comments on 
the stochastic flow for SDEs  studied in \cite{KU04} and \cite{Ku19}. We concentrate on  the SDEs in \cite{Ku19}
 which  have only
 ``small jumps'' 
and  are similar to \eqref{uno} with $f\equiv0$. On the other hand, \cite{KU04}
 considers 
more general   SDEs  with possibly random coefficients.   

\vv   \begin{enumerate}[label*=(\roman*)]
	\item \cite[Theorem 3.2]{KU04} and \cite[Theorem 3.4.1]{Ku19}  show the existence of a  modification of the solution which is, $\mathbb{P-}$a.s., continuous in the initial state  $x\in\mathbb{R}^d$ and càdlàg in $t$: the regularity with respect to the initial time $s$ is not considered. A version of the solution that is càdlàg in $s$ is given by
	\cite[Proposition 3.8.2]{Ku19}, which, however, requires to fix a time $t$. Therefore, these results do not prove the  simultaneous c\`adl\`ag property in $s$ and $t$.
	\item At the end of \cite[Page 353]{KU04},  it is stated that if  $(\xi_t(x))_{t\ge t_0}$ denotes the solution of the  SDE starting from $x$ at $t_0$, then 
	{the inverse flow   $\{ \xi_t^{-1}\colon \R^d\to \R^d ,\, t \ge t_0 \}$ is also a  c\`adl\`ag  process taking values in $C(\mathbb{R}^d , \mathbb{R}^d )$, just as $ \{ \xi_t \, : t \ge t_0 \}$.} If this holds true, then  the simultaneous c\`adl\`ag  property in $s$ and $t$    
	of the solution $\xi_{s,t}(x)$ of the SDE starting from $x$ at $s$ would be  a simple consequence of the relations
	\[
	\xi_{s,t}(x) =x,   \text{ if } t \le  s;\qquad    \xi_{s,t} (x)=  \xi_t \circ  \xi_s^{-1} (x),  \text{ if } t \ge  s.
	\]
	In this way, the claims at the beginning of \cite[Page 354]{KU04} would be  completely justified. However,  
	we have  not found a proof of the statement about the c\`adl\`ag property of the inverse flow for  SDEs with jumps in either \cite{KU04} or \cite{Ku19}.   Since this is a significant gap, we have followed a different path -- not relying on the inverse flow -- to demonstrate the simultaneous c\`adl\`ag  property in $s$ and $t$.
\end{enumerate}
Theorem \ref{cadlag1} solves this inconsistency by considering the further regularity in $s$. Thus, in the notation of \cite{KU04},  we are able to prove the simultaneous c\`adl\`ag property in $s$ and $t$ of $\xi_{s,t}$, a property that appears to be claimed without a proof on \cite[Page 354]{KU04}. Theorem \ref{main1} extends this regularity result to SDEs with a large--jumps component, i.e., $f\neq0$. 

We remark that, in this work, we do not   discuss the homeomorphism property of \eqref{uno}, which is the subject of \cite[Section 3.4]{KU04} (see also \cite[Section 3.7]{Ku19}) and  requires additional assumptions on the coefficients (Lipschitz--type conditions are not enough).
\end{rem}

\begin{rem} \label{Scheutzow} The paper  \cite{KS} considers the SDE   
\begin{equation} \label{sch}
dX_t = l(X_{t-}) \,dZ_t, \;\;\; t \ge s, \qquad X_s =x \in {\mathbb R}^d,
\end{equation}
from the point of view of  
random dynamical systems. Here  
$(Z_t)_{t \ge 0}$ is an ${\mathbb R}^k-$valued semimartingale 
with values in ${\mathbb R}^m$ 
and $l : \R^d \to \R^{d \times k}$ is Lipschitz continuous. 
 Despite some differences with the assertions in Theorem \ref{main1},   \cite[Theorem 5]{KS} gives  a version of the solution  of  the SDE \eqref{sch} which is regular in the variables $(s,t,x)$ and satisfies  the flow property
 (the stochastic continuity is not investigated in \cite{KS}). In particular,
 \cite[Theorem 5]{KS} can be applied to the SDE \eqref{SDEgeneral} when the coefficients  are time--independent and $g$ and $f$ have a special form; it cannot be used to study controlled SDEs. More precisely, $\alpha$ and  $b$ must be time--independent and 
$ g(x,r,z)= g_1(x) g_2(r,z),$ $ f(x,r,z)= f_1(x) f_2(r,z),$
where $f_2, g_2  $ are measurable in their domains with values in $\R^k$ and $g_2$ verifies  
$\int_0^T dr \int_{U_0}\left|g_2\left(r,z\right)\right|^2\nu\left(dz\right) < \infty$. Moreover, one has to require that $g_1,  f_1 : {\mathbb{R}^d} \to {\mathbb{R}^{d\times k}} $ are Lipschitz continuous. 
In this case,
the semimartingale $Z$ in \eqref{sch} takes the form 
$
Z_t = \Big(t, W_t, \int_0^t \int_{U_0} g_2\left(r,z\right) \widetilde{N}_p\left(dr,dz\right) , \int_0^t \int_{U \setminus U_0} f_2(r,z) {N}_p \left(dr,dz\right) \Big)\in \mathbb{R}^{1+ m+2k}. 
$
The proof of  \cite[Theorem 5]{KS} is different from the one of Theorem \ref{main1}, which relies on Theorem \ref{cadlag1}. On the other hand, 
in \cite{KS} there are no results related to Theorem \ref{cadlag1} (the space $\mathcal{C}_0 =(C(\mathbb{R}^d;\mathcal{D}_0), d_0^{lu})$ is introduced in this paper). 
\end{rem}

\section{Proof of existence of the regular stochastic flow  for SDEs with small jumps\nor }
		\label{sec_small}
In this section, we are interested in the study of \eqref{SDEgeneral} with $f\equiv 0$, i.e., the SDE with \emph{small jumps}. In particular, we consider the following SDE:
\begin{equation}\label{SDE}  
Y_t=\eta+\int_{s}^{t}b\left(r,Y_{r}\right)dr+\int_{s}^{t}\alpha\left(r,Y_{r}\right)dW_r+
\int_{s}^{t}\!\int_{U_0}g\left(Y_{r-},r,z\right)\widetilde{N}_p\left(dr,dz\right),\quad t\in\left[s,T\right],
\end{equation}
where $s\in\left[0,T\right)$ and 
$$
\eta\in L^0\left(\mathcal{F}_s\right),
$$  
i.e., $\eta$ is an $\mathcal{F}_s-$measurable random variable with values in $\mathbb{R}^d$.  A solution of this equation is a c\`adl\`ag, $\mathbb{R}^d-$valued, $\mathbb{F}-$adapted process $Y=\left(Y_t\right)_{s\le t\le T}$ satisfying \eqref{SDE} up to indistinguishability. 

Conditions \eqref{lg1}-\eqref{lip1} guarantee the existence of a solution $Y$ to \eqref{SDE} for every  $\eta\in L^p(\Omega)\cap L^0(\mathcal{F}_s)$, with $p\ge2$, see \cite[Theorem 3.1]{KU04} or \cite[Theorem 117]{situ}. Such a solution is pathwise unique and satisfies
\[
\mathbb{E}\big[\sup_{s\le t\le T}\left|Y_t\right|^p\big]<\infty.
\] 
We denote by $Y^{s,\eta}$ the solution of \eqref{SDE} starting from $\eta$ at time $s$. We also set $Y^{s,\eta}_r = Y^{s,\eta}_s$ if $0 \le r < s$. 
\subsection{Flow property and continuity in $x$}\label{pre_flow}
The pathwise uniqueness of \eqref{SDE} immediately implies the \emph{cocycle property}: for every $x\in\mathbb{R}^d$ and $0\le s<u\le T$, there exists an a.s. event $\Omega_{s,u,x}$ such that
\begin{equation}\label{cocycle}
Y^{u,Y^{s,x}_u}_t\left(\omega\right)=Y^{s,x}_t\left(\omega\right),\quad t\in\left[u,T\right],\,\omega\in\Omega_{s,u,x}.
\end{equation}
The notation $\Omega_{s,u,x}$ indicates an (a.s.) event which may depend on $s,u$ and $x$ (it is independent of $t$). This notation will be adopted for the rest of the paper. 

The next result  is an extension of \cite[Equation $\left(3.7\right)$]{KU04} to random initial conditions, see also \cite[Lemma 3.3.3]{Ku19}. The proof contains useful estimates (in particular, see \eqref{bdg_alpha}-\eqref{bdg_g}) which will be used several times hereinafter.
\begin{lemma}\label{l1}
Fix $p\ge2$. Then, for every $\mathcal{F}_s-$measurable random variables $\xi,\eta\in L^p\left(\Omega\right)$, one has 
\begin{equation}\label{3.6} 
\mathbb{E}\Big[\sup_{s\le t\le T}\left|Y_t^{s,\eta}-Y_t^{s,\xi}\right|^p\Big]\le 4^{p-1}e^{c\left(T-s\right)} \mathbb{E}\left[\left|\eta-\xi\right|^p\right],\quad s\in\left[0,T\right),
\end{equation} 
where $c>0$ is a constant depending only on $p,d,m, T,K_2, K_p$.
\end{lemma}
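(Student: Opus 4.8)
The plan is to prove the $L^p$-contraction estimate \eqref{3.6} by a standard Gronwall-type argument, exploiting the linear structure of the SDE \eqref{SDE} in the difference of two solutions. First I would write the difference $Y^{s,\eta}_t - Y^{s,\xi}_t = (\eta - \xi) + \int_s^t \bigl( b(r,Y^{s,\eta}_r) - b(r,Y^{s,\xi}_r)\bigr)\,dr + \int_s^t \bigl(\alpha(r,Y^{s,\eta}_r) - \alpha(r,Y^{s,\xi}_r)\bigr)\,dW_r + \int_s^t\!\int_{U_0}\bigl(g(Y^{s,\eta}_{r-},r,z) - g(Y^{s,\xi}_{r-},r,z)\bigr)\widetilde{N}_p(dr,dz)$, apply the elementary inequality $|a_1+a_2+a_3+a_4|^p \le 4^{p-1}(|a_1|^p+|a_2|^p+|a_3|^p+|a_4|^p)$, and take $\sup_{s\le t\le u}$ followed by expectation, for a running upper endpoint $u\in[s,T]$.

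Next I would estimate each of the four resulting terms. The $(\eta-\xi)$ term contributes exactly $4^{p-1}\mathbb{E}[|\eta-\xi|^p]$. For the drift term one uses Jensen's (or Hölder's) inequality in the time integral together with the Lipschitz bound \eqref{lip1} on $b$ to get a constant times $\int_s^u \mathbb{E}\bigl[\sup_{s\le r'\le r}|Y^{s,\eta}_{r'}-Y^{s,\xi}_{r'}|^p\bigr]\,dr$. For the stochastic integral with respect to $W$ and for the compensated-Poisson integral, I would invoke the Burkholder--Davis--Gundy inequality (these are precisely the estimates the lemma statement advertises as \eqref{bdg_alpha}--\eqref{bdg_g}): BDG converts the $\sup$ in $t$ of the $p$-th power of the martingale into the $p/2$-th power of its quadratic variation, and then, using \eqref{lip1} for $\alpha$ and the $\nu$-integral Lipschitz bound for $g$ (plus, for $g$, an interpolation between the $p=2$ and general $p$ exponents handled via Hölder in $(r,z)$ over the finite-in-time interval, or the Kunita-type inequality for the jump integral), each of these is bounded by a constant times $\int_s^u \mathbb{E}\bigl[\sup_{s\le r'\le r}|Y^{s,\eta}_{r'}-Y^{s,\xi}_{r'}|^p\bigr]\,dr$ as well. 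Collecting terms yields $\phi(u) := \mathbb{E}\bigl[\sup_{s\le t\le u}|Y^{s,\eta}_t-Y^{s,\xi}_t|^p\bigr] \le 4^{p-1}\mathbb{E}[|\eta-\xi|^p] + c\int_s^u \phi(r)\,dr$ for a constant $c=c(p,d,m,T,K_2,K_p)$, where one must first note $\phi$ is finite by the a priori bound $\mathbb{E}[\sup_{s\le t\le T}|Y^{s,\eta}_t|^p]<\infty$ recalled above.

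Finally I would apply Gronwall's lemma to $\phi$ on $[s,T]$ to obtain $\phi(T) \le 4^{p-1}e^{c(T-s)}\mathbb{E}[|\eta-\xi|^p]$, which is exactly \eqref{3.6}. The main obstacle, and the only genuinely delicate point, is the treatment of the small-jumps term: for $p>2$ one cannot directly use the $L^2$-isometry, so one needs the Bichteler--Jacod / Kunita form of the BDG inequality for stochastic integrals against $\widetilde{N}_p$, which produces both a $\bigl(\int\!\int |g|^2\nu\,dr\bigr)^{p/2}$ contribution and a $\int\!\int |g|^p\nu\,dr$ contribution; both are then controlled by \eqref{lip1} (the first after Hölder in $r$ over the bounded interval $[s,T]$), and care is needed to track the constants so that they depend only on the stated parameters. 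Everything else is routine, and the constant $4^{p-1}$ in front is transparent from the four-term split of the SDE.
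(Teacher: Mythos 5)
Your proposal is correct and follows essentially the same route as the paper's proof: the four-term split with the $4^{p-1}$ inequality, Hölder plus \eqref{lip1} for the drift, the Burkholder--Davis--Gundy inequality for the Brownian integral, the Kunita-type maximal inequality for the $\widetilde{N}_p$-integral (producing the $(\int\!\int|g|^2\nu)^{p/2}$ and $\int\!\int|g|^p\nu$ contributions controlled via $K_2$ and $K_p$), and finally Gronwall's lemma. The only cosmetic difference is that you make explicit the a priori finiteness of $\phi$ needed to apply Gronwall, which the paper leaves implicit.
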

\begin{proof}
Fix $0\le s<T$ and two $\mathcal{F}_s-$measurable random variables $\xi,\eta\in L^p(\Omega)$. By \eqref{SDE}, in an a.s. event $\Omega_{s,\xi,\eta}$ we have, using {H\"older's inequality} and the Lipschitz condition \eqref{lip1} on $b$,
\begin{align*}
&\left|Y_t^{s,\eta}-Y_t^{s,\xi}\right|^p
\\&\quad \le 4^{p-1}\bigg(\left|\eta-\xi\right|^p+K_pT^{p-1}\int_{s}^{t}\sup_{s\le u\le r}\left|Y^{s,\eta}_{u}-Y^{s,\xi}_{u}\right|^pdr
+\sup_{s\le u\le t}\left|\int_{s}^{u}\left(\alpha\left(r,Y_{r}^{s,\eta}\right)-\alpha\left(r,Y_{r}^{s,\xi}\right)\right)dW_r\right|^p\\
&\qquad
+\sup_{s\le u\le t}\left|\int_{s}^{u}\!\int_{U_0}\left(g\left(Y^{s,\eta}_{r-},r,z\right)-g\left(Y^{s,\xi}_{r-},r,z\right)\right)\widetilde{N}_p\left(dr,dz\right)\right|^p
\bigg),\quad t\in\left[s,T\right].
\end{align*}
Taking the supremum and the expectation  we obtain
\begin{align}\label{l1_1}
\notag&\mathbb{E}\left[\sup_{s\le u\le t}\left|Y_u^{s,\eta}-Y_u^{s,\xi}\right|^p\right]\le 4^{p-1}\bigg(\mathbb{E}\left[\left|\eta-\xi\right|^p\right]\\&\qquad\qquad
+K_pT^{p-1}\int_{s}^t\mathbb{E}\left[\sup_{s\le u\le r}\left|Y^{s,\eta}_{u}-Y^{s,\xi}_{u}\right|^p\right]dr+\mathbb{E}\left[\sup_{s\le u\le t}\left|\int_{s}^{u}\left(\alpha\left(r,Y_{r}^{s,\eta}\right)-\alpha\left(r,Y_{r}^{s,\xi}\right)\right)dW_r\right|^p\right]\notag\\&\qquad \qquad
+\mathbb{E}\left[\sup_{s\le u\le t}\left|\int_{s}^{u}\!\int_{U_0}\left(g\left(Y^{s,\eta}_{r-},r,z\right)-g\left(Y^{s,\xi}_{r-},r,z\right)\right)\widetilde{N}_p\left(dr,dz\right)\right|^p\right]
\bigg),\quad t\in\left[s,T\right].
\end{align}
By the {Burkholder--Davis--Gundy inequality} 
and the Lipschitz condition \eqref{lip1} on $\alpha$ we have 
\begin{multline}\label{bdg_alpha}
\mathbb{E}\left[\sup_{s\le u\le t}\left|\int_{s}^{u}\left(\alpha\left(r,Y_{r}^{s,\eta}\right)-\alpha\left(r,Y_{r}^{s,\xi}\right)\right)dW_r\right|^p\right]\le c_p\,\left(dm\right)^p\,T^{\frac{p}{2}-1}\mathbb{E}\left[\int_{s}^{t}\left|\alpha\left(r,Y^{s,\eta}_{r}\right)-\alpha\left(r,Y^{s,\xi}_{r}\right)\right|^pdr\right]\\
\le c_p\,(dm)^{p}\,T^{\frac{p}{2}-1}K_p\int_{s}^{t}\mathbb{E}\left[\sup_{s\le u\le r}\left|Y^{s,\eta}_{u}-Y^{s,\xi}_{u}\right|^p\right]dr,\quad t\in\left[s,T\right].
\end{multline} 
where $c_p>0$ is a constant only depending on $p$. As for the integral with respect to $\widetilde{N}_p$, \cite[Theorem $2.11$]{KU04} yields, for every $t\in\left[s,T\right],$
\begin{align}\label{bdg_g}
\notag&\mathbb{E}\left[\sup_{s\le u\le t}\left|\int_{s}^{u}\!\int_{U_0}\left(g\left(Y^{s,\eta}_{r-},r,z\right)-g\left(Y^{s,\xi}_{r-},r,z\right)\right)\widetilde{N}_p\left(dr,dz\right)\right|^p\right]\\&\qquad 
\notag\le c_{1,p}\,d^p\left(\mathbb{E}\left[\left(\int_{s}^{t}dr\int_{U_0}\nu\left(dz\right)\left|g\left(Y^{s,\eta}_{r-},r,z\right)-g\left(Y^{s,\xi}_{r-},r,z\right)\right|^2\right)^{\frac{p}{2}}\right]\right.\\&
\qquad \qquad \notag
+ 
\left.\,\mathbb{E}\left[\int_{s}^{t}dr\int_{U_0}\nu\left(dz\right)\left|g\left(Y^{s,\eta}_{r-},r,z\right)-g\left(Y^{s,\xi}_{r-},r,z\right)\right|^p\right]\right)\\&\qquad 
\le 
c_{1,p}\,d^p \left[ T^{\frac{p}{2}-1}
K_2^\frac{p}{2} + 
K_p
\right]  
\int_{s}^{t}\mathbb{E}\left[\sup_{s\le u\le r}\left|Y^{s,\eta}_{u}-Y^{s,\xi}_{u}\right|^p\right]dr,
\end{align}
where $c_{1,p}>0$. Going back to \eqref{l1_1}, we combine \eqref{bdg_alpha} and \eqref{bdg_g} to get the existence of a constant $c=c\left(p,d,m,T, K_2,K_p\right)>0$ such that 
\begin{equation*}
\mathbb{E}\Big[\sup_{s\le u\le t}\left|Y^{s,\eta}_{u}-Y^{s,\xi}_{u}\right|^p\Big]\le 4^{p-1}\mathbb{E}\Big[\left|\eta-\xi\right|^p\Big] +c\int_{s}^{t}\mathbb{E}\Big[\sup_{s\le u\le r}\left|Y^{s,\eta}_{u}-Y^{s,\xi}_{u}\right|^p\Big]dr,   \quad t\in\left[s,T\right].
\end{equation*}
At this point {Gronwall's lemma} provides us with the assertion.
\end{proof}
Denote by $\mathcal{D}_0=\left(\mathcal{D}\left(\left[0,T\right];\mathbb{R}^d\right),\norm{\cdot}_0\right),$ the metric space of $\mathbb{R}^d-$valued, càdlàg functions with the uniform norm $\norm{\cdot}_0$ in $[0,T]$:  $\mathcal{D}_0$ is  complete but not separable. Inspired by \cite[Chapter \upperRomannumeral{5}]{pollard}, we endow $\mathcal{D}_0$ with the $\sigma-$algebra $\mathcal D$ generated by the projections 
\begin{equation}\label{pil}
	\pi_t\colon \mathcal{D}_0 \to \mathbb{R}^d,\,t\in [0,T], \quad \text{defined by } \pi_t(f)=f(t),\,f\in\mathcal{D}_0.
\end{equation}
 It is well known that $\mathcal{D}$ coincides with the Borel $\sigma-$algebra generated by the Skorokhod topology $J_1$, see  \cite[Theorem $12.5$]{Bill}, \cite[Theorem 1.14, Chapter \upperRomannumeral{6}]{js}) and \cite[Corollary 2.4]{J}. On the contrary, $\mathcal{D}$ is strictly smaller than the Borel $\sigma-$algebra of the uniform distance (cf. \cite[Eq. (15.2)]{Bill}). Notice that the difference of two càdlàg functions, considered as a mapping from $(\mathcal{D}_0\times \mathcal{D}_0, \mathcal{D}\otimes \mathcal{D})$ to $(\mathcal{D}_0,\mathcal{D})$, is measurable. Indeed, this is an immediate consequence of the measurability of the following map:
\[
	\left(\mathcal{D}_0\times \mathcal{D}_0, \mathcal{D}\otimes \mathcal{D}\right)\to \mathbb{R}^{2d},\quad \left(x,y\right)\mapsto \left(x\left(t\right),y\left(t\right)\right),\quad t\in \left[0,T\right].
\]
Moreover, observe that also $\norm{\cdot}_0\colon (\mathcal{D}_0,\mathcal{D})\to\mathbb{R}$ is measurable, because the càdlàg property allows to compute the supremum on a countable  dense set of $[0,T]$.\\
Let us fix $s\in\left[0,T\right)$ and consider the random field 
$
Y^{s,\cdot}=\left(Y^{s,x}\right)_{x\in\mathbb{R}^d}.
$
For every $x\in\mathbb{R}^d$, the map $Y^{s,x}\colon \Omega\to (\mathcal{D}_0, \mathcal{D})$ is a random variable, i.e., it is $\mathcal{D}-$measurable. Hence, by the previous discussion the function  $\omega\mapsto \sup_{0\le t \le T}|Y_t^{s,x}(\omega)-Y_t^{s,y}(\omega)|$ is measurable for every $x,\,y\in\mathbb{R}^d$. Thanks to \eqref{3.6}, choosing $p>d$ we can  apply the  \emph{Kolmogorov--Chentsov continuity criterion} as in \cite[Lemma A.$2.37$]{Bicht} to find  a continuous modification $\widetilde{Y}^{s,\cdot}$ of $Y^{s,\cdot}$. Hence there exist a.s. events $\Omega_s$ and $\Omega_{s,x}$ such that \begin{equation}\label{2'}
	\widetilde{Y}^{s,x}=Y^{s,x} \text{ in $\Omega_{s,x}$,\quad and\quad  $\widetilde{Y}^{s,\cdot}\left(\omega\right)\colon\mathbb{R}^d\to\mathcal{D}_0$ is continuous for any $\omega \in \Omega_s$.}
\end{equation}  
By setting $\widetilde{Y}^{s,x}_t\left(\omega\right)=x$ for every $x\in\mathbb{R}^d,\,t\in\left[0,T\right]$ and $\omega\in\Omega_s^c$,  we get the continuity of $\widetilde{Y}^{s,\cdot}(\omega)$ for all $\omega\in\Omega$. 
From now on, we will always work with this continuous version, which we keep denoting by $Y^{s,\cdot}.$ 

The following result shows the dependence of $\mathbb{P}-$a.s. path of the solution on the initial condition.
\begin{proposition}\label{p2}
For every $s\in\left[0,T\right)$ and $\eta\in L^p\left(\Omega\right)\cap L^0(\mathcal{F}_s),\,p\ge2,$  there exists an a.s. event $\Omega_{s,\eta}$ such that
\begin{equation}\label{dependence}
Y^{s,\eta}_t\left(\omega\right)=Y_t^{s,\eta\left(\omega\right)}\left(\omega\right),\quad t\in\left[s,T\right],\,\omega\in\Omega_{s,\eta}.
\end{equation}
\end{proposition}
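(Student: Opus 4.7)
The plan is a classical approximation argument: reduce to simple $\mathcal{F}_s$-measurable initial conditions, handle those by pathwise uniqueness, then pass to the limit using the $L^p$-stability in Lemma \ref{l1} on the left and the continuity of the already-fixed modification $x\mapsto Y^{s,x}(\omega)$ on the right.

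First I would choose a sequence of $\mathcal{F}_s$-measurable simple random variables of the form $\eta_n=\sum_{i=1}^{k_n} x_i^n\mathbf{1}_{A_i^n}$, with $\{A_i^n\}_i$ an $\mathcal{F}_s$-measurable partition of $\Omega$ and $x_i^n\in\mathbb{R}^d$, such that $\eta_n\to\eta$ in $L^p(\Omega)$ and $\mathbb{P}$-a.s. (e.g.\ by truncating and dyadically rounding $\eta$). For each $n$, set $\widetilde Y^n_t:=\sum_{i=1}^{k_n}\mathbf{1}_{A_i^n}Y^{s,x_i^n}_t$. Since each $\mathbf{1}_{A_i^n}$ is $\mathcal{F}_s$-measurable, I can pull it inside the Lebesgue, It\^o and Poisson stochastic integrals appearing in the SDE solved by $Y^{s,x_i^n}$, and summing over the partition shows that $\widetilde Y^n$ is a càdlàg $\mathbb{F}$-adapted solution of \eqref{SDE} with initial value $\eta_n$ at time $s$ (using that $\sum_i \mathbf{1}_{A_i^n}b(r,Y^{s,x_i^n}_r)=b(r,\widetilde Y^n_r)$, and similarly for $\alpha$ and $g$, on the disjoint union $\bigsqcup_i A_i^n=\Omega$). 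Pathwise uniqueness then yields an a.s. event $\Omega'_{n}$ on which $Y^{s,\eta_n}_t(\omega)=Y^{s,\eta_n(\omega)}_t(\omega)$ for every $t\in[s,T]$.

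Next I would pass to the limit. On the left side, applying Lemma \ref{l1} gives
\[
\mathbb{E}\Big[\sup_{s\le t\le T}|Y^{s,\eta_n}_t-Y^{s,\eta}_t|^p\Big]\le 4^{p-1}e^{cT}\mathbb{E}[|\eta_n-\eta|^p]\to 0,
\]
so along a subsequence (not relabelled) $\sup_{s\le t\le T}|Y^{s,\eta_n}_t-Y^{s,\eta}_t|\to 0$ on an a.s. event $\Omega''$. On the right side, using the continuous modification fixed in \eqref{2'}, on the a.s. event $\Omega_s\cap\{\eta_n\to\eta\}$ the pointwise convergence $\eta_n(\omega)\to\eta(\omega)$ in $\mathbb{R}^d$ combined with the continuity of $x\mapsto Y^{s,x}(\omega)$ from $\mathbb{R}^d$ to $\mathcal{D}_0$ gives $\sup_{s\le t\le T}|Y^{s,\eta_n(\omega)}_t(\omega)-Y^{s,\eta(\omega)}_t(\omega)|\to 0$. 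Intersecting all these full-measure sets with $\bigcap_n\Omega'_n$ defines $\Omega_{s,\eta}$, and letting $n\to\infty$ in the identity of the previous paragraph yields \eqref{dependence}.

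The only genuinely delicate point is verifying that $\widetilde Y^n$ really does solve \eqref{SDE} with initial condition $\eta_n$, i.e., justifying that the indicator $\mathbf{1}_{A_i^n}$ can be brought inside the stochastic integrals; this is exactly where the $\mathcal{F}_s$-measurability of the partition (hence predictability of $\mathbf{1}_{A_i^n}$ on $(s,T]$) is used. Everything else is bookkeeping of null sets and an application of the estimate already proved in Lemma \ref{l1}.
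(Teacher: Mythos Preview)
Your proposal is correct and follows essentially the same approach as the paper's proof: reduce to simple $\mathcal{F}_s$-measurable initial conditions via pathwise uniqueness, then pass to the limit using Lemma \ref{l1} on the left and the continuity of the modification $x\mapsto Y^{s,x}(\omega)$ on the right. The paper is terser about the simple case (calling it ``an immediate consequence of the pathwise uniqueness''), whereas you spell out the pull-through of the $\mathcal{F}_s$-measurable indicators into the stochastic integrals, but the argument is the same.
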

\begin{proof}
Fix $p\ge2$ and $s\in [0,T)$. First, we notice that \eqref{dependence} is an immediate consequence of the pathwise uniqueness of the solutions to \eqref{SDE} when $\eta\in L^p\left(\Omega\right)\cap L^0(\mathcal{F}_s)$ is simple, namely 
$
\eta=\sum_{k=1}^{n}a_k1_{A_k},
$
where $n\in\mathbb{N}, \,a_k\in\mathbb{R}^d$ and $\left(A_k\right)_{k}$ is an $\mathcal{F}_s-$measurable partition of $\Omega$, $k=1,\dots,n$. 

Secondly, we consider a generic $\eta\in L^p(\Omega)\cap L^0(\mathcal{F}_s)$ and take a sequence $\left(\eta_n\right)_n$ of simple, $\mathcal{F}_s-$measurable random variables converging to it both in the $L^p-$sense and almost surely. By the previous step, we can find an a.s. event $\Omega'_{s,\eta}$ (independent of $n$) such that
\[
Y^{s,\eta_n}\left(\omega\right)=Y^{s,\eta_n\left(\omega\right)}\left(\omega\right),\quad n\in\mathbb{N},\, \omega\in\Omega'_{s,\eta}.
\]
Without loss of generality, suppose that $\eta_n\to\eta$ pointwise on $\Omega'_{s,\eta}$. The continuity of the random field $\left(Y^{s,x}\right)_{x\in\mathbb{R}^d}$ in  $\Omega$ yields
\[
\mathcal{D}_0-\!\!\lim_{n\to\infty}Y^{s,\eta_n\left(\omega\right)}\left(\omega\right)=Y^{s,\eta\left(\omega\right)}\left(\omega\right),\quad \omega\in\Omega'_{s,\eta}.
\]
On the other hand, an application of Lemma \ref{l1} shows that, possibly passing to a subsequence,
\[
\mathcal{D}_0-\!\!\lim_{k\to\infty}Y^{s,\eta_{n_k}}\left(\omega\right)=Y^{s,\eta}\left(\omega\right),\quad \omega\in\Omega_{s,\eta},
\] 
with an a.s. event $\Omega_{s,\eta}\subset\Omega'_{s,\eta}$. 
By the three previous assertions we infer \eqref{dependence} and the proof is complete. 
\end{proof} 
Combining the cocycle property in \eqref{cocycle} with Proposition \ref{p2} we get the \emph{flow property} expressed in the next corollary. This result improves \eqref{cocycle}, because in \eqref{cocycle} the a.s. event $\Omega_{s,u,x}$ possibly depends on $x \in \mathbb{R}^d$.

\begin{corollary}\label{cor_flow}
For every $0\le s<u\le T$, there exists an a.s. event $\Omega_{s,u}$ such that
\begin{equation}\label{flow}
Y_t^{u,Y^{s,x}_u\left(\omega\right)}\left(\omega\right)=Y^{s,x}_t\left(\omega\right),\quad t\in\left[u,T\right],\, x\in\mathbb{R}^d,\,\omega\in\Omega_{s,u}.
\end{equation}
\end{corollary}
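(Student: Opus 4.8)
The plan is to upgrade the cocycle property \eqref{cocycle}, in which the exceptional null set $\Omega_{s,u,x}$ depends on the spatial point $x$, to a single null set $\Omega_{s,u}$ valid simultaneously for all $x \in \mathbb{R}^d$, exploiting the continuity in $x$ of the version of the flow fixed after \eqref{2'}. The key observation is that \eqref{cocycle} can be reformulated as an instance of \eqref{dependence}: for fixed $0 \le s < u \le T$ and fixed $x$, the random variable $\eta := Y^{s,x}_u$ is $\mathcal{F}_u$-measurable and lies in $L^p(\Omega)$ for every $p \ge 2$ (by the moment bound $\mathbb{E}[\sup_{s\le t\le T}|Y^{s,x}_t|^p]<\infty$), so Proposition \ref{p2} applied with initial time $u$ gives an a.s.\ event $\Omega_{u,\eta}$ on which $Y^{u,Y^{s,x}_u}_t = Y^{u,Y^{s,x}_u(\omega)}_t$; combined with \eqref{cocycle} this already rewrites \eqref{flow} pointwise in $x$. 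So the only genuine task is the uniformity in $x$.

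First I would reduce to a countable dense set. Let $Q = \mathbb{Q}^d$ (or any fixed countable dense subset of $\mathbb{R}^d$). For each $q \in Q$, Proposition \ref{p2} (with random initial condition $\eta_q = Y^{s,q}_u \in L^p(\Omega)\cap L^0(\mathcal{F}_u)$) together with \eqref{cocycle} yields an a.s.\ event $\Omega_{s,u,q}$ on which $Y^{u,Y^{s,q}_u(\omega)}_t(\omega) = Y^{s,q}_t(\omega)$ for all $t \in [u,T]$. Set $\Omega_{s,u} := \big(\bigcap_{q\in Q}\Omega_{s,u,q}\big) \cap \Omega_s \cap \Omega_u$, where $\Omega_s,\Omega_u$ are the full-measure events from \eqref{2'} on which $x\mapsto Y^{s,x}(\cdot)$ and $x\mapsto Y^{u,x}(\cdot)$ are continuous maps into $\mathcal{D}_0$; this is still a.s.\ since it is a countable intersection. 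On $\Omega_{s,u}$ the flow identity holds for every rational $q$.

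Now fix $\omega \in \Omega_{s,u}$ and an arbitrary $x \in \mathbb{R}^d$, and take $q_n \in Q$ with $q_n \to x$. By continuity of $x \mapsto Y^{s,x}(\omega)$ in $\mathcal{D}_0$ we have $\|Y^{s,q_n}_\cdot(\omega) - Y^{s,x}_\cdot(\omega)\|_0 \to 0$; in particular $Y^{s,q_n}_u(\omega) \to Y^{s,x}_u(\omega)$ in $\mathbb{R}^d$. Then continuity of $y \mapsto Y^{u,y}(\omega)$ in $\mathcal{D}_0$ gives $Y^{u,Y^{s,q_n}_u(\omega)}_\cdot(\omega) \to Y^{u,Y^{s,x}_u(\omega)}_\cdot(\omega)$ in $\mathcal{D}_0$, hence pointwise on $[u,T]$. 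Passing to the limit in $Y^{u,Y^{s,q_n}_u(\omega)}_t(\omega) = Y^{s,q_n}_t(\omega)$ — the left side converging by the second display, the right side by the first — yields $Y^{u,Y^{s,x}_u(\omega)}_t(\omega) = Y^{s,x}_t(\omega)$ for all $t \in [u,T]$, which is \eqref{flow}.

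The main obstacle, and the only subtle point, is ensuring the two composed continuity statements interact correctly: one needs the \emph{outer} map $y \mapsto Y^{u,y}(\omega)$ to be continuous as a function $\mathbb{R}^d \to \mathcal{D}_0$ on the \emph{same} event for which convergence of the arguments $Y^{s,q_n}_u(\omega)$ is available, which is why $\Omega_u$ (not merely the $x$-dependent events $\Omega_{u,x}$) must be built into $\Omega_{s,u}$ — this is exactly what the globally-defined continuous version fixed after \eqref{2'} provides. Everything else is a routine density-and-continuity passage to the limit; no further estimates are needed beyond Lemma \ref{l1} and Proposition \ref{p2}, which are already available.
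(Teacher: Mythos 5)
Your proposal is correct and follows essentially the same route as the paper: combine the cocycle property \eqref{cocycle} with Proposition \ref{p2} (applied at initial time $u$ to $\eta=Y^{s,x}_u$) to get \eqref{flow} for each fixed $x$, intersect the exceptional events over $x\in\mathbb{Q}^d$, and extend to all $x$ by the continuity of the $\mathcal{D}_0$-valued random fields $Y^{s,\cdot}$ and $Y^{u,\cdot}$ fixed after \eqref{2'}. Your explicit remark that the continuity of the \emph{outer} map $y\mapsto Y^{u,y}(\omega)$ is also needed is a correct (and slightly more careful) reading of the same limit passage the paper performs.
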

\begin{proof}
Fix $0\le s< u\le T$. Equations \eqref{cocycle}-\eqref{dependence} imply, for every $x\in\mathbb{R}^d,$ the existence of an a.s. event $\Omega_{s,u,x}$ where \eqref{flow} holds. Therefore, it is  sufficient to remove the dependence of this event from $x$ to prove the assertion. Let  $\Omega_{s,u}=\cap_{x\in\mathbb{Q}^d}\Omega_{s,u,x}$; then $\mathbb{P}\left(\Omega_{s,u}\right)=1$ and 
\begin{equation}\label{flow_rat}
Y_t^{u,Y^{s,x}_u\left(\omega\right)}\left(\omega\right)=Y^{s,x}_t\left(\omega\right),\quad t\in\left[u,T\right],\, x\in\mathbb{Q}^d,\,\omega\in\Omega_{s,u}.
\end{equation}
For a point $x\in\mathbb{R}^d\setminus\mathbb{Q}^d$, take a sequence $\left(x_n\right)_n\subset\mathbb{Q}^d$ such that $x_n\to x$. Given $\omega\in\Omega_{s,u}$, by the continuity of the random field $Y^{s,\cdot}$ (see \eqref{2'} and the subsequent comment), one can pass to the limit in \eqref{flow_rat} to show that \eqref{flow_rat} holds in $x$, as well. This gives \eqref{flow}, completing the proof.
\end{proof}

Observe that each process $Y^{s,x},\,x\in\mathbb{R}^d,$ satisfies \eqref{SDE} in an a.s. event $\Omega_{s,x}$:  we now want to find a common a.s. event $\Omega_s$ --independent of $x$-- where \eqref{SDE} holds (with $\eta=x$). To do this, we consider suitable modifications of the stochastic integrals.  We start off by taking $\Omega'_s= \cap_{x\in\mathbb{Q}^d} \Omega_{s,x}$, so that, for every $\omega\in\Omega'_{s}$,
\begin{equation}\label{ar_1}
Y^{s,x}_t-x-\int_{s}^{t}b\left(r,Y^{s,x}_{r}\right)dr=\int_{s}^{t}\alpha\left(r,Y^{s,x}_{r}\right)dW_r+
\int_{s}^{t}\!\int_{U_0}g\left(Y^{s,x}_{r-},r,z\right)\widetilde{N}_p\left(dr,dz\right),\quad t\in\left[s,T\right],\,x\in\mathbb{Q}^d.
\end{equation}
 We can now construct two continuous random fields corresponding to the addends on the right--hand side of the previous equation. Specifically, for the term $(Z^{s,x}_{1})_{x\in\mathbb{R}^d}=\left(\int_{s}^{\cdot}\alpha\left(r,Y^{s,x}_{r}\right)dW_r\right)_{x\in\mathbb{R}^d}$, we combine \eqref{3.6} with the estimate in \eqref{bdg_alpha} to write, for any $p>d$,
\[
\mathbb{E}\Big[\sup_{s\le t\le T}\left|\int_{s}^{t}\left(\alpha\left(r,Y_{r}^{s,y}\right)-\alpha\left(r,Y_{r}^{s,x}\right)\right)dW_r\right|^p\Big]
\le C\left|x-y\right|^p,\quad x,y\in\mathbb{R}^d,
\]  
where $C=C\left(T,p,d,m,K_2,K_p\right)>0$. Hence the  Kolmogorov--Chentsov criterion ensures the existence of a  version of this random field which is continuous in an a.s. event $\Omega''_s$. We set this modification equal to $0$ outside $\Omega_s''$, so that it is continuous in the whole space $\Omega$,  and keep denoting it by $Z_{1}^{s,x}=\int_{s}^{\cdot}\alpha\left(r,Y^{s,x}_{r}\right))dW_r$. Moreover, we can think of $(Z^{s,x}_{1})_x$ as a continuous, $(\mathcal{D}_0,\mathcal{D})-$valued random field defining $Z^{s,x}_{1,t}(\omega)= Z^{s,x}_{1,s}(\omega)$, for every $t\in\left[0,s\right),\,x\in\mathbb{R}^d$ and $\omega\in\Omega.$  \\
As for   $Z^{s,x}_{2}=\int_{s}^{\cdot}\!\int_{U_0}g\left(Y^{s,x}_{r-},r,z\right)\widetilde{N}_p\left(dr,dz\right),\,{x\in\mathbb{R}^d},$ the argument to obtain a continuous, $(\mathcal{D}_0,\mathcal{D})-$valued  modification is the same once we consider the estimate in \eqref{bdg_g}. This construction ensures that, in an a.s. event $\Omega_s\subset \Omega_s'$,  \eqref{ar_1} holds with the right--hand side being the sum of continuous random fields.\\ 
Finally, it is easy to see that $(Z^{s,x}_{3})_{x\in\mathbb{R}^d}=\left(\int_{s}^{\cdot}b\left(r,Y^{s,x}_{r}\right)dr\right)_{x\in\mathbb{R}^d}$ is a continuous random field in $\Omega$ by the continuity of $Y^{s,\cdot}$ and the condition \eqref{lip1} on $b$. 
If we define $Z^{s,x}_{3}(\omega)(t)=0, \,t\in[0,s), \,\omega\in\Omega,\,x\in\mathbb{R}^d$, then $(Z^s_{3,x})_x$ is a  continuous, $(\mathcal{D}_0,\mathcal{D})-$valued random field.
Going back to \eqref{ar_1}, we deduce that
\begin{align}\label{ar_1vera}
	Y^{s,x}_t=x+\int_{s}^{t}b\left(r,Y^{s,x}_{r}\right)dr+\int_{s}^{t}\alpha\left(r,Y^{s,x}_{r}\right)dW_r+
	\int_{s}^{t}\!\int_{U_0}g\left(Y^{s,x}_{r-},r,z\right)\widetilde{N}_p\left(dr,dz\right),
\end{align}
with $t\in\left[0,T\right],\,x\in\mathbb{R}^d,\,\omega\in\Omega_s.$ \nor  We  conclude this subsection with a corollary showing the consequences of the cocycle property \eqref{cocycle} of $(Y^{s,x})_{x\in\mathbb{R}^d}$ (see also Corollary \ref{cor_flow}) on the continuous vector fields $(Z^{s,x}_{i})_{x\in\mathbb{R}^d},\,i=1,2,3.$
\begin{corollary}\label{flow_Z_cor}
	For every $s\in\left[0,T\right)$ and $\eta\in L^p\left(\Omega\right)\cap L^0(\mathcal{F}_s),\,p\ge2,$ there exists an a.s. event $\Omega_{s,\eta}$ such that
	\begin{equation}\label{dependence_Z}\begin{aligned}
				&\left(\int_{s}^{t}\alpha\left(r,Y^{s,\eta}_{r}\right)dW_r\right)\left(\omega\right)=Z_{1,t}^{s,\eta\left(\omega\right)}\left(\omega\right), 
			\qquad 
			 \left(\int_{s}^{t}\!\int_{U_0}g\left(Y^{s,\eta}_{r-},r,z\right)\widetilde{N}_p\left(dr,dz\right)\right)\left(\omega\right)=Z_{2,t}^{s,\eta\left(\omega\right)}\left(\omega\right),
			\\&\int_{s}^{t}b\left(r,Y^{s,\eta}_{r}\left(\omega\right)\right)dr=Z_{3,t}^{s,\eta\left(\omega\right)}\left(\omega\right),
		\end{aligned}
	\end{equation}
for all $t\in\left[s,T\right]$ and $\omega\in\Omega_{s,\eta}$.
	Furthermore, for every $u\in(s, T],$ there  exists an a.s. event  $\Omega_{s,u}$ such that
	\begin{equation}\label{flow_Z}
		Z_{i,u}^{s,x}+Z_{i,t}^{u,Y^{s,x}_u\left(\omega\right)}\left(\omega\right)=Z_{i,t}^{s,x}\left(\omega\right),\quad t\in\left[u,T\right],\, x\in\mathbb{R}^d,\,i=1,2,3,\,\omega\in\Omega_{s,u}.
	\end{equation}
\end{corollary}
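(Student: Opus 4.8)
The plan is to reduce everything to the flow property for $Y$ already obtained in Corollary \ref{cor_flow} and the identification of the solution with random initial data in Proposition \ref{p2}, then transfer those identities to the three continuous random fields $Z_i^{s,x}$ by using the fact that a continuous random field is determined up to indistinguishability by its values on a countable dense set.

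\emph{First identity \eqref{dependence_Z}.} Fix $s$ and $\eta \in L^p(\Omega)\cap L^0(\mathcal F_s)$. By Proposition \ref{p2} there is an a.s. event on which $Y^{s,\eta}_t(\omega) = Y^{s,\eta(\omega)}_t(\omega)$ for all $t\in[s,T]$. Plugging this pathwise identity into the coefficients $b,\alpha,g$ — which are deterministic measurable functions — gives, on that same event, the equality of the integrands, hence of the stochastic integrals $\int_s^\cdot \alpha(r,Y^{s,\eta}_r)\,dW_r$ and $\int_s^\cdot \alpha(r,Y^{s,\eta(\omega)}_r(\omega))\,dW_r$ (and similarly for the $\widetilde N_p$ and $dr$ integrals), up to indistinguishability. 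The point that needs a word of care is that $Z_{1}^{s,\eta(\omega)}(\omega)$ denotes the value at $\omega$ of the \emph{continuous version} of the random field $x\mapsto Z_1^{s,x}$, not literally the stochastic integral evaluated at $x=\eta(\omega)$; but this is handled exactly as in the proof of Proposition \ref{p2}: approximate $\eta$ by simple $\mathcal F_s$-measurable $\eta_n\to\eta$ a.s. and in $L^p$, for which the identity is immediate by the partition argument, then pass to the limit using on one side the continuity in $x$ of $Z_i^{s,\cdot}$ and on the other side the $L^p$-estimates \eqref{bdg_alpha}, \eqref{bdg_g} together with \eqref{3.6} (which control $\mathbb E[\sup_t |Z_i^{s,\eta_n} - Z_i^{s,\eta}|^p]$), extracting an a.s.-convergent subsequence. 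The drift term $Z_3$ is even easier since it is a genuine pathwise integral and $Y^{s,\cdot}$ is continuous.

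\emph{Second identity \eqref{flow_Z}.} Fix $0\le s<u\le T$. On the event $\Omega_{s,u}$ of Corollary \ref{cor_flow} we have $Y^{u,Y^{s,x}_u(\omega)}_t(\omega) = Y^{s,x}_t(\omega)$ for all $t\in[u,T]$ and all $x\in\mathbb R^d$ simultaneously. Write equation \eqref{ar_1vera} twice: once for the solution started at $(s,x)$ and run up to $t\in[u,T]$, and once for the solution started at $(u,Y^{s,x}_u(\omega))$ run up to $t$; subtracting, and using additivity of the stochastic integral over $[s,u]\cup[u,t]$, gives on $\Omega_s\cap\Omega_u\cap\Omega_{s,u}$ the desired relation $Z_{i,u}^{s,x} + Z_{i,t}^{u,Y^{s,x}_u(\omega)}(\omega) = Z_{i,t}^{s,x}(\omega)$ for $x$ in a countable dense set. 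To pass from rational $x$ to all $x\in\mathbb R^d$ one uses continuity in $x$ of each $Z_i^{s,\cdot}$ and of $Z_i^{u,\cdot}$, together with continuity in $x$ of $x\mapsto Y^{s,x}_u$ (so that $Y^{s,x_n}_u\to Y^{s,x}_u$ and hence $Z_{i}^{u,Y^{s,x_n}_u(\omega)}(\omega)\to Z_i^{u,Y^{s,x}_u(\omega)}(\omega)$), exactly as at the end of the proof of Corollary \ref{cor_flow}. Intersecting finitely many a.s. events keeps probability one.

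\emph{Main obstacle.} The only genuinely delicate point is the first one: making rigorous sense of ``plug the random variable $\eta(\omega)$ into the continuous-in-$x$ modification of a stochastic integral'' and justifying the interchange of the a.s. limit $\eta_n\to\eta$ with the stochastic integral. This is where the $L^p$-continuity estimates \eqref{bdg_alpha}--\eqref{bdg_g} and Lemma \ref{l1} do the real work, via a subsequence extraction; everything else is bookkeeping with countable intersections of a.s. events and continuity in $x$.
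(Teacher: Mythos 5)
Your treatment of the first identity \eqref{dependence_Z} is essentially the paper's argument: approximate $\eta$ by simple $\mathcal F_s$-measurable random variables, use the partition structure to get the identity for free, then pass to the limit using continuity in $x$ of the fields $Z_i^{s,\cdot}$ on one side and the $L^p$ estimates of Lemma \ref{l1} (i.e. \eqref{l1_1}, \eqref{bdg_alpha}, \eqref{bdg_g}) with a subsequence extraction on the other; this is exactly how the paper reduces it to the proof of Proposition \ref{p2}. The extension from rational to all $x$ by continuity in the second part also matches the paper.

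However, your argument for \eqref{flow_Z} has a genuine gap. Subtracting the two instances of \eqref{ar_1vera} (for the solution started at $(s,x)$ and at $(u,Y^{s,x}_u(\omega))$) together with the flow property only yields
\begin{equation*}
\sum_{i=1}^{3}\Big(Z_{i,t}^{s,x}-Z_{i,u}^{s,x}-Z_{i,t}^{u,Y^{s,x}_u(\omega)}(\omega)\Big)=0,
\end{equation*}
i.e. the three discrepancies cancel in sum, which is strictly weaker than the claimed identity for each $i$ separately. Additivity of the stochastic integral over $[s,u]\cup[u,t]$ does let you write, say, $Z_{1,t}^{s,x}-Z_{1,u}^{s,x}=\int_u^t\alpha(r,Y^{s,x}_r)\,dW_r$, and the cocycle property \eqref{cocycle} lets you replace $Y^{s,x}_r$ by $Y^{u,Y^{s,x}_u}_r$ in the integrand; but to identify the resulting integral $\int_u^t\alpha(r,Y^{u,Y^{s,x}_u}_r)\,dW_r$ — a stochastic integral with the \emph{random} initial condition $\eta=Y^{s,x}_u$ — with the continuous field evaluated at the random point, $Z_{1,t}^{u,Y^{s,x}_u(\omega)}(\omega)$, you must invoke the first part \eqref{dependence_Z} with $\eta=Y^{s,x}_u\in L^p(\Omega)\cap L^0(\mathcal F_u)$ (integrability follows from the standard moment bound for the solution). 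This application is the decisive step and it never appears in your proposal; the second instance of \eqref{ar_1vera} only identifies the \emph{sum} $\sum_i Z_{i,t}^{u,y}$ at $y=Y^{s,x}_u(\omega)$, not each field with its corresponding integral. The paper's proof is precisely: additivity of the integral, cocycle property inside the integrand, then \eqref{dependence_Z} with $\eta=Y^{s,x}_u$, first for $x\in\mathbb Q^d$ and then for all $x$ by continuity of $Y^{s,\cdot}$, $Z_i^{s,\cdot}$ and $Z_i^{u,\cdot}$. With that substitution in place of the subtraction, your argument becomes correct and coincides with the paper's.
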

\begin{proof}
	The equalities in \eqref{dependence_Z} can be inferred with the same argument as in the proof of Proposition \ref{p2}, recalling the estimates \eqref{l1_1}, \eqref{bdg_alpha} and \eqref{bdg_g} in the proof of Lemma \ref{l1}. 
	
	\noindent As for \eqref{flow_Z}, we focus only on $Z_1$, being the other cases analogous. Fix $0\le s <u\le T$ and compute, by the  cocycle property in \eqref{cocycle} and \eqref{dependence_Z},
	\[
		Z^{s,x}_{1,t}\left(\omega\right)=\left(\int_{s}^{u}\alpha\left(r,Y^{s,x}_{r}\right)dW_r+\int_{u}^{t}\alpha\left(r,Y^{u,Y^{s,x}_u}_{r}\right)dW_r\right)\left(\omega\right)=Z^{s,x}_{1,u}\left(\omega\right)+Z^{u,Y^{s,x}_u\left(\omega\right)}_{1,t}\left(\omega\right),
	\]
	which holds for every $t\in[u,T]$, $x\in \mathbb{Q}^d$ and $\omega\in \Omega_{s,u}$, where $\Omega_{s,u}$ is an a.s.  event  independent of $x$. In fact, the previous equation is valid also for $x\in\mathbb{R}^d\setminus \mathbb{Q}^d$, by the continuity of the  $(\mathcal{D}_0,\mathcal{D})-$valued random fields $Y^{s,\cdot},\,Z^{s,\cdot}_1$ and $Z^{u,\cdot}_1$. Hence we recover \eqref{flow_Z},  completing the proof.
\end{proof}

\subsection{The stochastic continuity in the initial time $s$}\label{st_cont_section}
Let $\mathcal{C}_0=(C(\mathbb{R}^d;\mathcal{D}_0), d_0^{lu})$ be the metric space of continuous functions defined on $\mathbb{R}^d$ with values in $\mathcal{D}_0$, where the distance $d_0^{lu}$ is given by 
\begin{equation}\label{d0_un}
d_0^{lu}\left(f,g\right)=
\sum_{N=1}^\infty\frac{1}{2^N}\frac{\sup_{\left|x\right|\le N}\norm{f\left(x\right)-g\left(x\right)}_0}{1+\sup_{\left|x\right|\le N}\norm{f\left(x\right)-g\left(x\right)}_0}
,\quad f,g\in C\left(\mathbb{R}^d;\mathcal{D}_0\right).
\end{equation}
The space $\mathcal{C}_0$ is complete but not separable. Hence, instead of endowing it with the Borel $\sigma-$algebra associated with $d^{lu}_0$, we consider the $\sigma-$algebra $\mathcal{C}$ generated by the projections $\pi_x\colon C(\mathbb{R}^d;\mathcal{D}_0) \to (\mathcal{D}_0,\mathcal{D}),\,x\in\mathbb{R}^d$, defined by $\pi_x(f)=f(x),\,f\in \mathcal{C}_0$. 
In Appendix \ref{ap_sigma} (see Lemma \ref{l=}), we prove that $\mathcal{C}$ can be read as a Borel $\sigma-$algebra of $C(\mathbb{R}^d;\mathcal{D}_0)$ endowed with the metric $d_S^{lu}$.  Although we are not going to use Lemma \ref{l=} in this paper, it is worth presenting because it is an analogue of the fact that $\mathcal{D}$ coincides with the Borel $\sigma-$algebra generated by $J_1$ in $\mathcal{D}([0,T];\mathbb{R}^d)$, see the discussion around \eqref{pil} 
and references therein.\\  
Arguments similar to those in Subsection \ref{pre_flow} about the space $(\mathcal{D}_0,\mathcal{D})$ show that $d_0^{lu}\colon (\mathcal{C}_0\times \mathcal{C}_0, \mathcal{C}\otimes\mathcal{C})\to \mathbb{R}$ is measurable. 
Indeed, for every $N\in \mathbb{N}$, the map $(f,g)\mapsto \sup_{\left|x\right|\le N}\norm{f(x)-g(x)}_0$ from $(\mathcal{C}_0\times \mathcal{C}_0, \mathcal{C}\otimes\mathcal{C})$ to $\mathbb{R}$ is measurable (by continuity,   the $\sup$ can be computed on a countable dense subset of $\mathbb{R}^d$).

\vskip 1mm
We consider the process $Y=\left(Y_s\right)_{s\in\left[0,T\right]}$, where
\begin{equation} \label{yyy}
Y_s=Y^{s,\cdot},\,0\le s<T, \quad  \text{and}\quad
Y_T\left(\omega\right)\colon\mathbb{R}^d\to\mathcal{D}_0,\;\; \left[Y_T\left(\omega\right)\left(x\right)\right]\left(t\right)=Y^{T,x}_t\left(\omega\right)= x,
\; \omega \in \Omega,
\end{equation}
 $0\le t\le T,\,x\in\mathbb{R}^d$ (see \eqref{ar_1vera}).    
 Since $(Y^{s,x})^{-1}(A)\in \mathcal{F} $ for every $A\in\mathcal{D}$ and $x\in\mathbb{R}^d$, the map $Y_s\colon \Omega\to (\mathcal{C}_0,\mathcal{C})$ is a random variable for all $s\in[0,T]$. 
This fact coupled with the above discussion shows that
\begin{equation}\label{Bic}
	\omega\mapsto d_0^{lu}\left(Y_s\left(\omega\right),Y_t\left(\omega\right)\right) \text{ is measurable for every $s,t\in[0,T]$}.
\end{equation} 
As in  \cite{AIHP18}, we want to apply \cite[Theorem $4.2$]{bez} to show the càdlàg property of the $(\mathcal{C}_0,\mathcal{C})-$valued process $Y$. The aforementioned theorem requires the stochastic continuity of $Y$, which is then the aim of this {subsection}. 

Before presenting our result (see Lemma \ref{st_cont}), we need some preparation. An important tool that we are going to use is \cite[Theorem $1.1$]{imk} (see also \cite[Theorem $3.1$]{AIHP18}), which in turn is based on a generalized \emph{Garsia--Rodemich--Rumsey} type lemma (see \cite{ai}). For the reader's convenience we report its statement, where we denote by $\log^+$ the positive part of the logarithm, namely $\log^+x=\log x\vee0,\,x\in\mathbb{R}_+$.
\begin{theorem}[\cite{imk}]\label{imk_t}
	Let $\left(M,\rho\right)$ be a separable metric space and $\phi\colon\Omega\times \mathbb{R}^d\to M$ be an $\mathcal{F}\otimes \mathcal{B}\left(\mathbb{R}^d\right)/\mathcal{B}(M)$--measurable map such that $\phi\left(\omega,\cdot\right)$ is continuous  for every $\omega\in\Omega$. Suppose that there are $p>2d$  and  $c>0$ such that
	\[
	\mathbb{E}\left[\left(\rho\left(\phi\left(\cdot,x\right),\phi\left(\cdot,y\right)\right)\right)^p\right]\le c\left|x-y\right|^p,\quad x,y\in\mathbb{R}^d.
	\]
	For any $\alpha>1$, define the map $f_\alpha\left(x\right)=([x^d\left(\log ^+x\right)^{\alpha}] \vee \,1)^{-1}\!,\,x\in\mathbb{R}_+.$ Then the function $Z\colon\Omega\to\left[0,\infty\right]$ given by 
	\begin{equation}\label{Z_imk}
		Z\left(\omega\right)=\left(\int_{{\mathbb R}^d}\int_{{\mathbb R}^d}\left(\frac{\rho\left(\phi\left(\omega,x\right),\phi\left(\omega,y\right)\right)}{\left|x-y\right|}\right)^pf_{\alpha}\left(\left|x\right|\right)f_{\alpha}\left(\left|y\right|\right)dx\,dy\right)^{\frac{1}{p}},\quad \omega\in\Omega,
	\end{equation}
	is a $p-$integrable random variable satisfying
	\begin{align}\label{est_imk}
    \rho\left(\phi\left(\omega,x\right),\phi\left(\omega,y\right)\right)\le c_0Z\left(\omega\right)\left|x-y\right|^{1-\frac{2d}{p}}\left(\left[\left(\left|x\right|\vee\left|y\right|\right)^\frac{2d}{p}\left(\log^+\left(\left|x\right|\vee\left|y\right|\right)\right)^{\frac{2\alpha}{p}}\right]\vee1\right),
	\end{align}
for all $x,y\in\mathbb{R}^d,\omega\in\Omega,$  	where $c_0$ is a positive constant depending on $\alpha,p,d$.
\end{theorem}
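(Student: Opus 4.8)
\medskip
\noindent The plan is to deduce the theorem from the weighted, multidimensional Garsia--Rodemich--Rumsey (GRR) inequality in \cite{ai}, applied with the Young function $\Psi(u)=u^{p}$ (so that $\Psi^{-1}(v)=v^{1/p}$), the gauge $p(u)=u$, and the reference measure $\mu(dx)=f_{\alpha}(|x|)\,dx$ on $\mathbb{R}^{d}$. I will first establish the $p$-integrability of $Z$: by Tonelli's theorem and the moment assumption on $\phi$,
\[
\mathbb{E}\big[Z^{p}\big]=\int_{\mathbb{R}^{d}}\!\int_{\mathbb{R}^{d}}\frac{\mathbb{E}\big[\rho(\phi(\cdot,x),\phi(\cdot,y))^{p}\big]}{|x-y|^{p}}\,f_{\alpha}(|x|)f_{\alpha}(|y|)\,dx\,dy\ \le\ c\Big(\int_{\mathbb{R}^{d}}f_{\alpha}(|x|)\,dx\Big)^{2},
\]
and, passing to polar coordinates, $\int_{\mathbb{R}^{d}}f_{\alpha}(|x|)\,dx$ equals a dimensional constant times $\int_{0}^{\infty}\frac{r^{d-1}}{(r^{d}(\log^{+}r)^{\alpha})\vee 1}\,dr$, whose integrand is $r^{d-1}$ near $r=0$ (integrable) and comparable to $r^{-1}(\log r)^{-\alpha}$ for large $r$ (integrable exactly because $\alpha>1$). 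Hence $\mathbb{E}[Z^{p}]<\infty$, and in particular $Z(\omega)<\infty$ for $\mathbb{P}$-a.e.\ $\omega$.

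\medskip
\noindent For the pointwise bound I would first record the geometric input. Since $t\mapsto t^{d}(\log^{+}t)^{\alpha}$ is nondecreasing, $f_{\alpha}(|\cdot|)$ is radially nonincreasing, and an elementary doubling estimate yields a constant $\kappa_{d,\alpha}>0$ with $\mu(B(x,r))\ge\kappa_{d,\alpha}\,r^{d}\big[\big((1\vee|x|)^{d}(\log^{+}(1\vee|x|))^{\alpha}\big)\vee 1\big]^{-1}$ for $0<r\le 1\vee|x|$, supplemented by the crude bound $\mu(B(x,r))\ge\mu(B(x,1\vee|x|))\ge c_{d,\alpha}(1+\log^{+}|x|)^{-\alpha}$ for all $r>0$. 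Fix $\omega$ with $Z(\omega)<\infty$ and set $B=Z(\omega)^{p}$, so the double integral defining $Z(\omega)^{p}$ equals $B$. Applying the GRR inequality of \cite{ai} on a ball $B(0,R)$ with $R$ large enough that $B(x,|x-y|)$ and $B(y,|x-y|)$ lie inside it (the resulting bound does not depend on $R$), one gets, for $\mu\otimes\mu$-a.e.\ $(x,y)$,
\[
\rho\big(\phi(\omega,x),\phi(\omega,y)\big)\ \le\ C_{d}\int_{0}^{|x-y|}\Big(\frac{B}{\mu(B(x,u))\,\mu(B(y,u))}\Big)^{1/p}du .
\]
Substituting the ball estimates for $u\le 1\vee|x|\vee|y|$ and the crude bound for $u$ beyond that (which can only occur when $|x-y|>1\vee|x|\vee|y|$), and using $|x-y|\le 2(1\vee|x|\vee|y|)$, the right-hand side is bounded by $C_{d}'\,B^{1/p}\big[\big((1\vee|x|\vee|y|)^{d}(\log^{+}(1\vee|x|\vee|y|))^{\alpha}\big)\vee 1\big]^{2/p}\int_{0}^{|x-y|}u^{-2d/p}\,du$, and $\int_{0}^{|x-y|}u^{-2d/p}\,du=\tfrac{p}{p-2d}\,|x-y|^{1-2d/p}$ is finite precisely because $p>2d$. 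Since $B^{1/p}=Z(\omega)$, this is \eqref{est_imk} after renaming the constant; finally, $\mu$ is equivalent to Lebesgue measure and $\phi(\omega,\cdot)$ is continuous, so both sides of \eqref{est_imk} are continuous in $(x,y)$ and the $\mu\otimes\mu$-a.e.\ inequality holds for every $(x,y)$.

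\medskip
\noindent The main obstacle will be the bookkeeping in the last paragraph: the ball-volume lower bounds must carry precisely the right powers of $1\vee|x|$ and $\log^{+}(1\vee|x|)$; they must combine, after integration, into the single factor $\big[(|x|\vee|y|)^{2d/p}(\log^{+}(|x|\vee|y|))^{2\alpha/p}\big]\vee 1$; and one must treat separately the regime $|x-y|>1\vee|x|\vee|y|$ (where the large-$u$ part of the integral contributes, and the bound is essentially sharp because $|x-y|\simeq|x|\vee|y|$ there) and the regime of $x,y$ near the origin (where $f_{\alpha}\equiv 1$, no logarithm survives, and the estimate reduces to $c_{0}Z(\omega)|x-y|^{1-2d/p}$). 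The two hypotheses then play transparent and disjoint roles: $\alpha>1$ is what makes $\mu(\mathbb{R}^{d})<\infty$, hence $Z$ $p$-integrable; $p>2d$ is what makes the endpoint integral $\int_{0}^{|x-y|}u^{-2d/p}\,du$ converge, hence produces the Hölder-type factor $|x-y|^{1-2d/p}$.
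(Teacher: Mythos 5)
The paper itself gives no proof of this statement: it is quoted from \cite{imk}, whose argument (as the paper notes) rests on the weighted Garsia--Rodemich--Rumsey lemma of \cite{ai}, and your proposal reconstructs exactly that route --- $\Psi(u)=u^{p}$, reference measure $f_{\alpha}(|x|)\,dx$ (finite precisely because $\alpha>1$), lower bounds on $\mu(B(x,u))$, and the integral $\int_{0}^{|x-y|}u^{-2d/p}\,du$ giving the exponent $1-2d/p$ since $p>2d$ --- so it is essentially the same approach as the cited source and is sound. Only one small slip: the ``crude bound'' $\mu(B(x,r))\ge\mu(B(x,1\vee|x|))$ cannot hold for all $r>0$ (for $r<1\vee|x|$ monotonicity runs the other way), but since you invoke it only for radii $u>1\vee|x|\vee|y|$, where it is valid, the argument is unaffected.
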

We wish to apply the previous result to  $(\mathcal{D}_0,\mathcal{D})-$valued, continuous random fields. Although $\mathcal{D}_0$ is not separable and $\mathcal{D}$ is not the Borel $\sigma-$algebra generated by $\norm{\cdot}_0$, this can be done thanks to the following proposition.
\begin{proposition}\label{not_sep}
	 Theorem \ref{imk_t} holds substituting $(\mathcal{D}_0, \| \cdot \|_0,\mathcal{D})$ for $(M,  \rho,  \mathcal{B}(M))$.
	\end{proposition}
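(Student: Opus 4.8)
The plan is to reduce Proposition~\ref{not_sep} to Theorem~\ref{imk_t} by passing to a separable subspace that carries all the relevant information. The key observation is that, although $(\mathcal{D}_0,\norm{\cdot}_0)$ is not separable, any \emph{single} continuous random field $\phi\colon\Omega\times\mathbb{R}^d\to\mathcal{D}_0$ has, for each fixed $\omega$, a separable range: indeed $\phi(\omega,\cdot)$ is a continuous image of $\mathbb{R}^d$, hence its range $\phi(\omega,\mathbb{R}^d)\subseteq\mathcal{D}_0$ is separable in $\norm{\cdot}_0$. Taking the closed linear span (or just the closure of the union over a sequence exhausting a countable dense subset, together with some care to make it $\omega$-independent) one would like to isolate a closed, separable subspace $S\subseteq\mathcal{D}_0$ with $\phi(\omega,x)\in S$ for all $\omega,x$. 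However, this naive union over $\omega$ need not be separable, so instead I would argue directly on the estimate.

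First I would note that the quantity $Z(\omega)$ in \eqref{Z_imk} and the conclusion \eqref{est_imk} only involve the real-valued quantities $\norm{\phi(\omega,x)-\phi(\omega,y)}_0$, which are $\mathcal{F}$-measurable by the discussion around \eqref{pil} (the difference map $(\mathcal{D}_0\times\mathcal{D}_0,\mathcal{D}\otimes\mathcal{D})\to(\mathcal{D}_0,\mathcal{D})$ is measurable and $\norm{\cdot}_0\colon(\mathcal{D}_0,\mathcal{D})\to\mathbb{R}$ is measurable). So the function $Z$ is well defined and measurable regardless of separability; the only place separability is genuinely used in the proof of Theorem~\ref{imk_t} is to guarantee that the relevant $\sup$-type expressions can be computed along countable dense sets and to run the Garsia--Rodemich--Rumsey chaining argument. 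The chaining in \cite{imk} proceeds by comparing $\phi(\omega,x)$ and $\phi(\omega,y)$ through a dyadic sequence of intermediate points $x_n\to x$, $y_n\to y$ in $\mathbb{R}^d$, using only the triangle inequality for $\norm{\cdot}_0$ and the continuity of $\phi(\omega,\cdot)$; the metric $\norm{\cdot}_0$ need not come from a separable space for this to work. Thus the plan is: (1) fix the continuous random field $\phi$; (2) for each $\omega$, restrict attention to the separable metric space $S_\omega=\overline{\phi(\omega,\mathbb{Q}^d)}^{\,\norm{\cdot}_0}$, which contains $\phi(\omega,\mathbb{R}^d)$ by continuity; (3) apply Theorem~\ref{imk_t} pathwise on $(S_\omega,\norm{\cdot}_0)$ — or, more cleanly, observe that the proof of Theorem~\ref{imk_t} in \cite{imk} only uses separability of $M$ through this kind of pathwise restriction, so it goes through verbatim once we check the measurability of $Z$ (already done).

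Concretely, I would either (a) quote the proof of \cite[Theorem~1.1]{imk} and remark line-by-line that separability of $M$ is invoked only to ensure measurability of the double integral defining $Z$ and to perform dyadic chaining, both of which are available in our setting via the $\sigma$-algebra $\mathcal{D}$ and via pathwise restriction to $S_\omega$; or (b) give a short self-contained reduction: define $\widehat{M}$ to be any fixed separable closed subspace of $\mathcal{D}_0$ containing a fixed countable dense subset chosen so that $\phi(\omega,\mathbb{Q}^d)\subseteq\widehat{M}$ for a.e.\ $\omega$ — but this requires the range $\bigcup_\omega\phi(\omega,\mathbb{Q}^d)$ to be separable, which is not automatic, so (a) is the safer route. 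I would adopt (a). The main obstacle is therefore a bookkeeping one: verifying carefully that every measurability statement in the proof of Theorem~\ref{imk_t} survives when $(M,\mathcal{B}(M))$ is replaced by $(\mathcal{D}_0,\mathcal{D})$ — in particular that $(\omega,x,y)\mapsto\norm{\phi(\omega,x)-\phi(\omega,y)}_0$ is jointly measurable (it is, since $\phi$ is $\mathcal{F}\otimes\mathcal{B}(\mathbb{R}^d)/\mathcal{D}$-measurable and the difference and norm maps are $\mathcal{D}$-measurable), so that the integrand in \eqref{Z_imk} is jointly measurable and Tonelli applies to give $p$-integrability of $Z$ from the hypothesis $\mathbb{E}\,\norm{\phi(\cdot,x)-\phi(\cdot,y)}_0^p\le c|x-y|^p$. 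Once this is in place, the chaining estimate \eqref{est_imk} is purely deterministic in $\omega$ and uses nothing beyond the triangle inequality for $\norm{\cdot}_0$ and continuity of $\phi(\omega,\cdot)$, so it transfers with no change.
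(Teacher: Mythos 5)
Your adopted route (a) is essentially the paper's own proof: the paper likewise observes that the joint measurability of $(\omega,x,y)\mapsto\norm{\phi(\omega,x)-\phi(\omega,y)}_0$ follows from the $\mathcal{F}\otimes\mathcal{B}(\mathbb{R}^d)/\mathcal{D}$-measurability of $\phi$ together with the measurability of the difference and norm maps on $(\mathcal{D}_0,\mathcal{D})$, and then notes that separability in \cite{imk} is used only to secure the measurability of $Z$ in \eqref{Z_imk}, which here is supplied directly by Tonelli's theorem, the remainder of the argument being pathwise. Your preliminary digression on the subspaces $S_\omega$ is unnecessary but harmless, and you correctly discard option (b).
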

\begin{proof} 
	We note that the map $\omega\mapsto \norm{\phi(\omega,x)-\phi(\omega,y)}_0$ is measurable for every $x,\,y\in\mathbb{R}^d$ (see Subsection \ref{pre_flow}, where this fact is proved for $Y^{s,\cdot}$). As a consequence, since $\phi \colon\Omega\times \mathbb{R}^d\to \mathcal{D}_0$ is $\mathcal{F}\otimes \mathcal{B}(\mathbb{R}^d)/\mathcal{D}-$measurable
	 and $ \phi (\omega, \cdot)$ is continuous for each $\omega \in \Omega$
	by hypothesis, the function $K\colon\Omega\times\mathbb{R}^d\times\mathbb{R}^d\to \mathbb{R}$ defined by
	\begin{equation*}
		K\left(\omega, x,y\right) =
		\norm{\phi\left(\omega,x\right)-\phi\left(\omega,y\right)}_0 ,\quad x,y\in\mathbb{R}^d,\,\omega\in\Omega
	\end{equation*}
	is jointly measurable. Looking now at the proof of  \cite[Theorem 1.1]{imk} and the results cited therein, it turns out that the separability of the arrival space $(M,\rho)$ is only used  to ensure the measurability of the function $Z$ in \eqref{Z_imk}. When  $M=(\mathcal{D}_0,\mathcal{D})$,  this property can be inferred directly. Indeed, for any $p>0$ and $\alpha>1$, 
	\begin{equation*}
		Z(\omega) =  \left(\int_{{\mathbb R}^d} \int_{{\mathbb R}^d} \left(\frac{  {K\left(\omega,x,y\right)}} {\left|x-y\right|}\right)^pf_\alpha\left(\left|x\right|\right)f_\alpha\left(\left|y\right|\right)dx\,dy    \right)^{1/p}
,\quad \omega\in\Omega;
	\end{equation*}
	since $K$ is non--negative, as well as $f_\alpha$ and $\left|x-y\right|^{-1},\,x\neq y,$ the desired measurability for $Z$ is given by {Tonelli's theorem}.
\end{proof}
Fix $\alpha=2$ and denote by $f=f_\alpha$. 
Notice that, for any $\gamma>0$, $(\log(x))^\gamma=\text{O}(x)$ as $x\to \infty$. Hence, combining  Proposition \ref{not_sep} with \eqref{3.6} in Lemma \ref{l1} we obtain the next corollary. 
\begin{corollary}\label{cor5}
	For every $p>2d$ and $ s\in\left[0,T\right)$, the $p-$integrable random variable $U_{s,p}$ defined by
	\[
		U_{s,p}(\omega)=
		\left(\int_{{\mathbb R}^d}\int_{{\mathbb R}^d}\left(\frac{\norm{Y^{s,x}(\omega)-Y^{s,y}(\omega)}_0}{\left|x-y\right|}\right)^pf_{}\left(\left|x\right|\right)f_{}\left(\left|y\right|\right)dx\,dy\right)^{\frac{1}{p}},\quad \omega\in\Omega,
	\] 
	is such that
	\begin{align}\label{est23}
		\sup_{0\le t \le T } \left|Y^{s,x}_t\left(\omega\right)   -   Y^{s,y}_t \left(\omega\right)\right|   \le c_1 \, U_{s,p}\left(\omega\right)   \left|x-y\right|^{1- 2d/p}\left[ \left(\left|x\right| \vee \left|y\right|\right)^{\frac{2d + 1}{p}} \vee 1\right],\quad 
		x, y \in {\mathbb R}^d,\,\omega\in\Omega,
	\end{align}
	where $c_1=c_1\left(d,p\right)>0$. Furthermore,
	\begin{equation} \label{ri4}
		\sup_{s \in \left[0,T\right)} \mathbb{E} \left[U_{s,p}^p\right] \le 4^{p-1}e^{cT}c_2,
	\end{equation}
	where $c>0$ is the same constant as in \eqref{3.6} and $c_2=(\int_{{\mathbb R}^d}f\left(\left|x\right|\right)dx)^2<\infty$.
\end{corollary}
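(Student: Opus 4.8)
The plan is to read off both assertions from a single application of Proposition \ref{not_sep} to the $(\mathcal{D}_0,\mathcal{D})$--valued, continuous random field $\phi(\omega,x):=Y^{s,x}(\omega)$, the quantitative input being \eqref{3.6} of Lemma \ref{l1}. First I would verify the hypotheses of Theorem \ref{imk_t} (in the form of Proposition \ref{not_sep}) for this $\phi$: the joint measurability of $(\omega,x)\mapsto Y^{s,x}(\omega)$ and the continuity of $x\mapsto Y^{s,x}(\omega)$ for every $\omega$ were established in Subsection \ref{pre_flow} (see \eqref{2'} and the line right after it, where we fixed the everywhere--continuous version). For the moment estimate, note that the convention $Y^{s,x}_r=x$ for $r\le s$ gives $\|Y^{s,x}(\omega)-Y^{s,y}(\omega)\|_0=\sup_{s\le t\le T}|Y^{s,x}_t(\omega)-Y^{s,y}_t(\omega)|$, so \eqref{3.6} yields, for the fixed $p>2d$,
\[
\mathbb{E}\big[\|Y^{s,x}-Y^{s,y}\|_0^p\big]\le 4^{p-1}e^{c(T-s)}|x-y|^p\le 4^{p-1}e^{cT}|x-y|^p,\qquad x,y\in\mathbb{R}^d,
\]
with $c$ the constant of \eqref{3.6}. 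Hence Proposition \ref{not_sep} applies (with $\alpha=2$), and the random variable $Z$ of \eqref{Z_imk} is exactly $U_{s,p}$; in particular $U_{s,p}$ is $p$--integrable.

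The estimate \eqref{est_imk} then reads $\sup_{0\le t\le T}|Y^{s,x}_t(\omega)-Y^{s,y}_t(\omega)|=\|Y^{s,x}(\omega)-Y^{s,y}(\omega)\|_0\le c_0\,U_{s,p}(\omega)\,|x-y|^{1-2d/p}\big(\big[R^{2d/p}(\log^+R)^{4/p}\big]\vee1\big)$, with $R:=|x|\vee|y|$, and to reach \eqref{est23} I would absorb the logarithmic factor. Since $r\mapsto(\log^+r)^{4/p}r^{-1/p}$ is continuous on $[1,\infty)$, vanishes at $r=1$ and tends to $0$ at infinity, it is bounded by some $M=M(p,d)>0$; therefore $R^{2d/p}(\log^+R)^{4/p}\le M\,R^{(2d+1)/p}$ for $R\ge1$ and the expression vanishes for $R<1$, so $[R^{2d/p}(\log^+R)^{4/p}]\vee1\le(M\vee1)[R^{(2d+1)/p}\vee1]$, which is \eqref{est23} with $c_1=c_0(M\vee1)$.

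Finally, for \eqref{ri4}, Tonelli's theorem together with the moment bound displayed above gives
\[
\mathbb{E}\big[U_{s,p}^p\big]=\int_{\mathbb{R}^d}\!\int_{\mathbb{R}^d}\frac{\mathbb{E}\big[\|Y^{s,x}-Y^{s,y}\|_0^p\big]}{|x-y|^p}\,f(|x|)f(|y|)\,dx\,dy\le 4^{p-1}e^{cT}\Big(\int_{\mathbb{R}^d}f(|x|)\,dx\Big)^2=4^{p-1}e^{cT}c_2,
\]
uniformly in $s\in[0,T)$, and it remains to check $c_2<\infty$: in polar coordinates $\int_{\mathbb{R}^d}f(|x|)\,dx$ equals a dimensional constant times $\int_0^\infty f(r)\,r^{d-1}\,dr$, which converges because $f\equiv1$ near $0$ (so $r^{d-1}$ is integrable there) while $f(r)\,r^{d-1}=r^{-1}(\log r)^{-2}$ for large $r$ (integrable at infinity, e.g.\ via $u=\log r$). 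The whole argument is essentially bookkeeping on top of Proposition \ref{not_sep} and Lemma \ref{l1}; the only steps requiring a moment's care — and they are not genuine obstacles — are the two elementary analytic facts just used, namely dominating $R^{2d/p}(\log^+R)^{4/p}$ by a constant times $R^{(2d+1)/p}\vee1$ and the integrability yielding $c_2<\infty$.
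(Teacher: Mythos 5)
Your proposal is correct and follows exactly the route the paper indicates for Corollary \ref{cor5}: apply Proposition \ref{not_sep} (with $\alpha=2$) to the continuous $(\mathcal{D}_0,\mathcal{D})$--valued field $Y^{s,\cdot}$, feed in the moment bound \eqref{3.6}, absorb the logarithmic factor into the exponent $(2d+1)/p$, and obtain \eqref{ri4} by Tonelli together with the integrability of $f$. The bookkeeping (identifying $Z$ of \eqref{Z_imk} with $U_{s,p}$, using the convention $Y^{s,x}_r=x$ for $r\le s$, and the uniform bound $e^{c(T-s)}\le e^{cT}$) matches the paper's intended argument.
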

We remark that results similar to Corollary \ref{cor5} hold with the continuous random fields $Z_i^{s,\cdot},\,i=1,2,3,$ instead of $Y^{s,\cdot}$. We are now ready  to prove the main result of this {\color{black}subsection}. 
\begin{lemma}\label{st_cont}
	The $(\mathcal{C}_0,\mathcal{C})-$valued process $Y=\left(Y_s\right)_{s\in[0,T]}$ considered in  \eqref{yyy}  is continuous in probability.
\end{lemma}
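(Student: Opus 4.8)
The plan is to exploit the explicit form of the distance $d_0^{lu}$ in \eqref{d0_un} to reduce the claim to a uniform-in-$x$ stochastic continuity statement, to absorb the supremum over $x$ through the Garsia--Rodemich--Rumsey estimate of Proposition \ref{not_sep} (exactly as in Corollary \ref{cor5}), and finally to reduce matters to finitely many pointwise-in-$x$ estimates, which I would settle via the flow property. \emph{Reduction.} Fix $s\in[0,T]$ and, for $r\in[0,T]$ and $N\in\mathbb{N}$, set $\sigma_N(r):=\sup_{|x|\le N}\norm{Y^{r,x}-Y^{s,x}}_0$; this is measurable by the argument around \eqref{Bic}. Since $t\mapsto t/(1+t)$ is bounded by $1$, \eqref{d0_un} gives $d_0^{lu}(Y_r,Y_s)\le 2^{-N_0}+\sum_{N=1}^{N_0}2^{-N}\big(\sigma_N(r)\wedge 1\big)$ for every $N_0\in\mathbb{N}$, so it suffices to prove that, for each fixed $N$, $\sigma_N(r)\to 0$ in probability as $r\to s$; bounded convergence and a dominated convergence in the series then yield $\mathbb{E}[d_0^{lu}(Y_r,Y_s)]\to 0$, hence convergence in probability.

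\emph{Uniformity in $x$.} Fix $N$ and $p>2d$. Writing $(Y^{r,x}-Y^{s,x})-(Y^{r,y}-Y^{s,y})=(Y^{r,x}-Y^{r,y})-(Y^{s,x}-Y^{s,y})$ and applying \eqref{3.6} twice, one gets $\mathbb{E}\big[\norm{(Y^{r,x}-Y^{s,x})-(Y^{r,y}-Y^{s,y})}_0^p\big]\le C|x-y|^p$ with a constant $C$ \emph{independent of $r,s$}. The map $x\mapsto Y^{r,x}-Y^{s,x}$ is continuous from $\mathbb{R}^d$ into $\mathcal{D}_0$ (difference of the two continuous $(\mathcal{D}_0,\mathcal{D})$--valued random fields built in Subsection \ref{pre_flow}) and $\mathcal{F}\otimes\mathcal{B}(\mathbb{R}^d)/\mathcal{D}$--measurable, so Proposition \ref{not_sep} applies and provides, for each pair $(r,s)$, a $p$--integrable random variable $Z_{r,s}$ with $\sup_{r,s\in[0,T]}\mathbb{E}[Z_{r,s}^p]<\infty$ and $\norm{(Y^{r,x}-Y^{s,x})-(Y^{r,y}-Y^{s,y})}_0\le C_N\,Z_{r,s}\,|x-y|^{1-2d/p}$ for all $|x|,|y|\le N$. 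Covering $\{|x|\le N\}$ by a finite $\delta$--net $x_1,\dots,x_M$, we obtain $\sigma_N(r)\le\max_{1\le i\le M}\norm{Y^{r,x_i}-Y^{s,x_i}}_0+C_N Z_{r,s}\delta^{1-2d/p}$; by Chebyshev's inequality and the uniform moment bound, $\mathbb{P}\big(C_N Z_{r,s}\delta^{1-2d/p}>\varepsilon/2\big)\le C_N'\delta^{p-2d}\varepsilon^{-p}$ for all $r,s$, which is made arbitrarily small by taking $\delta$ small. Hence it only remains to show that, for each fixed $x$, $\norm{Y^{r,x}-Y^{s,x}}_0\to 0$ in probability as $r\to s$.

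\emph{The pointwise estimate.} Fix $x\in\mathbb{R}^d$, put $\underline s:=r\wedge s$, $\overline s:=r\vee s$, and recall that $Y^{u,x}_t=x$ for $t\le u$. The two trajectories coincide on $[0,\underline s]$; on $[\underline s,\overline s]$ one of them is constantly $x$ while the other is $Y^{\underline s,x}$, so their difference equals $|Y^{\underline s,x}_t-x|$; on $[\overline s,T]$, by the cocycle property \eqref{cocycle} together with Proposition \ref{p2}, it equals $\big|Y^{\overline s,\,Y^{\underline s,x}_{\overline s}}_t-Y^{\overline s,x}_t\big|$ almost surely. Consequently $\norm{Y^{r,x}-Y^{s,x}}_0\le\sup_{\underline s\le t\le\overline s}|Y^{\underline s,x}_t-x|+\sup_{\overline s\le t\le T}\big|Y^{\overline s,\,Y^{\underline s,x}_{\overline s}}_t-Y^{\overline s,x}_t\big|$. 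For the first term, from \eqref{SDEgeneral} with $f\equiv0$, the Burkholder--Davis--Gundy inequality and \cite[Theorem 2.11]{KU04} used as in \eqref{bdg_alpha}--\eqref{bdg_g}, the linear growth \eqref{lg1}, and the a priori bound $\mathbb{E}[\sup_{\underline s\le t\le T}|Y^{\underline s,x}_t|^p]\le C(1+|x|^p)$ (uniform in the initial time), one gets $\mathbb{E}\big[\sup_{\underline s\le t\le\overline s}|Y^{\underline s,x}_t-x|^p\big]\le C_x(\overline s-\underline s)$ whenever $\overline s-\underline s\le 1$; for the second term, Lemma \ref{l1} applied at time $\overline s$ with the $\mathcal{F}_{\overline s}$--measurable initial data $Y^{\underline s,x}_{\overline s}$ and $x$ bounds it by $4^{p-1}e^{cT}\,\mathbb{E}\big|Y^{\underline s,x}_{\overline s}-x\big|^p\le 4^{p-1}e^{cT}C_x(\overline s-\underline s)$. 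Thus $\mathbb{E}\big[\norm{Y^{r,x}-Y^{s,x}}_0^p\big]\le C_x|r-s|\to 0$, which completes the argument.

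I expect the main obstacle to be the dependence on the initial time: it is what forces the decomposition at $\underline s=r\wedge s$ and the use of the flow property to move the initial time in the pointwise step, and it is also what makes it essential, in the uniformity step, that the constant in \eqref{3.6} --- and hence the moment bound for the Garsia--Rodemich--Rumsey variables $Z_{r,s}$ --- be uniform in $(r,s)$; without such uniformity the ``oscillation in $x$'' term could not be controlled uniformly as $r\to s$. Everything else reduces to routine moment estimates already available from Subsection \ref{pre_flow} and from the proof of Lemma \ref{l1}.
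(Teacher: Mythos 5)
Your proof is correct, and it reorganizes the argument in a genuinely different way from the paper, even though the key ingredients coincide (the cocycle/flow property to transfer the initial time, Lemma \ref{l1}, and the Garsia--Rodemich--Rumsey machinery of Proposition \ref{not_sep}). The paper proves $L^1$ convergence of $\sup_{|x|\le N}\norm{Y^{s,x}-Y^{s_n,x}}_0$ directly by splitting time into $[0,s]\cup[s,s_n]\cup[s_n,T]$ and estimating each stochastic-integral field uniformly in $|x|\le N$ (see \eqref{second_pr}--\eqref{ku_37}), then on $[s_n,T]$ applying the flow property together with the pointwise GRR bound \eqref{est23} evaluated at the random spatial point $Y^{s,x}_{s_n}$ and closing with Cauchy--Schwarz and the uniform bound \eqref{ri4}. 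You instead decouple the supremum over $x$ once and for all: the observation that the constant in \eqref{3.6} is independent of the initial time gives a spatial-equicontinuity estimate for the difference field $x\mapsto Y^{r,x}-Y^{s,x}$ with $\sup_{r,s}\mathbb{E}[Z_{r,s}^p]<\infty$ (equivalently, one could bound the oscillation by $c(U_{r,p}+U_{s,p})$ via Corollary \ref{cor5} applied twice, which sidesteps any worry about re-verifying the hypotheses of Proposition \ref{not_sep} for the difference field), so a finite $\delta$-net plus Chebyshev reduces matters to fixed $x$; there, the decomposition at $\underline s=r\wedge s$, $\overline s=r\vee s$, the small-time increment bound and Lemma \ref{l1} with the random initial datum $Y^{\underline s,x}_{\overline s}$ give the clean quantitative bound $\mathbb{E}\norm{Y^{r,x}-Y^{s,x}}_0^p\le C_x|r-s|$, and left and right limits are handled in one stroke. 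What your shortcut does not provide, and what the paper's heavier route buys, is the battery of explicit uniform-in-$x$ estimates on the integral fields over short intervals (\eqref{4.4ku}, \eqref{vabedai}, \eqref{flo2}): these are reused almost verbatim in Corollary \ref{st_cont_Z} and, more importantly, in the proof of the càdlàg property in Theorem \ref{cadlag}, so within the architecture of the paper the extra work in this lemma is not wasted. As a proof of Lemma \ref{st_cont} alone, your argument is complete; only cosmetic care is needed at the boundary case $\overline s=T$ (where Lemma \ref{l1} is stated for $s<T$ but the required bound is trivial since $Y^{T,y}_\cdot\equiv y$).
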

\begin{proof}
Fix $s\in\left[0,T\right]$ and  take a sequence $\left(s_n\right)_n\subset\left[0,T\right]$ such that $s_n\to s$ as $n\to\infty$. We want to show that
\begin{equation}\label{cont_pr}
	\mathbb{E}\Big[\sup_{\left|x\right|\le N}\norm{Y^{s,x}-Y^{s_n,x}}_0\Big]=\mathbb{E}\Big[\sup_{\left|x\right|\le N}\sup_{0\le t\le T}\left|Y^{s,x}_t-Y^{s_n,x}_t\right|\Big]\underset{n\to\infty}{\longrightarrow} 0,\quad N\ge1.
\end{equation}
In effect, this is a sufficient condition to obtain the stochastic continuity of $Y$ in $s$, as the next argument explains. By definition of continuity in probability, we aim to prove that $\lim_{n\to\infty}\mathbb{P}\left(d^{lu}_0\left(Y_{s_n},Y_s\right)>\epsilon\right)=0$ for any $\epsilon>0$, which is equivalent to \[
\lim_{n\to \infty}\mathbb{E}\left[d^{lu}_0\left(Y_{s_n},Y_s\right)\left(1+d^{lu}_0\left(Y_{s_n},Y_s\right)\right)^{-1}\right]=0.
\]
Therefore, it is enough to show that
\begin{equation}\label{dom}
	\lim_{n\to \infty}\mathbb{E}\left[d^{lu}_0\left(Y_{s_n},Y_s\right)\right]=\lim_{n\to\infty}\sum_{N=1}^\infty \frac{1}{2^N}\mathbb{E}\left[\frac{\sup_{\left|x\right|\le N}\norm{Y^{s,x}-Y^{s_n,x}}_0}{1+\sup_{\left|x\right|\le N}\norm{Y^{s,x}-Y^{s_n,x}}_0}\right]=0.
\end{equation}
An application of the dominated convergence theorem  --endowing $\mathbb{N}$ with the canonical counting measure-- gives \eqref{dom} knowing \eqref{cont_pr}.
 
Fix $N\ge1$. We start off by proving the right stochastic continuity in a point $s\in\left[0,T\right)$. Take a sequence $\left(s_n\right)_n\subset\left(s,T\right)$ such that $s_n\downarrow s$ and  split the expectation in \eqref{cont_pr} as follows:
\begin{multline}\label{split_time}
\mathbb{E}\left[\sup_{\left|x\right|\le N}\sup_{0\le t\le T}\left|Y^{s,x}_t-Y^{s_n,x}_t\right|\right]\le
\mathbb{E}\left[\sup_{\left|x\right|\le N}\sup_{0\le t\le s}\left|Y^{s,x}_t-Y^{s_n,x}_t\right|\right]+
\mathbb{E}\left[\sup_{\left|x\right|\le N}\sup_{s\le t\le s_n}\left|Y^{s,x}_t-Y^{s_n,x}_t\right|\right]\\+
\mathbb{E}\left[\sup_{\left|x\right|\le N}\sup_{s_n\le t\le T}\left|Y^{s,x}_t-Y^{s_n,x}_t\right|\right],\quad n\in \mathbb{N}.
\end{multline}
We analyze the second and third addends in the right--hand side of \eqref{split_time}, the first being equal to $0$. As for the second,  by \eqref{ar_1vera}, for every $n\in \mathbb{N}$, we can find an a.s. event $\Omega_{s,s_n}$ independent of $x$ where
\begin{multline}\label{flo1}
\sup_{s\le t\le s_n}\left|Y^{s,x}_t-Y^{s_n,x}_t\right|
\le
\sup_{s\le t\le s_n}\left|\int_{s}^{t}b\left(r,Y^{s,x}_{r}\right)dr\right|
+
\sup_{s\le t\le s_n}\left|\int_{s}^t\alpha\left(r,Y^{s,x}_{r}\right)dW_r\right|\\
+\sup_{s\le t\le s_n}\left|\int_{s}^{t}\!\int_{U_0}g\left(Y^{s,x}_{r-},r,z\right)\widetilde{N}_p\left(dr,dz\right)\right|,\quad x\in\mathbb{R}^d.
\end{multline}
Thus,  we prove that, as $n\to \infty$,
\begin{align}\label{second_pr}
\begin{split}
&\mathbb{E}\left[\sup_{\left|x\right|\le N}\sup_{s\le t\le s_n}\left|\int_{s}^{t}b\left(r,Y^{s,x}_{r}\right)dr\right|\right]\to 0,\qquad
\mathbb{E}\left[\sup_{\left|x\right|\le N}\sup_{s\le t\le s_n}\left|\int_{s}^{t}\alpha\left(r,Y^{s,x}_{r}\right)dW_r\right|\right]\to 0,\\&
\mathbb{E}\left[\sup_{\left|x\right|\le N}\sup_{s\le t\le s_n}\left|\int_{s}^{t}\!\int_{U_0}g\left(Y^{s,x}_{r-},r,z\right)\widetilde{N}_p\left(dr,dz\right)\right|\right]\to 0.
\end{split}
\end{align}
The limits in \eqref{second_pr} are all dealt with using the same technique, so we just focus on the one  appearing in the second line. In particular, we want to apply Proposition \ref{not_sep} to the continuous, $(\mathcal{D}_0,\mathcal{D})-$valued random field $(Z^{s,x}_2)_{x\in\mathbb{R}^d}$. To do this, we take $\gamma>2d$ and write
\begin{multline}\label{c_1v}
	 \mathbb{E}\Big[\sup_{\left|x\right|\le N}\sup_{s\le t\le s_n} \left|Z^{s,x}_{2,t}\right|^{{\gamma}}\Big]\le 2^{\gamma-1}
	\Big(\mathbb{E}\Big[\sup_{s\le t\le s_n} \left|Z^{s,0}_{2,t}\right|^{{\gamma}}\Big]
	+\mathbb{E}\Big[\sup_{\left|x\right|\le N}\sup_{s\le t\le s_n} \left|Z^{s,x}_{2,t}-Z^{s,0}_{2,t}\right|^{{\gamma}}\Big]\Big)\\\eqqcolon 2^{\gamma-1}\left(\mathbf{\upperRomannumeral{1}}_n+\mathbf{\upperRomannumeral{2}}_n\left(N\right)\right).
\end{multline}
As for $\mathbf{\upperRomannumeral{1}}_n$, by the linear growth condition \eqref{lg1} for $g$ and \cite[Theorem $2.11$]{KU04} we infer that, for some $C>0,$
\begin{align}\label{ku3.6}
&\notag\mathbf{\upperRomannumeral{1}}_n=\mathbb{E}\left[\sup_{s\le t\le s_n}\left|
\int_{s}^{t}\!\int_{U_0}g\left(Y^{s,0}_{r-},r,z\right)\widetilde{N}_p\left(dr,dz\right)
\right|^{\gamma}\right]\\\notag&\quad 
\le c_{d,\gamma}\!\left(\!
\mathbb{E}\left[\left(K_2\int_{s}^{s_n}\left(1+\left|Y^{s,0}_{r-}\right|\right)^2dr\right)^{\frac{\gamma}{2}}\right]+
\mathbb{E}\left[K_{\gamma}\int_{s}^{s_n} \left(1+\left|Y^{s,0}_{r-}\right|\right)^{\gamma} dr\right]
\right)
\\&\quad 
\le c_{d,\gamma}\left(\left(s_n-s\right)^{\frac{\gamma}{2}}K_2^{\frac{\gamma}{2}}
+
\left(s_n-s\right)K_{\gamma}
\right)
\mathbb{E}\left[\sup_{s\le t\le T}\left(1+\left|Y^{s,0}_t\right|\right)^\gamma\right]
\le C\cdot \text{o}\left(1\right),\quad \text{as }n\to\infty,
\end{align}
where in the last inequality we use \cite[Equation $\left(3.6\right)$]{KU04}. As regards $\mathbf{\upperRomannumeral{2}}_n$, by the estimates in \eqref{3.6} and \eqref{bdg_g} we have, for some $C_0=C_0\left(\gamma,d,m,T,K_2,K_\gamma\right)>0$,
\begin{equation}\label{ku3.7}
\mathbb{E}\left[\sup_{s\le t\le s_n}\left|
\int_{s}^{t}\!\int_{U_0}\left(g\left(Y^{s,x}_{r-},r,z\right)-g\left(Y^{s,y}_{r-},r,z\right)\right)\widetilde{N}_p\left(dr,dz\right)
\right|^{\gamma}\right]
\le C_0\,\left(s_n-s\right)\left|x-y\right|^\gamma,\quad x,y\in\mathbb{R}^d.
\end{equation}
 Hence  Proposition \ref{not_sep} yields, arguing as in Corollary \ref{cor5} with $Y^{s,\cdot}$ replaced by $Z_2^{s,\cdot}$,
 \[
\sup_{\left|x\right|\le N}\sup_{s\le t\le s_n} \left|Z^{s,x}_{2,t}-Z^{s,0}_{2,t}\right|\le 
c\left(d,p\right)\widetilde{U}^{(2)}_{s,s_n,\gamma}N^{1+\frac{1}{\gamma}}
,\quad \text{in }\Omega,
 \]
 where 
 	\begin{equation*}
 \widetilde{U}^{(2)}_{s,s_n,\gamma}(\omega)=
 \left(\int_{{\mathbb R}^d}\int_{{\mathbb R}^d}\left(\frac{\sup_{s\le t \le s_n}{\left|Z_{2,t}^{s,x}(\omega)-Z_{2,t}^{s,y}(\omega)\right|}}{\left|x-y\right|}\right)^\gamma f_{}\left(\left|x\right|\right)f_{}\left(\left|y\right|\right)dx\,dy\right)^{\frac{1}{\gamma}},\quad \omega\in\Omega.
 \end{equation*}
  In particular,  $\widetilde{U}^{(2)}_{s,s_n,\gamma}$ is  a  $\gamma-$integrable random variable such that, by \eqref{ku3.7}, 
 $
 	\mathbb{E}\big[\big(\widetilde{U}^{(2)}_{s,s_n,\gamma}\big)^\gamma\big]\le C_1\left(s_n-s\right),
 $
 where $C_1=C_1(\gamma,d,m,T,K_2,K_\gamma)$.
 Consequently, 
 \begin{equation}\label{ku_37}
 	\mathbf{\upperRomannumeral{2}}_n\left(N\right)\le c^\gamma C_1 N^{\gamma+1}	\left(s_n-s\right)\to 0 \quad \text{as }n\to \infty.
 \end{equation}
Combining \eqref{ku3.6}-\eqref{ku_37} in \eqref{c_1v} we deduce that 
\begin{equation}\label{4.4ku}
\mathbb{E}\left[\sup_{\left|x\right|\le N}\sup_{s\le t\le s_n}\left|\int_{s}^{t}\!\int_{U_0}g\left(Y^{s,x}_{r-},r,z\right)\widetilde{N}_p\left(dr,dz\right)\right|^\gamma\right]\to 0 \quad \text{ as }n\to \infty.
\end{equation}
Note that \eqref{4.4ku} holds  for every $\gamma>0$ by {Jensen's inequality}, hence we recover the second line of \eqref{second_pr} as a particular case. In an analogous way one can prove that, for any $\gamma>0$,
\begin{equation}\label{vabedai}
\mathbb{E}\left[\sup_{\left|x\right|\le N}\sup_{s\le t\le s_n}\left|\int_{s}^{t}b\left(r,Y^{s,x}_{r}\right)dr\right|^{\gamma}\right]\underset{n\to\infty}{\longrightarrow}0,\quad
\mathbb{E}\left[\sup_{\left|x\right|\le N}\sup_{s\le t\le s_n}\left|\int_{s}^{t}\alpha\left(r,Y^{s,x}_{r}\right)dW_r\right|^{\gamma}\right]\underset{n\to\infty}{\longrightarrow} 0,
\end{equation}
whence
\begin{equation}\label{second_pr1}
\mathbb{E}\left[\sup_{\left|x\right|\le N}\sup_{s\le t\le s_n}\left|Y^{s,x}_t-Y^{s_n,x}_t\right|\right]\to 0 \quad \text{ as }n\to \infty.
\end{equation}
Next, we study the third addend in \eqref{split_time} using the flow property in Corollary \ref{cor_flow}.
In particular, by \eqref{flow} there exists an almost sure event $\Omega_{s,s_n}$ independent of $x$ where
\[
\sup_{s_n\le t\le T}\left|Y^{s,x}_t-Y^{s_n,x}_t\right|=
\sup_{s_n\le t\le T}\left|Y^{s_n,Y^{s,x}_{s_n}}_t-Y^{s_n,x}_t\right|\le 
\sup_{0\le t\le T}\left|Y^{s_n,Y^{s,x}_{s_n}}_t-Y^{s_n,x}_t\right|,\quad x\in\mathbb{R}^d.
\]
Now we choose $p\ge2d+1$ and apply \eqref{est23} in Corollary \ref{cor5} to deduce that
\begin{equation*}
\sup_{0\le t\le T}\left|Y^{s_n,Y^{s,x}_{s_n}}_t-Y^{s_n,x}_t\right|\le
c_1\,U_{s_n,p}\left[\left(\left|x\right|\vee\left|Y^{s,x}_{s_n}\right|\right)^{\frac{2d+1}{p}}\vee 1\right]\left|x-Y^{s,x}_{s_n}\right|^{1-\frac{2d}{p}},\quad x\in\mathbb{R}^d,
\end{equation*}
which holds  in the whole space $\Omega.$ Moreover, by \eqref{ar_1vera}, there exists a full probability set $\Omega_s$ where 
\[
\left|Y^{s,x}_{s_n}\right|\le N+\left|\int_{s}^{s_n}b\left(r,Y^{s,x}_{r}\right)dr\right|+
\left|\int_{s}^{s_n}\alpha\left(r,Y^{s,x}_{r}\right)dW_r\right|+
\left|\int_{s}^{s_n}\!\int_{U_0}g\left(Y^{s,x}_{r-},r,z\right)\widetilde{N}_p\left(dr,dz\right)\right|,\quad \left|x\right|\le N.
\]
Combining the three previous expressions we get the existence of an almost sure event $\Omega'_{s,s_n}$ where
\begin{align}\label{flo2}
&\notag\sup_{\left|x\right|\le N}\sup_{s_n\le t\le T}\left|Y^{s,x}_t-Y^{s_n,x}_t\right|\le c_1\,U_{s_n,p}\sup_{\left|x\right|\le N}\Bigg\{\\&\quad\! \notag
\left[N^{\frac{2d+1}{p}}+\left|\int_{s}^{s_n}b\left(r,Y^{s,x}_{r}\right)dr\right|^{\frac{2d+1}{p}}+\left|\int_{s}^{s_n}\alpha\left(r,Y^{s,x}_{r}\right)dW_r\right|^{\frac{2d+1}{p}}+
\left|\int_{s}^{s_n}\!\int_{U_0}g\left(Y^{s,x}_{r-},r,z\right)\widetilde{N}_p\left(dr,dz\right)\right|^{\frac{2d+1}{p}}\right]\\&\quad \!
\times\!\!
\left[\left|\int_{s}^{s_n}b\left(r,Y^{s,x}_{r}\right)dr\right|^{1-\frac{2d}{p}}\!+\left|\int_{s}^{s_n}\alpha\left(r,Y^{s,x}_{r}\right)dW_r\right|^{1-\frac{2d}{p}}\!\!+
\left|\int_{s}^{s_n}\!\int_{U_0}g\left(Y^{s,x}_{r-},r,z\right)\widetilde{N}_p\left(dr,dz\right)\right|^{1-\frac{2d}{p}}
\right]
\Bigg\}.
\end{align}
In the estimates in \eqref{flo2}, we use the subadditivity  property of the function $x^r$, for $r=\left(2d+1\right)/p$ or $r=1-2d/p$.
Let us denote by $C_2\coloneqq\sup_{0< t< T}\mathbb{E}\left[U_{t,p}^2\right]$, which is finite by \eqref{ri4} in Corollary \ref{cor5}. Taking the expected value in \eqref{flo2}, we apply the Cauchy--Schwarz inequality to write
{\begin{align*}
&\mathbb{E}\left[\sup_{\left|x\right|\le N}\sup_{s_n\le t\le T}\left|Y^{s,x}_t-Y^{s_n,x}_t\right|\right]^2\le12\,c_1\,C_2\,\mathbb{E}\Bigg[\sup_{\left|x\right|\le N}\Bigg\{
\\&\quad 
N^{\frac{4d+2}{p}}+\left|\int_{s}^{s_n}b\left(r,Y^{s,x}_{r}\right)dr\right|^{\frac{4d+2}{p}}+\left|\int_{s}^{s_n}\alpha\left(r,Y^{s,x}_{r}\right)dW_r\right|^{\frac{4d+2}{p}}+
\left|\int_{s}^{s_n}\!\int_{U_0}g\left(Y^{s,x}_{r-},r,z\right)\widetilde{N}_p\left(dr,dz\right)\right|^{\frac{4d+2}{p}}\Bigg\}\\&\quad
\times\sup_{\left|x\right|\le N}\left\{\left|\int_{s}^{s_n}b\left(r,Y^{s,x}_{r}\right)dr\right|^{2-\frac{4d}{p}}\!\!+\left|\int_{s}^{s_n}\alpha\left(r,Y^{s,x}_{r}\right)dW_r\right|^{2-\frac{4d}{p}}\!\!+
\left|\int_{s}^{s_n}\!\!\int_{U_0}g\left(Y^{s,x}_{r-},r,z\right)\widetilde{N}_p\left(dr,dz\right)\right|^{2-\frac{4d}{p}}
\right\}\!\Bigg].
\end{align*}}
Invoking the Cauchy--Schwarz inequality one more time,
\begin{align*}
&\mathbb{E}\left[\sup_{\left|x\right|\le N}\sup_{s_n\le t\le T}\left|Y^{s,x}_t-Y^{s_n,x}_t\right|\right]^2\le12\sqrt{12}\,c_1\,C_2\Bigg(N^{\frac{4d+2}{p}}+\Bigg(\mathbb{E}\Bigg[\sup_{\left|x\right|\le N}\Bigg\{
\\&\quad\!
\left|\int_{s}^{s_n}b\left(r,Y^{s,x}_{r}\right)dr\right|^{\frac{8d+4}{p}}
\!+\left|\int_{s}^{s_n}\alpha\left(r,Y^{s,x}_{r}\right)dW_r\right|^{\frac{8d+4}{p}}\!+
\left|\int_{s}^{s_n}\!\!\int_{U_0}g\left(Y^{s,x}_{r-},r,z\right)\widetilde{N}_p\left(dr,dz\right)\right|^{\frac{8d+4}{p}}\Bigg\}\Bigg]\Bigg)^{\frac{1}{2}}\Bigg)\Bigg(
\mathbb{E}\Bigg[
\\&\quad\!
\sup_{\left|x\right|\le N}\left\{\left|\int_{s}^{s_n}b\left(r,Y^{s,x}_{r}\right)dr\right|^{4-\frac{8d}{p}}\!\!+\left|\int_{s}^{s_n}\alpha\left(r,Y^{s,x}_{r}\right)dW_r\right|^{4-\frac{8d}{p}}\!\!+
\left|\int_{s}^{s_n}\!\!\int_{U_0}g\left(Y^{s,x}_{r-},r,z\right)\widetilde{N}_p\left(dr,dz\right)\right|^{4-\frac{8d}{p}}\right\}\Bigg]\Bigg)^{\frac{1}{2}}\!.
\end{align*}
Notice that the right--hand side of the previous equation goes to $0$ as $n\to \infty$ by \eqref{4.4ku}-\eqref{vabedai}. Summing up, 
\begin{equation}\label{third_pr}
\mathbb{E}\left[\sup_{\left|x\right|\le N}\sup_{s_n\le t\le T}\left|Y^{s,x}_t-Y^{s_n,x}_t\right|\right]\to 0 \quad \text{ as }n\to \infty.
\end{equation}

Combining \eqref{third_pr} with \eqref{second_pr1} we obtain \eqref{cont_pr},
which yields the desired right--stochastic continuity for the process $Y$ in $s$.  The left--continuity of $Y$ in $(0,T]$ can be argued in a similar way. The proof is complete.
\end{proof}
Recall the continuous, $(\mathcal{D}_0,\mathcal{D})-$valued random fields $Z^{s,\cdot}_i,\,i=1,2,3,$ introduced at the end of Subsection \ref{pre_flow} and appearing on the right--hand side of \eqref{ar_1vera}. For every $i=1,2,3,$ we define the $(\mathcal{C}_0,\mathcal{C})-$valued stochastic process $Z_i=(Z_{i,s})_{s\in[0,T]}$, whose random variables are $Z_{i,s}=Z_{i}^{s,\cdot},\,0\le s <T$, and we set, for every $\omega\in\Omega$,
\[
	Z_{i,T}\left(\omega\right)\colon\mathbb{R}^d\to\mathcal{D}_0,\qquad \left[Z_{i,T}\left(\omega\right)\left(x\right)\right]\left(t\right)=Z^{T,x}_{i,t}\left(\omega\right)= 0,\quad x\in\mathbb{R}^d,\,0\le t\le T.
\]
Using a strategy similar to the one followed in  Lemma \ref{st_cont} for the process $Y$, in the next result we show the continuity in probability of the processes $Z_i$.
\begin{corollary}\label{st_cont_Z}
		For every $i=1,2,3,$ the $(\mathcal{C}_0,\mathcal{C})-$valued process $Z_i=\left(Z_{i,s}\right)_{s\in[0,T]}$ is continuous in probability.
\end{corollary}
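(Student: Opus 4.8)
\emph{Proof (sketch).} The argument is a transcription of the proof of Lemma~\ref{st_cont}, with the random fields $Y^{s,\cdot}$ replaced by $Z_i^{s,\cdot}$ and the flow identity of Corollary~\ref{cor_flow} replaced by \eqref{flow_Z}. Fix $i\in\{1,2,3\}$. Since $d_0^{lu}$ has the same structure as in \eqref{d0_un}, the dominated convergence argument leading to \eqref{dom} (endowing $\mathbb{N}$ with the counting measure) shows that it is enough to prove, for every $N\ge1$ and every $(s_n)_n\subset[0,T]$ with $s_n\to s$,
\[
\mathbb{E}\Big[\sup_{\left|x\right|\le N}\sup_{0\le t\le T}\big|Z_{i,t}^{s,x}-Z_{i,t}^{s_n,x}\big|\Big]\to 0\quad\text{as }n\to\infty.
\]
For the right-stochastic continuity at $s\in[0,T)$ we take $s_n\downarrow s$ and split the inner supremum over $[0,s]$, $[s,s_n]$ and $[s_n,T]$. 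On $[0,s]$ both terms vanish, since $Z_{i,t}^{s,x}=0$ for $t\le s$ and $Z_{i,t}^{s_n,x}=0$ for $t\le s_n$ by construction. On $[s,s_n]$ we have $Z_{i,t}^{s_n,x}=0$, and $\mathbb{E}\big[\sup_{\left|x\right|\le N}\sup_{s\le t\le s_n}|Z_{i,t}^{s,x}|\big]\to0$ is precisely the content of \eqref{4.4ku}--\eqref{vabedai} established within the proof of Lemma~\ref{st_cont} ($i=1$ the diffusion term, $i=2$ the small-jumps term, $i=3$ the drift term).

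It remains to treat the supremum over $[s_n,T]$. By \eqref{flow_Z} there is an a.s. event, independent of $x$, on which $Z_{i,t}^{s,x}=Z_{i,s_n}^{s,x}+Z_{i,t}^{s_n,Y^{s,x}_{s_n}}$ for all $t\in[s_n,T]$, whence
\[
\sup_{\left|x\right|\le N}\sup_{s_n\le t\le T}\big|Z_{i,t}^{s,x}-Z_{i,t}^{s_n,x}\big|\le\sup_{\left|x\right|\le N}\big|Z_{i,s_n}^{s,x}\big|+\sup_{\left|x\right|\le N}\sup_{0\le t\le T}\big|Z_{i,t}^{s_n,Y^{s,x}_{s_n}}-Z_{i,t}^{s_n,x}\big|.
\]
The first summand is bounded by $\sup_{\left|x\right|\le N}\sup_{s\le t\le s_n}|Z_{i,t}^{s,x}|$, which tends to $0$ as above. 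For the second we use the analogue of Corollary~\ref{cor5} for $Z_i^{s_n,\cdot}$ (announced in the remark preceding Lemma~\ref{st_cont}, and obtained from Proposition~\ref{not_sep} together with the bounds \eqref{bdg_alpha}--\eqref{bdg_g}, \eqref{3.6}): choosing $p\ge2d+1$,
\[
\sup_{0\le t\le T}\big|Z_{i,t}^{s_n,Y^{s,x}_{s_n}}-Z_{i,t}^{s_n,x}\big|\le c_1\,U^{(i)}_{s_n,p}\Big[\big(\left|x\right|\vee\big|Y^{s,x}_{s_n}\big|\big)^{\frac{2d+1}{p}}\vee1\Big]\big|x-Y^{s,x}_{s_n}\big|^{1-\frac{2d}{p}},
\]
where $U^{(i)}_{r,p}$ is defined as in Corollary~\ref{cor5} with $Y^{r,\cdot}$ replaced by $Z_i^{r,\cdot}$ and satisfies $\sup_{r\in[0,T)}\mathbb{E}[(U^{(i)}_{r,p})^2]<\infty$. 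Bounding $\big|Y^{s,x}_{s_n}\big|\le N+\sum_{j=1}^{3}\sup_{s\le t\le s_n}|Z_{j,t}^{s,x}|$ and $\big|x-Y^{s,x}_{s_n}\big|\le\sum_{j=1}^{3}\sup_{s\le t\le s_n}|Z_{j,t}^{s,x}|$ via \eqref{ar_1vera}, and then applying the Cauchy--Schwarz inequality twice exactly as in the derivation of \eqref{third_pr}, everything is reduced to the limits \eqref{4.4ku}--\eqref{vabedai}; hence $\mathbb{E}\big[\sup_{\left|x\right|\le N}\sup_{s_n\le t\le T}|Z_{i,t}^{s,x}-Z_{i,t}^{s_n,x}|\big]\to0$. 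Combining the three pieces yields the right-stochastic continuity of $Z_i$.

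The left-stochastic continuity at $s\in(0,T]$ is obtained in the same way with $s_n\uparrow s$: split over $[0,s_n]$ (both terms vanish), over $[s_n,s]$ (here $Z_{i,t}^{s,x}=0$, and $\mathbb{E}\big[\sup_{\left|x\right|\le N}\sup_{s_n\le t\le s}|Z_{i,t}^{s_n,x}|\big]\to0$ is proved by the estimates behind \eqref{4.4ku}--\eqref{vabedai} applied on the shrinking interval $[s_n,s]$, using the uniform moment bound $\sup_{r\in[0,T)}\mathbb{E}[\sup_{0\le t\le T}(1+|Y^{r,0}_t|)^\gamma]<\infty$ to control the ``no-subtraction'' term), and over $[s,T]$, where \eqref{flow_Z} applied to the pair $s_n<s$ gives $Z_{i,t}^{s_n,x}=Z_{i,s}^{s_n,x}+Z_{i,t}^{s,Y^{s_n,x}_s}$, so that $\big|Z_{i,t}^{s,x}-Z_{i,t}^{s_n,x}\big|\le\big|Z_{i,s}^{s_n,x}\big|+\big|Z_{i,t}^{s,x}-Z_{i,t}^{s,Y^{s_n,x}_s}\big|$; the first term tends to $0$ as before, and the second is handled by the Corollary~\ref{cor5}-type estimate for $Z_i^{s,\cdot}$ together with $\big|x-Y^{s_n,x}_s\big|\le\sum_{j=1}^3\sup_{s_n\le t\le s}|Z_{j,t}^{s_n,x}|$, which vanishes again by \eqref{4.4ku}--\eqref{vabedai}. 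This establishes \eqref{cont_pr} for $Z_i$ and completes the proof. The main difficulty is organizational rather than conceptual: unlike the cocycle identity for $Y$, the flow identity \eqref{flow_Z} carries the extra additive increment $Z_{i,u}^{s,x}$, so one must simultaneously keep track of this increment (small because $u-s\to0$) and of the Hölder-type deviation term inside the Cauchy--Schwarz bookkeeping, while checking that the auxiliary estimates ($\sup_r\mathbb{E}[(U^{(i)}_{r,p})^2]<\infty$ and the moment bound for $Y^{r,0}$) are uniform in the initial time. \hfill\qedsymbol
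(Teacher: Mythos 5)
Your proposal is correct and follows essentially the same route as the paper's proof of Corollary~\ref{st_cont_Z}: the reduction to \eqref{cont_pr}, the three--interval splitting, the use of \eqref{second_pr} (equivalently \eqref{4.4ku}--\eqref{vabedai}) for the middle piece, and the combination of \eqref{flow_Z} with the Proposition~\ref{not_sep}/Corollary~\ref{cor5}--type estimate and the Cauchy--Schwarz bookkeeping of \eqref{flo2}--\eqref{third_pr} for the tail piece. The only difference is that you also spell out the left--continuity argument, which the paper dispatches by analogy with Lemma~\ref{st_cont}; your version of it is consistent with the uniform-in-initial-time constants used there.
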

\begin{proof}
	Fix $i=1,2,3$ and $s\in\left[0,T\right)$. As in Lemma \ref{st_cont}, we only prove the right stochastic  continuity of $Z_i$ in $s$. Hence we consider a sequence $\left(s_n\right)_n\subset\left(s,T\right)$ such that $s_n\downarrow s$ and we show that (cf. \eqref{cont_pr})
	\begin{equation}\label{cont_pr_Z}
		\mathbb{E}\left[\sup_{\left|x\right|\le N}\norm{Z_i^{s,x}-Z_i^{s_n,x}}_0\right]=\mathbb{E}\left[\sup_{\left|x\right|\le N}\sup_{0\le t\le T}\left|Z^{s,x}_{i,t}-Z^{s_n,x}_{i,t}\right|\right]\underset{n\to\infty}{\longrightarrow} 0,\quad N\ge1.
	\end{equation}
	Taking $N\ge1$, we  split the expectation in \eqref{cont_pr_Z} as follows:
	\begin{align*}
		\notag\mathbb{E}\left[\sup_{\left|x\right|\le N}\sup_{0\le t\le T}\left|Z^{s,x}_{i,t}-Z^{s_n,x}_{i,t}\right|\right]&\le
		\mathbb{E}\left[\sup_{\left|x\right|\le N}\sup_{0\le t\le s}\left|Z^{s,x}_{i,t}-Z^{s_n,x}_{i,t}\right|\right]+
		\mathbb{E}\left[\sup_{\left|x\right|\le N}\sup_{s\le t\le s_n}\left|Z^{s,x}_{i,t}-Z^{s_n,x}_{i,t}\right|\right]\\&\quad +
		\mathbb{E}\left[\sup_{\left|x\right|\le N}\sup_{s_n\le t\le T}\left|Z^{s,x}_{i,t}-Z^{s_n,x}_{i,t}\right|\right]
		\eqqcolon \left(\mathbf{\upperRomannumeral{1}}^{(i)}_n
		+
		\mathbf{\upperRomannumeral{2}}^{(i)}_n
		+\mathbf{\upperRomannumeral{3}}^{(i)}_n\right)\left(N\right),\quad n\in \mathbb{N}.
	\end{align*}
Since $\mathbf{\upperRomannumeral{1}}^{(i)}_n\left(N\right)=0,\,n\in\mathbb{N}$, by construction, and $\lim_{n\to \infty}\mathbf{\upperRomannumeral{2}}^{(i)}_n\left(N\right)=0$ by \eqref{second_pr}, we only study $\mathbf{\upperRomannumeral{3}}^{(i)}_n\left(N\right).$ In particular, we prove that $\lim_{n\to \infty}\mathbf{\upperRomannumeral{3}}^{(i)}_n\left(N\right)=0$. To do this, we invoke Corollary \ref{flow_Z_cor}, precisely \eqref{flow_Z}, which guarantees the existence of an almost sure event $\Omega_{s,s_n}$ --independent of $x$-- where, for every $x\in\mathbb{R}^d$,
\begin{equation}\label{part_1}
\sup_{s_n\le t\le T}\left|Z^{s,x}_{i,t}-Z^{s_n,x}_{i,t}\right|=
\sup_{s_n\le t\le T}\left|Z^{s,x}_{i,s_n}+Z^{s_n,Y^{s,x}_{s_n}}_{i,t}-Z^{s_n,x}_{i,t}\right|
\le 
\sup_{s\le t\le s_n}\left|Z^{s,x}_{i,t}\right|+
\sup_{0\le t\le T}\left|Z^{s_n,Y^{s,x}_{s_n}}_{i,t}-Z^{s_n,x}_{i,t}\right|.
\end{equation}
Choose $p\ge2d+1$. By Lemma \ref{l1} and the estimates in its proof, we can apply Proposition \ref{not_sep} to deduce the existence of a $p-$integrable random variable $\widetilde{U}^{(i)}_{s_n,p}$  such that, for some $c=c(p,d)>0,$
\begin{equation*}
	\sup_{0\le t\le T}\left|Z^{s_n,Y^{s,x}_{s_n}}_{i,t}-Z^{s_n,x}_{i,t}\right|\le
	c\,\widetilde{U}^{(i)}_{s_n,p}\left[\left(\left|x\right|\vee\left|Y^{s,x}_{s_n}\right|\right)^{\frac{2d+1}{p}}\vee 1\right]\left|x-Y^{s,x}_{s_n}\right|^{1-\frac{2d}{p}},\quad x\in\mathbb{R}^d,
\end{equation*}
which holds  in the whole space $\Omega.$ Since (again by Lemma \ref{l1}) $\sup_{n\in\mathbb{N} }\mathbb{E}\big[\big(\widetilde{U}^{(i)}_{s_n,p}\big)^2\big]<\infty$, we can proceed as in \eqref{flo2} and the subsequent estimates to obtain
\[
	\lim_{n\to \infty}\mathbb{E}\Big[\sup_{\left|x\right|\le N}\sup_{0\le t\le T}\Big|Z^{s_n,Y^{s,x}_{s_n}}_{i,t}-Z^{s_n,x}_{i,t}\Big|\Big]=0.
\]
Taking now the supremum over $|x|\le N$ and expectations in \eqref{part_1}, we have
\[
\mathbf{\upperRomannumeral{3}}^{(i)}_n\left(N\right)\le 
\mathbf{\upperRomannumeral{2}}^{(i)}_n\left(N\right)+
	\mathbb{E}\Big[\sup_{\left|x\right|\le N}\sup_{0\le t\le T}\Big|Z^{s_n,Y^{s,x}_{s_n}}_{i,t}-Z^{s_n,x}_{i,t}\Big|\Big]\to 0\quad \text{as }n\to\infty,\vspace{-.6em}
\]
completing the proof.
\end{proof} 
\subsection{The càdlàg property in the initial time \emph{s}}\label{cadlag_sec}
In this subsection we study the existence of a càdlàg modification of the process $Y=\left(Y_s\right)_s$ in \eqref{yyy}. 
 Such a property is obtained by Theorem \ref{thm_BZ}, which is a reformulation of \cite[Theorem $4.2$]{bez} (see also \cite[Theorem $1.2$]{BZ2}). 
 We compare
 Theorem \ref{thm_BZ}
 with the original result in  \cite{bez}-\cite{BZ2} in  Remark \ref{rem_BZ}. 
\begin{theorem}\label{thm_BZ}
	Let $X = (X_t)_{t\in[0,T ]}$ be a family of functions defined on a complete probability space $(\Xi, \mathcal{G},\mathbb{Q})$  with values in a complete metric space $(E, \Delta)$. Let $0 \le s \le t \le u \le T$ and denote by $\Delta(s,t,u)=\Delta(X_s,X_t)\wedge \Delta(X_t,X_u)$. Suppose that the map $\omega\mapsto \Delta(X_s(\omega),X_t(\omega))$ is measurable for any $s,t\in[0,T]$, and that  $X$ is continuous in probability. If  there exist continuous, increasing functions $\delta\colon [0,T]\to \mathbb{R}_+$ and $\theta\colon [0,T\vee 1]\to \mathbb{R}_+$, with $\delta(0)=\theta(0)=0$ and $\theta$ concave,  such that
	\begin{equation}\label{class_BZ}
		\mathbb{E}_\mathbb{Q}\left[\Delta (s,t,u)1_A\right]\le \delta\left(u-s\right)\cdot \theta\left(\mathbb{Q}\left(A\right)\right),\quad 0\le s \le t \le u\le T,\,A\in\mathcal{G},
	\end{equation}
	and that
	\begin{equation}\label{cond_inte}
		\int_{0}^{T}u^{-2}\delta\left(u\right)\theta\left(u\right) du <\infty, 
	\end{equation}
then $X$ has a càdlàg modification $X'$ (modification means that the map $\omega\mapsto \Delta(X_t(\omega),X_t'(\omega))$ is  equal to 0, $\mathbb{Q}-$a.s., for any $t \in [0,T]$). 
\end{theorem}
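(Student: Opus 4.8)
The plan is to run the dyadic--chaining regularization argument familiar from \cite{bez,BZ2,Bicht}, checking that it survives the loss of separability of $(E,\Delta)$ thanks to the standing hypothesis that $\omega\mapsto\Delta(X_s(\omega),X_t(\omega))$ is measurable. Fix the dyadic times $D=\bigcup_{n\ge0}D_n$ with $D_n=\{jT2^{-n}:0\le j\le 2^n\}$. The crux is to show that, for $\mathbb{Q}$-a.e.\ $\omega$, the restriction $D\ni t\mapsto X_t(\omega)$ exhibits no asymptotic three--point oscillation, i.e.\ the Skorokhod--type modulus
\[
w''_\delta\!\left(X_\cdot(\omega)\big|_D\right)=\sup\Big\{\Delta\big(X_{t_1}(\omega),X_{t_2}(\omega)\big)\wedge\Delta\big(X_{t_2}(\omega),X_{t_3}(\omega)\big):t_i\in D,\ t_1\le t_2\le t_3\le t_1+\delta\Big\}
\]
tends to $0$ as $\delta\downarrow0$. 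Granting this, completeness of $(E,\Delta)$ yields, at every $t\in[0,T]$, the left limit along $D\cap[0,t)$ and the right limit along $D\cap(t,T]$, and setting $X'_t:=\lim_{D\ni s\downarrow t}X_s$ for $t<T$ and $X'_T:=\lim_{D\ni s\uparrow T}X_s$ produces a map on $[0,T]$ that is easily checked to be càdlàg; this final ``regularization on the rationals'' step is routine.

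The quantitative input is the oscillation estimate, which is where \eqref{class_BZ} and \eqref{cond_inte} enter. One first localizes: a three--point oscillation of span $\le\delta$ involves three instants lying in two adjacent subintervals, so it suffices to bound, uniformly in $N$, the probability that $w''$ of $X$ restricted to $D_N\cap I$ exceeds $\epsilon$ for a generic interval $I\subset[0,T]$ of length $h$. Inside $I$ one chains down the dyadic levels: at level $n$, for each consecutive triple $(s,t,u)$ of the $n$-th subdivision (so $u-s$ is of order $h2^{-n}$), a Chebyshev step together with \eqref{class_BZ} applied to the event $A=\{\Delta(X_s,X_t)\wedge\Delta(X_t,X_u)>\lambda_n\}$ bounds $\mathbb{Q}(A)$ in terms of $\delta(h2^{-n})$ and $\theta(\mathbb{Q}(A))$; summing over the $\sim 2^n$ triples at level $n$ and over the levels, with $\sum_n\lambda_n\le\epsilon$, the resulting series is controlled by a dyadic discretization of $\int_0^{h}u^{-2}\delta(u)\theta(u)\,du$, which is finite and $o(1)$ as $h\downarrow0$ by \eqref{cond_inte}. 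Because the chained quantity is the \emph{wedge} $\Delta(X_s,X_t)\wedge\Delta(X_t,X_u)$ and not a single increment, one large jump of $X$ inside $I$ is not penalized --- only genuine oscillation is --- which is precisely what makes the limiting paths càdlàg rather than continuous. A union bound over the $\sim h^{-1}$ intervals of length $h$ covering $[0,T]$ gives $\mathbb{Q}\big(w''_h(X|_{D_N})>\epsilon\big)\le C(\epsilon)\,\rho(h)$ with $\rho(h)\to0$, uniformly in $N$; since $w''_\delta(X|_{D_N})\uparrow w''_\delta(X|_D)$ as $N\to\infty$, a Borel--Cantelli argument along $h_k\downarrow0$ combined with the monotonicity of $h\mapsto w''_h$ upgrades this to the a.s.\ statement $w''_\delta(X_\cdot(\omega)|_D)\to0$. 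All suprema here are over the finite grids $D_N$, so the measurability required to make these probabilities meaningful follows from the hypothesis on $\omega\mapsto\Delta(X_s(\omega),X_t(\omega))$ --- this is the one point where separability of $(E,\Delta)$ would otherwise have been used.

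It remains to see that $X'$ is a modification of $X$: for fixed $t\in[0,T)$, $X_s(\omega)\to X'_t(\omega)$ as $D\ni s\downarrow t$ for a.e.\ $\omega$ by construction, while $X_s\to X_t$ in $\mathbb{Q}$-probability as $s\downarrow t$ by the assumed stochastic continuity, so $\Delta(X_t,X'_t)=0$ $\mathbb{Q}$-a.s.; the case $t=T$ is handled the same way with left limits. I expect the main obstacle to be the chaining bookkeeping with the $\wedge$-structure retained: one must select, at each dyadic level, the events $A$ fed into \eqref{class_BZ} and the thresholds $\lambda_n$ so that the resulting probabilities sum against $\delta(\cdot)\theta(\cdot)$ exactly as \eqref{cond_inte} permits, while arranging that ``one jump is free'' --- getting this localization/summation scheme to close is the delicate part; the rest is soft.
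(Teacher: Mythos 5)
The soft half of your plan is sound: measurability of the finite--grid suprema follows from the assumed measurability of $\omega\mapsto\Delta(X_s(\omega),X_t(\omega))$, vanishing of the three--point modulus on the dyadics plus completeness of $(E,\Delta)$ does give one--sided limits along $D$ at every $t$, the right--limit regularization is càdlàg, and stochastic continuity identifies it as a modification --- this is also how the paper concludes. The genuine gap is in the quantitative step you yourself flag as ``delicate''. You propose, at level $n$, to apply \eqref{class_BZ} to each event $A=\{\Delta(X_s,X_t)\wedge\Delta(X_t,X_u)>\lambda_n\}$ separately and then to sum over the $\sim 2^n$ triples (a union bound). Chebyshev plus \eqref{class_BZ} only gives the implicit bound $\mathbb{Q}(A)\le\lambda_n^{-1}\delta(u-s)\,\theta(\mathbb{Q}(A))$, and since the events attached to different triples are neither disjoint nor a priori small, after resolving the implicit inequality you are left, per level, with roughly $2^n\,\psi\big(\lambda_n^{-1}\delta(h2^{-n})\big)$ where $\psi(c)=\sup\{q:\,q\le c\,\theta(q)\}$. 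This is \emph{not} controlled by the dyadic discretization $\sum_n 2^n\delta(2^{-n})\theta(2^{-n})$ of \eqref{cond_inte}. Concretely, take $\theta(q)\sim 1/\log(1/q)$ (concave near $0$, extended linearly) and $\delta(u)\sim u\,(\log(1/u))^{-1/2}$; then \eqref{cond_inte} holds, but $\psi(c)\sim c/\log(1/c)$ and the level--$n$ contribution is of order $\lambda_n^{-1}n^{-3/2}$, and by Cauchy--Schwarz, $\sum_n n^{-3/4}\le(\sum_n\lambda_n)^{1/2}(\sum_n\lambda_n^{-1}n^{-3/2})^{1/2}$, so no choice of thresholds with $\sum_n\lambda_n\le\epsilon$ makes your series converge. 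So the scheme, as sketched, does not close under the stated hypotheses.

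What is missing is precisely the Bezandry--Fernique device that the paper's proof is built on: at each level one does \emph{not} bound each triple separately, but partitions $\Xi$ according to which triple realizes the maximum (the sets $A^{(n)}_t$), applies \eqref{class_BZ} once per cell of this partition, and then uses the concavity of $\theta$ through \eqref{prop_theta} to get
\[
\mathbb{E}_\mathbb{Q}\Big[\max_{u\in S_{n+1}}\Delta\big(X_u,X_{g_n(\cdot,u)}\big)\Big]\le\delta\left(T2^{-n}\right)2^n\,\theta\left(2^{-n}\right),
\]
i.e.\ the crucial factor $\theta(2^{-n})$ instead of $\theta(1)$ (cf.\ \eqref{1.2.2.}); only this expectation bound sums against \eqref{cond_inte}. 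Note that your sketch never invokes the concavity of $\theta$, which is a hypothesis the argument genuinely needs. A secondary (repairable) point: controlling consecutive triples at every level does not by itself control $w''$ over arbitrary triples of the fine grid while keeping ``one jump free''; one needs a hierarchical selection of the better dyadic neighbour with monotone, composable selection maps (the paper's $g_n$, $f_{m,n}$ and the step functions $f_n$ of Lemma \ref{l1.3}), which is also what makes the approximants càdlàg step functions converging uniformly. If you replace your per--triple union bound by the argmax--partition/concavity estimate and add this chaining structure, your route essentially collapses onto the paper's proof.
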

\begin{rem}\label{rem_BZ}
	Compared to the original assertion in \cite{bez}-\cite{BZ2}, in Theorem \ref{thm_BZ} we do not require the functions $X_t\colon \Xi\to (E,\Delta)$ to be measurable with respect to the Borel $\sigma-$algebra of $E$. This is crucial for our arguments, because we are going to apply Theorem \ref{thm_BZ} to the $(\mathcal{C}_0,\mathcal{C})-$valued process $Y$ and $\mathcal{C}$ is not the Borel $\sigma-$algebra of $\mathcal{C}_0$.  The hypothesis on the measurability of the map $\omega\mapsto \Delta(X_s(\omega),X_t(\omega))$ is inspired by an analogous assumption in \cite[Lemma A.$2.37$]{Bicht}, where the Kolmogorov--Chentsov continuity criterion is proved without supposing the separability of the arrival space. In any case, such an hypothesis does not alter the strategy of the proof of Theorem \ref{thm_BZ}, which is presented in Appendix \ref{ap_BZ} for the sake of completeness.
\end{rem}
The next corollary, which gives a sufficient condition for the existence of a càdlàg version, can be easily deduced from Theorem \ref{thm_BZ}.
\begin{corollary}\label{cor_4.2}
	Under the same hypotheses as in Theorem \ref{thm_BZ}, if there exist $q>1/2,\,r>0$ and $C>0$ such that 
	\begin{equation}\label{cond_BZ}
		\mathbb{E}_\mathbb{Q}\left[\Delta\left(X_s,X_t\right)^q\cdot \Delta\left(X_t,X_u\right)^q\right]\le C\left|u-s\right|^{1+r},\quad 0\le s \le t\le u\le T,
	\end{equation}
then $X$ has a càdlàg modification.
\end{corollary}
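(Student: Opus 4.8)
The plan is to derive Corollary~\ref{cor_4.2} directly from Theorem~\ref{thm_BZ} by exhibiting admissible functions $\delta$ and $\theta$ for which hypotheses~\eqref{class_BZ} and~\eqref{cond_inte} follow from~\eqref{cond_BZ}. All the other assumptions of Theorem~\ref{thm_BZ} — completeness of the probability space and of $(E,\Delta)$, measurability of $\omega\mapsto\Delta(X_s(\omega),X_t(\omega))$, and continuity in probability of $X$ — are part of ``the same hypotheses'' and are therefore available without further comment.

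First I would record an elementary pointwise inequality. For nonnegative reals $a,b$ one has $a\wedge b\le\sqrt{ab}$, hence $(a\wedge b)^{2q}\le a^{q}b^{q}$. Applying this with $a=\Delta(X_s,X_t)$ and $b=\Delta(X_t,X_u)$ gives $\Delta(s,t,u)^{2q}\le\Delta(X_s,X_t)^{q}\,\Delta(X_t,X_u)^{q}$, so that~\eqref{cond_BZ} yields
\[
\mathbb{E}_{\mathbb{Q}}\!\left[\Delta(s,t,u)^{2q}\right]\le C\,|u-s|^{1+r},\qquad 0\le s\le t\le u\le T.
\]
Then, for any $A\in\mathcal{G}$, Hölder's inequality with conjugate exponents $2q$ and $\tfrac{2q}{2q-1}$ (note $2q>1$ because $q>1/2$) gives
\[
\mathbb{E}_{\mathbb{Q}}\!\left[\Delta(s,t,u)\,1_A\right]\le\left(\mathbb{E}_{\mathbb{Q}}\!\left[\Delta(s,t,u)^{2q}\right]\right)^{1/(2q)}\mathbb{Q}(A)^{1-1/(2q)}\le C^{1/(2q)}\,|u-s|^{(1+r)/(2q)}\,\mathbb{Q}(A)^{1-1/(2q)}.
\]
Thus~\eqref{class_BZ} holds with $\delta(u)=C^{1/(2q)}u^{(1+r)/(2q)}$ on $[0,T]$ and $\theta(v)=v^{1-1/(2q)}$ on $[0,T\vee 1]$: both are continuous and increasing with $\delta(0)=\theta(0)=0$, and $\theta$ is concave precisely because $1-1/(2q)\in(0,1)$, which is where the assumption $q>1/2$ enters.

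It remains to check the integrability condition~\eqref{cond_inte}. With the above choices,
\[
\int_0^{T}u^{-2}\,\delta(u)\,\theta(u)\,du=C^{1/(2q)}\int_0^{T}u^{-2+(1+r)/(2q)+1-1/(2q)}\,du=C^{1/(2q)}\int_0^{T}u^{-1+r/(2q)}\,du,
\]
and the last integral is finite because $r/(2q)>0$, i.e.\ because $r>0$. Hence Theorem~\ref{thm_BZ} applies and produces a càdlàg modification $X'$ of $X$. There is no serious obstacle here beyond bookkeeping; the only point requiring a little care is matching the exponents so that $\theta$ is concave and the integral near $0$ converges, and this is exactly what forces the two hypotheses $q>1/2$ and $r>0$.
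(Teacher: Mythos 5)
Your proposal is correct and follows essentially the same route as the paper: you choose exactly the functions $\delta(h)=C^{1/(2q)}h^{(1+r)/(2q)}$ and $\theta(v)=v^{1-1/(2q)}$ and verify \eqref{cond_inte} via $r>0$, just as the paper does, merely filling in explicitly (via $a\wedge b\le\sqrt{ab}$ and H\"older with exponents $2q$ and $2q/(2q-1)$) the derivation of \eqref{class_BZ} that the paper delegates to a citation.
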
 
\begin{proof}
Define $\theta(h)= h^{1-\frac{1}{2q}}$ and $\delta(h)=C^{\frac{1}{2q}} h^{\frac{1+r}{2q}},\,h\ge 0$. Then \eqref{class_BZ} can be deduced from \eqref{cond_BZ} as in \cite[Corollary 4.2]{AIHP18}, while \eqref{cond_inte} is satisfied because $r>0$.
\end{proof}
We are now ready to present  the main result of Section \ref{sec_small}. In the proof, we employ the concept of strong solution to Equation \eqref{SDE} (see \eqref{strong}), which allows to follow an argument relying on  conditional expectations with respect to the augmented $\sigma-$algebra generated by $W$ and $N_p$. 
\begin{theorem}\label{cadlag}
There exists a càdlàg version $Z$ of the $(\mathcal{C}_0,\mathcal{C})-$valued process $Y=\left(Y_s\right)_s$.
\end{theorem}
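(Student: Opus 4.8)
The plan is to apply Corollary~\ref{cor_4.2} to the $(\mathcal{C}_0, d_0^{lu})$--valued process $Y = (Y_s)_{s \in [0,T]}$. Two of its hypotheses are already available: the measurability of $\omega \mapsto d_0^{lu}(Y_s(\omega), Y_t(\omega))$ was noted in \eqref{Bic}, and the continuity in probability of $Y$ is Lemma~\ref{st_cont}. Hence it remains only to verify the bilinear moment bound \eqref{cond_BZ}; I would take a large exponent $q > 2d$ (in particular $q > 1/2$) and aim at $r = 1/3$. Since $\tfrac{a}{1+a} \le a$ and $\sum_{N \ge 1} 2^{-N} = 1$, Jensen's inequality for the convex map $\xi \mapsto \xi^q$ gives $d_0^{lu}(Y_s, Y_t)^q \le \sum_{N \ge 1} 2^{-N} \big(a_N^{(s,t)}\big)^q$, where $a_N^{(s,t)} := \sup_{|x| \le N} \| Y^{s,x}_\cdot - Y^{t,x}_\cdot \|_0$, whence
\begin{equation*}
\mathbb{E}\big[ d_0^{lu}(Y_s, Y_t)^q\, d_0^{lu}(Y_t, Y_u)^q \big] \le \sum_{N, M \ge 1} \frac{1}{2^{N+M}}\, \mathbb{E}\big[ (a_N^{(s,t)})^q (a_M^{(t,u)})^q \big],
\end{equation*}
so it suffices to bound each summand by $C_{N,M}\, |u-s|^{1+r}$ with $\sum_{N,M} 2^{-N-M} C_{N,M} < \infty$.

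The central step is a decomposition of $a_N^{(s,t)}$, carried out on the a.s.\ event $\Omega_{s,t}$ of Corollary~\ref{cor_flow}. For $v \le s$ both $Y^{s,x}_v$ and $Y^{t,x}_v$ equal $x$; for $v \in [s,t]$ one has $Y^{t,x}_v = x$, so $|Y^{s,x}_v - Y^{t,x}_v| = |Y^{s,x}_v - x|$; and for $v \in [t,T]$ the flow property \eqref{flow} gives $Y^{s,x}_v = Y^{t, Y^{s,x}_t}_v$, so that estimate \eqref{est23} of Corollary~\ref{cor5} (valid on all of $\Omega$) bounds $|Y^{s,x}_v - Y^{t,x}_v| = |Y^{t, Y^{s,x}_t}_v - Y^{t,x}_v|$ by a constant multiple of $U_{t,p}\, |Y^{s,x}_t - x|^{1 - 2d/p} \big[(|Y^{s,x}_t| \vee |x|)^{(2d+1)/p} \vee 1\big]$. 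Using $|Y^{s,x}_t - x| \le J_N^{(s,t)} := \sup_{|x| \le N} \sup_{s \le v \le t} |Y^{s,x}_v - x|$, $|Y^{s,x}_t| \le N + J_N^{(s,t)}$ and the subadditivity of $\xi \mapsto \xi^{(2d+1)/p}$ (for $p > 2d+1$), one obtains, on $\Omega_{s,t}$,
\begin{equation*}
a_N^{(s,t)} \le \big(1 + c_1 U_{t,p}\big)\, \Phi_N\big( J_N^{(s,t)} \big), \qquad \Phi_N(\xi) := \xi + N^{(2d+1)/p}\, \xi^{1 - 2d/p} + \xi^{1 + 1/p},
\end{equation*}
and similarly $a_M^{(t,u)} \le (1 + c_1 U_{u,p})\, \Phi_M(J_M^{(t,u)})$.

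The key point is then an independence argument. Through the strong--solution representation, $Y^{s,\cdot}$ restricted to $[s,t]$ is a measurable functional of the increments of $W$ and of $N_p$ over $(s,t]$, so $J_N^{(s,t)}$ is measurable with respect to that noise, whereas $U_{t,p}$, $U_{u,p}$ and $J_M^{(t,u)}$ are functionals of $W$ and $N_p$ over $[t,T]$; since $W$ has independent increments and $N_p$ is a Poisson random measure, $J_N^{(s,t)}$ is independent of $(U_{t,p}, U_{u,p}, J_M^{(t,u)})$. (This is precisely why it is convenient to work throughout with the augmented filtration $\mathbb{F}^{W,N_p}$ generated by $W$ and $N_p$ and to condition on $\mathcal{F}^{W,N_p}_t$.) Consequently
\begin{equation*}
\mathbb{E}\big[ (a_N^{(s,t)})^q (a_M^{(t,u)})^q \big] \le \mathbb{E}\big[ \Phi_N(J_N^{(s,t)})^q \big] \cdot \mathbb{E}\big[ (1 + c_1 U_{t,p})^q (1 + c_1 U_{u,p})^q\, \Phi_M(J_M^{(t,u)})^q \big].
\end{equation*}
The estimates of Section~\ref{sec_small} --- \eqref{3.6}, the Burkholder--Davis--Gundy bounds \eqref{bdg_alpha}--\eqref{bdg_g}, and Proposition~\ref{not_sep}/Corollary~\ref{cor5} applied to the fields $Z_i^{s,\cdot}$, $i=1,2,3$, appearing in \eqref{ar_1vera} --- give $\mathbb{E}\big[(J_N^{(s,t)})^m\big] \le C_{N,m}\, |t-s|$ for every $m > 2d$ with $|t-s| \le 1$, with $C_{N,m}$ polynomial in $N$; since $q > 2d$ one may fix $p$ large enough that the smallest powers $q(1-2d/p)$ and $3q(1-2d/p)$ occurring in $\Phi_N^{q}$ and $\Phi_M^{3q}$ exceed $2d$, so that $\mathbb{E}[\Phi_N(J_N^{(s,t)})^q] \le C_N\, |t-s|$ and $\mathbb{E}[\Phi_M(J_M^{(t,u)})^{3q}] \le C_M\, |u-t|$. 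Using in addition $\sup_{t} \mathbb{E}[U_{t,p}^p] < \infty$ from \eqref{ri4} (and $p \ge 3q$, so that $\mathbb{E}[(1 + c_1 U_{t,p})^{3q}]$ is bounded uniformly in $t$), Hölder's inequality with exponents $(3,3,3)$ on the second factor yields
\begin{equation*}
\mathbb{E}\big[ (a_N^{(s,t)})^q (a_M^{(t,u)})^q \big] \le C_{N,M}\, |t-s|\, |u-t|^{1/3} \le C_{N,M}\, |u-s|^{4/3}
\end{equation*}
(for $|u-s|$ bounded below, the inequality is immediate from the uniform--in--$(s,t)$ moment bounds of Section~\ref{sec_small}), with $C_{N,M}$ polynomial in $(N,M)$. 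Summing against $2^{-N-M}$ proves \eqref{cond_BZ} with $r = 1/3$, and Corollary~\ref{cor_4.2} produces the desired càdlàg modification $Z$ of $Y$.

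I expect the main obstacle to lie not in any single estimate but in the measure--theoretic bookkeeping around this decomposition: one must (i) invoke the strong--solution representation with enough care --- including the option of choosing the exceptional null sets independently of the space variable, as throughout Subsection~\ref{pre_flow} --- to identify the $\sigma$--algebras with respect to which $J_N^{(s,t)}$ and $U_{t,p}$ are measurable, and so legitimise the factorisation of the expectation; (ii) keep the flow--property a.s.\ events, which a priori depend on $s, t, x$, under control, which works precisely because Corollary~\ref{cor_flow} furnishes an event independent of $x$ and the Garsia--Rodemich--Rumsey estimate of Corollary~\ref{cor5} holds on all of $\Omega$; and (iii) balance the exponents so that, although the single increment $\mathbb{E}[\| Y^{s,x}_\cdot - Y^{t,x}_\cdot \|_0^m]$ is only of order $|t-s|$ (consistently with $s \mapsto Y_s$ genuinely jumping, so that no Kolmogorov--Chentsov--type bound $|t-s|^{1+r}$ can hold), the product over the intermediate time $t$ still beats $|u-s|$. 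The non--separability of $\mathcal{C}_0$, and the fact that $\mathcal{C}$ is not its Borel $\sigma$--algebra, cause no difficulty here, since Theorem~\ref{thm_BZ} (hence Corollary~\ref{cor_4.2}) only requires measurability of $\omega \mapsto d_0^{lu}(Y_s(\omega), Y_t(\omega))$.
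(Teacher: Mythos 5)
Your proposal is correct and rests on the same pillars as the paper's proof: Corollary \ref{cor_4.2} (with measurability from \eqref{Bic} and stochastic continuity from Lemma \ref{st_cont}), the split of $\|Y^{s,x}-Y^{t,x}\|_0$ at the intermediate time via Corollary \ref{cor_flow} together with the pathwise Garsia--Rodemich--Rumsey bound \eqref{est23}, and--crucially--the strong-solution independence of the two time blocks (the paper's \eqref{strong} and \eqref{Mark}) to rescue the product of the two jump-integral contributions, each of which is only of order $|t-s|$, resp. $|u-t|$, in any moment. Where you genuinely diverge is in the final bookkeeping, and your arrangement is leaner: you bound the double series $\sum_{N,M}2^{-N-M}\mathbb{E}[(a_N^{(s,t)})^q(a_M^{(t,u)})^q]$ term by term with constants polynomial in $(N,M)$, whereas the paper truncates the series at $N\approx\rho^{-\sigma}$ and introduces the leverage parameters $\sigma,\gamma$ (paying small losses $\rho^{-\sigma(\cdot)}$ in every moment estimate, cf. \eqref{suY}--\eqref{int_g_1}); and you factorize by independence first and apply H\"older only among the $\mathcal{F}^{W,N_p}_{t,T}$-measurable factors $U_{t,p},U_{u,p},\Phi_M(J_M^{(t,u)})$, whereas the paper applies H\"older with $\eta=5/4$ across everything in \eqref{4_miss1} and only then uses conditional independence in \eqref{Mark}--\eqref{daje}, ending with an exponent just above $1$ in \eqref{quasi2}; your route gives the clean exponent $1+\tfrac13$ in \eqref{cond_BZ} without any of $\sigma,\gamma,\eta$. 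Two small points to keep explicit when writing this up: dropping the $\wedge 1$ in the Jensen step is legitimate only because each summand is then controlled by finite moments with polynomial $N$-dependence (the paper keeps the cap precisely to multiply the non-jump terms by $1$); and the factorization needs $J_N^{(s,t)}$ to be $\mathcal{F}^{W,N_p}_{s,t}$-measurable and $U_{t,p},U_{u,p},J_M^{(t,u)}$ to be $\mathcal{F}^{W,N_p}_{t,T}$-measurable, which holds for the chosen versions because the augmented $\sigma$-algebras contain the null sets and the suprema run over countable dense sets--exactly the argument the paper makes for the $Z^{u,x}_{i}$ before \eqref{Mark}; you flag both, so there is no gap.
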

\begin{proof}
We will apply Corollary \ref{cor_4.2}. Note that, by  the completeness of the probability space, the càdlàg version  $Z$ will be automatically a $(\mathcal{C}_0,\mathcal{C})-$valued process.
 
Recall that in \eqref{Bic} we have shown the measurability of the map $\omega\mapsto d^{lu}_0(Y_s(\omega),Y_t(\omega)),\,s,t\in[0,T]$. Thus, according to Corollary \ref{cor_4.2}, in order to find a càdlàg modification of the stochastically continuous process $Y$ it is sufficient to determine $q>1/2, r>0$ and a constant $C>0$ such that
\begin{equation}\label{goal}
\mathbb{E}\left[d_0^{lu}\left(Y_s,Y_u\right)^q\cdot d_0^{lu}\left(Y_u,Y_v\right)^q\right]\le C \left|v-s\right|^{1+r},\quad 0\le s< u< v \le T.
\end{equation}

Let us take a triplet of times $\left(s,u,v\right)$, with $0\le s< u< v \le T$, and denote by $\rho= v-s$. We can assume $\rho<1$, otherwise \eqref{goal} is trivially satisfied for any choice of $q,r\in\mathbb{R}_+$ and $C\ge1$.  By the computations in Subsection~\ref{st_cont_section}, precisely \eqref{flo1}-\eqref{flo2}, there exists an a.s. event $\Omega_{s,u}$ where, for every $p\ge 2d+1$ and $N\ge 1$,
\begin{align}\label{add1}
&\notag\sup_{\left|x\right|\le N}\sup_{0\le t\le T}\left|Y_t^{s,x}-Y_t^{u,x}\right|\\\notag&\quad
\le
\sup_{\left|x\right|\le N}\left\{\sup_{s\le t\le u}\left|\int_{s}^{t}b\left(r,Y^{s,x}_{r}\right)dr\right|
+
\sup_{s\le t\le u}\left|\int_{s}^t\alpha\left(r,Y^{s,x}_{r}\right)dW_r\right|
+
\sup_{s\le t\le u}\left|\int_{s}^{t}\!\int_{U_0}g\left(Y^{s,x}_{r-},r,z\right)\widetilde{N}_p\left(dr,dz\right)\right|\right\}
\\\notag\notag&\qquad
+c_1\,U_{u,p}\sup_{\left|x\right|\le N}\bigg\{\\\notag&\qquad
\left[N^{\frac{2d+1}{p}}+\left|\int_{s}^{u}b\left(r,Y^{s,x}_{r}\right)dr\right|^{\frac{2d+1}{p}}+\left|\int_{s}^{u}\alpha\left(r,Y^{s,x}_{r}\right)dW_r\right|^{\frac{2d+1}{p}}+
\left|\int_{s}^{u}\!\int_{U_0}g\left(Y^{s,x}_{r-},r,z\right)\widetilde{N}_p\left(dr,dz\right)\right|^{\frac{2d+1}{p}}\right]\\&\qquad
\times
\left[\left|\int_{s}^{u}b\left(r,Y^{s,x}_{r}\right)dr\right|^{1-\frac{2d}{p}}+\left|\int_{s}^{u}\alpha\left(r,Y^{s,x}_{r}\right)dW_r\right|^{1-\frac{2d}{p}}+
\left|\int_{s}^{u}\!\int_{U_0}g\left(Y^{s,x}_{r-},r,z\right)\widetilde{N}_p\left(dr,dz\right)\right|^{1-\frac{2d}{p}}
\right]
\bigg\}\notag\\&\quad
\eqqcolon \mathbf{\upperRomannumeral{1}}_1+c_1U_{u,p}\mathbf{\upperRomannumeral{2}}_1.\vspace{-.6em}
\end{align}
 If we compute the product appearing in $\mathbf{\upperRomannumeral{2}}_1$, then introducing the set $A=\left\{\frac{2d+1}{p},1-\frac{2d}{p}\right\}$ we can estimate \vspace{-.6em}
 \begin{align*}
 	&\mathbf{\upperRomannumeral{2}}_1\le \sup_{\left|x\right|\le N}\bigg\{N^{\frac{2d+1}{p}}
 	\sup_{s\le t\le u}\left|\int_{s}^{t}b\left(r,Y^{s,x}_{r}\right)dr\right|^{1-\frac{2d}{p}}
 	+
 	N^{\frac{2d+1}{p}}\sup_{s\le t\le u}\left|\int_{s}^t\alpha\left(r,Y^{s,x}_{r}\right)dW_r\right|^{1-\frac{2d}{p}}
 		\\&\qquad +
 	N^{\frac{2d+1}{p}}\sup_{s\le t\le u}\left|\int_{s}^{t}\!\int_{U_0}g\left(Y^{s,x}_{r-},r,z\right)\widetilde{N}_p\left(dr,dz\right)\right|^{1-\frac{2d}{p}}
 	+\sup_{s\le t\le u}
 	\left|\int_{s}^{t}b\left(r,Y^{s,x}_{r}\right)dr\right|^{1+\frac{1}{p}}
 		\\&\qquad +\sup_{s\le t\le u}
 	\left|\int_{s}^{t}\alpha\left(r,Y^{s,x}_{r}\right)dW_r\right|^{1+\frac{1}{p}}
 	+
 	\sup_{s\le t\le u}
 	\left|\int_{s}^{t}\!\int_{U_0}g\left(Y^{s,x}_{r-},r,z\right)\widetilde{N}_p\left(dr,dz\right)\right|^{1+\frac{1}{p}} 
 	\bigg\}
 	+
 	S\left(N\right) 
 	\eqqcolon \mathbf{\upperRomannumeral{3}}_1,
 \end{align*}
where we denote by $\mathcal{S}\left(N\right),\,N\ge 1$, the quantity
\begin{align}\label{S_def}
\notag\mathcal{S}\left(N\right)&=\sum_{i,j\in A,\,i\neq j}\sup_{\left|x\right|\le N}\sup_{s\le t\le u} \Bigg\{
		\\\notag&\qquad
		\left|\int_{s}^{t}b\left(r,Y^{s,x}_{r}\right)dr\right|^i
		\left|\int_{s}^{t}\!\int_{U_0}g\left(Y^{s,x}_{r-},r,z\right)\widetilde{N}_p\left(dr,dz\right)\right|^j
		+\left|\int_{s}^{t}b\left(r,Y^{s,x}_{r}\right)dr\right|^i\left|\int_{s}^{t}\alpha\left(r,Y^{s,x}_{r}\right)dW_r\right|^j
		\\&\qquad+
		\left|\int_{s}^{t}\alpha\left(r,Y^{s,x}_{r}\right)dW_r\right|^i
		\left|\int_{s}^{t}\!\int_{U_0}g\left(Y^{s,x}_{r-},r,z\right)\widetilde{N}_p\left(dr,dz\right)\right|^j
		\Bigg\}.
\end{align}
Furthermore, notice that $\mathbf{\upperRomannumeral{1}}_1\le \mathbf{\upperRomannumeral{3}}_1$. 
 Therefore, going back to \eqref{add1} we have 
\begin{align}\label{4_1}
\notag
\sup_{\left|x\right|\le N}&\sup_{0\le t\le T}\left|Y_t^{s,x}-Y_t^{u,x}\right|
\le 
 \left(1+c_1\,U_{u,p}\right)\mathbf{\upperRomannumeral{3}}_1
\le \left(1+c_1\,U_{u,p}\right)\\&\qquad\notag
\times\Bigg[ N^{\frac{2d+1}{p}}
\sup_{\left|x\right|\le N}\sup_{s\le t\le u} 
\left|\int_{s}^{t}b\left(r,Y^{s,x}_{r}\right)dr\right|^{1-\frac{2d}{p}}
+
N^{\frac{2d+1}{p}}\sup_{\left|x\right|\le N}\sup_{s\le t\le u} \left|\int_{s}^{t}\alpha\left(r,Y^{s,x}_{r}\right)dW_r\right|^{1-\frac{2d}{p}}
\\&\qquad+\notag
N^{\frac{2d+1}{p}}\sup_{\left|x\right|\le N}\sup_{s\le t\le u} 
\left|\int_{s}^{t}\!\int_{U_0}g\left(Y^{s,x}_{r-},r,z\right)\widetilde{N}_p\left(dr,dz\right)\right|^{1-\frac{2d}{p}}
\\&\qquad\notag
+\sup_{\left|x\right|\le N}\sup_{s\le t\le u} \left|\int_{s}^{t}b\left(r,Y^{s,x}_{r}\right)dr\right|^{1+\frac{1}{p}}
+\sup_{\left|x\right|\le N}\sup_{s\le t\le u} \left|\int_{s}^{t}\alpha\left(r,Y^{s,x}_{r}\right)dW_r\right|^{1+\frac{1}{p}}
\\&\qquad+\sup_{\left|x\right|\le N}\sup_{s\le t\le u} \left|\int_{s}^{t}\!\int_{U_0}g\left(Y^{s,x}_{r-},r,z\right)\widetilde{N}_p\left(dr,dz\right)\right|^{1+\frac{1}{p}}+\mathcal{S}\left(N\right)
\Bigg].
\end{align}
Since we can pick $p\ge2d+1$ arbitrarily large, we consider $p^{-1}{\left(2d+1\right)}<1-{2d}{p}^{-1}$. Moreover, note that given two numbers $a,b\ge0$, 
\begin{equation}\label{min}
\left(ab\right)\wedge1\le\left(1+a\right)\left(b\wedge1\right)\quad \text{and} \quad\left(a+b\right)\wedge1\le a\wedge1+b\wedge1.
\end{equation}
With these considerations, using the inequality  $x/(1+x)\le 1\wedge x,\,x\ge0$, we simplify the expression in \eqref{4_1} to deduce that 
\begin{align*}
&\frac{\sup_{\left|x\right|\le N}\norm{Y^{s,x}-Y^{u,x}}_0}{1+\sup_{\left|x\right|\le N}\norm{Y^{s,x}-Y^{u,x}}_0}
\le
\sup_{\left|x\right|\le N}\sup_{0\le t\le T}\left|Y_t^{s,x}-Y_t^{u,x}\right|\wedge 1
\\&\qquad\le \left(2+c_1\,U_{u,p}\right)\left(N^{\frac{2d+1}{p}}+2\right)
\Bigg[\mathcal{S}\left(N\right)\wedge 1+
\sup_{\left|x\right|\le N}\sup_{s\le t\le u}
\left(\left|\int_{s}^{t}\!\int_{U_0}g\left(Y^{s,x}_{r-},r,z\right)\widetilde{N}_p\left(dr,dz\right)\right|\wedge1\right)^{\frac{2d+1}{p}}
\\&\qquad+
\sup_{\left|x\right|\le N}\sup_{s\le t\le u}\left(\left|\int_{s}^{t}b\left(r,Y^{s,x}_{r}\right)dr\right|\wedge1\right)^{\frac{2d+1}{p}}
+
\sup_{\left|x\right|\le N}\sup_{s\le t\le u} \left(\left|\int_{s}^{t}\alpha\left(r,Y^{s,x}_{r}\right)dW_r\right|\wedge 1\right)^{\frac{2d+1}{p}}
\Bigg].
\end{align*}
Multiplying by $2^{-N}$ and then summing over $N$, by the definition in \eqref{d0_un} we infer that
\begin{multline}\label{4_2}
d_0^{lu}\left(Y_s,Y_u\right)
\le 
2\left(2+c_1\,U_{u,p}\right)\sum_{N=1}^{\infty}\frac{N+1}{2^N}
\Bigg[\mathcal{S}\left(N\right)\wedge1 + \sup_{\left|x\right|\le N}\sup_{s\le t\le u} \left(\left|\int_{s}^{t}\!\int_{U_0}g\left(Y^{s,x}_{r-},r,z\right)\widetilde{N}_p\left(dr,dz\right)\right|\wedge1\right)^{\!\!\frac{2d+1}{p}}
\\
+\sup_{\left|x\right|\le N}\sup_{s\le t\le u}\left(\left|\int_{s}^{t}b\left(r,Y^{s,x}_{r}\right)dr\right|\wedge 1\right)^{\frac{2d+1}{p}}
+
\sup_{\left|x\right|\le N}\sup_{s\le t\le u}\left(\left|\int_{s}^{t}\alpha\left(r,Y^{s,x}_{r}\right)dW_r\right|\wedge1\right)^{\frac{2d+1}{p}}
\Bigg],
\end{multline}
which holds in $\Omega_{s,u}$. Now we want to split the series in \eqref{4_2}. To do this, we first notice that  the function $2^{-x}\left(x+1\right)$ is strictly decreasing in $\left[1,\infty\right)$, hence we can estimate
\begin{equation*}
\sum_{N=\widebar N+1}^{\infty} \frac{N+1}{2^N}
\le \int_{\widebar{N}}^{\infty}\frac{x+1}{2^x}\,dx=
\frac{1}{\left(\log2\right)^2}\frac{\left(\widebar{N}+1\right)\log 2+1}{2^{\widebar{N}}},\quad \widebar{N}\ge1.
\end{equation*}
Therefore, for any $\gamma>0$ --a new leverage parameter \emph{which has to be fixed}-- there exists a constant $c_\gamma>0$ such that 
$
\sum_{N=\widebar N+1}^{\infty} {(N+1)}{2^{-N}}\le c_\gamma {(\widebar{N}+1)^{-\gamma}},\, \widebar{N}\ge1.
$
\\Secondly, we introduce another parameter $\sigma>0$ --once again, \emph{to be determined}-- and set $\widebar{N}=\widebar{N}\left(\rho\right)=\left[\rho^{-\sigma}\right]$ (recall that $\rho=v-s<1$). Note that 
$
	\sum_{N=\left[\rho^{-\sigma}\right] +1}^{\infty} (N+1){2^{-N}}\le c_\gamma (\left[\rho^{-\sigma}\right]+1)^{-\gamma}\le c_\gamma\rho^{\sigma\gamma},
$
and that $\sum_{N=1}^{\infty}(N+1)2^{-N}=3$. Hence from \eqref{4_2} we write
\begin{multline}\label{4_3}
d_0^{lu}\left(Y_s,Y_u\right)\le 2\left(2+c_1\,U_{u,p}\right)\Bigg[3\Bigg(
\mathcal{S}\left(\rho^{-\sigma}\right)\wedge 1+
\sup_{\left|x\right|\le \rho^{-\sigma}}\sup_{s\le t\le u} \left|\int_{s}^{t}\int_{U_0}g\left(Y^{s,x}_{r-},r,z\right)\widetilde{N}_p\left(dr,dz\right)\right|^{\frac{2d+1}{p}}
\\
+ 
\sup_{\left|x\right|\le \rho^{-\sigma}}\sup_{s\le t\le u}\left|\int_{s}^{t}b\left(r,Y^{s,x}_{r}\right)dr\right|^{\frac{2d+1}{p}}
+
\sup_{\left|x\right|\le \rho^{-\sigma}}\sup_{s\le t\le u}\left|\int_{s}^{t}\alpha\left(r,Y^{s,x}_{r}\right)dW_r\right|^{\frac{2d+1}{p}}\Bigg)
+4\,c_\gamma\,\rho^{\sigma\gamma}
\Bigg].
\end{multline}
At this point, we revert to the definition of $\mathcal{S}\left(\cdot\right)$ in \eqref{S_def}. By \eqref{min}, 
\begin{align*}
&\Big(\left|\int_{s}^{t}b\left(r,Y^{s,x}_{r}\right)dr\right|^{\frac{2d+1}{p}}
\left|\int_{s}^{t}\!\int_{U_0}g\left(Y^{s,x}_{r-},r,z\right)\widetilde{N}_p\left(dr,dz\right)\right|^{1-\frac{2d}{p}}\Big)\wedge1
\\&\hspace{4em}
\le\Big(1+\left|\int_{s}^{t}b\left(r,Y^{s,x}_{r}\right)dr\right|^{\frac{2d+1}{p}}\Big) \Big(\left|\int_{s}^{t}\!\int_{U_0}g\left(Y^{s,x}_{r-},r,z\right)\widetilde{N}_p\left(dr,dz\right)\right|\wedge1\Big)^{1-\frac{2d}{p}}\\
&\hspace{4em}
\le 
\left|\int_{s}^{t}\!\int_{U_0}g\left(Y^{s,x}_{r-},r,z\right)\widetilde{N}_p\left(dr,dz\right)\right|^{\frac{2d+1}{p}}
+
\left|\int_{s}^{t}b\left(r,Y^{s,x}_{r}\right)dr\right|^{\frac{2d+1}{p}},\quad x\in\mathbb{R}^d,\,t\in\left[s,u\right].
\end{align*}
Repeating the previous argument we find an upper bound for $\mathcal{S}\left(\rho^{-\sigma}\right)\wedge 1$, namely
\begin{multline*}
	\mathcal{S}\left(\rho^{-\sigma}\right)\wedge 1\le 4\Bigg(\sup_{\left|x\right|\le \rho^{-\sigma}}\sup_{s\le t\le u} \left|\int_{s}^{t}\int_{U_0}g\left(Y^{s,x}_{r-},r,z\right)\widetilde{N}_p\left(dr,dz\right)\right|^{\frac{2d+1}{p}}
	\\
	+
	\sup_{\left|x\right|\le \rho^{-\sigma}}\sup_{s\le t\le u}\left|\int_{s}^{t}b\left(r,Y^{s,x}_{r}\right)dr\right|^{\frac{2d+1}{p}}
	+
	\sup_{\left|x\right|\le \rho^{-\sigma}}\sup_{s\le t\le u}
	\left|\int_{s}^{t}\alpha\left(r,Y^{s,x}_{r}\right)dW_r\right|^{\frac{2d+1}{p}}\Bigg).
\end{multline*}
Therefore \eqref{4_3} becomes
\begin{multline*}
d_0^{lu}\left(Y_s,Y_u\right)\le 2\left(2+c_1\,U_{u,p}\right)\Bigg[
15\Bigg(
\sup_{\left|x\right|\le \rho^{-\sigma}}\sup_{s\le t\le u} \left|\int_{s}^{t}\int_{U_0}g\left(Y^{s,x}_{r-},r,z\right)\widetilde{N}_p\left(dr,dz\right)\right|^{\frac{2d+1}{p}}
\\
+
\sup_{\left|x\right|\le \rho^{-\sigma}}\sup_{s\le t\le u}\left|\int_{s}^{t}b\left(r,Y^{s,x}_{r}\right)dr\right|^{\frac{2d+1}{p}}
+
\sup_{\left|x\right|\le \rho^{-\sigma}}\sup_{s\le t\le u}
\left|\int_{s}^{t}\alpha\left(r,Y^{s,x}_{r}\right)dW_r\right|^{\frac{2d+1}{p}}\Bigg)
+4\,c_\gamma\,\rho^{\sigma\gamma}
\Bigg].
\end{multline*}
Analogously, in an a.s. event $\Omega_{u,v}$ one has, using also that $d^{lu}_0(f,g)\le 1$ for every $f,g\in \mathcal{C}_0,$
\begin{multline*}
d_0^{lu}\left(Y_u,Y_v\right)\le30
\left(2+c_1\,U_{v,p}\right)\Bigg[\Bigg(
\sup_{\left|x\right|\le \rho^{-\sigma}}\sup_{u\le t\le v} \left|\int_{u}^{t}\int_{U_0}g\left(Y^{u,x}_{r-},r,z\right)\widetilde{N}_p\left(dr,dz\right)\right|^{\frac{2d+1}{p}}
\\
+
\sup_{\left|x\right|\le \rho^{-\sigma}}\sup_{u\le t\le v}\left|\int_{u}^{t}b\left(r,Y^{u,x}_{r}\right)dr\right|^{\frac{2d+1}{p}}
+
\sup_{\left|x\right|\le \rho^{-\sigma}}\sup_{u\le t\le v}\left|\int_{u}^{t}\alpha\left(r,Y^{u,x}_{r}\right)dW_r\right|^{\frac{2d+1}{p}}
+c_\gamma\,\rho^{\sigma\gamma}
\Bigg)\wedge1\Bigg].
\end{multline*}
If we multiply the two previous expressions, raise both sides to a power $q\in\left(1/2,p\right)$ and take expectations, then 
\begin{align*}
\notag
\mathbb{E}&\left[d^{lu}_0\left(Y_s,Y_u\right)^q\cdot d_0^{lu}\left(Y_u,Y_v\right)^q\right]
\le
30^{2q}\left(4^{q-1}\vee1\right)^2
\mathbb{E}\Bigg[
\left(2+c_1\,U_{u,p}\right)^q \left(2+c_1\,U_{v,p}\right)^q\\\notag&\quad
\times \Bigg\{
\sup_{\left|x\right|\le \rho^{-\sigma}}\sup_{s\le t\le u} \left|\int_{s}^{t}\int_{U_0}g\left(Y^{s,x}_{r-},r,z\right)\widetilde{N}_p\left(dr,dz\right)\right|^{q\frac{2d+1}{p}}
\\\notag&\quad
+
\sup_{\left|x\right|\le \rho^{-\sigma}}\sup_{s\le t\le u}\left|\int_{s}^{t}b\left(r,Y^{s,x}_{r}\right)dr\right|^{q\frac{2d+1}{p}}
+
\sup_{\left|x\right|\le \rho^{-\sigma}}\sup_{s\le t\le u}\left|\int_{s}^{t}\alpha\left(r,Y^{s,x}_{r}\right)dW_r\right|^{q\frac{2d+1}{p}}
+c_{\gamma}^q\,\rho^{\sigma\gamma q}
\Bigg\}\\\notag&\quad
\times\Bigg(\Bigg\{
\sup_{\left|x\right|\le \rho^{-\sigma}}\sup_{u\le t\le v} \left|\int_{u}^{t}\int_{U_0}g\left(Y^{u,x}_{r-},r,z\right)\widetilde{N}_p\left(dr,dz\right)\right|^{q\frac{2d+1}{p}}
\\&\quad
+
\sup_{\left|x\right|\le \rho^{-\sigma}}\sup_{u\le t\le v}\left|\int_{u}^{t}b\left(r,Y^{u,x}_{r}\right)dr\right|^{q\frac{2d+1}{p}}
+
\sup_{\left|x\right|\le \rho^{-\sigma}}\sup_{u\le t\le v}\left|\int_{u}^{t}\alpha\left(r,Y^{u,x}_{r}\right)dW_r\right|^{q\frac{2d+1}{p}}
+c_{\gamma}^q\,\rho^{\sigma\gamma q}
\Bigg\}\wedge 1\Bigg)
\Bigg].
\end{align*}
 Notice that the definition in Corollary \ref{cor5} and Lemma \ref{l1} yield the $\eta-$integrability of the random variables $U_{u,p}$ and $U_{v,p}$ for every $\eta>1$. We fix $\eta=\frac{5}{4}$  and apply H\"older's inequality with exponents $\eta$ and $\eta'=\eta(\eta-1)^{-1}=5$ to deduce that
\begin{align}\label{4_miss1}
\notag
&\mathbb{E}\left[d^{lu}_0\left(Y_s,Y_u\right)^q\cdot d_0^{lu}\left(Y_u,Y_v\right)^q\right]
\le  30^{2q}4^{2\frac{\eta-1}{\eta}}\left(4^{q-1}\vee1\right)^2
\left(\mathbb{E} \left[
\left(2+c_1\,U_{u,p}\right)^{5 q} \left(2+c_1\,U_{v,p}\right)^{5 q} \right]\right)^{1/5}
\\&\quad \quad
\notag
\times \Bigg(\mathbb{E}\Bigg[\Bigg\{
c^{\eta q}_{\gamma}\,\rho^{\eta\sigma\gamma q}
+
\sup_{\left|x\right|\le \rho^{-\sigma}}\sup_{s\le t\le u} \left|\int_{s}^{t}\int_{U_0}g\left(Y^{s,x}_{r-},r,z\right)\widetilde{N}_p\left(dr,dz\right)\right|^{\eta q\frac{2d+1}{p}}
\\\notag&\quad \quad 
+
\sup_{\left|x\right|\le \rho^{-\sigma}}\sup_{s\le t\le u}\left|\int_{s}^{t}b\left(r,Y^{s,x}_{r}\right)dr\right|^{\eta q\frac{2d+1}{p}}
+
\sup_{\left|x\right|\le \rho^{-\sigma}}\sup_{s\le t\le u}\left|\int_{s}^{t}\alpha\left(r,Y^{s,x}_{r}\right)dW_r\right|^{\eta q\frac{2d+1}{p}} \Bigg\}
\\\notag&\quad \quad
\times\Bigg( \Bigg \{c^{\eta q}_{\gamma}\rho^{\eta \sigma\gamma q}+
\sup_{\left|x\right|\le \rho^{-\sigma}}\sup_{u\le t\le v} \left|\int_{u}^{t}\int_{U_0}g\left(Y^{u,x}_{r-},r,z\right)\widetilde{N}_p\left(dr,dz\right)\right|^{\eta q\frac{2d+1}{p}}
\\&\quad \quad
+
\sup_{\left|x\right|\le \rho^{-\sigma}}\sup_{u\le t\le v}\left|\int_{u}^{t}b\left(r,Y^{u,x}_{r}\right)dr\right|^{\eta q\frac{2d+1}{p}}
+
\sup_{\left|x\right|\le \rho^{-\sigma}}\sup_{u\le t\le v}\left|\int_{u}^{t}\alpha\left(r,Y^{u,x}_{r}\right)dW_r\right|^{\eta q\frac{2d+1}{p}}
 \Bigg\}\wedge1\Bigg)\Bigg]\Bigg)^{\frac{1}{\eta}}. 
\end{align}
For every couple of time indeces $0\le t_1\le t_2\le T$, let us denote by 
	\begin{equation}\label{filtr1}
	\mathcal{F}_{t_1,t_2}^{W,N_p}
	=
	\sigma\left(\left\{W_t-W_{t_1},\,N_p\left(\left(t_1,t\right]\times E\right),\,t\in \left[t_1,t_2\right],\,E\in\mathcal{U}\right\}\cup \mathcal{N}\right),
    \end{equation}
	where $\mathcal{N}$ is the family of negligible events in $(\Omega,\mathcal{F},\mathbb{P})$: we set $\mathbb{F}^{W,N_p}_{t_1}=(\mathcal{F}^{W,N_p}_{t_1,t})_{t\in[t_1,T]}$. In particular, $\mathbb{F}_0^{W,N_p}=(\mathcal{F}^{W,N_p}_{0,t})_{t\in[0,T]}$ is the augmented filtration generated by $W$ and $N_p$. We are considering the SDE \eqref{SDE} with deterministic coefficients, which are obviously predictable with respect to $\mathbb{F}_{t_1}^{W,N_p}$, for every $t_1\in[0,T)$, once restricted to the interval $[t_1, T]$. Hence we can invoke \cite[Theorem $117$]{situ} (see also \cite[Theorem $2$]{BLP}) to claim that, for every $t_1\in [0,T)$ and $x\in\mathbb{R^d}$,
	\begin{equation}\label{strong}
		Y^{t_1,x}\quad \text{is a strong solution of (\ref{SDE})}.
	\end{equation}
	This means that $Y^{t_1,x}$ is $\mathbb{F}^{W,N_p}_{t_1}-$adapted (in fact, it is also an $\mathbb{F}^{W,N_p}_{t_1}-$Markov process).  As a consequence, for all $x\in\mathbb{R}^d, \,i=1,2,3,$ and $t\in\left[u,v\right]$, the random variable $Z^{u,x}_{i,t}$ (defined at the end of {\color{black}Subsection }\ref{pre_flow}) is $\mathcal{F}^{W,N_p}_{u,v}-$measurable. Recalling the continuity of the $(\mathcal{D}_0,\mathcal{D})-$valued random fields $Z^{u,\cdot}_{i}$ and using the fact that countable $\sup$ of measurable functions is measurable, we deduce that the random variables  $\sup_{|x|\le \rho^{-\sigma}}\sup_{u\le t \le v}|Z^{u,x}_{i,t}|,\,i=1,2,3,$ appearing in \eqref{4_miss1} are $\mathcal{F}^{W,N_p}_{u,v}-$measurable. It then follows that they are independent of  $\mathcal{F}^{W,N_p}_{s,u}$. Indeed, since $W$ and $N_p$ are mutually independent (see \cite[Theorem $6.3$, Chapter \upperRomannumeral{2}]{IW}, or \cite[Proposition $2.6$]{KU04}), a standard argument based on \emph{Dynkin's theorem} ensures that $\mathcal{F}^{W,N_p}_{t_1,t_2}$ and $\mathcal{F}^{W,N_p}_{t_2,t_3}$ are independent, for any $0\le t_1<t_2<t_3\le T$.  Therefore
\begin{align}\label{Mark}
	\notag&\mathbb{E}\left[\Bigg( \Bigg \{c^{\eta q}_{\gamma}\rho^{\eta \sigma\gamma q}+
	\sup_{\left|x\right|\le \rho^{-\sigma}}\sup_{u\le t\le v} \left|\int_{u}^{t}\int_{U_0}g\left(Y^{u,x}_{r-},r,z\right)\widetilde{N}_p\left(dr,dz\right)\right|^{\eta q\frac{2d+1}{p}}
	\right.\\&\notag\left.
	\qquad +
	\sup_{\left|x\right|\le \rho^{-\sigma}}\sup_{u\le t\le v}\left|\int_{u}^{t}b\left(r,Y^{u,x}_{r}\right)dr\right|^{\eta q\frac{2d+1}{p}}
	+
	\sup_{\left|x\right|\le \rho^{-\sigma}}\sup_{u\le t\le v}\left|\int_{u}^{t}\alpha\left(r,Y^{u,x}_{r}\right)dW_r\right|^{\eta q\frac{2d+1}{p}}
	\Bigg\}\wedge1\Bigg)\Bigg| \mathcal{F}^{W,N_p}_{s,u}\right]
	\\&\notag\quad =
	\mathbb{E}\left[\Bigg( \Bigg \{c^{\eta q}_{\gamma}\rho^{\eta \sigma\gamma q}+
	\sup_{\left|x\right|\le \rho^{-\sigma}}\sup_{u\le t\le v} \left|\int_{u}^{t}\int_{U_0}g\left(Y^{u,x}_{r-},r,z\right)\widetilde{N}_p\left(dr,dz\right)\right|^{\eta q\frac{2d+1}{p}}
	\right.\\&\left.
	\qquad +
	\sup_{\left|x\right|\le \rho^{-\sigma}}\sup_{u\le t\le v}\left|\int_{u}^{t}b\left(r,Y^{u,x}_{r}\right)dr\right|^{\eta q\frac{2d+1}{p}}
	+
	\sup_{\left|x\right|\le \rho^{-\sigma}}\sup_{u\le t\le v}\left|\int_{u}^{t}\alpha\left(r,Y^{u,x}_{r}\right)dW_r\right|^{\eta q\frac{2d+1}{p}}
	\Bigg\}\wedge1\Bigg)\right].
\end{align}
 From now on, we denote by $\tilde{p}=\eta q{(2d+1)}{p^{-1}}$ and enumerate positive constants implicitly assuming their dependence on the parameters $\gamma,\eta, p,q,d,m,T,\{K_r,\,r\ge2 \}$. By virtue of Corollary \ref{cor5} and \cite[Equation ($3.6$)]{KU04}, for every $\tilde{q}>2d,$
\begin{multline}\label{suY}
	\mathbb{E}\Big[\sup_{|x|\le \rho^{-\sigma}}\sup_{s\le t\le u}\Big|Y_t^{s,x}\Big|^{\tilde{q}}
	\Big]
	\le 	2^{\tilde{q}-1}
	\Big(\mathbb{E}\Big[\sup_{s\le t\le u}\Big|Y^{s,0}_t\Big|^{\tilde{q}}\Big]
	+\mathbb{E}\Big[
	\sup_{|x|\le \rho^{-\sigma}}\sup_{s\le t\le u}\Big|Y_t^{s,x}-Y_t^{s,0}\Big|^{\tilde{q}}\Big]
	\Big)
	\\ 
	\le
	c_2\left(1+\rho^{-\sigma\left(\tilde{q}+1\right)}\right)\le 2c_2\rho^{-\sigma\left(\tilde{q}+1\right)}.
\end{multline}
Taking $\tilde{p}>d$, \eqref{suY} with $\tilde{q}=2\tilde{p}$, \eqref{lg1} and H\"older's inequality entail
\begin{align}\label{int_b}
	\begin{split}
		\mathbb{E}\left[\sup_{\left|x\right|\le \rho^{-\sigma}}\sup_{s\le t\le u}\left|\int_{s}^{t}b\left(r,Y^{s,x}_{r}\right)dr\right|^{2\tilde{p}}
		\right]
		\le c_3\,\rho^{2\tilde{p}-\sigma \left(2\tilde{p}+1\right)}.
	\end{split}
\end{align}

In order to estimate the expected value in the right--hand side of \eqref{4_miss1}, we also want to  apply Proposition~\ref{not_sep}  to the continuous, $(\mathcal{D}_0,\mathcal{D})-$valued random fields 
\begin{align*}
Z_1^{\xi,x}=\int_{\xi}^{\cdot}\alpha\left(r,Y^{\xi,x}_{r}\right)d W_r,\qquad
 Z_2^{\xi,x}= \int_{\xi}^{\cdot}\!\int_{U_0}g\left(Y^{\xi,x}_{r-},r,z\right)\widetilde{N}_p\left(dr,dz\right),\quad x\in\mathbb{R}^d, 
\end{align*}
where $\xi=s,u$.
We only show the case $\xi=s$, being $\xi=u$ analogous. We start from $Z_1^{s,\cdot}$, writing
\begin{multline*}
	\notag \mathbb{E}\left[\sup_{\left|x\right|\le \rho^{-\sigma}}\sup_{s\le t\le u} \left|\int_{s}^{t}\alpha\left(r,Y^{s,x}_{r}\right)dW_r\right|^{2\tilde{p}}\right]\le c_4\Bigg(
	\mathbb{E}\left[\sup_{s\le t\le u} \left|\int_{s}^{t}\alpha\left(r,Y^{s,0}_{r}\right)dW_r\right|^{2\tilde{p}}\right]
	\\
	+\mathbb{E}\left[\sup_{\left|x\right|\le \rho^{-\sigma}}\sup_{s\le t\le u} \left|\int_{s}^{t}\left(\alpha\left(r,Y^{s,x}_{r}\right)-\alpha\left(r,Y^{s,0}_{r}\right)\right)dW_r\right|^{2\tilde{p}}\right]\Bigg)\eqqcolon c_4\left(\mathbf{\upperRomannumeral{1}}_2+\mathbf{\upperRomannumeral{2}}_2\right).
\end{multline*}
Regarding $\mathbf{\upperRomannumeral{1}}_2$, by \eqref{lg1},  the Burkholder--Davis--Gundy inequality and \cite[Equation ($3.6$)]{KU04} we obtain $\mathbf{\upperRomannumeral{1}}_2\le c_5\rho^{\tilde{p}}$.
As for $\mathbf{\upperRomannumeral{2}}_2$, since $\tilde{p}>d$ we can apply Proposition \ref{not_sep}, which implies that, in the whole space  $\Omega$, 
\begin{align*}
	\sup_{\left|x\right|\le \rho^{-\sigma}}\sup_{s\le t\le u} \left|\int_{s}^{t}\left(\alpha\left(r,Y^{s,x}_{r}\right)-\alpha\left(r,Y^{s,0}_{r}\right)\right)dW_r\right|\
	\le c_6\widetilde{U}^{(1)}_{s,u,\tilde{p}}\rho^{-\sigma\left(1+\frac{1}{2\tilde{p}}\right)}.
\end{align*}
Here $\widetilde{U}^{(1)}_{s,u,\tilde{p}}$ is the $2\tilde{p}-$integrable random variable
	\[
\widetilde{U}^{(1)}_{s,u,\gamma}(\omega)=
\Big(\int_{{\mathbb R}^d}\int_{{\mathbb R}^d}\Big(\frac{\sup_{s\le t \le u}{\left|Z_{1,t}^{s,x}(\omega)-Z_{1,t}^{s,y}(\omega)\right|}}{\left|x-y\right|}\Big)^{2\tilde{p}} f_{}\left(\left|x\right|\right)f_{}\left(\left|y\right|\right)dx\,dy\Big)^{\frac{1}{2\tilde{p}}},\quad \omega\in\Omega,
\]   
 which satisfies, by \eqref{lip1}, Lemma \ref{l1} and the Burkholder--Davis--Gundy inequality,
\begin{equation*}
	\mathbb{E}\left[\left(\widetilde{U}^{(1)}_{s,u,\tilde{p}}\right)^{2\tilde{p}}\right]\le c_7\,\rho^{\tilde{p}}.
\end{equation*}
Thus,
\begin{equation}\label{int_alph}
	\mathbb{E}\left[\sup_{\left|x\right|\le \rho^{-\sigma}}\sup_{s\le t\le u} \left|\int_{s}^{t}\alpha\left(r,Y^{s,x}_{r}\right)dW_r\right|^{2\tilde{p}}\right]
	\le c_8 \left[
	\rho^{\tilde{p}}+\rho^{\tilde{p}-\sigma \left(2\tilde{p}+1\right)}\right]\\
	\le 2\,c_8\,\rho^{\tilde{p}-\sigma \left(2\tilde{p}+1\right)}.
\end{equation}     
Moving on to $Z^{s,\cdot}_2$, we can argue as in \eqref{c_1v} to conclude that 
\begin{equation}\label{int_g_2}
	\mathbb{E}\left[\sup_{\left|x\right|\le \rho^{-\sigma}}\sup_{s\le t\le u} \left|\int_{s}^{t}\int_{U_0}g\left(Y^{s,x}_{r-},r,z\right)\widetilde{N}_p\left(dr,dz\right)\right|^{2\tilde{p}}\right]
	\le c_{9} \left(\rho+\rho^{1-\sigma\left(2\tilde{p}+1\right)}\right)\le 
	 2\,c_{9}\,\rho^{1-\sigma\left(2\tilde{p}+1\right)}. 
\end{equation} 
Analogously, if we require $\tilde{p}>2d,$ then 
\begin{equation}\label{int_g_1}
	\mathbb{E}\left[\sup_{\left|x\right|\le \rho^{-\sigma}}\sup_{s\le t\le u} \left|\int_{s}^{t}\int_{U_0}g\left(Y^{s,x}_{r-},r,z\right)\widetilde{N}_p\left(dr,dz\right)\right|^{\tilde{p}}\right]
\le 
	c_{10}\,\rho^{1-\sigma\left(\tilde{p}+1\right)}. 
\end{equation}
Going back to \eqref{4_miss1}, by \eqref{Mark} and the law of iterated expectations with respect to $\mathcal{F}^{W,N_p}_{s,u}$ we compute
\begin{align}\label{daje}
	\notag
	&\mathbb{E}\left[d^{lu}_0\left(Y_s,Y_u\right)^q\cdot d_0^{lu}\left(Y_u,Y_v\right)^q\right]
	\le  c_{11}\Bigg\{\rho^{\sigma\gamma q}\\\notag
	&\qquad 
	+\left(\mathbb{E}\left[
		\sup_{\left|x\right|\le \rho^{-\sigma}}\sup_{s\le t\le u}\left|\int_{s}^{t}b\left(r,Y^{s,x}_{r}\right)dr\right|^{\tilde{p}}
+
	\sup_{\left|x\right|\le \rho^{-\sigma}}\sup_{s\le t\le u}\left|\int_{s}^{t}\alpha\left(r,Y^{s,x}_{r}\right)dW_r\right|^{\tilde{p}}\right]\right)^{\frac{1}{\eta}}
\\&
\qquad \notag+\Bigg(\mathbb{E}\left[\sup_{\left|x\right|\le \rho^{-\sigma}}\sup_{s\le t\le u} \left|\int_{s}^{t}\int_{U_0}g\left(Y^{s,x}_{r-},r,z\right)\widetilde{N}_p\left(dr,dz\right)\right|^{\tilde{p}}\right]
\Bigg( 
\rho^{\eta \sigma\gamma q}+\mathbb{E}\Bigg[
	\sup_{\left|x\right|\le \rho^{-\sigma}}\sup_{u\le t\le v}\left|\int_{u}^{t}b\left(r,Y^{u,x}_{r}\right)dr\right|^{\tilde{p}}
	\\\notag
	&\qquad +\sup_{\left|x\right|\le \rho^{-\sigma}}\sup_{u\le t\le v} \left|\int_{u}^{t}\int_{U_0}g\left(Y^{u,x}_{r-},r,z\right)\widetilde{N}_p\left(dr,dz\right)\right|^{\tilde{p}}+
	\sup_{\left|x\right|\le \rho^{-\sigma}}\sup_{u\le t\le v}\left|\int_{u}^{t}\alpha\left(r,Y^{u,x}_{r}\right)dW_r\right|^{\tilde{p}}
	\Bigg)\Bigg]\Bigg)^{\frac{1}{\eta}}\Bigg\}\notag
	\\&\qquad \eqqcolon 
	c_{11}\left\{
	\rho^{\sigma \gamma q}+ \mathbf{\upperRomannumeral{1}}_3+\mathbf{\upperRomannumeral{2}}_3
	\right\}.
\end{align}
To perform this passage, we observe that the argument of the second expected value in \eqref{4_miss1} is a product between a sum $\widebar{S}$, which includes an integral in $\widetilde{N}_p$, and $\min\{S,1\}$, where $S$ is another sum. In \eqref{daje},  $\mathbf{\upperRomannumeral{2}}_3$ is obtained multiplying  the integral with respect to $\widetilde{N}_p$ in $\widebar{S}$ with $S$, while  $\rho^{\sigma \gamma q}$ and $ \mathbf{\upperRomannumeral{1}}_3$ multiplying the remaining terms in $\widebar{S}$ by $1$.

By \eqref{int_b}-\eqref{int_alph} and the Cauchy--Schwarz inequality,
\begin{equation*}
	\mathbf{\upperRomannumeral{1}}_3\le c_{12}\rho^{\frac{\tilde{p}}{2\eta}-\frac{\sigma}{2\eta}\left(2\tilde{p}+1\right)}.
\end{equation*}
The same results together with \eqref{int_g_2}-\eqref{int_g_1} also yield
\begin{align}
{\mathbf{\upperRomannumeral{2}}_3}\notag&\le
c_{13}\rho^{\frac{1}{\eta}-\frac{\sigma}{\eta}\left(\tilde{p}+1\right)}\left(\rho^{\sigma\gamma q}+\rho^{\frac{1}{2\eta}-\frac{\sigma}{2\eta}\left(2\tilde{p}+1\right)}\right).
\end{align}
Combining the two previous  estimates in \eqref{daje} we deduce  that 
	\begin{equation}\label{quasi2}
		\mathbb{E}\left[d^{lu}_0\left(Y_s,Y_u\right)^q\cdot d_0^{lu}\left(Y_u,Y_v\right)^q\right]
	\le 
	c_{14}\left(\rho^{\sigma\gamma q }+\rho^{\frac{\tilde{p}}{2\eta}-\frac{\sigma}{2\eta}\left(2\tilde{p}+1\right)}
	+\rho^{\sigma\gamma q+\frac{1}{\eta}\left(1-\sigma\left(\tilde{p}+1\right)\right)}
	+
	\rho^{\frac{3}{2\eta}-\frac{\sigma}{\eta}\left(2\tilde{p}+\frac{3}{2}\right)}
\right).
	\end{equation}
At this point, it only remains to select appropriate parameters to recover \eqref{goal} from \eqref{quasi2}. Recall that $\eta=\frac{5}{4},$ hence $\frac{3}{2\eta}=\frac{6}{5}>1.$ Collecting the conditions written throughout the lines above, we  pick $\left(p,q,\sigma,\gamma\right)$ according to the following steps.
\begin{enumerate}
	\item $p\in\left(4d+1,\infty\right)$, so that $p^{-1}{\left(2d+1\right)}<1-2dp^{-1}$;
	\item $q\in({p(2d+\frac{1}{2})}{(2d+1)^{-1}},\,p)$, where  the lower bound ensures that $q\left(2d+1\right)p^{-1}>2d+\frac{1}{2}$. In turn, this yields
	\[
		q>1/2,\qquad \tilde{p}=\eta q\frac{2d+1}{p}>2d,\qquad \frac{\tilde{p}}{2\eta}>\frac{3}{2\eta};
	\]
	\item $\sigma\in(0,(8\tilde{p}+6)^{-1})$, i.e., $\sigma$ is so small that
	\begin{equation}\label{r}
		\frac{3}{2\eta}-\frac{\sigma}{\eta}\left(2\tilde{p}+\frac{3}{2}\right)>1.
	\end{equation}
This bound also guarantees that $\sigma\left(\tilde{p}+1\right)<1$;
	\item $\gamma\in\big(3(2\eta \sigma q)^{-1},\infty\big)$, so that $\sigma\gamma q>\frac{3}{2\eta}$.
\end{enumerate}
With the previous prescriptions and noticing that $\frac{2\tilde{p}+1}{2}<2\tilde{p}+\frac{3}{2}$, from \eqref{quasi2} we conclude that
\begin{align}\label{ooo}
	\notag&\mathbb{E}\left[d^{lu}_0\left(Y_s,Y_u\right)^q\cdot d_0^{lu}\left(Y_u,Y_v\right)^q\right]
\le 
c_{15}\rho^{\frac{3}{2\eta}-\frac{\sigma}{\eta}\left(2\tilde{p}+\frac{3}{2}\right)}\eqqcolon
c_{15}\rho^{1+r},\quad \text{where }\\
&\qquad r=	\frac{3}{2\eta}-\frac{\sigma}{\eta}\left(2\tilde{p}+\frac{3}{2}\right)-1
.
\end{align}
Since $r>0$ by \eqref{r}, recalling that $\rho=v-s<1$, we see that Equation \eqref{ooo} reduces to \eqref{goal}. The proof is then complete.
\end{proof}
 Using  the càdlàg version $Z$ of the process $Y$ given by Theorem \ref{cadlag}, we consider
\begin{equation}\label{bis}\begin{aligned}
		&Z_{1,t}^{s,x}=\int_{s}^{t}\alpha\left(r,Z^{s,x}_{r}\right)dW_r,
		\qquad 
		Z_{2,t}^{s,x}=\int_{s}^{t}\!\int_{U_0}g\left(Z^{s,x}_{r-},r,z\right)\widetilde{N}_p\left(dr,dz\right),
		\qquad Z_{3,t}^{s,x}=\int_{s}^{t}b\left(r,Z^{s,x}_{r}\right)dr.
	\end{aligned}
\end{equation}
Thanks  to \eqref{part_1}, for every $i=1,2,3$, the estimate \eqref{4_2} for $d_0^{lu}(Y_{s},Y_{u})$ constitutes an upper bound for $d_0^{lu}(Z_{i,s},Z_{i,u})$, upon substituting  $\big(3+c_1\widetilde{U}^{(i)}_{u,p}\big)$ for $(2+c_1U_{u,p})$ in the right--hand side. Here the random variables $\widetilde{U}^{(i)}_{u,p}$ are defined according to Proposition \ref{not_sep} applied to the
$(\mathcal{D}_0,\mathcal{D})-$valued random fields $Z^{u,\cdot}_i$. Therefore, recalling that   $Z_i=(Z_{i,s})_{s\in[0,T]}$ are stochastically continuous processes with values in $(\mathcal{C}_0,\mathcal{C})$ by Corollary \ref{st_cont_Z}, the same computations as in the proof of Theorem~\ref{cadlag} allow to invoke Corollary \ref{cor_4.2}, which yields the following result.
\begin{corollary}\label{cadlag_Z}
	For any $i=1,2,3$, there exists a càdlàg version of the $(\mathcal{C}_0,\mathcal{C})-$valued process $Z_i=(Z_{i,s})_{s}$. 
\end{corollary}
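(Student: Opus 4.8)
The plan is to verify the hypotheses of Corollary~\ref{cor_4.2} for each of the three $(\mathcal{C}_0,\mathcal{C})$-valued processes $Z_i=(Z_{i,s})_{s\in[0,T]}$, $i=1,2,3$, and then invoke it directly. Stochastic continuity of each $Z_i$ is already available from Corollary~\ref{st_cont_Z}, and the measurability of $\omega\mapsto d_0^{lu}(Z_{i,s}(\omega),Z_{i,u}(\omega))$ follows exactly as for $Y$ in \eqref{Bic} (by continuity of $Z_i^{s,\cdot}$ the supremum defining $d_0^{lu}$ can be evaluated on a countable dense subset of $\mathbb{R}^d$). So the only thing left is the moment bound \eqref{cond_BZ}, i.e.\ to produce $q>1/2$, $r>0$ and $C>0$ with
\[
\mathbb{E}\big[d_0^{lu}(Z_{i,s},Z_{i,u})^q\cdot d_0^{lu}(Z_{i,u},Z_{i,v})^q\big]\le C\,|v-s|^{1+r},\qquad 0\le s<u<v\le T.
\]

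First I would replay the chain of estimates \eqref{add1}--\eqref{4_2} from the proof of Theorem~\ref{cadlag} with $Y$ replaced by $Z_i$. The crucial input is the flow identity \eqref{flow_Z} of Corollary~\ref{flow_Z_cor}, namely $Z_{i,u}^{s,x}+Z_{i,t}^{u,Y^{s,x}_u}=Z_{i,t}^{s,x}$ for $t\in[u,T]$ (valid on an a.s.\ event independent of $x$), together with the decomposition \eqref{part_1}; these play exactly the role that \eqref{ar_1vera} and \eqref{flow} played for $Y$. They yield the analogue of \eqref{4_2}: $d_0^{lu}(Z_{i,s},Z_{i,u})$ is bounded by the same expression appearing there, with the factor $(2+c_1U_{u,p})$ replaced by $(3+c_1\widetilde U^{(i)}_{u,p})$, where $\widetilde U^{(i)}_{u,p}$ is the random variable furnished by Proposition~\ref{not_sep} applied to the continuous $(\mathcal{D}_0,\mathcal{D})$-valued random field $Z_i^{u,\cdot}$. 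By Lemma~\ref{l1} and the estimates in its proof (cf.\ \eqref{bdg_alpha}--\eqref{bdg_g}), one has $\sup_{u\in[0,T)}\mathbb{E}[(\widetilde U^{(i)}_{u,p})^\eta]<\infty$ for every $\eta>1$, which is precisely the uniform integrability property of $U_{u,p}$ used in Theorem~\ref{cadlag}.

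With the analogue of \eqref{4_2} in hand, the remaining computation is literally the one carried out in \eqref{4_3}--\eqref{ooo}: split the series at $\bar N(\rho)=[\rho^{-\sigma}]$, apply Hölder with exponent $\eta=\tfrac54$, use the independence of the increments of $(W,N_p)$ over $[s,u]$ and $[u,v]$ — which applies to the integrals built from the strong solutions $Z_i^{u,\cdot}$, since these are $\mathcal{F}^{W,N_p}_{u,v}$-measurable, exactly as in \eqref{Mark} — and estimate the moments of the suprema of the three stochastic integrals by \eqref{int_b}, \eqref{int_alph}, \eqref{int_g_2}, \eqref{int_g_1}. Choosing $(p,q,\sigma,\gamma)$ as in Steps~1--4 of that proof gives \eqref{cond_BZ} with the same $r>0$, and Corollary~\ref{cor_4.2} produces the desired càdlàg modification; as in Theorem~\ref{cadlag}, completeness of $(\Omega,\mathcal{F},\mathbb{P})$ makes it automatically a $(\mathcal{C}_0,\mathcal{C})$-valued process.

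The main obstacle is bookkeeping rather than conceptual: one must check that every place where the proof of Theorem~\ref{cadlag} used a structural property of $Y$ — the representation \eqref{ar_1vera}, the flow property \eqref{flow}, the strong-solution/adaptedness property \eqref{strong}, and the uniform moment bound \eqref{ri4} — has an exact counterpart for $Z_i$. All four do: \eqref{ar_1vera} is replaced by the definition \eqref{bis}, \eqref{flow} by \eqref{flow_Z}, \eqref{strong} holds because $Z_i^{u,\cdot}$ is built from the strong solution $Y^{u,\cdot}$ and is therefore $\mathbb{F}^{W,N_p}_u$-adapted, and \eqref{ri4} by the uniform integrability of $\widetilde U^{(i)}_{u,p}$ just noted. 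Since these substitutions are one-to-one, no genuinely new estimate is required and the proof reduces to the observation already recorded before the statement.
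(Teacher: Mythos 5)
Your proposal is correct and follows essentially the same route as the paper: the authors likewise bound $d_0^{lu}(Z_{i,s},Z_{i,u})$ via the flow identity \eqref{flow_Z}/\eqref{part_1} by the expression in \eqref{4_2} with $(2+c_1U_{u,p})$ replaced by $\big(3+c_1\widetilde{U}^{(i)}_{u,p}\big)$, where $\widetilde{U}^{(i)}_{u,p}$ comes from Proposition \ref{not_sep} applied to $Z_i^{u,\cdot}$, and then rerun the computations of Theorem \ref{cadlag} (including the conditioning step \eqref{Mark} and the moment bounds \eqref{int_b}--\eqref{int_g_1}) together with the stochastic continuity of Corollary \ref{st_cont_Z} to invoke Corollary \ref{cor_4.2}. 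Your additional checks (measurability of $\omega\mapsto d_0^{lu}(Z_{i,s},Z_{i,u})$, adaptedness of $Z_i^{u,\cdot}$ to $\mathbb{F}^{W,N_p}_u$, uniform moments of $\widetilde{U}^{(i)}_{u,p}$) match the ingredients the paper uses implicitly.
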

In order not to complicate the notation, we keep denoting by $Z_i,\,i=1,2,3$, the càdlàg processes given by  Corollary \ref{cadlag_Z}. Without loss of generality, we assume that $Z$ and $Z_i$ are càdlàg in the whole space $\Omega$. \\
At the end of Subsection \ref{pre_flow} (see \eqref{ar_1vera}), we have determined the existence of an a.s. event $\Omega_s$ independent of $x$ where the SDE \eqref{SDE} is satisfied. Now, combining Theorem \ref{cadlag} with Corollary \ref{cadlag_Z}, we can get rid of the dependence of such $\Omega_s$ from the initial time $s$. This is done in the next lemma, where we are also able to establish the flow property \eqref{flow} in an a.s. event independent of the space and time variables. 
\begin{lemma}\label{common}
	There exists an a.s. event $\Omega'$ (independent of $x,\,s$ and $t$) such that
	\begin{align}\label{common_eq}
	\notag	&Z^{s,x}_t=x+\int_{s}^{t}b\left(r,Z^{s,x}_{r}\right)dr+\int_{s}^{t}\alpha\left(r,Z^{s,x}_{r}\right)dW_r+
		\int_{s}^{t}\!\int_{U_0}g\left(Z^{s,x}_{r-},r,z\right)\widetilde{N}_p\left(dr,dz\right),\\&\qquad s\in\left[0,T\right),\,t\in\left[0,T\right],\,x\in\mathbb{R}^d,\,\omega\in\Omega'.
	\end{align}
Furthermore,
\begin{equation}\label{flow_X}
		Z^{s,x}_t\left(\omega\right)=Z^{u,Z^{s,x}_u\left(\omega\right)}_t\left(\omega\right),\quad 0\le s<u< t\le T,\,x\in\mathbb{R}^d,\,\omega\in\Omega'.
\end{equation}
\end{lemma}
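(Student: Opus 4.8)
The plan is to assemble, over a countable dense set of initial times, the ingredients already available --- the identity \eqref{ar_1vera} (valid on an $x$-independent event $\Omega_s$ for each fixed $s$), the flow property of Corollary~\ref{cor_flow} for $Y$, the fact that $Z$ is a modification of $Y$ and that each càdlàg process $Z_i$ is a modification of the stochastic-integral process of \eqref{bis} --- and then to propagate these identities to \emph{every} initial time by means of the right-continuity in $s$ of the $(\mathcal{C}_0,\mathcal{C})$-valued processes $Z,Z_1,Z_2,Z_3$. This right-continuity is precisely what Subsections~\ref{st_cont_section}--\ref{cadlag_sec} were built to provide, and it is indispensable here: a stochastic integral cannot be passed to a limit pointwise in $\omega$, so in \eqref{common_eq} the integrals are read as the càdlàg-in-$s$ modifications $Z_i^{s,x}$ of \eqref{bis} (which coincide with the genuine stochastic integrals $\mathbb{P}$-a.s.\ for each fixed $s$). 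Throughout, fix a countable dense set $D\subset[0,T)$.

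\emph{Step 1 (the equation on a common event).} First, for each $s\in D$ one has $Z_s=Y_s$ $\mathbb{P}$-a.s., and each $Z_{i,s}$ agrees $\mathbb{P}$-a.s.\ with the $\mathcal{C}_0$-valued random variable defining it in \eqref{bis}; since all of these are $(\mathcal{D}_0,\mathcal{D})$-valued random fields continuous in $x$, agreement on $\mathbb{Q}^d$ propagates to all of $\mathbb{R}^d$, and intersecting with the event $\Omega_s$ of \eqref{ar_1vera} gives an a.s.\ event $\Omega'_s$ on which \eqref{common_eq} holds for all $x\in\mathbb{R}^d$ and $t\in[0,T]$. Set $\Omega_1=\bigcap_{s\in D}\Omega'_s$. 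Given $\omega\in\Omega_1$ and an arbitrary $s\in[0,T)$, I would choose $D\ni s_n\downarrow s$ and pass to the limit in \eqref{common_eq} written at $s_n$: by the right-continuity at $s$ of the $\mathcal{C}_0$-valued processes $Z,Z_1,Z_2,Z_3$ one has, for every $N$, $\sup_{|x|\le N}\norm{Z^{s_n,x}(\omega)-Z^{s,x}(\omega)}_0\to0$ and the analogous bound for each $Z_i$, so both sides of \eqref{common_eq} at $s_n$ converge, pointwise in $t$ and $x$, to the corresponding sides at $s$. Hence \eqref{common_eq} holds on $\Omega_1$.

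\emph{Step 2 (the flow property).} For $s,u\in D$ with $s<u$, Corollary~\ref{cor_flow} gives an a.s.\ event $\Omega_{s,u}$ on which $Y^{u,Y^{s,x}_u(\omega)}_t(\omega)=Y^{s,x}_t(\omega)$ for all $t\in[u,T]$ and $x\in\mathbb{R}^d$; intersecting, over all such pairs, these events with $\{Z^{s,x}_{\cdot}=Y^{s,x}_{\cdot}\text{ for all }x\}$ and $\{Z^{u,y}_{\cdot}=Y^{u,y}_{\cdot}\text{ for all }y\}$ produces an a.s.\ event $\Omega_2$ on which $Z^{u,Z^{s,x}_u(\omega)}_t(\omega)=Z^{s,x}_t(\omega)$ whenever $s,u\in D$, $s<u<t\le T$ and $x\in\mathbb{R}^d$. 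To reach arbitrary $0\le s<u<t\le T$ on $\Omega':=\Omega_1\cap\Omega_2$, I would first keep $u\in D$ and take $D\ni s_n\downarrow s$ (so $s_n<u$ eventually): right-continuity of $Z$ in the first variable yields $Z^{s_n,x}_u(\omega)\to Z^{s,x}_u(\omega)$ and $Z^{s_n,x}_t(\omega)\to Z^{s,x}_t(\omega)$, and, since $Z_u(\omega)\in\mathcal{C}_0=C(\mathbb{R}^d;\mathcal{D}_0)$ so that $y\mapsto Z^{u,y}(\omega)$ is continuous, also $Z^{u,Z^{s_n,x}_u(\omega)}_t(\omega)\to Z^{u,Z^{s,x}_u(\omega)}_t(\omega)$; the identity thus extends to every $s\in[0,T)$ and $u\in D$. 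Next I would take $D\ni u_m\downarrow u$ (so $s<u_m<t$ eventually): $Z^{s,x}_{\cdot}(\omega)\in\mathcal{D}_0$ is right-continuous, so $Z^{s,x}_{u_m}(\omega)\to Z^{s,x}_u(\omega)=:y$ with all these points in a fixed ball, and combining right-continuity of $Z$ in the first variable with the continuity of $Z^{u,\cdot}(\omega)$ on $\mathbb{R}^d$ gives $Z^{u_m,Z^{s,x}_{u_m}(\omega)}_t(\omega)\to Z^{u,y}_t(\omega)$, while $Z^{s,x}_t(\omega)$ does not depend on $m$. This establishes \eqref{flow_X} on $\Omega'$.

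\emph{Main obstacle.} The bookkeeping with countable dense sets is routine; the real difficulty lies in the limits. Each term in \eqref{common_eq} and \eqref{flow_X} has to be realized as the value of a $\mathcal{C}_0$-valued process that is right-continuous in $s$ --- which is exactly why $Z_1,Z_2,Z_3$ were introduced and shown càdlàg in $s$ via Corollary~\ref{cor_4.2} --- since the stochastic integrals written there do not survive pointwise-in-$\omega$ limits. The second delicate point is the iterated limit in Step~2, where one must use simultaneously the right-continuity of $Z$ in its initial time, the càdlàg property of $Z^{s,x}_{\cdot}$ in $t$, and the continuity of $Z^{u,\cdot}(\omega)$ on $\mathbb{R}^d$, and check that the random spatial arguments $Z^{s_n,x}_u(\omega)$ and $Z^{s,x}_{u_m}(\omega)$ remain bounded so that the local-uniform convergence in $\mathcal{C}_0$ can be used.
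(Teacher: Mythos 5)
Your argument is correct and follows essentially the same route as the paper: establish \eqref{common_eq} and the flow identity on a common a.s.\ event over a countable dense set of initial times (reading the integrals as the càdlàg-in-$s$ versions $Z_i$), then extend to all $s,u$ via the càdlàg property of the $(\mathcal{C}_0,\mathcal{C})$-valued processes $Z,Z_1,Z_2,Z_3$, the right-continuity in $t$, and the continuity in $x$. The only cosmetic difference is that you extend the flow property by two iterated limits (first in $s$ with $u$ dyadic, then in $u$), whereas the paper takes a simultaneous diagonal limit $s_n\downarrow s$, $u_n\downarrow u$; both are equivalent.
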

\begin{proof}
	By \eqref{ar_1vera}, there exists an a.s. event $\Omega_1$ --independent of $s\in[0,T)$ and $x\in\mathbb{R}^d$-- such that 
	\begin{equation}\label{ar2}
	Z_t^{s,x}\left(\omega\right)=x+	\sum_{i=1}^{3}Z^{s,x}_{i,t}\left(\omega\right),\quad s\in\left[0,T\right)\cap \mathbb{Q},\,t\in\left[0,T\right],\,x\in\mathbb{R}^d,\,\omega\in\Omega_1.
	\end{equation}
	Thanks to the càdlàg property of the $(\mathcal{C}_0,\mathcal{C})-$valued processes $Z$ and $Z_i,\,i=1,2,3,$ a standard approximation argument in $s$ ensures that  \eqref{ar2} holds for every $s\in[0,T)$. Hence \eqref{common_eq} is satisfied  in $\Omega_1$. 

As for the flow property in \eqref{flow_X}, note that by \eqref{flow} in Corollary \ref{cor_flow} there is an a.s. event $\Omega_2$ --independent of $x,\,s,\,u$-- such that
\begin{equation}\label{pre_flow_Q}
	Z^{s,x}_t\left(\omega\right)=Z^{u,Z^{s,x}_u\left(\omega\right)}_t\left(\omega\right),\quad 0\le s<u< t\le T,\, s,u\in\mathbb{Q},\,x\in\mathbb{R}^d,\,\omega\in\Omega_2.
\end{equation}
Fix $x\in\mathbb{R}^d,\,\omega\in\Omega_2$ and $s,u\in\mathbb{R}\setminus\mathbb{Q}$ such that $0\le s<u<T$. Consider $t\in(u,T]$ and a sequence $(s_n)_n\subset [0,u)\cap \mathbb{Q}$ such that $s_n\downarrow s$ as $n\to \infty$: we know that $\lim_{n\to \infty}Z^{s_n,x}_{t}(\omega)=Z^{s,x}_t(\omega)$. Moreover, we take a sequence $(u_n)_n\subset (u,t)\cap \mathbb{Q}$, with $u_n\downarrow u$ as $n\to \infty$, so that $\lim_{n\to \infty}Z^{s_n,x}_{u_n}(\omega)=Z^{s,x}_u(\omega).$ At this point, from  the càdlàg property of the $(\mathcal{C}_0,\mathcal{C})-$valued process $Z$, we deduce that $Z^{u,y}\left(\omega\right)=\mathcal{D}_0\!-\lim_{n\to \infty}Z^{u_n,y}\left(\omega\right)$ locally uniformly in $y\in\mathbb{R}^d$. As a consequence, 
\[
	Z^{u,Z^{s,x}_u\left(\omega\right)}\left(\omega\right)=\mathcal{D}_0\!-\lim_{n\to \infty}Z^{u_n,Z^{s_n,x}_{u_n}\left(\omega\right)}\left(\omega\right).
\]
By \eqref{pre_flow_Q},
$
		Z^{s_n,x}_t\left(\omega\right)=Z^{u_n,Z^{s_n,x}_{u_n}\left(\omega\right)}_t\left(\omega\right)
	,\, n\in\mathbb{N},
$
hence we can pass to the limit as $n\to \infty$ to obtain \eqref{flow_X} in $\Omega_2$.

The a.s. event $\Omega'$ is obtained by setting $\Omega'=\Omega_1\cap \Omega_2$, completing the proof.
\end{proof}
{\color{black} Theorem \ref{cadlag} and Lemma \ref{common} coupled with Lemma \ref{st_cont} show  that \emph{$Z$ is the regular (or sharp) stochastic flow generated by the SDE \eqref{SDE}} (without large jumps)  according to {\color{black} Theorem \ref{cadlag1}}. 

More precisely, $Z$ satisfies Points \ref{comunala}-\ref{flow_1} in  Definition \ref{sharpala} by Lemma \ref{common}, while the fact that $Z$ is a stochastically continuous, càdlàg process with values in $(\mathcal{C}_0,\mathcal{C})$ --which entail Points \ref{regularity}-\ref{constoch} in Definition \ref{sharpala}-- is guaranteed by Theorem \ref{cadlag} and Lemma \ref{st_cont}. 

Combining stochastic continuity and càdlàg property, we also infer that the process {\sl $Z=(Z_s)_{s\in[0,T]}$ has no fixed--time discontinuities,}  meaning that, for every $s\in [0,T]$,
\begin{equation}\label{nftd}
	Z_{s-}(\omega)=Z_s(\omega),\quad \omega\in\Omega_s. 
\end{equation}}
We conclude this part by stating a couple of lemmas discussing further properties of the sharp flow $Z^{s,x}_t$: they will be used in Section \ref{large_sec} while studying the SDE \eqref{SDEgeneral} with large jumps. Their proofs are postponed to Appendix \ref{ap_lemmi}.  The first result regards the joint--measurability.
\begin{lemma}\label{joint_meas}
	For every $\bar{s},\,\bar{t}\in [0,T]$, the mapping $Z\colon \Omega\times [0,\bar{s}]\times \mathbb{R}^d\times [0,{\bar{t}}]\to \mathbb{R}^d$ defined by $Z(\omega,s,x,t)=Z^{s,x}_t(\omega)$ is $\mathcal{F}_{\bar{t}}\otimes \mathcal{B}([0,\bar{s}]\times \mathbb{R}^d\times [0,{\bar{t}}])-$measurable.
\end{lemma}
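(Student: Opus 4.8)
The plan is to realise the map $(\omega,s,x,t)\mapsto Z^{s,x}_t(\omega)$ as a pointwise limit of explicitly measurable ``discretised'' maps, exploiting the three regularity properties of the sharp flow $Z$ one variable at a time — càdlàg in $s$ (Theorem \ref{cadlag}), continuity in $x$ (Point \ref{conx} of Definition \ref{sharpala}), càdlàg in $t$ — while keeping the time parameter truncated at $\bar t$ so that everything remains $\mathcal F_{\bar t}$-measurable. Since $\mathcal D_0$ and $\mathcal C_0$ are not separable for the uniform metric, I cannot simply invoke ``a pointwise limit of measurable maps into a Polish space is measurable''; instead I would work throughout with the projection $\sigma$-algebras $\mathcal D$, $\mathcal C$ and approximate by dyadic/lattice discretisations by hand (this is precisely the kind of statement collected in Appendix \ref{ap_sigma}).

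First I would record a preliminary fact: the evaluation map $e_{\bar t}\colon\big(\mathcal D([0,\bar t];\mathbb R^d),\mathcal D_{[0,\bar t]}\big)\times[0,\bar t]\to\mathbb R^d$, $e_{\bar t}(g,t)=g(t)$, is jointly measurable. Writing $\kappa_n(t)=\big(\lfloor 2^n t\rfloor+1\big)2^{-n}\wedge\bar t$ one has $\kappa_n(t)\downarrow t$ with $\kappa_n(t)\ge t$, so $g(t)=\lim_n g(\kappa_n(t))$ for every càdlàg $g$; and for each fixed $n$ the map $(g,t)\mapsto g(\kappa_n(t))$ is jointly measurable because $t\mapsto\kappa_n(t)$ is a step function taking finitely many values $c\le\bar t$, on each level set of which $(g,t)\mapsto g(\kappa_n(t))=\pi_c(g)$ is $\mathcal D_{[0,\bar t]}$-measurable and constant in $t$. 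Hence $e_{\bar t}$ is a pointwise limit of jointly measurable maps, so it is jointly measurable.

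Next I would fix $c\in[0,\bar t]$ and show that $(\omega,x,t)\mapsto Z^{c,x}_t(\omega)$ is $\mathcal F_{\bar t}\otimes\mathcal B(\mathbb R^d\times[0,\bar t])$-measurable. Since $Z_c\colon\Omega\to(\mathcal C_0,\mathcal C)$ is a random variable and the projections $\pi_x\colon(\mathcal C_0,\mathcal C)\to(\mathcal D_0,\mathcal D)$ are measurable, $\omega\mapsto Z^{c,x}_{\,\cdot}(\omega)\in(\mathcal D_0,\mathcal D)$ is measurable for each $x$; restricting the path to $[0,\bar t]$ and using that $Z^{c,x}$ is $\mathbb F$-adapted with $Z^{c,x}_t\equiv x$ for $t\le c$, the map $\omega\mapsto(Z^{c,x}_t(\omega))_{t\in[0,\bar t]}$ is $\mathcal F_{\bar t}/\mathcal D_{[0,\bar t]}$-measurable, and composing with $e_{\bar t}$ gives joint measurability of $(\omega,t)\mapsto Z^{c,x}_t(\omega)$ for each $x$. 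To upgrade this to joint measurability in $x$ I would use continuity in $x$: with $q_k(x)$ the coordinatewise dyadic rounding $\lfloor 2^k x\rfloor 2^{-k}$ (a Borel map $\mathbb R^d\to(2^{-k}\mathbb Z)^d$ with countable range), the map $(\omega,x,t)\mapsto Z^{c,q_k(x)}_t(\omega)$ is jointly measurable (constant in $x$ on each of countably many Borel pieces), and by Point \ref{conx} of Definition \ref{sharpala} it converges as $k\to\infty$, uniformly in $t$, to $(\omega,x,t)\mapsto Z^{c,x}_t(\omega)$.

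Finally I would remove the restriction on the initial time. On $\{\bar t<s\le\bar s,\ t\le\bar t\}$ one has $t<s$, hence $Z^{s,x}_t(\omega)=x$ there, which is trivially measurable. On $\{0\le s\le\bar t\}$ put $s_n=\big(\lfloor 2^n s\rfloor+1\big)2^{-n}\wedge\bar t\in[0,\bar t]$, so $s_n\downarrow s$ with $s_n\ge s$; the càdlàg property of $s\mapsto Z_s(\omega)$ together with continuity of $\pi_t\circ\pi_x\colon(\mathcal C_0,d_0^{lu})\to\mathbb R^d$ yields $Z^{s_n,x}_t(\omega)\to Z^{s,x}_t(\omega)$ for all $(\omega,x,t)$, and for each fixed $n$ the map $s\mapsto s_n$ is a step function with finitely many values in $[0,\bar t]$, so $(\omega,s,x,t)\mapsto Z^{s_n,x}_t(\omega)$ is $\mathcal F_{\bar t}\otimes\mathcal B$-measurable by the previous step; passing to the limit completes the proof. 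The main obstacle is not any single estimate but the bookkeeping: at no stage may one form a measurable map into the non-separable path spaces $\mathcal D_0$ or $\mathcal C_0$ without first truncating time at $\bar t$, and the three discretisations (in $t$, in $x$, in $s$) must be arranged so that the approximants are genuinely jointly measurable with the correct $\sigma$-algebra.
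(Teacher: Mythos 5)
Your proposal is correct and is essentially the paper's own argument: the paper factors $Z=g_3\circ(g_2,\mathrm{Id})\circ(g_1,\mathrm{Id}_d,\mathrm{Id})$ and proves measurability of each factor by exactly your three discretisations — dyadic right-approximation in $t$ via right-continuity, lattice approximation in $x$ via continuity, dyadic right-approximation in $s$ via the càdlàg property — always working with the projection $\sigma$-algebras $\mathcal{D}$ and $\mathcal{C}$ to sidestep non-separability, and it disposes of general $\bar t,\bar s$ in the same way you do. The only small imprecision is on the region $\bar t<s\le\bar s$: the identity $Z^{s,x}_t=x$ for $t\le s$ holds on an almost sure event rather than pointwise, so there one should instead note (as the paper implicitly does) that each $Z^{c,x}_t$ with $t\le\bar t<c$ is a.s. constant and hence $\mathcal{F}_{\bar t}$-measurable by completeness, and run your dyadic approximation in $s$ over all of $[0,\bar s]$.
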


The second lemma shows that the flow $Z^{s,x}_t$ can be used to construct a solution to \eqref{SDE} when the initial condition $\eta$ is only a measurable random variable. 
\begin{lemma}\label{le_se}
	 Fix $s\in [0,T]$ and let $\eta\in L^0(\mathcal{F}_s)$. Then, denoting by $Z^{s,\eta}$ the process defined by $Z^{s,\eta}_t(\omega)=Z^{s,\eta(\omega)}_t(\omega),\,\omega\in\Omega,\,t\in [0,T]$, there exists an a.s. event $\Omega_{s,\eta}$ where the following equation is satisfied:
	\begin{multline}\label{serve_1}
		Z_t^{s, \eta}=\eta+
		\int_{0}^{t}1_{\{r>s\}}b\left(r, Z_{r}^{s,\eta}\right) dr + 
		\int_{0}^{t}1_{\{r>s\}}\alpha\left(r, Z_{r}^{s,\eta} \right) dW_r\\+
		\int_{0}^{t}\!\int_{U_0}1_{\{r>s\}}g\left(Z_{r-}^{s,\eta},r,z\right)\widetilde{N}_p\left(dr,dz\right),\quad t\in[0,T].
	\end{multline}
{\color{black} In particular, $Z^{s,\eta}$ is the pathwise unique solution of \eqref{serve_1}.}
 Further, for every $\omega\in\Omega_{s,\eta},$  $t\in[0,T]$ (cf. \eqref{bis}$)$,
	\begin{equation}\label{dep_Zmeas}\begin{aligned}
		&\left(\int_{0}^{t}1_{\{r>s\}}\alpha\left(r,Z^{s,\eta}_{r}\right)dW_r\right)\left(\omega\right)=Z_{1,t}^{s,\eta\left(\omega\right)}\left(\omega\right), 
		\qquad \int_{0}^{t}1_{\{r>s\}}b\left(r,Z^{s,\eta}_{r}\left(\omega\right)\right)dr=Z_{3,t}^{s,\eta\left(\omega\right)}\left(\omega\right),
		\\&	\left(\int_{0}^{t}\!\int_{U_0}1_{\{r>s\}}g\left(Z^{s,\eta}_{r-},r,z\right)\widetilde{N}_p\left(dr,dz\right)\right)\left(\omega\right)=Z_{2,t}^{s,\eta\left(\omega\right)}\left(\omega\right).
	\end{aligned}
\end{equation}
\end{lemma}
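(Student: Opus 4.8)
\emph{Setup and strategy.} Before anything else one checks that $Z^{s,\eta}$ is a legitimate object: by Lemma \ref{joint_meas} the map $(\omega,x,t)\mapsto Z^{s,x}_t(\omega)$ is jointly measurable and (restricting the time variable to $[0,t]$) $\mathcal F_t$-measurable in $\omega$, and $x\mapsto Z^{s,x}_\cdot(\omega)$ is continuous with values in $\mathcal D_0$ for every $\omega$; hence $Z^{s,\eta}_t(\omega)=Z^{s,\eta(\omega)}_t(\omega)$ defines an $\mathbb F$-adapted, càdlàg process, so the integrals in \eqref{serve_1}--\eqref{dep_Zmeas} make sense. The plan is to prove the claim first for $\eta\in L^p(\Omega)\cap L^0(\mathcal F_s)$ (in particular for bounded $\eta$), deducing it from Proposition \ref{p2} and Corollary \ref{flow_Z_cor}, and then to remove the integrability restriction by localising on the $\mathcal F_s$-sets $B_n:=\{|\eta|\le n\}$. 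The case $s=T$ is trivial, since $Z^{T,x}_t\equiv x$, so $Z^{T,\eta}_t\equiv\eta$ and all the integrals in \eqref{serve_1}--\eqref{dep_Zmeas} vanish; thus assume $s<T$.

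\emph{Step 1: $\eta\in L^p(\Omega)\cap L^0(\mathcal F_s)$.} Because $Z$ is a càdlàg modification of the $(\mathcal C_0,\mathcal C)$-valued process $Y$, for the fixed $s$ there is an a.s. event on which $Z^{s,x}=Y^{s,x}$ in $\mathcal D_0$ for \emph{all} $x\in\mathbb R^d$ (use the continuity in $x$ of both random fields to pass from a dense countable set to all of $\mathbb R^d$). Intersecting this with the event of Proposition \ref{p2} and with the event where $Y^{s,\eta}$ solves \eqref{SDE}, we get an a.s. event $\Omega_{s,\eta}$ on which $Z^{s,\eta}_t(\omega)=Z^{s,\eta(\omega)}_t(\omega)=Y^{s,\eta}_t(\omega)$ for all $t\in[s,T]$; since $Z^{s,\eta}_t=\eta$ for $t\le s$ by the extension convention, $Z^{s,\eta}$ satisfies \eqref{serve_1}. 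Likewise, the identities \eqref{dep_Zmeas} follow from \eqref{dependence_Z} in Corollary \ref{flow_Z_cor} after transferring from $Y$ to $Z$: the integrands agree a.s. (as $Z^{s,\eta}_r=Y^{s,\eta}_r$ for all $r$, a.s.), and for the fixed $s$ the random fields $Z_i^{s,\cdot}$ of \eqref{bis} coincide a.s.\ with the continuous random fields of Subsection \ref{pre_flow} that appear in Corollary \ref{flow_Z_cor}, again by a.s. equality of their integrands. Finally, \eqref{serve_1} is an SDE of the type \eqref{SDE} with $\mathbb F$-adapted coefficients obeying Hypothesis \ref{base}, so pathwise uniqueness for \eqref{serve_1} is the classical one (see \cite{situ,BLP}); combined with the fact that $Z^{s,\eta}$ is a solution, this gives that $Z^{s,\eta}$ is \emph{the} pathwise unique solution.

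\emph{Step 2: general $\eta\in L^0(\mathcal F_s)$.} Set $\eta_n=\eta\,1_{B_n}$, which is bounded, hence in $L^p(\Omega)\cap L^0(\mathcal F_s)$, and let $\Omega_{s,\eta_n}$ be the a.s. event produced by Step 1 for $\eta_n$, on which \eqref{serve_1}--\eqref{dep_Zmeas} hold with $\eta_n$ in place of $\eta$, for every $t\in[0,T]$. Multiply those equations by $1_{B_n}$. Since $B_n\in\mathcal F_s\subseteq\mathcal F_r$ for $r\ge s$ and the $1_{\{r>s\}}$ factors confine the integrals to $[s,T]$, the indicator $1_{B_n}$ may be carried inside the $dr$-, $dW_r$- and $\widetilde N_p(dr,dz)$-integrals; moreover, $\eta_n=\eta$ on $B_n$ and, directly from the definition of the composed processes, $1_{B_n}Z^{s,\eta_n}_r=1_{B_n}Z^{s,\eta}_r$, $1_{B_n}Z^{s,\eta_n}_{r-}=1_{B_n}Z^{s,\eta}_{r-}$ and $1_{B_n}Z^{s,\eta_n(\omega)}_{i,t}=1_{B_n}Z^{s,\eta(\omega)}_{i,t}$ \emph{everywhere} (no a.s. statement needed). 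Hence every occurrence of $1_{B_n}$ times an $\eta_n$-term equals $1_{B_n}$ times the corresponding $\eta$-term, so on $\Omega_{s,\eta_n}$ one obtains \eqref{serve_1} and \eqref{dep_Zmeas} for $\eta$ after multiplication by $1_{B_n}$, i.e.\ these relations hold on $\Omega_{s,\eta_n}\cap B_n$ for all $t\in[0,T]$. As $\mathbb P(\Omega_{s,\eta_n}\cap B_n)=\mathbb P(B_n)\to1$, the set $\Omega_{s,\eta}:=\bigcup_n(\Omega_{s,\eta_n}\cap B_n)$ has full probability, and on it \eqref{serve_1}--\eqref{dep_Zmeas} hold (for each $\omega$ take an $n$ with $\omega\in\Omega_{s,\eta_n}\cap B_n$). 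Pathwise uniqueness of $Z^{s,\eta}$ again reduces to the classical case, which extends from $L^p$ to $L^0(\mathcal F_s)$ initial data by the very same localisation on $B_n$ (two solutions multiplied by $1_{B_n}$ solve the same SDE with $\mathbb F$-adapted Lipschitz coefficients and $L^\infty$ initial datum).

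\emph{Main obstacle.} The only genuinely delicate point is the passage from integrable to merely measurable initial data: one has to be sure that the identities, which a priori hold on $\omega$-dependent a.s. events, can be glued consistently along the exhaustion $B_n\uparrow\Omega$ and, above all, that multiplication by the $\mathcal F_s$-measurable indicator $1_{B_n}$ commutes with the It\^o and compensated-Poisson integrals driving \eqref{serve_1} and \eqref{dep_Zmeas}; the measurability/adaptedness bookkeeping needed to make the two sides of those equations well defined is routine given Lemma \ref{joint_meas}.
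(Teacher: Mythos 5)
Your argument is correct, but it follows a genuinely different route from the paper's. The paper works directly at the level of the flow $Z$: it approximates a general $\eta\in L^0(\mathcal F_s)$ by simple $\mathcal F_s$-measurable random variables $\eta_n\to\eta$ a.s., writes \eqref{serve_1} for each $\eta_n$ by decomposing on the partition and using \eqref{common_eq} (so no integrability is ever needed), and then passes to the limit in every term: the continuity of $Z^{s,\cdot}$ gives uniform convergence of $Z^{s,\eta_n}$ to $Z^{s,\eta}$, the $dr$-term converges by dominated convergence, and the $dW_r$- and $\widetilde N_p$-integrals are handled through a stopping-time localisation yielding uniform convergence in probability, followed by extraction of an a.s. convergent subsequence; \eqref{dep_Zmeas} falls out of the same limits. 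You instead reduce to the $L^p$ case by quoting Proposition \ref{p2} and Corollary \ref{flow_Z_cor} and transferring from $Y$ to $Z$ (legitimate: for the fixed $s$ the modification property gives $Z_s=Y_s$ a.s.\ in $\mathcal C_0$, and for each $x$ the \eqref{bis}-integrals are indistinguishable from those of Subsection \ref{pre_flow}, upgraded to all $x$ by continuity in $x$), and then remove the integrability assumption by truncating on $B_n=\{|\eta|\le n\}\in\mathcal F_s$ and commuting $1_{B_n}$ with the integrals; in this way you never pass to a limit inside a stochastic integral, at the price of the $Y$-to-$Z$ bookkeeping and of the (standard, but worth invoking explicitly, on a.s.\ events folded into $\Omega_{s,\eta_n}$) identity $1_{A}\int_s^t H\,dW_r=\int_s^t 1_A H\,dW_r$ for $A\in\mathcal F_s$, and its analogue for $\widetilde N_p$. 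Your treatment of the uniqueness claim (classical pathwise uniqueness plus the same localisation) is at the same level of detail as the paper's, which simply refers back to Remark \ref{nonbanale}. In short, the paper's proof is self-contained at the level of $Z$, while yours recycles the $Y$-level results and replaces the analytic limit passage by algebraic indicator manipulations; both are sound.
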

We observe that, using \eqref{dep_Zmeas} and arguing as in the proof of Lemma \ref{common}, it is possible to show that $Z^{s,\eta}$ in Lemma \ref{le_se} satisfies \eqref{serve_1} in an a.s. event $\Omega_\eta$ depending only on $\eta$. 
\begin{rem}\label{skor}
	In Subsections \ref{st_cont_section}-\ref{cadlag_sec} we have considered the process $Y$ with values in the complete metric space $\mathcal{C}_0=(C(\mathbb{R}^d;\mathcal{D}_0), d_0^{lu})$. Since $\mathcal{C}_0$ is not separable, we have endowed it with the $\sigma-$algebra $\mathcal{C}$ generated by the projections $\pi_x$ --strictly smaller than the Borel $\sigma-$algebra--  in order to overcome measurability issues. 
	
	An alternative approach which, at a first glance, might appear to be more natural is the following one. Denote by $\mathcal{D}_S$  the space of $\mathbb{R}^d-$valued, càdlàg functions on $\left[0,T\right]$ endowed with the Skorokhod topology $J_1$, i.e., $\mathcal{D}_S=\left(\mathcal{D}\left(\left[0,T\right];\mathbb{R}^d\right),J_1\right)$. According to \cite[Section 12, Chapter 3]{Bill}, $\mathcal{D}_S$ is a Polish space with the following metric defining the topology:
 \begin{equation} \label{sko1}
	d_S\left(x,y\right)= \inf_{\lambda\in\Lambda}\left\{\norm{\lambda}^0\vee\norm{x-y\circ\lambda}_0\right\},\quad x,y\in\mathcal{D}\left(\left[0,T\right];\mathbb{R}^d\right).
	\end{equation}
	Here $\Lambda$ is the set of continuous and strictly increasing functions $\lambda$ such that $\lambda\left(0\right)=0$ and $\lambda\left(T\right)=T$, and 
	 $
	\norm{\lambda}^0=\sup_{s<t}\big|\log\big(\frac{\lambda\left(t\right)-\lambda\left(s\right)}{t-s}\big)\big|.
	$ 
	Note that $J_1$ is weaker than the topology generated by the uniform convergence. Indeed, taking $\lambda=I$,
	\begin{equation}\label{ineq_dist}
	 d_S\left(x,y\right)\le\norm{x-y}_0,\quad x,y\in\mathcal{D}\left(\left[0,T\right];\mathbb{R}^d\right).
	\end{equation} 
Hence, for every $s\in\left[0,T\right]$, $Y_s\in C(\mathbb{R}^d;\mathcal{D}_S)$. By \cite{Kh}, the complete metric space $\left(C\left(\mathbb{R}^d;\mathcal{D}_S\right),{d}^{lu}_{S}\right),$ where
\[ 
d^{lu}_{S}\left(f,g\right)= \sum_{N=1}^{\infty}\frac{1}{2^N}\frac{\sup_{\left|x\right|\le N} d_S(f\left(x\right),g\left(x\right))}{1+\sup_{\left|x\right|\le N} d_S(f\left(x\right),g\left(x\right))},\quad f,g\in C\left(\mathbb{R}^d;\mathcal{D}_S\right),
\]
is also separable. Therefore we can argue as at the beginning of page 702 in \cite{AIHP18} to infer the measurability of $Y_s$ with respect to the Borel $\sigma-$algebra associated with $d^{lu}_S$. Observe that, by \eqref{ineq_dist} and the fact that $x(1+x)^{-1}$ is increasing in $\mathbb{R}_+$, 
$
	d^{lu}_S\left(f,g\right)\le d^{lu}_0\left(f,g\right),\, f,g\in C\left(\mathbb{R}^d;\mathcal{D}_S\right).
$
Thus, we can exploit the same computations as those presented in the paper to obtain the existence of a càdlàg modification $\widetilde{Y}$ of the $C\left(\mathbb{R}^d;\mathcal{D}_S\right)-$valued process $Y$. Moreover, using {\cite[Proposition 2.1, Chapter \upperRomannumeral{6}]{js}} we can prove Lemma  \ref{common}, as well.  
However, $\widetilde{Y}$ is not the \emph{regular} stochastic flow associated with  \eqref{SDE} according to Definition  \ref{sharpala}, because \ref{conx} and \ref{cons}] in Point \ref{regularity} hold in a weaker sense, namely replacing $\mathcal{D}_0$ with $\mathcal{D}_S$. As a consequence, for every $\bar{s}\in[0,T],\,t\in [0,T],\,x\in\mathbb{R}^d$ and $\omega\in\Omega$, we can not deduce that
$\lim_{x\to\bar{x}}\widetilde{Y}_t^{\bar{s}, {x}}(\omega)=\widetilde{Y}^{\bar{s},\bar{x}}_t(\omega)$
or
  $\lim_{s\downarrow\bar{s}}\widetilde{Y}_t^{s,\bar{x}}(\omega)=\widetilde{Y}^{\bar{s},\bar{x}}_t(\omega)$. 
\end{rem}

\section{Proof of existence of the regular stochastic flow  for   SDEs with large jumps}
\label{large_sec}
In this section,  we investigate the SDE \eqref{SDEgeneral} with $f\neq0$. 
Given $s\in[0,T)$ and $\eta\in L^0(\mathcal{F}_s),$ we study 
\begin{multline}\label{SDEJ} 
	X_{t}^{s,\eta} = \eta+ \int_{s}^{t}b\left(r, X_{r}^{s,\eta} \right) dr + 
	\int_{s}^{t}\alpha\left(r, X_{r}^{s,\eta} \right) dW_r\\+
	\int_{s}^{t}\!\int_{U_0}g\left(X_{r-}^{s,\eta},r,z\right)\widetilde{N}_p\left(dr,dz\right) 
	+ 
	\int_{s}^{t}\!\int_{U\setminus U_0 } f\left(X_{r-}^{s,\eta},r,z\right){N}_p\left(dr,dz\right),\quad t\in [s,T].
\end{multline} 
Compared to the SDE \eqref{SDE} that we have been discussing in Section \ref{sec_small}, \eqref{SDEJ} presents an additional integral with respect to the (non--compensated) Poisson random measure $N_p$. For this reason, \eqref{SDEJ} is often referred to as an SDE with \emph{large jumps}.  In particular, given $\omega\in \Omega$, one can read
\[
\left(\int_{s}^{t}\!\int_{U\setminus U_0 } f\left(X_{r-}^{s,\eta},r,z\right){N}_p\left(dr,dz\right)\right)(\omega)
=
\sum_{r\in D_p(\omega)\cap (s,t]} 1_{U\setminus U_0} \left(p_r(\omega)\right)f(X^{s,\eta}_{r-}(\omega),r,p_r(\omega)),
\]
with the sum on the right--hand side which is finite $\mathbb{P}-$a.s., because $\nu(U\setminus U_0)<\infty$ implies that $D_p(\omega)$ is discrete, $\mathbb{P}-$a.s. We study \eqref{SDEJ} by adapting an interlacing method described, for example, in  \cite{Bre, IW, LM}. Such an adaptation is not trivial for our scope of finding a regular stochastic flow generated by \eqref{SDEJ}, as detailed in Remark \ref{nonbanale}.

Recall that a solution to \eqref{SDEJ} is a c\`adl\`ag, $\mathbb{R}^d-$valued, $\mathbb{F}-$adapted process $X^{s,\eta}=\left(X^{s,\eta}_t\right)_{s\le t\le T}$ satisfying \eqref{SDEJ} up to indistinguishability. As usual, we extend the trajectories of $X^{s,\eta}$ in the whole interval $[0,T]$ by setting $X^{s,\eta}_t=X^{s,\eta}_s,\,t\in[0,s]$. Under our assumptions on the coefficients (see Section \ref{preliminare}), there exists a pathwise unique solution of \eqref{SDEJ}. 
\begin{rem}\label{nonbanale}
The existence of a pathwise unique solution of \eqref{SDEJ} can be proven by adapting the interlacing procedure described in, e.g, \cite[Subsection 3.2]{Bre} and \cite[Section 9, Chapter \upperRomannumeral{4}]{IW}) to the case $s\neq 0$. To do this, starting from $W$, $p$ and $\mathbb{F}$, we  construct a Brownian motion $W^{(s)}$ and a stationary Poisson point process $p^{(s)}$ with respect to a  filtration $\mathbb{F}^{(s)}$,  for every $s\in (0,T)$. On the other hand, it is not clear how to use this approach to prove the existence of a \emph{regular} stochastic flow generated by \eqref{SDEJ} according to Definition \ref{sharpala}. In particular, it is not clear how to analyze the regularity of the flow with respect to the initial time $s$. To overcome this issue, we follow an argument relying on the sharp stochastic flow $Z^{s,x}_t$ generated by the SDE \eqref{SDE} with small jumps (see Section \ref{sec_small}). Remarkably, we are also able to obtain an explicit expression --based on $Z^{s,x}_t$-- for the solution of \eqref{SDEJ}, from which we deduce the regularity properties that we are looking for.
\end{rem}
{\color{black}We now prove Theorem \ref{main1}, which asserts the existence of a regular stochastic flow generated by \eqref{SDEJ} according to Definition \ref{sharpala}. In order to make the proof easier to follow, in Theorem  \ref{thm_big} we reformulate the statement of Theorem \ref{main1} in an expanded version.}
\begin{theorem}\label{thm_big}
	There exist an $\mathcal{F}_{{}}\otimes \mathcal{B}([0,T]\times \mathbb{R}^d\times [0,T])-$measurable function $X\colon \Omega\times [0,T]\times \mathbb{R}^d\times [0,T]\to \mathbb{R}^d$, denoted by $X^{s,x}_t(\omega)=X(\omega,s,x,t)$, and an almost sure event $\Omega''$ (independent of $s,t$ and $x$)  such that
	 \begin{align}\label{SDEJ_common}
		\notag&X_{t}^{s,x} 
		= x+ \int_{s}^{t}b\left(r, X_{r}^{s,x} \right) dr + 
		\int_{s}^{t}\alpha\left(r, X_{r}^{s,x} \right) dW_r
		+
		\int_{s}^{t}\!\int_{U_0}g\left(X_{r-}^{s,x},r,z\right)\widetilde{N}_p\left(dr,dz\right) 
		\\&\qquad\qquad + 
		\int_{s}^{t}\!\int_{U\setminus U_0 } f\left(X_{r-}^{s,x},r,z\right){N}_p\left(dr,dz\right),
				\quad s\in \left[0,T\right),\,t\in\left[0,T\right],\,x\in\mathbb{R}^d,\,\omega\in\Omega'',
	\end{align}
	and such that the flow property holds:
	\begin{equation}\label{flow_finale}
		X^{s,x}_t(\omega)=X^{u,X^{s,x}_u(\omega)}_t(\omega),\quad 0\le s<u<t\le T,\,x\in\mathbb{R}^d,\,\omega\in\Omega''.
	\end{equation}

	{\color{black}Furthermore,  the process $(X^s)_{s\in[0,T]}$ is stochastically continuous in the sense of Point \ref{constoch} in Definition \ref{sharpala}, and, for every $\omega\in\Omega$, the mapping $(s,t,x)\mapsto X^{s,x}_t(\omega)$ satisfies Point \ref{regularity} in Definition \ref{sharpala}.}
\end{theorem}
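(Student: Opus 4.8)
\emph{Plan.} The plan is to build $X$ from the sharp small--jump flow $Z^{s,x}_t$ of Section~\ref{sec_small} by a carefully organised interlacing, as anticipated in Remark~\ref{nonbanale}. Since $\nu(U\setminus U_0)<\infty$, there is an a.s. event on which $D_p(\omega)\cap(0,T]$ contains only finitely many instants $\tau$ with $p_\tau(\omega)\in U\setminus U_0$, and these are $\mathbb F$--stopping times. For $s\in[0,T)$ let $s=\tau_0^s<\tau_1^s<\dots<\tau_{N_s}^s\le T$ enumerate the large--jump times in $(s,T]$, so that $N_s$ is the number of large jumps after $s$. Working on the intersection of this good event with the a.s. event $\Omega'$ of Lemma~\ref{common}, define recursively $\xi_0=x$ and, for $k=1,\dots,N_s$,
\[\xi_k=Z^{\tau_{k-1}^s,\xi_{k-1}}_{\tau_k^s-}+f\bigl(Z^{\tau_{k-1}^s,\xi_{k-1}}_{\tau_k^s-},\,\tau_k^s,\,p_{\tau_k^s}\bigr),\]
and set $X^{s,x}_t=Z^{\tau_{k-1}^s,\xi_{k-1}}_t$ for $t\in[\tau_{k-1}^s,\tau_k^s)$ ($k=1,\dots,N_s$), $X^{s,x}_t=Z^{\tau_{N_s}^s,\xi_{N_s}}_t$ for $t\in[\tau_{N_s}^s,T]$, $X^{s,x}_t=x$ for $t\le s$, and $X^{s,x}\equiv x$ off the good event. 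This is the candidate sharp stochastic flow for \eqref{SDEJ}, and it yields an explicit representation of the solution.

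\emph{Joint measurability and the SDE.} Joint measurability of $X$ in $(\omega,s,x,t)$ follows from that of $Z$ (Lemma~\ref{joint_meas}), from the measurability of $N_s$, of the stopping times $\tau_k^s$ and of the marks $p_{\tau_k^s}$, and from the continuity of $f$ in its first argument (Hypothesis~\ref{base}). To check that $X^{s,x}$ solves \eqref{SDEJ_common}, note that each $\xi_{k-1}$ is $\mathcal F_{\tau_{k-1}^s}$--measurable, so on $[\tau_{k-1}^s,\tau_k^s)$ the process $X^{s,x}$ coincides with the small--jump flow restarted at the stopping time $\tau_{k-1}^s$ from a random initial datum; by Lemma~\ref{le_se} in a strong--Markov form — equivalently, by replacing the large--jump times with the dyadic points immediately to their right, applying Lemma~\ref{le_se} at these deterministic times, and passing to the limit via the càdlàg dependence of $Z$ on its initial time from Section~\ref{sec_small} — such a restarted flow satisfies the small--jump SDE on $[\tau_{k-1}^s,\tau_k^s]$. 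Summing the drift, Brownian and compensated--Poisson contributions over $k=0,\dots,N_s$ and adding at each $\tau_k^s$ the jump $f(X^{s,x}_{\tau_k^s-},\tau_k^s,p_{\tau_k^s})=\xi_k-X^{s,x}_{\tau_k^s-}$ reproduces the right--hand side of \eqref{SDEJ_common}; since $Z$ satisfies \eqref{common_eq} on the single event $\Omega'$ and the large--jump data are determined by $\omega$, this holds on one event $\Omega''$ independent of $(s,t,x)$, and pathwise uniqueness of \eqref{SDEJ} identifies $X^{s,x}$ with the solution.

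\emph{Flow property and regularity in $(s,t,x)$.} The flow property \eqref{flow_finale} follows from that of $Z$ (see \eqref{flow_X}) applied on each inter--jump interval: for $s<u<t$ the large jumps in $(u,T]$ form a final segment of those in $(s,T]$, and restarting the recursion at $u$ from $X^{s,x}_u$ produces the same values $\xi_k$, hence the same trajectory, on an event independent of $(s,u,t,x)$. Right--continuity with left limits in $t$ is immediate, the trajectory being a finite concatenation of càdlàg pieces. Continuity in $x$ into $\mathcal D_0$ is the step where the \emph{uniform} norm on $\mathcal D_0$ is essential (cf. Remark~\ref{skor}): since $x\mapsto Z^{s,x}$ is continuous into $(\mathcal D_0,\|\cdot\|_0)$, the evaluation $x\mapsto Z^{s,x}_{\tau-}$ at the ($\omega$--fixed) time $\tau=\tau_1^s$ is continuous, and $f$ is continuous in its first argument, so $x\mapsto\xi_1$ is continuous; iterating through the finitely many restarts gives continuity of $x\mapsto X^{s,x}$ into $\mathcal D_0$. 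For the càdlàg property in $s$, locally uniformly in $x$: when $s$ varies in the (random) open interval strictly between two consecutive large--jump times the combinatorial structure of the construction is frozen, so $s\mapsto X^{s,\cdot}$ inherits right--continuity in $\mathcal C_0$ and existence of left limits from the corresponding properties of $s\mapsto Z^{s,\cdot}$ (Theorem~\ref{cadlag}) through the continuity of $f$ and the finitely many compositions; across a large--jump time $s_0$ one checks directly that the left limit $X^{s_0-,\cdot}$ exists in $\mathcal C_0$ (it carries the extra jump at $s_0$ that disappears at $s=s_0$, consistently with being càdlàg, not continuous, in $s$).

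\emph{Stochastic continuity and the main obstacle.} For \eqref{prob1}, fix $\epsilon,M>0$ and $s\in[0,T]$. Since $p$ has no fixed atoms, a.s. $s$ is not a large--jump time, hence the probability that the interval between $r$ and $s$ contains a large jump tends to $0$ as $r\to s$; on the complementary event the large--jump times after $r$ and after $s$ coincide, and $\sup_{|x|\le M}\sup_t|X^{r,x}_t-X^{s,x}_t|$ is controlled by propagating $\sup_{|x|\le M}\|Z^{r,x}-Z^{s,x}\|_0$ — which tends to $0$ in probability by Lemma~\ref{st_cont} — through the finitely many restarts, using the pathwise estimates of Corollary~\ref{cor5}, the continuity of $f$, and the a priori bound $\sup_{|x|\le M}\sup_{\sigma}\|Z^{\sigma,x}\|_0<\infty$ a.s. to upgrade continuity of $f$ to uniform continuity on the relevant compacts. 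Together with the small--jump case of Theorem~\ref{cadlag} this establishes Theorem~\ref{main1}. The main obstacle is precisely to make the interlacing rigorous while keeping every assertion valid on one $(s,t,x)$--independent event: the ``initial times'' of the pieces are stopping times and the ``initial conditions'' are random, so Lemma~\ref{le_se} must be used at random times (via strong Markov or the dyadic approximation above), and — because $f$ is only continuous, not Lipschitz — the Kolmogorov/Garsia--Rodemich--Rumsey estimates of Section~\ref{sec_small} are unavailable for the large--jump part, which is exactly what forces the uniform--norm setting of $\mathcal D_0$ and the uniform--continuity--on--compacts arguments.
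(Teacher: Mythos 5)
Your route is the paper's route: the same interlacing of the sharp small--jump flow $Z$ at the large--jump times, verification of the equation piece by piece via Lemma \ref{le_se} applied after discretising the jump times, regularity in $(s,t,x)$ by induction through the finitely many restarts, and the flow property read off from that of $Z$. There is, however, a genuine gap in your verification that \eqref{SDEJ_common} holds on one event independent of $(s,t,x)$. The sentence ``since $Z$ satisfies \eqref{common_eq} on the single event $\Omega'$ and the large--jump data are determined by $\omega$, this holds on one event $\Omega''$'' does not close the argument: every application of Lemma \ref{le_se} --- even at the dyadic approximations of $\tau^s_k$ --- is made with the random initial datum $\xi_{k-1}$, which depends on $(s,x)$, so it carries an exceptional null set depending on $(s,x)$; moreover the limit passage along those approximating times needs the c\`adl\`ag dependence on the initial time not only of $Z$ but of the three integral processes $Z_1,Z_2,Z_3$ (Corollary \ref{cadlag_Z} together with the identities \eqref{dep_Zmeas}), which you never invoke. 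The paper removes the $(s,x)$--dependence of the null sets by first establishing the piecewise identities for rational $(s,x)$, intersecting, and then extending by the continuity in $x$ and the one--sided continuity in $s$ of $X$, $Z$ and the $Z_i$'s; some version of this step is indispensable and is missing from your proposal.

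A second omitted ingredient: to sum your pieces into the full equation you must know that the compensated $U_0$--integral along the path of $X$ has no atom at any large--jump time, simultaneously for all $(s,x)$ on one event; otherwise the jump of the right--hand side of \eqref{SDEJ_common} at $\tau^s_k$ is not exactly $f(X^{s,x}_{\tau^s_k-},\tau^s_k,p_{\tau^s_k})$. For fixed $(s,x)$ this is the standard fact that the jump of $\int\!\int_{U_0}g\,\widetilde N_p$ at $\tau$ is $g(\cdot,\tau,p_\tau)1_{U_0}(p_\tau)=0$, but the null set depends on the integrand; the paper devotes the opening of its proof (the truncations $g_n$, the sets $U_n$, and the use of the regularity of the flow) precisely to showing that $Z^{s,x}_\cdot$ does not jump at any $\tau_n$ on a single $(s,x)$--independent event, and your use of left limits in the recursion does not make this step dispensable. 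With these two ingredients supplied --- both available in the paper --- your argument coincides with the paper's proof; your direct derivation of the flow property from \eqref{flow_X} (substituting the random times and data pathwise on $\Omega'$) is fine and slightly more economical than the paper's appeal to pathwise uniqueness, while for stochastic continuity the paper's argument via the absence of fixed--time discontinuities \eqref{nftd} is cleaner than propagating Lemma \ref{st_cont} through random restarts, which would additionally require controlling $U_{\tau,p}$ at random times.
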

\begin{proof}
	In order not to complicate the notation, we are going to construct the flow $X^{s,x}_t$ generated by \eqref{SDEJ_common} with $t\in [0,T)$, excluding the upper bound $T$. Since the proof is rather long, we divide it into several steps.
	
	\vspace{1mm}
	 \underline{\emph{Step \upperRomannumeral{1}}}\emph{: Construction of the flow $X^{s,x}_t$.} 
	 Denote by $P_t =N_p((0,t]\times (U\setminus U_0)),\,t>0$, and set $P_0=0$: since $\nu(U\setminus U_0)\in(0,\infty)$, $P=(P_t)_{t\ge0}$ is a Poisson process with intensity $\nu(U\setminus U_0)$. Let $\tau_n,\,n\in\mathbb{N}$, be  the arrival times for the jumps of $P$. It is well known that $\tau_n(\omega)\uparrow \infty $ as $n\to \infty$, for every $\omega\in \Omega_1$, where $\Omega_1$ is an a.s. event (see, e.g., \cite[Theorem 21.3]{Sato} and the subsequent comment). Notice that  $P$ is càdlàg and continuous in probability, hence it does not jump at time $T$, $\mathbb{P}-$a.s. Thus, we suppose that $\tau_n\neq T$ in $\Omega_1$, for every $n\in\mathbb{N}$. 
	
	To construct the solution of \eqref{SDEJ_common} we use $Z=(Z_s)_{s\in[0,T]}$: the càdlàg, $(\mathcal{C}_0,\mathcal{C})-$valued process studied in Subsection \ref{cadlag_sec} satisfying \eqref{common_eq}, see Lemma \ref{common}. Note that \eqref{common_eq} is the analogous of \eqref{SDEJ_common} without the integral in $N_p$, i.e., without the ``large jumps''.  We argue that $Z^{s,x}_\cdot(\omega)$ does not jump at  $ \tau_n(\omega)\in (s,T),$ for every $s\in [0,T),\,x\in\mathbb{R}^d$ and $\omega\in \Omega_{2}$, where $\Omega_{2}$ is an a.s. event. To see this, we take  a sequence $(U_n)_n\subset \mathcal{U}$ such that $\nu(U_n)<\infty$ and $\cup_n\, U_n=U$, which exists because  $\nu(dz)$ is $\sigma-$finite. Moreover, we denote by 
	\[
	g_n\left(r,z,\omega\right)=1_{(-n,n)}\left(g\left(Z_{r-}^{s,x}\left(\omega\right),r,z\right)\right)
	g\left(Z_{r-}^{s,x}\left(\omega\right),r,z\right),\quad n\in\mathbb{N}.
	\]
	By construction of the stochastic integral with respect to $\widetilde{N}_p$ (see  \cite[Section 3, Chapter \upperRomannumeral{2}]{IW}),  
	\begin{equation*}
		\lim_{n\to \infty}	\int_{s}^{t}\!\int_{U_0\cap U_n}
		g_n\left(r,z,\cdot\right)\widetilde{N}_p\left(dr,dz\right) 
		=
		\int_{s}^{t}\!\int_{U_0}g\left(Z_{r-}^{s,x},r,z\right)\widetilde{N}_p\left(dr,dz\right),
	\end{equation*}
	where the limit is uniform on compacts in probability. It follows that, $\mathbb{P}-\text{a.s.}$,
	\begin{equation}\label{uni_conve}
		\sup_{t\in [s,T]}\left|\int_{s}^{t}\!\int_{U_0\cap U_{n_k}}
		g_{n_k}\left(r,z,\cdot\right)\widetilde{N}_p\left(dr,dz\right) 
		-
		\int_{s}^{t}\!\int_{U_0}g\left(Z_{r-}^{s,x},r,z\right)\widetilde{N}_p\left(dr,dz\right)\right|
		\underset{k\to \infty}{\longrightarrow} 0.
	\end{equation}
	Since $p_{\tau_n}\in U\setminus U_0$ and,  for every $k\in\mathbb{N}$, for $\mathbb{P}-$a.s. $\omega\in \Omega$,
	\begin{multline*}
		\left(\int_{s}^{t}\!\int_{U_0\cap U_{n_k}}
		g_{n_k}\left(r,z,\cdot\right)\widetilde{N}_p\left(dr,dz\right)\right)\left(\omega\right) =
		\sum_{r\in D_p(\omega)\cap (s,t]}g_{n_k}\left(r,p_r(\omega),\omega\right)1_{U_0\cap U_{n_k}}\left(p_r(\omega)\right)
		\\-
		\int_{s}^{t}dr\!\int_{U_0\cap U_{n_k}}
		g_{n_k}\left(r,z,\omega\right)\nu\left(dz\right),\quad t\in [s,T],
	\end{multline*}
	these approximating processes do not jump at time $\tau_{{n}}\in (s,T)$ for all $n\in\mathbb{N}, \, \mathbb{P}-$a.s.
	Therefore, by \eqref{uni_conve}, the process $\int_{s}^{\cdot}\!\int_{U_0}g\left(Z_{r-}^{s,x},r,z\right)\widetilde{N}_p\left(dr,dz\right)$ does not jump either at time $\tau_{{n}}\in (s,T)$ in an a.s. event depending on $s$ and $x$. Whence the same conclusion holds for $Z^{s,x}_\cdot$ in an a.s. event $\Omega_{s,x}$, by \eqref{common_eq}.  \\
	Define the a.s. event 
	\[
	\Omega_2=\bigcap_{s\in [0,T)\cap \mathbb{Q}}\bigcap_{x\in\mathbb{Q}^d}\Omega_{s,x}:
	\]
	we are going to show that $Z_\cdot^{s,x}(\omega)$ does not jump at  $\tau_n(\omega)\in(s,T)$ for every $x\in\mathbb{R}^d,\,s\in[0,T)$ and $\omega\in \Omega_2$. 
	Fix $\omega\in\Omega_2$, $s\in[0,T)\cap \mathbb{Q},\, x\in\mathbb{R}^d$ and take a sequence $(x_m)_m\in\mathbb{Q}^d$ such that $x_m\to x $ as $m\to \infty$. Since the map $Z^{{s},\cdot}\left(\omega\right)\colon \mathbb{R}^d\to \mathcal{D}_0$ is continuous and $Z^{s,x_m}_\cdot(\omega)$ is continuous at  $\tau_n(\omega),$ for all $m\in\mathbb{N}$, we conclude that $Z^{s,x}_\cdot(\omega)$ is continuous at $\tau_n(\omega)$, as well. Indeed, uniform convergence in $t$ preserves continuity. 
	An analogous argument relying on the càdlàg property of the map $Z^{\cdot, {x}}\left(\omega\right)\colon [0,T] \to \mathcal{D}_0$ allows to deduce the continuity of $Z^{s,x}_\cdot(\omega)$ at $\tau_n(\omega)$ for every $s\in[0,T)$, as desired.
	
	Recalling the almost certain event $\Omega'$ given by Lemma \ref{common}, we define $\Omega_3=\Omega_1\cap \Omega_2\cap\Omega'$. Without loss of generality, we suppose that $Z^{s,x}_t(\omega)=x$ for every $0\le t\le s \le T, \,x\in\mathbb{R}^d$ and $\omega\in \Omega_3$. 	For the sake of shortness, from now on 
	\begin{equation}\label{chiaro}
	\text{	{we  denote by $ \,\tau_n $ the random variable  $ \tau_n\wedge T,\,n\in\mathbb{N}$. }}
	\end{equation}
We now construct the solution $X$ of \eqref{SDEJ} using an ad hoc, path--by--path, interlacing procedure (see Remark \ref{nonbanale}). For $s = T$, we just assign $X^{T,x}_t = x$. Take $s\in [0,T)$ and $x\in\mathbb{R}^d$. First, we set $X^{s,x}_t(\omega)=x$ for $\omega\in \Omega\setminus \Omega_3,\,t\in [0,T]$. Secondly, fix $\omega\in\Omega_3$ and denote by $\tau_{n(s,\omega)}(\omega)=\min_n\{\tau_{n}(\omega)>s\}$. In words, if $\tau_{{n(s,\omega)}}(\omega)<T$, then it represents the first jump time of $P$ occurring (strictly) after time $s$ and before time $T$. In the sequel, we omit $\omega$ to keep notation simple. We  define
	 \begin{equation}\label{missala}
	X_{t}^{s,x}=  Z_{t}^{s,x},\quad t\in\left[0,\tau_{n(s)}\right).
	\end{equation}
	 If $\tau_{{n(s)}}=T$ the construction is over. Otherwise, for $t = \tau_{n(s)}$ we set 
	\begin{align}\label{1step}
		\notag	X_{\tau_{n(s)}}^{s,x} 
		&=   X_{\tau_{n(s)}- }^{s,x} + f\left( X_{\tau_{n(s)} - }^{s,x}, \tau_{n(s)},  p_{\tau_{n(s)}} \right)
		= Z_{\tau_{n(s)} - }^{s,x} + f \left( Z_{\tau_{n(s)} - }^{s,x}, \tau_{{n(s)}}, p_{\tau_{n(s)}} \right)  \\
		&= Z_{\tau_{n(s)}}^{s,x} + f \left( Z_{\tau_{n(s)} }^{s,x}, \tau_{{n(s)}}, p_{\tau_{n(s)}} \right),  
	\end{align} 
	where the last equality is due to the fact that $\tau_{{n(s)}}$ is not  a jump time for $Z_\cdot^{s,x}$, because $\omega\in \Omega_3\subset \Omega_2$. Next, we define
\begin{equation}\label{2step}
	X_{t}^{s,x}= Z^{\tau_{n(s)}, X_{\tau_{n(s)}}^{s,x} }_t,\quad   t \in \left[\tau_{n(s)}, \tau_{n(s)+1}\right).
\end{equation}

This argument by steps can be repeated to cover the whole interval $[s,T)$. More precisely, for every $m\in\mathbb{N}$ such that $\tau_{{n(s)}+m}<  T$, we define recursively 
	\begin{equation}\label{rec}
		X_t^{s,x}=
		\begin{cases}
			X_{\tau_{n(s)+m}- }^{s,x} + f\left( X_{\tau_{n(s)+m} - }^{s,x}, \tau_{n(s)+m},  p_{\tau_{n(s)+m}} \right),&t=\tau_{{n(s)+m}}, \\
			Z_t^{\tau_{{n(s)+m}},X^{s,x}_{\tau_{{n(s)+m}}}},&t\in [\tau_{{n(s)+m}},\tau_{{n(s)+m+1}} ),
		\end{cases}
	\end{equation}
	In particular, since $\omega\in\Omega_3\subset \Omega_2$ we observe that 
	\begin{equation}\label{meglio}
		X^{s,x}_{\tau_{{n(s)+m}}-}=
		Z_{\tau_{{n(s)+m}}}^{\tau_{{n(s)+m-1}},X^{s,x}_{\tau_{{n(s)+m-1}}}}.
	\end{equation}
	We finally extend the map $X^{s,x}_\cdot$ to $[0,T]$ by setting  $X^{s,x}_T=X^{s,x}_{T-}.$
	
		\vspace{1mm}
	\underline{\emph{Step \upperRomannumeral{2}}}\emph{: The process $(X^{s,x}_t)_{t\in[0,T]}$ is $\mathbb{F}-$adapted.} 
	The claim is trivial if $s=T$  because $X^{T,x}_t=x$, so we consider $s\in [0,T)$. For every $t\in [0,T)$, setting $\tau_0=0$ we have (recall \eqref{chiaro})
	\begin{equation}\label{meas}
		X_t^{s,x}=x1_{\Omega\setminus\Omega_3}+x1_{\{t\le s\}}1_{\Omega_3}+ \sum_{n=1}^{\infty}Z^{\tau_{n-1}\vee s, X^{s,x}_{\tau_{n-1}\vee s}}_t1_{\{\tau_{n-1}\vee s\le t<\tau_n\vee s\}}1_{\Omega_3}.
	\end{equation}
Notice that the series in \eqref{meas} is actually a finite sum, as  $\tau_n(\omega)=T$ definitively in $\Omega_3$, hence $[\tau_{n-1}(\omega)\vee s,\tau_n\vee s(\omega))=\emptyset$ definitively in $\Omega_3$.
	In what follows, we write $\tau^s_{{n}}=\tau_n\vee s,\,n\in\mathbb{N}_0$. Recalling that the filtration $\mathbb{F}$ is complete, $x1_{\Omega\setminus\Omega_3}$ and $x1_{\{t\le s\}}1_{\Omega_3}$ are $\mathcal{F}_t-$measurable. Since $(\tau^s_n)_n$ is a sequence of $\mathbb{F}-$stopping times, the sets $\{\omega :  \tau^s_{n-1}(\omega)\le t<\tau_n^s(\omega)\},\,n\in\mathbb{N},$  are $\mathcal{F}_t-$measurable. As a consequence, $Z_t^{s,x}1_{\{s\le t<\tau_1^ s\}}1_{\Omega_3}$ --the first term of the series in \eqref{meas}-- is $\mathcal{F}_t-$measurable. Moreover, by Lemma \ref{joint_meas},
	$
	Z^{s,x}_{\tau_1^s}1_{\{\tau^s_{1}\le t\}}
	$
	is $\mathcal{F}_t-$measurable, so (by \eqref{1step})
	\[
	X_{\tau_1^s}^{s,x}1_{\{\tau_{1}^s\le  t\}}1_{\Omega_3}
	= 
	\left(Z_{\tau_{1}^s}^{s,x} + 1_{\{\tau_1^s>s\}}f\left( Z_{\tau_{1}^s }^{s,x}, \tau_{1}^s,  p_{\tau_{1}^s} \right)\right)
	1_{\{\tau_{1}^s\le t\}}1_{\Omega_3}
	\]
	is $\mathcal{F}_t-$measurable, too. Hence another application of Lemma \ref{joint_meas} yields the $\mathcal{F}_t-$measurability of  the second term of the series in \eqref{meas}, i.e.,
	\[
	Z_t^{\tau_1^s,X^{s,x}_{\tau_1^s}}1_{\{\tau_{1}^ s\le t<\tau_2^ s\}}1_{\Omega_3}.
	\]
	At this point, an induction argument based on the recursive definition law in \eqref{rec} allows us to conclude that all the addends in the series \eqref{meas} are $\mathcal{F}_t-$measurable. Therefore, considering also that $X^{s,x}$ is left--continuous in $T$, we deduce that the process $X^{s,x}$ is $\mathbb{F}-$adapted, as desired. 
	
	\vspace{1mm}
	\underline{\emph{Step \upperRomannumeral{3}}}\emph{: The regularity of the flow $X^{s,x}_t$.} The aim of this part is to prove \ref{pure}-\ref{conx}-\ref{cons} in Definition \ref{sharpala}. We only analyze  the case $\omega\in \Omega_3$, being the other one trivial ($X^{s,x}_t(\omega)=x$, $\omega\in\Omega\setminus \Omega_3$). Conditions \ref{pure}-\ref{conx} are immediate also for $s=T$ because $X^{T,x}_t=x$, so we consider $s\in[0,T)$. Recalling that the series in \eqref{meas} is actually a finite sum, for every $x\in\mathbb{R}^d$ the càdlàg property with respect to $t\in [0,T]$ is evident, because the path $X_{\cdot}^{s,x}(\omega)$ is constructed by combining a finite number of càdlàg trajectories of the flow $Z^{s,x}_t$. Hence \ref{pure} is verified.\\
	 To study the continuity in $x$ in the sense of \ref{conx}, we  take $x\in\mathbb{R}^d$ and a sequence $(x_j)_j\subset \mathbb{R}^d$ such that $x_j\to x$ as $j\to \infty$. Since  $X^{s,x_{j}}_t=Z^{s,x_{j}}_t$ and $X^{s,x}_t=Z^{s,x}_t$ for $t\in[0,\tau_{{n(s)}})$, and  $Z_s\in \mathcal{C}_0$,
	 \[
	 	\lim_{j\to \infty}\sup_{0\le t< \tau_{{n(s)}}}\left|X^{s,x_j}_t-X_t^{s,x}\right|=0.
	 \]
	 If $\tau_{n(s)}<T$, then by \eqref{1step} and the continuity of $f$ in the first argument we have $X^{s,x_j}_{\tau_{{n(s)}}}\to X^{s,x}_{\tau_{{n(s)}}}$ as $j\to \infty$, from which we deduce, by \eqref{2step},
	 \[
	 \lim_{j\to \infty}\sup_{t\in(\tau_{n(s)}, \tau_{n(s)+1})}\left|X^{s,x_j}_t-X_t^{s,x}\right|=0.
	 \]
	  In general, using \eqref{rec}-\eqref{meglio} we can work by induction to obtain \ref{conx}.\\
	Finally we study the càdlàg property in the variable $s\in [0,T]$ according to \ref{cons}.  Firstly, we analyze the right--continuity in $s\in [0,T)$. Fix $M>0$ and take a sequence $(s_j)_j\subset (s,T)$ such that $s_j\to s$ as $j\to \infty$. We assume, without loss of generality, that $\tau_{{n(s_j)}}=\tau_{{n(s)}}$ for all $j$. Since $Z$ is a $(\mathcal{C}_0,\mathcal{C})-$valued càdlàg process,  by construction
	\[
	\lim_{j\to \infty} \sup_{\left|x\right|\le M}\sup_{0\le t < \tau_{n(s)}}\left|X^{s_j,x}_t-X^{s,x}_t\right|=0.
	\] 
	If $\tau_{n(s)}<T$, notice that the set $\big\{Z_{\tau_{n(s)}}^{s_{j},x},Z_{\tau_{n(s)}}^{s,x},  \text{ with } \left|x\right|\le M,\,j\in\mathbb{N}\big\}$ is bounded. Considering that $f$ is uniformly continuous in the first variable on compact sets, by \eqref{1step} we deduce that 
	\begin{equation*}
	 \sup_{\left|x\right|\le M}\left|X^{s_j,x}_{\tau_{n(s)}}\!-\!X^{s,x}_{\tau_{n(s)}}\right|\!\le\!
	\sup_{\left|x\right|\le M}\!\left|Z^{s_j,x}_{\tau_{n(s)}}-Z^{s,x}_{\tau_{n(s)}}\right|
	  +
	     \sup_{\left|x\right|\le M}\left| f \left( Z_{\tau_{n(s)} }^{s_j,x}, \tau_{{n(s)}}, p_{\tau_{n(s)}} \right)\!-\! f \left( Z_{\tau_{n(s)} }^{s,x}, \tau_{{n(s)}}, p_{\tau_{n(s)}} \right)\right|
	  \! \underset{j\to \infty}{\longrightarrow}\!0
	 .
	\end{equation*}
As $x\mapsto Z^{\tau_{n(s)},x}_\cdot$ is a continuous function from $\mathbb{R}^d$ to $\mathcal{D}_0$, it is uniformly continuous on compact sets of $\mathbb{R}^d$. Moreover, the previous equation coupled with
	\[
		 \sup_{\left|x\right|\le M}\left|X^{s_j,x}_{\tau_{n(s)}}\right|<\infty,\,j\in\mathbb{N},\qquad  \sup_{\left|x\right|\le M}\left|X^{s,x}_{\tau_{n(s)}}\right|<\infty\quad \text{(by the continuity in $x$)}
	\] 
	ensures that the set $\big\{X_{\tau_{n(s)}}^{s_{j},x},X_{\tau_{n(s)}}^{s,x},  \text{ with } \left|x\right|\le M,\,j\in\mathbb{N}\big\}$ is bounded. 
Combining these two facts, by \eqref{2step}
	\[
\lim_{j\to \infty} \sup_{\left|x\right|\le M}\sup_{\tau_{n(s)}< t < \tau_{n(s)+1}}\left|X^{s_j,x}_t-X^{s,x}_t\right|=0.
\] 
 Using \eqref{rec}-\eqref{meglio} we argue by induction to  infer the right--continuity in $s$ in the sense of \ref{cons}. \\
 Secondly, we can prove the left--continuity of $X^{\cdot, x}_t$ in $s\in(0,T]$  in a similar way, exploiting the left--continuity of the process $(Z_s)_s$. We only note that, in this case, it is possible that $s=\tau_n$ for some $n\in\mathbb{N}$. Hence given a sequence $(s_j)_j\subset (0,s)$ such that $s_j\to s$ as $j\to \infty$, we might have $\tau_{{n(s_j)}}=s<\tau_{{n(s)}}$ for $j$ large enough. This, however, does not affect the existence of the left--limits because $\tau_{{n(s_j)}}$ are definitively all  equal. 
	
	\vspace{1mm}
	\underline{\emph{Step \upperRomannumeral{4}}}\emph{: The stochastic continuity of the flow $X^{s,x}_t$}. 
	{\color{black} To obtain the stochastic continuity in the sense of Point \ref{constoch} in Definition \ref{sharpala}, it is sufficient to prove that, for every $s\in (0,T]$, there exists an a.s. event $\Omega_s$ where \begin{equation}\label{cont_X_st}
X^{s-,x}_t=X^{s,x}_t,\quad x\in\mathbb{R}^d,\,t\in [0,T].
		\end{equation}
		Indeed, combining this equality with the càdlàg property we have just proved, we deduce that
		\begin{multline*}
		0=\lim_{r\uparrow s}\mathbb{P}\left(\sup_{\left|x\right|\le M}\sup_{0\le t\le  T}\left|X^{r,x}_t-X^{s-,x}_t\right|>\epsilon\right)=
		\lim_{r\uparrow s}\mathbb{P}\left(\sup_{\left|x\right|\le M}\sup_{0\le t\le  T}\left|X^{r,x}_t-X^{s,x}_t\right|>\epsilon\right)\\=
		\lim_{r\downarrow s}\mathbb{P}\left(\sup_{\left|x\right|\le M}\sup_{0\le t\le  T}\left|X^{r,x}_t-X^{s,x}_t\right|>\epsilon\right)
		=
		0,\quad  \epsilon, \, M>0.
		\end{multline*}
The case $s=T$ is the easier one: by construction and \eqref{nftd} we have, in an a.s. event contained in $\Omega_3$ and depending on $T$, 
\[
X^{T-,x}_t=\lim_{r\uparrow T}X^{r,x}_t=\lim_{r\uparrow T}Z^{r,x}_t=
Z^{T-,x}_t=Z^{T,x}_t=x=X^{T,x}_t,\quad t\in[0,T),\,x\in\mathbb{R}^d;
\]
 the final time $t=T$ can be recovered by passing to the limit as $t\to T$, because $X^{T-,x}_\cdot$ and $X^{T,x}_\cdot$ are left--continuous in $T$. As for $s\in [0,T)$, we can argue as at the beginning of this proof to construct an a.s. event $\Omega_s\subset \Omega_3$ such that $\tau_{{n}}(s)\neq s$ for all $n\in\mathbb{N}$, and that  (recall \eqref{nftd}) $Z_{s-}=Z_s$. Then, by \eqref{missala},
 \[
 X^{s-,x}_t=Z^{s-,x}_t=Z^{s,x}_t=X^{s,x}_t,\quad t\in [0,\tau_{n(s)}),\,x\in\mathbb{R}^d, \text{ in $\Omega_s$}:
 \]
  if $\tau_{n(s)}<T$, this equality holds also for $t=\tau_{{n(s)}}$ by  \eqref{1step}. Employing \eqref{2step}-\eqref{rec}-\eqref{meglio} and the left--continuity of $X^{s-,x}_\cdot,\,X^{s,x}_\cdot$ in $T$, we reason by induction to obtain \eqref{cont_X_st}.
}

\vspace{1mm}
\underline{\emph{Step \upperRomannumeral{5}}}\emph{: The stochastic flow $X^{s,x}_t$ satisfies \eqref{SDEJ_common}}.
		Recall that $\tau^s_{{m}}=\tau_m\vee s,\,m\in\mathbb{N}_0$, where $\tau_m$ is given in \eqref{chiaro}.
	We now argue --using Lemma \ref{le_se}-- that the process $X^{s,x}_t$ satisfies \eqref{SDEJ} with $\eta=x$ in $t\in[0,T)$. This is equivalent to showing that, for every $m\in\mathbb{N}_0$, there is an a.s. event $\Omega_{1,m}(s,x)$ such that, for all $t\in[\tau^s_{m},\tau^s_{m+1})$ (note that $[\tau^s_{m},\tau^s_{m+1})$ can also be empty),
	\begin{multline}\label{prova_questo}
		X_{t}^{s,x}= x1_{\left\{\tau_m^s\le s\right\}}+ 
		\left(X^{s, x}_{\tau_{m}^s-}+ f\left(X^{s,x}_{\tau_m^s-},\tau_m^s,p_{\tau^s_m}\right)\right)1_{\left\{\tau^s_m> s\right\}} 
		+\int_{s}^{t}1_{{\{r>\tau_m^s\}}}b\left(r, X_{r}^{s,x} \right) dr \\+ 
		\int_{s}^{t}1_{{\{r>\tau_m^s\}}}\alpha\left(r, X_{r}^{s,x} \right) dW_r+
		\int_{s}^{t}\!\int_{U_0}1_{{\{r>\tau_m^s\}}}g\left(X_{r-}^{s,x},r,z\right)\widetilde{N}_p\left(dr,dz\right).
	\end{multline}
	Indeed, the stochastic integrals appearing in the previous expression can be read as  differences involving truncated processes, see, for instance, \cite[Section 3, Chapter \upperRomannumeral{2}]{IW} and \cite[Property 4.37, Chapter \upperRomannumeral{1}]{js}. More precisely, $\mathbb{P}-$a.s., for every $t\in[s,T)$,
	\begin{equation*}
		\int_{s}^{t}1_{{\{r>\tau_m^s\}}}\alpha\left(r, X_{r}^{s,x} \right) dW_r=\int_{s}^{t}\alpha\left(r, X_{r}^{s,x} \right) dW_r-
		\left(\int_{s}^{\cdot}\alpha\left(r, X_{r}^{s,x} \right) dW_r\right)_{\tau_{{m}}^ s\wedge t}
	\end{equation*}
	and similarly
	 \begin{multline*}
		\int_{s}^{t}\!\int_{U_0}1_{{\{r>\tau_m^s\}}}g\left(X_{r-}^{s,x},r,z\right)\widetilde{N}_p\left(dr,dz\right)=
		\int_{s}^{t}\!\int_{U_0}g\left(X_{r-}^{s,x},r,z\right)\widetilde{N}_p\left(dr,dz\right)\\
		-
		\left(\int_{s}^{\cdot}\!\int_{U_0}g\left(X_{r-}^{s,x},r,z\right)\widetilde{N}_p\left(dr,dz\right)\right)_{\tau^s_m\wedge t}.
	\end{multline*}
	In view of the interlacing construction carried out above (cf. \eqref{meas}), in order to verify \eqref{prova_questo} we search for an a.s. event $\Omega_{1,m}(s,x)\subset\Omega_3$ such that, for all $t\in[\tau_{m}^s, \tau_{m+1}^s) $, 
	\begin{multline*}
		Z^{s,x}_t=x+ \int_{s}^{t}1_{{\{r>\tau_m^s\}}} b\left(r, Z_{r}^{s,x} \right) dr + 
		\int_{s}^{t}1_{{\{r>\tau_m^s\}}}\alpha\left(r, Z_{r}^{s,x} \right) dW_r\\+
		\int_{s}^{t}\!\int_{U_0}1_{{\{r>\tau_m^s\}}}g\left(Z_{r-}^{s,x},r,z\right)\widetilde{N}_p\left(dr,dz\right),\quad \text{ in }\Omega_{1,m}(s,x)\cap \{\tau^s_m\le s\},
	\end{multline*}
	and 
	\begin{multline}\label{true_int}
		Z_t^{\tau^s_m, X^{s,x}_{\tau^s_m}}=X^{s,x}_{\tau^s_m}+
		\int_{s}^{t}1_{{\{r>\tau_m^s\}}}b\left(r, Z_{r}^{\tau_m^s, X^{s,x}_{\tau_m^s}}\right) dr + 
		\int_{s}^{t}1_{{\{r>\tau_m^s\}}}\alpha\left(r, Z_{r}^{\tau_m^s, X^{s,x}_{\tau_m^s}} \right) dW_r\\+
		\int_{s}^{t}\!\int_{U_0}1_{{\{r>\tau_m^s\}}}g\left(Z_{r-}^{\tau^s_m, X^{s,x}_{\tau_m^s}},r,z\right)\widetilde{N}_p\left(dr,dz\right),\quad \text{in } \Omega_{1,m}(s,x)\cap \{\tau^s_m> s\}. 
	\end{multline}
	In $\{\tau^s_{{m}}\le s\}$, the former equation can be rewritten without the indicator functions, namely
	\begin{equation*}
		Z^{s,x}_t=x+ \int_{s}^{t}b\left(r, Z_{r}^{s,x} \right) dr + 
		\int_{s}^{t}\alpha\left(r, Z_{r}^{s,x} \right) dW_r+
		\int_{s}^{t}\!\int_{U_0}g\left(Z_{r-}^{s,x},r,z\right)\widetilde{N}_p\left(dr,dz\right),
	\end{equation*}
	which,  by  \eqref{common_eq},  holds in the whole $\Omega_3\subset \Omega'$. Thus, we only focus on \eqref{true_int}. Note that, in \eqref{true_int}, we can insert $0$ instead of $s$  as lower bound for the integrals because we are working in $\{\tau^s_m> s\}$. 
	
Consider a non--increasing sequence of simple random variables $(\tau_{m,n})_{n}$, with $\tau_{{m,n}}\le  T$,  such that $\tau_{m,n}\downarrow \tau_m^s$ as $n\to \infty$ in an a.s. event $\Omega_{2,m}(s)\subset \Omega_3$: \eqref{true_int} holds if we replace $\tau^s_m$ with $\tau_{m,n}$. More precisely,  we write $\tau_{m,n}=\sum_{k=1}^{N_{n}}a^n_k1_{A^n_k}$ for some $N_{n}\in\mathbb{N}$, $(a^n_k)_k\subset(-\infty,T]$ and some measurable partition $(A^n_k)_k\subset \mathcal{F}$ of $\Omega$,  $k=1,\dots, N_{n}$. Then, by Lemma \ref{le_se}, 
	 there exists an a.s. event where, for every $t\in [0,T]$ and $n\in\mathbb{N}$,
	\begin{align*}
		Z_t^{\tau_{m,n}, X^{s,x}_{\tau_{m,n}}}&=
		\sum_{k=1}^{N_{n}}Z_t^{a^n_k, X^{s,x}_{a^n_k}}1_{A^n_k}
		\notag\\&=\sum_{k=1}^{N_{n}}\Bigg[
		X^{s,x}_{a^n_k}+
		\int_{0}^{t}1_{\{r>a_k^n\}}b\left(r, Z_{r}^{a^n_k, X^{s,x}_{a^n_k}}\right)dr
		+
		\int_{0}^{t}1_{\{r>a_k^n\}}\alpha\left(r, Z_{r}^{a^n_k, X^{s,x}_{a^n_k}} \right)\!dW_r\notag\\&\qquad\quad+
		\int_{0}^{t}\!\int_{U_0}1_{\{r>a_k^n\}}g\left(Z_{r-}^{a^n_k, X^{s,x}_{a^n_k}},r,z\right)\!\widetilde{N}_p\left(dr,dz\right)\Bigg]1_{A^n_k},
	\end{align*}
Note that here we do  not insert $1_{A_n^k}$ inside the stochastic integrals in order not to lose the adaptedness of the integrands.
Invoking \eqref{dep_Zmeas}, the previous equation can be rewritten as follows: 
\begin{align}\label{limit}
	Z_t^{\tau_{m,n}, X^{s,x}_{\tau_{m,n}}}=X^{s,x}_{\tau_{m,n}}+
	Z_{1,t}^{\tau_{m,n},X^{s,x}_{\tau_{{m,n}}}}+
	Z_{2,t}^{\tau_{m,n},X^{s,x}_{\tau_{{m,n}}}}+
	Z_{3,t}^{\tau_{m,n},X^{s,x}_{\tau_{{m,n}}}},\quad t\in [0,T],\,n\in\mathbb{N},
\end{align}
which holds in an a.s. event $\Omega_{3,m}(s,x)\subset \Omega_3$. 
Since $X^{s,x}_t$ is a càdlàg function of $t$ and  $Z$ is a $\mathcal{C}_0-$valued càdlàg process,  
	\begin{equation*}
		\lim_{n\to \infty}X^{s,x}_{\tau_{{m,n}}}=X^{s,x}_{\tau^s_{{m}}}	,\qquad 
		\lim_{n\to \infty}Z_t^{\tau_{m,n}, X^{s,x}_{\tau_{m,n}}}=Z_t^{\tau^s_{{m}}, X^{s,x}_{\tau^s_m}}
		,\quad \text{in }\Omega_{2,m}(s).
	\end{equation*}
Thus, recalling that also the processes $Z_1,\,Z_2$ and $Z_3$ are càdlàg with values in the space $\mathcal{C}_0$, we can pass to the limit in \eqref{limit} as $n\to \infty$ to deduce that, in the a.s. event $\Omega_{1,m}(s,x)=\Omega_{2,m}(s)\cap\Omega_{3,m}(s,x)$,
\begin{equation}\label{xcommon}
Z_t^{\tau^s_{m}, X^{s,x}_{\tau^s_{m}}}=X^{s,x}_{\tau^s_{m}}+Z_{1,t}^{\tau^s_{{m}},X^{s,x}_{\tau^s_{{m}}}}+
Z_{2,t}^{\tau^s_{{m}},X^{s,x}_{\tau^s_{{m}}}}+
Z_{3,t}^{\tau^s_{{m}},X^{s,x}_{\tau^s_{{m}}}}
,\quad t\in [0,T].
\end{equation}	
Therefore \eqref{true_int} is satisfied  in $\Omega_{1,m}(s,x)$ on the entire $[0,T]$, proving that $X^{s,x}$ is a solution to \eqref{prova_questo}. Hence $X^{s,x}$ solves \eqref{SDEJ} with $\eta=x$. 
	
It remains to find an a.s. event $\Omega''$ --not depending on $s,\,x$ and $t$-- where \eqref{SDEJ_common} is satisfied. If we define 
	\[
	\Omega_{1,m}=\bigcap_{s\in [0,T)\cap \mathbb{Q}}\bigcap_{x\in\mathbb{Q}^d}\Omega_{1,m}(s,x),\quad m\in\mathbb{N}_0,
	\]
	then \eqref{xcommon} is simultaneously satisfied in $\Omega_{1,m}$ for every $s\in [0,T)\cap \mathbb{Q}$ and $x\in\mathbb{Q}^d$. In fact, the continuity of the flow $X^{s,x}_t$ in $x$ implies that \eqref{xcommon} holds for every $x\in\mathbb{R}^d$ in $\Omega_{1,m}$, with $s$ being a rational number in $[0,T)$.\\	If $s\in[0,T)\cap (\mathbb{R}\setminus\mathbb{Q})$, then we just consider  $(s_n)_n\subset (s,T)\cap \mathbb{Q}$ such that $s_n\downarrow s$ as $n\to\infty$, and another limiting argument based on the regularity of $Z,\,Z_1,\,Z_2,\,Z_3$ and $X^{s,x}_t$ shows that \eqref{xcommon} holds for this choice of $s$, too. Summarizing, \eqref{xcommon} holds in $\Omega_{1,m}$ for all $s\in[0,T)$ and $x\in\mathbb{R}^d$. \!Therefore the flow $X^{s,x}_t$ \!satisfies \!\eqref{SDEJ_common} \!in  $$\Omega''=\cap_{m=0}^\infty \Omega_{1,m}.$$
	
	\vspace{1mm}
	\underline{\emph{Step \upperRomannumeral{6}}}\emph{: The flow property \eqref{flow_finale}}. Note that, for every $\bar{t}\in [0,T]$, the function $X\colon\Omega\times [0,T]\times \mathbb{R}^d\times [0,\bar{t}]\to \mathbb{R}^d$ defined by $X(\omega,s,x,t)=X^{s,x}_t(\omega)$ is $ \mathcal{F}_{{}}\otimes \mathcal{B}([0,T]\times \mathbb{R}^d\times [0,\bar{t}])-$measurable by \eqref{meas} and Lemma \ref{joint_meas}. As a consequence, for every $s\in[0,T)$, $\eta \in L^0(\mathcal{F}_s)$ and $t\in [s,T]$, the 
	random variable $X_t^{s,\eta(\cdot)}(\cdot)$ is $\mathcal{F}_t-$measurable. Denote by $X^{s,\eta}$ the process defined by $X^{s,\eta}_t(\omega)=X^{s,\eta(\omega)}_t(\omega),\,\omega\in\Omega,\,t\in [s,T]$; by the same arguments as those used to prove \eqref{prova_questo}, with $\eta$ instead of $x$, we deduce that $X^{s,\eta}$ solves \eqref{SDEJ} with initial condition $(s,\eta)$. In particular, for every $0\le s <u<t\le T$ and $x\in\mathbb{R}^d$, thanks to the pathwise uniqueness of \eqref{SDEJ} we infer the existence of an a.s. event $\Omega_{s,u,x}$ such that  
	\[
			X^{s,x}_t(\omega)=X^{u,X^{s,x}_u(\omega)}_t(\omega),\quad t\in (u,T],\, \omega\in\Omega_{s,u,x}.
	\]
	Since $X^{s,x}_t$ satisfies Point \ref{regularity} in Definition \ref{sharpala}, we can proceed as in Corollary \ref{cor_flow} and Lemma \ref{common} to obtain \eqref{flow_finale}, i.e., to establish the previous equation in an a.s. event not depending on $s,\,u$ and $x$.
	
	The proof is now complete.
\end{proof}   

		\section{The Dynamic Programming Principle}
		\label{sec_DPP}
	The aim of this section is to state Theorem \ref{DPP_t}: a dynamic programming principle for controlled SDEs. Its proof is presented in the next Section \ref{sec_proofDPP} and also employs the regularity properties of the sharp stochastic flow constructed in the previous sections, see Theorem \ref{main1}. In particular, the flow property  of the regular controlled solution (see Point \ref{flow_1} of Definition \ref{sharpala}) obtained in Lemma \ref{regular_control} below  will be important in Subsection \ref{sub_firstDPP}, where the first part of the DPP is proved.
	\vspace{1mm}\\
	According to \cite[Chapter 4]{Kalle}, in this section and the next one, we suppose that the measurable space $(U,\mathcal{U})$ is a Polish space, with $\mathcal{U}$ being the Borel $\sigma-$algebra.	
	Moreover, in these parts we use extensively
	\begin{equation} \label{filtr2}
	\mathcal{F}_{t_1,t_2}^{W,N_p}
	=
	\sigma\left(\left\{W_t-W_{t_1},\,N_p\left(\left(t_1,t\right]\times E\right),\,t\in \left[t_1,t_2\right],\,E\in\mathcal{U}\right\}\cup \mathcal{N}\right), \;\;\; 0\le t_1\le t_2\le T,
	 \end{equation}
	where $\mathcal{N}$ is the family of negligible events in $(\Omega,\mathcal{F},\mathbb{P})$: we set $\mathbb{F}^{W,N_p}_{t_1}=(\mathcal{F}^{W,N_p}_{t_1,t})_{t\in[t_1,T]}$ (cf. \eqref{filtr1}). Notice that 
	$\mathbb{F}^{W,N_p} = \mathbb{F}_0^{W,N_p}=(\mathcal{F}^{W,N_p}_{0,t})_{t\in[0,T]}$ is the augmented filtration generated by $W$ and $N_p$. 
	  
	\vskip 2mm
 
 \subsection {Controlled SDEs with general predictable controls}\label{sub_controlledg} 
  Given $l\in\mathbb{N}$, fix a closed convex set $\mathbf{C} \subset {\mathbb {R}}^l$ and a countable  set $\mathbf{Q} \subset \mathbf{C}$ which is dense in $\mathbf{C}$.  We denote by   $\text{proj}_{\mathbf{C}}$    the usual  orthogonal projection
 \begin{gather}\label{projection}
 	{\text{proj}_{\mathbf{C}}}  : \R^l \to \mathbf{C}.   
 \end{gather}  
 We recall that ${\text{proj}_{\mathbf{C}}}$ is a $1-$Lipschitz continuous map, i.e., $|{\text{proj}_{\mathbf{C}}}(x)-{\text{proj}_{\mathbf{C}}}(y)|\le |x-y|,\,x,y\in\mathbb{R}^l$. In the sequel, we write ${\text{proj}_{\mathbf{C}}}(x)$ or ${\text{proj}_{\mathbf{C}}}\, x$ when no confusion may arise. 
 
 We consider  controlled SDEs of the form
 \begin{multline}\label{anchelei} 
	 	X^{s,x, a}_{t} = x+ \int_{s}^{t}b\left(r, X_{r}^{s,x, a},a_r \right) dr + 
	 	\int_{s}^{t}\alpha\left(r,X_{r}^{s,x, a} ,a_r  \right) dW_r\\+
	 	\int_{s}^{t}\!\int_{U_0}g\left(X_{r-}^{s,x, a} ,r,z,a_r\right)\widetilde{N}_p\left(dr,dz\right) 
	 	+ 
	 	\int_{s}^{t}\!\int_{U\setminus U_0 } f\left(X_{r-}^{s,x, a} ,r,z,a_r \right){N}_p\left(dr,dz\right),
	 \end{multline}
where the control $a\colon[0,T]\times \Omega\to \bb$ belongs to the space of $\mathbb{F}^{W,N_p}-$predictable processes $\mathcal{P}_T$,  namely
\[
	\mathcal{P}_T=\left\{a\colon[0,T]\times \Omega\to \bb\,\text{ s.t. $a$ is $\mathbb{F}^{W,N_p}-$predictable} \right\}.
\]
Note that the general controls in $\mathcal{P}_T$ correspond to the progressively measurable controls used in the monograph \cite{Kr} on controlled SDEs driven by Brownian motion.\\
We impose the following assumptions on the diffusion and jump coefficients of \eqref{anchelei}.
\begin{hyp} \label{base22}
The maps $b,\,\alpha,\, g$ and $f$ of \eqref{anchelei} are as those introduced in Hypothesis \ref{base}, but  also depend on an additional variable $\mathbf{y}\in\bb$ representing the control, and  are  jointly measurable in their domains. We require that $f\colon \mathbb{R}^d\times [0,T] \times U\times \bb \to \mathbb{R}^d$ is continuous in the first and last arguments. Moreover, we suppose that $b, \, \alpha, \,g$ satisfy the linear growth and Lipschitz--type conditions in \eqref{lg1}-\eqref{lip1}, uniformly in $\mathbf{y}\in\bb$, and are continuouos in the control variable $\mathbf{{y}}.$
\end{hyp}  
Under Hypothesis \ref{base22}, for every $a\in\mathcal{P_T}$, we denote by  $X^{s,x,a}$  the pathwise unique strong solution
of the controlled SDE \eqref{anchelei}. Such solution has  càdlàg paths, ${\mathbb P}-$a.s.,  for every fixed $s,x,a$;   see, for instance,  \cite[Sections 3.1 and 3.5]{KU04}.   
  With our notation, 
 $X^{s,x,a}$ is $\mathbb{F}^{W, N_p}-$adapted, see also \eqref{strong}. 
 \\
 On the measurable space $([0,T]\times \Omega, \mathcal{B}_T\otimes\mathcal{F})$, where $\mathcal{B}_T$ are the Borelian sets of $[0,T]$, we define the probability measure $\rho(dt,d\omega)$ by $\rho=\frac{1}{T}dt\otimes \mathbb{P}$, i.e., $\rho$ is the normalized product measure on $[0,T] \times \Omega$. Given a sequence $(a_n)_n\subset \mathcal{P}_T$ and $a\in\mathcal{P}_T$, we say that 
\begin{equation}\label{mu-conv}
	a_n\overset{\rho}{\longrightarrow}a \quad \text{ if and only if }\quad \lim_{n\to\infty}\rho(|a_n-a|>\epsilon)=0, \text{ for every } \epsilon>0.
\end{equation} 
The next theorem gives a stability result  for the solution $X^{s,x,a}$ of \eqref{anchelei} with respect to the $\rho-$convergence of controls in \eqref{mu-conv}. 
\begin{theorem}\label{estrap1}
	Fix $s\in[0,T)$ and $x\in\mathbb{R}^d$. Then, for every $(a_n)_n\subset \mathcal{P}_T$ and $a\in\mathcal{P}_T$ such that $a_n\overset{\rho}{ \longrightarrow}a$ in the sense of \eqref{mu-conv}, one has
	\begin{equation}\label{eq_extended}
		\lim_{n\to\infty} 	\mathbb{P}\left(\sup_{0\le t\le  T}\left|X^{s,x,a}_t-X^{s,x,a_n}_t\right|>\epsilon\right)=0,\quad \epsilon >0.
	\end{equation}
\end{theorem}
\begin{proof}
	Let $s\in[0,T)$, $x\in\mathbb{R}^d$, $a\in\mathcal{P}_T$, and consider a sequence $(a_n)_n\subset \mathcal{P}_T$ such that $a_n\overset{\rho}{{\longrightarrow} } a$ according to \eqref{mu-conv}. In order to obtain \eqref{eq_extended}, our objective is to prove by induction that
	\begin{equation}\label{induction_diag}
		\lim_{n\to\infty} 	\mathbb{P}\left(\sup_{0\le t\le  \tau_m\wedge T}\left|X^{s,x,a}_t-X^{s,x,a_n}_t\right|>\epsilon\right)=0,\quad \epsilon >0,\text{ for all }m\in \mathbb{N}.
	\end{equation}
	Here, as in the proof of Theorem \ref{thm_big}, $(\tau_m)_{m\in \mathbb{N}}$ is the increasing sequence of arrival times for the jumps of the Poisson process 	$(P_t)_{t\ge0}$ given by  $P_t=N_p((0,t]\times (U\setminus U_0)),\,t>0$. Recall that $\tau_n\neq T$ for all $n\in \mathbb{N},\,\mathbb{P}-$a.s. 
	\\
	For the case $m=1$, we first show that 
	\begin{equation}\label{m=1}
		\lim_{n\to\infty} 	\mathbb{E}\left[\sup_{0\le t< \tau_1\wedge S_N\wedge T}\left|X^{s,x,a}_t-X^{s,x,a_n}_t\right|^2\right]=0,\quad N\in\mathbb{N},
	\end{equation}
	where, for any  $N\in\mathbb{N}$, we define $$
	S_N=\inf\{t\in[s,T] \text{ such that }X^{s,x,a}_t> N\},\quad \text{ with }\quad \inf\emptyset=\infty.$$
	Note that the $\sup$ in \eqref{m=1} is taken over the (random) interval $[0,\tau_1\wedge S_N\wedge T)$, which is right--open to exclude the first large jump of the solution processes. Throughout the proof, we denote by $C$ a positive constant which is allowed to change from line to line. 
	For every $n\in\mathbb{N}$, from \eqref{anchelei} and the fact that we set, $\mathbb{P}-$a.s.,  $X^{s,x,a}_u$ and $X^{s,x,a_n}_u$ equal to $x$  for $u\le s$, we have 
	\begin{align*}
	&	\sup_{ 0\le  u<  t\wedge \tau_1 \wedge S_N}\Big|X^{s,x, a}_{u}-X^{s,x, a_n}_{u}\Big|^2 = \,C\sup_{ s \le  u\le  t }\bigg(\left|\int_{s}^{u}1_{[0,\tau_1\wedge S_N]}(r)\left(b\left(r, X_{r}^{s,x, a},a_r\right) -b\left(r, X_{r}^{s,x, a_n},a_{n,r}\right)\right) dr\right|^2 \\&\qquad + 
		\left|	\int_{s}^{u}1_{[0,\tau_1\wedge S_N]}(r)\left(\alpha\left(r,X_{r}^{s,x, a} ,a_r  \right) -\alpha\left(r, X_{r}^{s,x, a_n},a_{n,r}\right)\right)dW_r\right|^2\\&\qquad +
		\left|\int_{s}^{u}\!\int_{U_0}1_{[0,\tau_1\wedge S_N]}(r)\left(g\left(X_{r-}^{s,x, a} ,r,z,a_r\right)-g\left(X_{r-}^{s,x, a_n} ,r,z,a_{n,r}\right)\right)\widetilde{N}_p\left(dr,dz\right) 
		\right|^2	\bigg),
	\end{align*}
	which holds for any $t\in [s,T]$, $\mathbb{P}-$a.s.
	Taking expectations, by the Lipschitz--type conditions on the coefficients (see Hypothesis \ref{base22}), the Burkholder--Davis--Gundy inequality and  \cite[Theorem $2.11$]{KU04} we obtain
	\begin{align}\label{similarly}
		\notag	\mathbb{E}	\left[\sup_{ 0 \le  u<  t\wedge \tau_1 \wedge S_N}\Big|X^{s,x, a}_{u}-X^{s,x, a_n}_u\Big|^2\right]
		&\le 
		C\bigg(\mathbb{E}\bigg[\int_{0}^{t}1_{[0,\tau_1\wedge S_N]}(r) \left|X_{r}^{s,x, a}-X_{r}^{s,x, a_n}\right|^2dr\bigg]+
		\mathbb{E}[J^{1,N}_n] 
		\bigg)
		\\&\le 
		C\left(\mathbb{E}\bigg[\int_{0}^{t}\sup_{ 0 \le  u<  r\wedge \tau_1 \wedge S_N} \left|X_{u}^{s,x, a}-X_{u}^{s,x, a_n}\right|^2dr+
		\mathbb{E}[J^{1,N}_n] \bigg]\right),
	\end{align}
	where in the second inequality we also use the fact that $X^{s,x,a}$ and $X^{s,x,a_n}$ are càdlàg, hence their trajectories have at most countable discontinuities. In the previous equation, the quantity $J^{1,N}_n$ is defined as follows:
	\begin{align*}
		J^{1,N}_n=&\int_{s}^{t\wedge \tau_1\wedge S_N}\left(\left|b\left(r, X_{r}^{s,x, a},a_r\right) -b\left(r, X_{r}^{s,x, a},a_{n,r}\right)\right|^2+
		\left|\alpha\left(r,X_{r}^{s,x, a} ,a_r  \right) -\alpha\left(r, X_{r}^{s,x, a},a_{n,r}\right)\right|^2\right) dr  \\&+
		\int_{s}^{t\wedge \tau_1\wedge S_N}\!\left(\int_{U_0}\left|g\left(X_{r-}^{s,x, a} ,r,z,a_r\right)-g\left(X_{r-}^{s,x, a} ,r,z,a_{n,r}\right)\right|^2\nu(dz)\right) dr.
	\end{align*}
	By the linear growth of the coefficients $b,\, \alpha$ and $g$, and the fact that $a_n\overset{\rho}{\longrightarrow}a$ as $n\to\infty$, we can apply the dominated convergence theorem to deduce that $\mathbb{E}[J_n^{1,N}]=\text{o}(1)$ as $n\to \infty$. Thus, from \eqref{similarly}, Fubini's theorem and Gronwall's lemma entail \eqref{m=1}.
	\\
	We now focus on the time $ \tau_1\wedge S_N\wedge T$ and compute, by \eqref{anchelei}, $\mathbb{P}-$a.s.,
	\begin{align}\label{serve_dopo1}
		\notag&	\bigg|
		X^{s,x,a}_{ \tau_1\wedge S_N\wedge T}-X^{s,x,a_n}_{ \tau_1\wedge S_N\wedge T}
		\bigg|\le
		\bigg|	\int_{s}^{T}1_{[0,\tau_1\wedge S_N]}(r)\left(b\left(r, X_{r}^{s,x, a},a_r\right) -b\left(r, X_{r}^{s,x, a_n},a_{n,r}\right)\right) dr\bigg|
		\\\notag&\qquad +
		\bigg|	\int_{s}^{T}1_{[0,\tau_1\wedge S_N]}(r)\left(\alpha\left(r,X_{r}^{s,x, a} ,a_r  \right) -\alpha\left(r, X_{r}^{s,x, a_n},a_{n,r}\right)\right)dW_r\bigg|
		\\\notag&\qquad +
		\bigg|\int_{s}^{T}\!\int_{U_0}1_{[0,\tau_1\wedge S_N]}(r)\left(g\left(X_{r-}^{s,x, a} ,r,z,a_r\right)-g\left(X_{r-}^{s,x, a_n} ,r,z,a_{n,r}\right)\right)\widetilde{N}_p\left(dr,dz\right) 
		\bigg|\\&\qquad +
		1_{\{s<\tau_1 \le S_N\wedge T\}}
		 \bigg|f\left(X^{s,x, a }_{(\tau_1\wedge S_N\wedge T)-},\tau_1,p_{\tau_1},a_{\tau_1\wedge T} \right)
		- f\left(X^{s,x, {a_n} }_{(\tau_1\wedge S_N\wedge T)-},\tau_1,p_{\tau_1},a_{n,\tau_1\wedge T}\right)\bigg|.
	\end{align}
	By \eqref{m=1} and with arguments similar to those employed in \eqref{similarly}, the first three addends in the right--hand side of the previous equation converge to $0$ in $L^2(\Omega)$.
	\\
	As for the last addend in \eqref{serve_dopo1}, we aim to show that it converges to $0$ in probability. To do this, we use the following fact. Let $\phi (y) = y/(y+1),\,y\ge0$, $\eta$ be a  $[0,1]-$valued random variable and $(Y_n)_n$ be a sequence of $d-$dimensional random variables. Then $\lim_{n\to\infty}(\eta Y_n)=0$ in probability if and only if
	\begin{equation}\label{prob}
		\mathbb{E}[\eta\phi(|Y_n|)] \to 0 \quad  \text{as $n \to \infty.$}
	\end{equation}
	We only demonstrate the sufficient condition. Let ${(\eta Y_{n_k})}_k$ be any subsequence of ${(\eta Y_n)}_n$. Since
	\eqref{prob} holds, there exists a further subsequence $(\eta\phi(|Y_{n_{k_j}}|))_j$ which converges to $0$ in an a.s. event $\Omega'$. \\
	We have to prove that  $\lim_{j\to\infty}  |\eta Y_{n_{k_j}}|=0$, $\mathbb{P}-$a.s. This is obvious on the event $\{ \eta =0\}$. On the other hand, for every $\omega\in \{ \eta >0\}\cap \Omega'$, 
	\begin{equation*}
		\lim_{j\to\infty} \phi(|Z_{n_{k_j}}(\omega)| ) = 0,\quad \text{ which implies that }\quad  \lim_{j\to\infty} |Z_{n_{k_j}}(\omega)|=0,
	\end{equation*}
	by the continuity of the inverse map $\phi^{-1}$ and the fact that  $\phi^{-1}(0)=0$. This shows that \eqref{prob} is a sufficient condition for $\lim_{n\to\infty}(\eta Y_n)=0$ in probability, as claimed. 
	\vspace{2mm}\\
	Going back to the last addend in \eqref{serve_dopo1}, we define $\eta= 1_{\{s<\tau_1\le S_N\wedge T\}}$ and consider
	\begin{align}\label{similarly2}
		\notag&\mathbb{E}\left[
		\eta \,\phi \left( \left|f\left(X^{s,x, a }_{(\tau_1\wedge S_N\wedge T)-},\tau_1,p_{\tau_1},a_{\tau_1\wedge T} \right)
		- f\left(X^{s,x, {a_n} }_{(\tau_1\wedge S_N\wedge T)-},\tau_1,p_{\tau_1},a_{n,\tau_1\wedge T}\right)\right| \right) 
		\right]\\
		&\notag
		\qquad=
		\mathbb{E}\bigg[
		\int_{s}^{T}\!\int_{U \setminus U_0}
		1_{[0, \tau_1\wedge S_N]}(r)
		\phi\bigg(\bigg| f\left(X^{s,x, a }_{r-},r,z,a_{r} \right)
		- f\left(X^{s,x, {a_n} }_{r-},r,z,a_{n,r}\right)\bigg|
		\bigg) 	N_p(dr,dz)\bigg]
		\\&
		\qquad =
		\mathbb{E}\bigg[
		\int_{s}^{T}\!\int_{U \setminus U_0}
		1_{[0, \tau_1\wedge S_N]}(r)
		\phi\bigg(\bigg| f\left(X^{s,x, a }_{r-},r,z,a_{r} \right)
		- f\left(X^{s,x, {a_n} }_{r-},r,z,a_{n,r}\right)\bigg|
		\bigg) 	\nu(dz)\,dr\bigg],
	\end{align}
	where the last equality holds because $dt\otimes \nu(dz)$ is the compensator of $N_p(dt,dz)$. Recalling the continuity of $f$ in the first and last arguments, the fact that $\nu(U\setminus U_0)<\infty$ and \eqref{m=1}, the right--most hand side of the previous equation goes to $0$ by the dominated convergence theorem. 
	Consequently, 
	\begin{equation*}
		\lim_{n\to\infty} 	\mathbb{P}\left(\left|X^{s,x,a}_{ \tau_1\wedge S_N\wedge T}-X^{s,x,a_n}_{ \tau_1\wedge S_N\wedge T}\right|>\epsilon\right)=0,\quad \epsilon >0,\,N\in\mathbb{N}.
	\end{equation*}
	Combining the previous equation with \eqref{m=1} we obtain 
	\begin{equation*}
		\lim_{n\to\infty} 	\mathbb{P}\left(\sup_{0\le t\le  \tau_1\wedge S_N\wedge T}\left|X^{s,x,a}_t-X^{s,x,a_n}_t\right|>\epsilon\right)=0,\quad \epsilon >0,\,N\in \mathbb{N}.
	\end{equation*}
	We now prove that this equation holds taking the $\sup$ on the interval $[0,\tau_1\wedge T]$, so that it reduces to \eqref{induction_diag} with $m=1$, i.e., 
	\begin{equation}\label{add2}
		\lim_{n\to\infty} 	\mathbb{P}\left(\sup_{0\le t\le  \tau_1\wedge T}\left|X^{s,x,a}_t-X^{s,x,a_n}_t\right|>\epsilon\right)=0,\quad \epsilon >0.
	\end{equation}  Since convergence in probability implies $\mathbb{P}-$a.s. convergence up to a subsequence, for any $(a_{n_k})_k\subset (a_n)_n$, we use a diagonalization argument to construct a further subsequence $(a_{n_{k_j}})_j\subset (a_{n_k})_k$ such that, for every $\omega$ in an a.s. event $\Omega_0$,
	\[
	\lim_{j\to\infty} 	\sup_{0\le t\le  \tau_1(\omega)\wedge S_N(\omega)\wedge T}\left|X^{s,x,a}_t(\omega)-X^{s,x,a_{n_{k_j}}}_t(\omega)\right|=0,\quad N\in\mathbb{N}.
	\]
	We notice that $\lim_{N\to\infty}S_N= \infty,\,\mathbb{P}-$a.s., as $X^{s,x,a}$ is a c\`adl\`ag process. Hence we can assume -- without loss of generality -- that $S_N\to \infty$ as $N\to \infty$ in $\Omega_{0}$. Therefore, for any $\omega\in\Omega_0$, there exists an $N(\omega)$ sufficiently large such that $S_N(\omega)\wedge T=T$, whence 
	\[
	\lim_{j\to\infty} 	\sup_{0\le t\le  \tau_1(\omega)\wedge T}\left|X^{s,x,a}_t(\omega)-X^{s,x,a_{n_{k_j}}}_t(\omega)\right|=0,\quad \omega\in\Omega_0.
	\]
	This shows that \eqref{add2} is true.

	For the inductive step, we suppose that \eqref{induction_diag} holds for some $m\ge 1$, and aim to prove that
	\begin{equation}\label{inductive_step}
		\lim_{n\to\infty} 	\mathbb{P}\bigg(\sup_{0\le t\le  \tau_{m+1}\wedge S_N\wedge  T}\left|X^{s,x,a}_t-X^{s,x,a_n}_t\right|>\epsilon\bigg)=0,\quad \epsilon >0,\,N\in\mathbb{N}.
	\end{equation} 
	Arguing as we have just done for \eqref{add2}, \eqref{inductive_step} continues to hold computing the $\sup$ on the interval $[0,\tau_{m+1}\wedge T]$. \\ 
	For every $\delta>0$ and $n\in\mathbb{N}$, we define the stopping time $\sigma^{}_n(\delta)$ by
	\[
	\sigma^{}_n(\delta)=\inf\bigg\{
	t\in [s,T] : 	\int_{s}^{t}\!\int_{U\setminus U_0 }\left|f\left(X_{r-}^{s,x, a} ,r,z,a_r \right)- f\left(X_{r-}^{s,x, a_n} ,r,z,a_{n,r} \right)\right|{N}_p\left(dr,dz\right)> \delta
	\bigg\},
	\]
	{with }$\inf\emptyset=\infty$. Thanks to the inductive hypothesis (see \eqref{induction_diag}) and following a dominated convergence argument analogous to the one in \eqref{similarly2},
	\begin{align*}
		&	\mathbb{P}\left(\sigma_n^{}(\delta)< \tau_{{m+1}}\wedge T\right)=
		\mathbb{P}\left(
		\sigma_n^{}(\delta)= \tau_{{i}}\wedge T,\,\text{for some }i=1,\dots,m\right)
		\\&\qquad
		\le \mathbb{P}\bigg(
		\sum_{i=1}^{m}1_{\{s<\tau_i\le T\} }\left|f\left(X^{s,x, a }_{(\tau_i\wedge T)-},\tau_i,p_{\tau_i},a_{\tau_i\wedge T} \right)
		- f\left(X^{s,x, {a_n} }_{(\tau_i\wedge T)-},\tau_i,p_{\tau_i},a_{n,\tau_i\wedge T}\right)\right|
		> \delta\bigg)
		\underset{n\to \infty}{\longrightarrow} 0.
	\end{align*}
	By analogy with the previous step of the proof, for any $N\in\mathbb{N}$, we focus on 
	$$\mathbb{E}\bigg[\sup_{0\le t< \tau_{m+1}\wedge S_N\wedge \sigma_n(\delta)\wedge T}\Big|X^{s,x,a}_t-X^{s,x,a_n}_t\Big|^2\bigg].$$
	In particular, from \eqref{anchelei} and arguments similar to those employed to infer \eqref{similarly}, Gronwall's lemma entails that 
	\begin{equation}\label{mettiamola}
		\mathbb{E}	\bigg[\sup_{ 0 \le  t<  \tau_{m+1}\wedge S_N\wedge\sigma_n(\delta)\wedge T }\Big|X^{s,x, a}_{t}-X^{s,x, a_n}_t\Big|^2\bigg]
		\le
		C\delta^2+\text{o}(1),\quad \text{as }n\to \infty.
	\end{equation}
	Thus, for every $\epsilon>0$, by Markov's inequality 
	\begin{align}\label{decomposition_delta}
		&\notag\mathbb{P}\bigg(
		\sup_{ 0 \le  t<  \tau_{m+1}\wedge S_N\wedge T}\left|
		X_t^{s,x,a}-X_t^{s,x,a_n}
		\right|
		>\epsilon
		\bigg)\\&\qquad\notag
		=	\mathbb{P}\bigg(
		\sup_{ 0 \le  t<    \tau_{m+1}\wedge S_N\wedge \sigma_n(\delta)\wedge T}\left|
		X_t^{s,x,a}-X_t^{s,x,a}
		\right|
		>\epsilon
		,\,
		\sigma_n(\delta)\ge\tau_{m+1}\wedge T
		\bigg)+
		\mathbb{P}\left(
		\sigma_n(\delta)<\tau_{m+1}\wedge T
		\right)
		\\&\qquad
		\le C\frac{\delta ^2}{\epsilon^2} +\text{o}(1),\quad \text{ as }n\to\infty.
	\end{align}
	Since $\delta>0$ is arbitrary, this estimate gives
	\begin{equation}\label{open_right_m}
		\lim_{n\to\infty}\mathbb{P}\bigg(
		\sup_{ 0 \le  t<   \tau_{m+1}\wedge S_N\wedge T}\left|
		X_t^{s,x,a}-X_t^{s,x,a_n}
		\right|
		>\epsilon
		\bigg)=0,\quad \epsilon>0, \,N\in\mathbb{N},
	\end{equation}
	that is, \eqref{inductive_step} holds when the $\sup$ is taken over the right--open interval $[0,\tau_{m+1}\wedge S_N\wedge T)$. \\
	We finally analyze the closed interval $[0,\tau_{m+1}\wedge S_N \wedge T]$. From \eqref{anchelei}, we deduce that (see also \eqref{serve_dopo1})
	\begin{align*}
		&	\Big|X_{\tau_{m+1}\wedge S_N\wedge\sigma_n(\delta)\wedge T}^{s,x,a}-X_{ \tau_{m+1}\wedge S_N\wedge\sigma_n(\delta)\wedge T}^{s,x,a_n}\Big|
	\\&\qquad 	\le
		\bigg|	\int_{s}^{T}1_{[0,{\tau_{m+1}\wedge S_N\wedge\sigma_n(\delta)}]}(r)\left(b\left(r, X_{r}^{s,x, a},a_r\right) -b\left(r, X_{r}^{s,x, a_n},a_{n,r}\right)\right) dr\bigg|
		\\\notag&\qquad \quad +
		\bigg|	\int_{s}^{T}1_{[0,{\tau_{m+1}\wedge S_N\wedge\sigma_n(\delta)}]}(r)\left(\alpha\left(r,X_{r}^{s,x, a} ,a_r  \right) -\alpha\left(r, X_{r}^{s,x, a_n},a_{n,r}\right)\right)dW_r\bigg|
		\\\notag&\qquad \quad +
		\bigg|\int_{s}^{T}\!\int_{U_0}1_{[0,{\tau_{m+1}\wedge S_N\wedge\sigma_n(\delta)}]}(r)\left(g\left(X_{r-}^{s,x, a} ,r,z,a_r\right)-g\left(X_{r-}^{s,x, a_n^{}} ,r,z,a^{}_{n,r}\right)\right)\widetilde{N}_p\left(dr,dz\right) 
		\bigg|\\&\quad \qquad +
		\sum_{i=1}^{m+1}1_{\{s<\tau_i\le  S_N\wedge T\}} \left|f\left(X^{s,x, a }_{(\tau_i\wedge S_N\wedge T)-},\tau_i,p_{\tau_i},a_{\tau_i\wedge T} \right)
		- f\left(X^{s,x, {a_n^{}} }_{{( \tau_i\wedge S_N\wedge T)}-},\tau_i,p_{\tau_i},a^{}_{n,\tau_i\wedge T}\right)\right|,\quad \mathbb{P}-\text{a.s.}
	\end{align*}
	By \eqref{open_right_m} and the dominated convergence theorem, arguing as in \eqref{similarly2} (with\,$1_{[0,\tau_{{m+1}}\wedge S_N]}$ instead of $1_{[0,\tau_1\wedge S_N]}$), the fourth addend in the right--hand side of the previous equation, namely the sum from $1$ to $m+1$, converges to $0$ in probability as $n\to\infty$. As a result,  by Markov's inequality, \eqref{mettiamola} and the same techniques as in \eqref{similarly}, 
	\begin{align*}
		\mathbb{P}\left(\left|X_{ \tau_{m+1}\wedge S_N\wedge\sigma_n(\delta)\wedge T}^{s,x,a}-X_{ \tau_{m+1}\wedge S_N\wedge\sigma_n(\delta)\wedge T}^{s,x,a_n^{}}\right|>4\epsilon\right)\le
		C\frac{\delta^2}{\epsilon^2}
		+\text{o}(1),\quad \text{as }n\to \infty,\quad \epsilon>0,\,N\in\mathbb{N}.
	\end{align*}
	Therefore, arguing with the same decomposition  as in \eqref{decomposition_delta}, we demonstrate that \eqref{inductive_step} holds. This completes the induction argument, which shows that \eqref{induction_diag} is true for all $m\in \mathbb{N}.$\vspace{2mm}
	\\
	Recall that there exists an a.s. event $\Omega_1$ such that $\tau_n(\omega)\uparrow\infty$ as $n\to \infty$, for all $\omega\in\Omega_1$ (see the proof of Theorem \ref{thm_big}). Thus, one can follow the same diagonalization reasoning as in \eqref{add2} to infer \eqref{eq_extended} from \eqref{induction_diag}. The proof is then complete.
\end{proof}

\subsection{Regular controlled stochastic flow with step controls in $\mathcal A$}\label{sub5.2}
 In the general case $a\in\mathcal{P}_T$, it is not clear whether \eqref{anchelei}  generates a regular stochastic flow according to Definition \ref{sharpala}. This issue, which is relevant for proving the DPP, does not seem to be thoroughly addressed in the literature concerning controlled SDEs with jumps, see Remark \ref{flow1no} for more details.
 However, if we restrict the class of controls,  we can apply the results from the previous sections to obtain a useful regular version of the solution to \eqref{anchelei}, see Lemma \ref{regular_control}.

\vspace{1mm}
We introduce step (or simple) control processes with $\qq-$valued jumps and jump times in a countable dense subset of $[0,T]$ starting at $0$. For every $n\in\mathbb{N}$, we then consider the set $S_n=\{t^{(n)}_i,\,i=0,\dots, 2^n\}$ of dyadic points of $[0,T]$ with mesh $T2^{-n}$, and call
$$
S=\cup_{n\in\mathbb{N}}S_n.
$$ For every $t\in [0,T]$, we define  $t_n^-=\max\{s\in S_n : s\le t\}$ and $t_n^+=\min\{s\in S_n : s>t\}$, with  $T_n^+=\infty$.
We denote by $\mathcal{A}_n$  the space of  $\mathbb{F}^{W,N_p}-$adapted càglàd square--integrable $\mathbf{Q}-$valued step processes $a$ of the form
 \begin{equation}\label{form}
a(t,\omega)=\sum_{i=0}^{2^n-1}Z_{i}(\omega)1_{\left(t^{(n)}_{i},t^{(n)}_{i+1}\right]}(t),\quad t\in[0,T],\,\omega\in \Omega,
\end{equation}
for some $Z=(Z_i)_i\subset L^2(\Omega;\mathbf{Q})$, where $Z_i$ is $\mathcal{F}^{W,N_p}_{0,t^{(n)}_i}-$measurable. 
Here $L^2(\Omega;\mathbf{Q})$ is the subset of $L^2(\Omega;\mathbb{R}^l)$ constituted by (classes of) random variables with values in $\mathbf{Q}$.
Therefore every simple process $a\in\mathcal{A}_n$ is identified by $Z$: we are going to write $a\sim Z$. We denote by 
$$\mathcal{Z}_{n}=\big\{(Z_{i})_{i=0,\dots, 2^n-1}\subset L^2(\Omega;\mathbf{Q})\text{ such that $Z_{i}$ is $\mathcal{F}^{W,N_p}_{0,t^{(n)}_i}-$measurable}\}.$$
In the sequel, we set 
\begin{equation} \label{aaa}
	\mathcal{A}=\cup_{n\in\mathbb{N}}\mathcal{A}_n 
	\;\; \text{ and} \;\;   
	\mathcal{Z}=\cup_{n\in\mathbb{N}}\mathcal{Z}_n.
\end{equation}

%
\begin{lemma} \label{regular_control}
 If $a\in\mathcal{A}$, then there exists a regular stochastic  flow $X^{s,x,a}_t$ generated by \eqref{anchelei} according to Definition~\ref{sharpala}.
\end{lemma}
\begin{proof}
Let $n\in \mathbb{N}$ and suppose that $a\in\mathcal{A}_n$ is given by \eqref{form}. Fix $s\in[0,T)$, $\eta \in L^0(\mathcal{F}_s)$ and a vector $\mathbf{{y}}({n})=(y_i)_{ i=0,\dots, 2^n}\in (\bb)^{2^n-1},$  and consider the SDE 
\begin{align}\label{control_SDE}
	\notag	X^{s,\eta, \mathbf{y}( n)}_{t} &= \eta+ \sum_{i=0}^{ 2^n-1}\Bigg(\int_{s}^{t}1_{\{t_{i}<r\le t_{i+1}\}}b\left(r, X_{r}^{s,\eta, \mathbf{y}( n)},y_i \right) dr + 
	\int_s^t1_{\{t_{i}<r\le t_{i+1}\}}\alpha\left(r,X_{r}^{s,\eta, \mathbf{y}( n)} ,y_i  \right) dW_r\\
	&\notag\qquad +
	\int_s^t\!\int_{U_0}1_{\{t_{i}<r\le t_{i+1}\}}g\left(X_{r-}^{s,\eta, \mathbf{y}( n)} ,r,z,y_i \right)\widetilde{N}_p\left(dr,dz\right) 
	\\	& \qquad + 
	\int_{s}^{t}\!\int_{U\setminus U_0 }1_{\{t_{i}<r\le t_{i+1}\}} f\left(X_{r-}^{s,\eta, \mathbf{y}( n)} ,r,z,y_i \right){N}_p\left(dr,dz\right)\Bigg),\quad t\in [s,T].
\end{align} 
	If we define $\tilde{b}\colon [0,T]\times \mathbb{R}^d\times \left(\bb\right)^{2^n}\to \mathbb{R}^d$ by
\[
\tilde{b}\left(t,x,\mathbf{z}\right)
=
\sum_{i=0}^{2^n-1}1_{\{t_{i}<t\le t_{i+1}\}}b\left(t,x,z_i \right),\quad t\in[0,T], \,x\in\mathbb{R}^d,\,\mathbf{z}=(z_0,\dots,z_{2^n-1})\in\left(\bb\right)^{2^n},
\]
with analogous definitions for the coefficients $\tilde{\alpha},\,\tilde{g}$ and $\tilde{f}$, then \eqref{control_SDE} can be rewritten as
\begin{multline*}
	X^{s,\eta, \mathbf{y}( n)}_{t} = \eta+ \int_{s}^{t}\tilde{b}\left(r, X_{r}^{s,\eta, \mathbf{y}( n)},\mathbf{y}( n) \right) dr + 
	\int_s^t\tilde{\alpha}\left(r,X_{r}^{s,\eta, \mathbf{y}( n)} ,\mathbf{y}( n)  \right) dW_r
	\\+
	\int_s^t\!\int_{U_0}\tilde{g}\left(X_{r-}^{s,\eta, \mathbf{y}( n)} ,r,z,\mathbf{y}( n) \right)\widetilde{N}_p\left(dr,dz\right) 
	+ 
	\int_{s}^{t}\!\int_{U\setminus U_0} \tilde{f}\left(X_{r-}^{s,\eta, \mathbf{y}( n)} ,r,z,\mathbf{y}( n) \right){N}_p\left(dr,dz\right),
\end{multline*} 
for $t\in [s,T]$. Since $\tilde{b}(\cdot,\mathbf{{y}}(n)),\, \tilde{\alpha}(\cdot,\mathbf{{y}}(n)),\,\tilde{g}(\cdot,\mathbf{{y}}(n))$ and $\tilde{f}(\cdot,\mathbf{{y}}(n))$ satisfy the requirements of Theorem \ref{main1}, \eqref{control_SDE}  generates a \emph{sharp stochastic flow} $(X^{S_n,\mathbf{y}({n})})^{s,x}$ in the sense of Definition \ref{sharpala}. When $\mathbf{{y}}({n})$ varies in $(\mathbf{Q})^{2^{n}}$,  the corresponding family of SDEs \eqref{control_SDE} generates a common sharp stochastic flow, which we denote by $(X^{S_n})^{s,x,\mathbf{{y}}(n)}$.	Notice that the map
\begin{equation}\label{measQ}
	X^{S_n}\colon \Omega\times [0,T]\times \mathbb{R}^d\times [0,T]\times (\mathbf{Q})^{2^n}\to \mathbb{R}^d \text{ given by } X^{S_n}(\omega,s,x,t,\mathbf{{y}}(n))=(X^{S_n})_t^{s,x,\mathbf{{y}}(n)}
\end{equation}
is  $ \mathcal{F}_{{}}\otimes \mathcal{B}([0,T]\times \mathbb{R}^d\times [0,T])\otimes \mathcal{P}(\mathbf{Q})^{2^n}-$\text{measurable}, where $\mathcal{P}(\mathbf{Q})^{2^n}$ is the power set of $(\mathbf{Q})^{2^n}$.\\
Recalling \eqref{form} and that $Z\in\mathcal{Z}_n$, if we set $[\mathbf{y}( n)](\omega)=Z(\omega)$, then  arguments similar to those in the proof of Lemma \ref{le_se} yield 
\begin{equation}\label{controlla_flow}
	X_t^{s,x,a}(\omega)=(X^{S_{{n}}})_t^{s,x,[\mathbf{y}( n)](\omega)}(\omega),\quad \omega\in\Omega.
\end{equation}
Therefore \eqref{controlla_flow} gives a version of $X_t^{s,x,a}$ which is a regular  stochastic flow in the sense of Definition \ref{sharpala}. In particular, the stochastic continuity in Point \ref{constoch} can be inferred using equalities like \eqref{nftd}-\eqref{cont_X_st} (see also \eqref{mettere} below) and Point \ref{regularity} \ref{cons} in Definition \ref{sharpala}. The proof is now complete.
\end{proof} 
\begin{rem}
	By \eqref{measQ} and \eqref{controlla_flow} in the proof of Lemma \ref{regular_control}, we, in fact, construct a regular stochastic flow $X^\mathcal{A}$ generated by \eqref{anchelei} which is common to all controls $a\in\mathcal{A}$.
\end{rem} 
 \begin{rem} \label{flow1no}  
	 The existence of a regular stochastic flow  is investigated  for several classes of controlled SDEs without jumps.  For instance,  
a regular version of the continuous  solution is established for different controlled McKean--Vlasov SDEs  in \cite{CP,PW} (see also the references therein). \\
As we explain in Introduction, in the case of SDEs with jumps such as \eqref{anchelei}, establishing  the existence of a regular solution is more challenging. 
	 Lemma \ref{regular_control} achieves this  for  jump diffusions controlled by a process $a\in\mathcal{A}$.
	 To the best of our knowledge, Lemma \ref{regular_control} fills a significant gap in the literature on controlled SDEs with jumps. Indeed, even in the case of controlled SDEs with only a small--jumps component  (i.e., \eqref{anchelei} with $f\equiv 0$),   we could not find any proofs 	 regarding the regularity of the generated stochastic flow. 
%
	  In the special case of  controlled jump--diffusions driven by a Wiener process and an independent compound Poisson process  (hence without a small--jumps component), 	 a regular  stochastic flow  has  been  used in the proof of \cite[Proposition 5.4]{touzi}.  However, the details about its validity  are missing.   
\end{rem}

	The following result -- a corollary to Theorem \ref{estrap1} -- shows that it is possible to approximate, uniformly in probability, the solution $X^{s,x,a}_t$ of \eqref{anchelei} for $a\in\mathcal{P}_T$   with solutions of the same equation corresponding to controls in $\mathcal{A}$.
	\begin{corollary}\label{estrap}
		For every $s\in[0,T),\,x\in\mathbb{R}^d$ and $a\in\mathcal{P}_T$, there exists a sequence $(a_n)_n\subset \mathcal{A}$ such that $a_n\overset{\rho}{\longrightarrow} a$  and 
		\begin{equation}\label{eq_corollary}
				\lim_{n\to\infty} 	\mathbb{P}\left(\sup_{0\le t\le  T}\left|X^{s,x,a}_t-X^{s,x,a_n}_t\right|>\epsilon\right)=0,\quad \epsilon >0.
		\end{equation}
	\end{corollary}
\begin{proof}
	Fix $s\in[0,T),\,x\in\mathbb{R}^d$ and $a\in\mathcal{P}_{T}$. Notice that $a\colon([0,T] \times \Omega,\mathcal{P})\to (\mathbb{R}^l,\mathcal{B}(\mathbb{R}^l))$ is a measurable function, where $\mathcal{P}$ is the $\mathbb{F}^{W,N_p}-$predictable $\sigma-$algebra, hence we can find a sequence of simple predictable processes $(a_{1,n})_n$ such that $a_{1,n}(t,\omega)\to a(t,\omega)$ as $n\to\infty$ for every $(t,\omega)\in [0,T]\times \Omega$. In particular, $a_{1,n}\overset{\rho}{\longrightarrow}a$. \\
	Observe that  $(a_{1,n})_n$ is a sequence in the Hilbert space  $\mathcal{L}^2=(L^2([0,T]\times \Omega;\mathbb{R}^l),\norm{\cdot}_{\mathcal{L}^2})$, where $\norm{\cdot}_{\mathcal{L}^2}$ denotes the usual norm, and that the $\norm{\cdot}_{\mathcal{L}^2}-$convergence is stronger than the $\rho-$convergence. Let $\mathcal{E}$ be the space of step processes of the form \eqref{form}, where the jump times are not necessarily in the dyadic points $S$ and the jumps are $\mathbb{R}^l-$valued square--integrable random variables. By \cite[Exercise 2.7.2 and Theorem 2.8.2]{kry}, we deduce that $\mathcal{E}$ is dense in $\mathcal{L}^2$.			 
	Thus,		 we can determine a sequence $(a_{2,n})_n\subset \mathcal{E}$ such that  
	$$
	\norm{a_{1,n}-a_{2,n}}_{\mathcal{L}^2}<\frac{1}{n},\quad n\in\mathbb{N}.$$
	It follows that $a_{2,n}\overset{\rho}{\longrightarrow}a$. Since $a$ takes values in $\mathbf{C}$, it is clear that $\text{proj}_{\mathbf{C}}\,a_{2,n}\overset{\rho}{\longrightarrow}a$, as well.\\
	Recall the countable dense subset $\mathbf{Q}\subset \mathbf{C},$ and write $\mathbf{Q}=\{q_j\}_{j\in\mathbb{N}}$. For every $n\in\mathbb{N}$, we define the function $\Phi_n\colon \mathbf{C}\to\mathbf{Q}$ as follows: given $x\in\mathbf{C}$, $\Phi_n(x)$ is the element $z_j\in \mathbf{Q}\cap B(x,\frac{1}{n\sqrt{T}})$ with the smallest index $j$, where $B(x,r)\subset\R^l$ is the open ball centered in $x$ with radius $r>0$. Notice that $\Phi_n$ is measurable, as
	\[
	\Phi_n^{-1}(z_j)= \left(B\left(z_j,\frac{1}{n\sqrt{T}}\right)\setminus \bigcup_{k=1}^{j-1}
	B\left(z_k,\frac{1}{n\sqrt{T}}\right)\right)
	\cap C,\quad j\in\mathbb{N}.
	\]
	By the definition of $\mathcal{E}$, for every $n\in\mathbb{N}$, there exists an $(\bar{n}(n)+1)-$tuple $\bar t = (t_0, \dots, t_{\bar{n}(n)})$, with $0=t_0<t_1<\dots <t_{\bar{n}(n)}=T$, and $\mathcal{F}^{W,N_p}_{0,t_i}-$measurable, square--integrable, $\mathbf{C}-$valued random variables $\widetilde{Z}_{i,n}$ such that 
	\begin{gather*}
		\text{proj}_{\mathbf{C}}\,a_{2,n}(t,\cdot )= \widetilde{Z}_{i,n},\quad t\in (t_i,t_{i+1}],\,i=0,\dots,\bar{n}(n)-1.
	\end{gather*}
	Considering the processes $(a_{3,n})_n\subset\mathcal{P}_T$ given by $a_{3,n}(0,\cdot)=0$ and 
	\[
	a_{3,n}(t,\cdot )= \Phi_n(\widetilde{Z}_{i,n}),\quad t\in (t_i,t_{i+1}],\,i=0,\dots,\bar{n}(n)-1,\,n\in\mathbb{N},
	\]
	we have 
	\[
	\norm{\text{proj}_{\mathbf{C}}\,a_{2,n}-a_{3,n}}_{\mathcal{L}^2}<\frac{1}{n},\quad n\in\mathbb{N},
	\]
	which implies that $a_{3,n}\overset{\rho}{\longrightarrow}a$.
	\\
	Define $Z_{i,n}=\Phi_n(\widetilde{Z}_{i,n})$ and $M_n=\max_{i=1,\dots,\bar{n}(n)-1} \mathbb{E}[|Z_{i,n}-Z_{i-1,n}|^2]$. Take  an $(\bar{n}(n)+1)-$tuple of  dyadic points $(q_0, q_1,  \dots, q_{\bar{n}(n)})$, with $0=q_0<q_1<\dots<q_{\bar{n}(n)}=T$, such that 
	$t_i <q_i < t_{i+1}$ and $q_i-t_i<({M}_n{n^2(\bar{n}(n)-1)})^{-1}$ for any $i =1, \ldots, \bar n (n)-1$. Then, defining the control  $a_n\in \mathcal{A}$ by
	\begin{gather*} 
		a_n(t,\omega)=
		\sum_{i=0}^{\bar{n}(n)-1} Z_{i,n}(\omega)1_{\left(q_{i},q_{i+1}\right]}(t),\quad t\in[0,T],\,\omega\in \Omega,
	\end{gather*}
	we conclude that  
	\begin{gather*}\label{L2} 
		\norm{a_{n}-a_{3,n}}_{\mathcal{L}^2}^2=
		\sum_{i=1}^{\bar{n}(n)-1}\int_{t_i}^{q_i}  \mathbb{E}\Big[\left|Z_{i,n} - Z_{i-1,n}\right|^2\Big] dr = \sum_{i=1}^{\bar{n}(n)-1}(q_i-t_i) \mathbb{E}\Big[\left|Z_{i,n} - Z_{i-1,n}\right|^2\Big]<\frac{1}{n^2}.
	\end{gather*}
	Consequently,  the sequence $(a_n)_n\subset \mathcal{A}$	satisfies $a_n\overset{\rho}{\longrightarrow}a$, and  Theorem \ref{estrap1} implies that \eqref{eq_corollary} holds.		
	The proof is now complete. 
\end{proof}

	\subsection{The stochastic control problem and the statement of the DPP} \label{sub_DPP}

  To formulate our stochastic control problem, we introduce the functions $h$ and $j$ that satisfy the following assumption.		
\begin{hyp}		We consider a {\sl jointly measurable map $h\colon[0,T] \times \mathbb{R}^d\times \bb\to \mathbb{R}$ such that $h (t, \cdot) : \mathbb{R}^d\times \bb\to \mathbb{R}$  is continuous for any $t \in [0,T]$},
	  and {\sl a continuous map $j\colon \mathbb{R}^d\to \mathbb{R}$.} 
	We also require that {\sl $h$ and $j$ are globally bounded in their domains. } 
\end{hyp}	
	  
	  We define the gain function
		\begin{equation} \label{h22}
		J(s,x,a)=\mathbb{E}\left[\int_{s}^{T}h\left(r,X_r^{s,x,a}, a_r\right)dr + 
		j\left(X_T^{s,x,a}\right)\right],\quad s\in [0,T),\, x\in\mathbb{R}^d,\,a\in\mathcal{P}_T.
		\end{equation}
The value function $v$ associated with $J$ is 
		\[
		v(s,x)=\sup_{a\in\mathcal{P}_T}\,J(s,x,a),\quad s\in [0,T),\,x\in\mathbb{R}^d.
		\] 
	The following three results illustrate some properties of $J$ and $v$ which are essential for the proof of the DPP stated in Theorem \ref{DPP_t}. The first one concerns  a continuity property of $J$ in the control $a\in\mathcal{P}_T$ with respect to the  $\rho-$convergence in \eqref{mu-conv}.
	\begin{lemma}\label{lemma_cont}
		Fix $s\in [0,T)$ and $x\in\mathbb{R}^d$. Then, given $a\in\mathcal{P}_T$ and  a sequence $(a_n)_n\subset \mathcal{P}_T$ such that $a_n\overset{\rho}{\longrightarrow}a$, 
			\begin{equation}\label{cont_J}
			\lim_{n \to \infty } J(s,x,a_{n}) = J(s,x,a).
		\end{equation}
	\end{lemma}
\begin{proof}
	Consider $s\in [0,T)$, $x\in\mathbb{R}^d$ and controls $a,\,(a_n)_n$ in $\mathcal{P}_T$  such that $a_n\overset{\rho}{\longrightarrow}a$. Recall that 
	\[
	J(s,x,a_n)=\mathbb{E}\bigg[\int_{s}^{T}h\left(r,X_r^{s,x,a_n}, a_{n,r}\right)dr + 
	j\left(X_T^{s,x,a_n}\right)\bigg],\quad n\in\mathbb{N}.
	\]
	By  Theorem \ref{estrap1} and Vitali's convergence theorem, $ \mathbb{E}\left[ j\left(X_T^{s,x,a_n}\right)\right]\to  \mathbb{E}\left[ j\left(X_T^{s,x,a}\right)\right]$ as $n\to \infty$. Regarding the other addend, notice that, by \eqref{mu-conv} and \eqref{eq_extended}, for every subsequence $(a_{n_k})_k\subset (a_n)_n$, there exists a further subsequence $(a_{n_{k_j}})_j$ such that 
	\[
	\lim_{j\to \infty}\left|h\left(r,X_r^{s,x,a_{n_{k_j}}}(\omega), a_{n_{k_j}}(r,\omega)\right)
	-  h\left(r,X_r^{s,x,a}(\omega), a(r,\omega)\right) \right|=0,\quad  \text{for $dt\otimes \mathbb{P}-$a.e. }(r,\omega)\in [0,T]\times \Omega. 
	\]
	Thus, the dominated convergence theorem implies that  
	\begin{gather*}
		\lim_{j \to \infty}\int_{s}^{T}  \mathbb{E} \left[h\left(r,X_r^{s,x,a_{n_{k_j}}}, a_{n_{k_j},r}\right)
		-  h\left(r,X_r^{s,x,a}, a_r\right) \right] dr =0.
	\end{gather*} 
This computation gives \eqref{cont_J}, finishing the proof.
\end{proof}
A straightforward consequence of Lemma \ref{lemma_cont} is the next corollary. It shows that, in the definition of  $v$, it is not restrictive to consider only  controls $a\in\mathcal{A}$, i.e.,  step  processes with $\mathbf{Q}-$valued square--integrable jumps and  jump times in $S$  (the set of dyadic points of $[0,T]$). 
		\begin{corollary}\label{l20}
		The following equality holds for every $s\in[0,T)$ and $x\in\mathbb{R}^d$:
		\begin{equation}\label{semplicemente}
			v(s,x)=\sup_{a\in\mathcal{A}}\,J(s,x,a).
		\end{equation}
	\end{corollary}
	\begin{proof}
		Fix $s\in[0,T)$ and $x\in\mathbb{R}^d$. Thanks to Corollary \ref{estrap}, for every $a \in {\mathcal P}_T$ there exists a sequence $(a_n)_n \subset {\mathcal A}$ such that $a_n\overset{\rho}{\longrightarrow} a$. Hence, by Lemma  \ref{lemma_cont}, $	\lim_{n \to \infty } J(s,x,a_{n}) = J(s,x,a),$ whence
	 $$
	v(s,x)\le \sup_{a\in\mathcal{A}}\,J(s,x,a).
	$$
Since $\mathcal{A}\subset \mathcal{P}_T$, the proof is complete.
	\end{proof}
\begin{rem}\label{ref8}
By Corollary 	\ref{l20}, 
 we can consider the value function $v$ as in \eqref{semplicemente}, i.e., $v$ is computed taking the supremum over controls $a\in\mathcal{A}$. Moreover, using the identification $\mathcal{A}_n\ni a\sim Z\in\mathcal{Z}_n$, $n\in\mathbb{N}$, in the sequel we might also write $J(s,x,Z)=J(s,x,a).$
\end{rem}

Thanks to \eqref{controlla_flow} and the characterization of the value function $v$ in Corollary \ref{l20}, we can exploit the regularity of the flow $X^{S_{ n}}$ generated by \eqref{control_SDE}  to deduce the following property of $v$. 
\begin{lemma}\label{lsc}
	The function $v\colon[0,T)\times \mathbb{R}^d\to \mathbb{R}$ is lower semicontinuous. 
\end{lemma}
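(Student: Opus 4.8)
The plan is to prove that, for every fixed control $a\in\mathcal{A}$, the map $(s,x)\mapsto J(s,x,a)$ is continuous on $[0,T)\times\mathbb{R}^d$; since, by Lemma \ref{l20}, $v=\sup_{a\in\mathcal{A}}J(\cdot,\cdot,a)$ is a supremum of continuous functions, the lower semicontinuity of $v$ follows immediately. So fix $a\in\mathcal{A}$, say $a\sim Z\in\mathcal{Z}_n$ for some $n$, and let $X=X_t^{s,x,\mathbf{y}}$ be the sharp stochastic flow generated by the corresponding SDE \eqref{control_SDE} in the sense of Definition \ref{sharpala_1}; by \eqref{controlla_flow} we have $X^{s,x,a}_t(\omega)=X^{s,x,Z(\omega)}_t(\omega)$ for every $\omega\in\Omega$.

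First I would establish the almost sure convergence of trajectories: if $(s_k,x_k)\to(s,x)$ in $[0,T)\times\mathbb{R}^d$, then $\sup_{0\le t\le T}\left|X^{s_k,x_k,a}_t-X^{s,x,a}_t\right|\to 0$ almost surely. To this end I would fix $s\in[0,T)$ and choose an almost sure event $\Omega_s$ on which the flow $X$ satisfies Point \ref{regularity1} of Definition \ref{sharpala_1} and, in addition, $\lim_{r\uparrow s}X^{r,x',\mathbf{y}}(\omega)=X^{s,x',\mathbf{y}}(\omega)$ in $\mathcal{D}_0$, locally uniformly in $(x',\mathbf{y})$; such an event exists because the controlled flow has no fixed--time discontinuities, as can be argued exactly as in \eqref{nftd}--\eqref{cont_X_st}. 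On $\Omega_s$, the map $r\mapsto X^{r,x',\mathbf{y}}(\omega)$ is right--continuous at $s$ locally uniformly in $(x',\mathbf{y})$ by \ref{cons1}, and its left limit at $s$ equals its value at $s$; hence it is \emph{continuous} at $s$, locally uniformly in $(x',\mathbf{y})$. Combining this with the continuity in $(x',\mathbf{y})$ from \ref{conx1}, for $\omega\in\Omega_s$ the map $(s',x')\mapsto X^{s',x',\mathbf{y}}(\omega)\in\mathcal{D}_0$ is continuous at $(s,x)$, uniformly for $\mathbf{y}$ in compact sets; evaluating at the fixed vector $\mathbf{y}=Z(\omega)$ yields $X^{s_k,x_k,a}_\cdot(\omega)=X^{s_k,x_k,Z(\omega)}_\cdot(\omega)\to X^{s,x,Z(\omega)}_\cdot(\omega)=X^{s,x,a}_\cdot(\omega)$ in $\mathcal{D}_0$, i.e., uniformly on $[0,T]$.

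Next I would pass to the limit in $J$. Since $j$ is bounded and continuous, the uniform convergence of paths gives $j(X^{s_k,x_k,a}_T)\to j(X^{s,x,a}_T)$ a.s., whence $\mathbb{E}[j(X^{s_k,x_k,a}_T)]\to\mathbb{E}[j(X^{s,x,a}_T)]$ by bounded convergence. For the running cost, for $\omega\in\Omega_s$ and every $r\ne s$ one has $1_{[s_k,T]}(r)\,h(r,X^{s_k,x_k,a}_r(\omega),a_r(\omega))\to 1_{[s,T]}(r)\,h(r,X^{s,x,a}_r(\omega),a_r(\omega))$, using that $h(r,\cdot,\cdot)$ is continuous, $X^{s_k,x_k,a}_r(\omega)\to X^{s,x,a}_r(\omega)$, and $a_r(\omega)$ does not depend on $k$; since $h$ is bounded and $[0,T]\times\Omega$ has finite measure, dominated convergence together with Fubini's theorem gives $\mathbb{E}\big[\int_{s_k}^{T}h(r,X^{s_k,x_k,a}_r,a_r)\,dr\big]\to\mathbb{E}\big[\int_{s}^{T}h(r,X^{s,x,a}_r,a_r)\,dr\big]$. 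Therefore $J(s_k,x_k,a)\to J(s,x,a)$, proving the continuity of $J(\cdot,\cdot,a)$. Finally, for any $(s_k,x_k)\to(s,x)$ and any $a\in\mathcal{A}$, $\liminf_k v(s_k,x_k)\ge\liminf_k J(s_k,x_k,a)=J(s,x,a)$; taking the supremum over $a\in\mathcal{A}$ and using Lemma \ref{l20} gives $\liminf_k v(s_k,x_k)\ge v(s,x)$, i.e., $v$ is lower semicontinuous. The only delicate step is the trajectory convergence: one has to combine the càdlàg regularity of the flow in the initial time $s$ with the absence of fixed--time discontinuities to upgrade right--continuity in $s$ into genuine continuity at the fixed time $s$ (the flow does jump in $s$, but only at the random Poisson jump times, which avoid any prescribed $s$ a.s.), and one has to keep this convergence uniform in $\mathbf{y}$ so that the random vector $Z(\omega)$ may be substituted.
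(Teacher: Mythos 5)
Your proposal is correct and follows essentially the same route as the paper's proof: for each fixed $a\in\mathcal{A}$ you combine the regularity of the sharp controlled flow (Point \ref{regularity1} of Definition \ref{sharpala_1}) with the absence of fixed--time discontinuities (the analogue of \eqref{nftd}--\eqref{cont_X_st}, i.e.\ \eqref{mettere}) and the identification \eqref{controlla_flow} to get a.s.\ convergence of the trajectories $X^{s_k,x_k,a}\to X^{s,x,a}$, then pass to the limit in $J$ by dominated convergence and conclude via Lemma \ref{l20} and the $\liminf$ argument. Your write-up merely spells out in more detail the joint continuity in $(s,x)$ and the substitution of the random vector $Z(\omega)$, which the paper leaves implicit.
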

\begin{proof}
	Fix $n\in\mathbb{N}$, $s\in [0,T)$, $x\in\mathbb{R}^d$ and $a\in\mathcal{A}_n$. By Points \ref{regularity}  \ref{cons} and \ref{constoch} in Definition \ref{sharpala} (see also \eqref{nftd}-\eqref{cont_X_st}), we infer that there exists an a.s. event $\Omega_s$ such that 
	\begin{equation}\label{mettere}
		(X^{S_n})^{s-,x,\mathbf{y}}_t(\omega)=	(X^{S_n})^{s,x,\mathbf{y}}_t(\omega),\quad x\in\mathbb{R}^d,\,\mathbf{y}\in (\mathbf{Q})^{2^n},\,t\in[0,T],\,\omega\in\Omega_s.
	\end{equation}
	Hence by \eqref{controlla_flow} we infer that 
	\[
	X^{s-,x,a}_t(\omega)=\lim_{r\uparrow s}X^{r,x,a}_t(\omega)=X^{s,x,a}_t(\omega),\quad x\in\mathbb{R}^d,\,t\in[0,T],\,\omega\in\Omega_s.
	\]
	We recall that $X^{s-,x,a}_t=\lim_{r\uparrow s}X^{r,x,a}_t$ holds uniformly in $t$ and locally uniformly in $x$. 
	Thus, if we take a sequence $(s_k,x_k)_k\subset [0,T)\times \R^d$ such that $(s_k,x_k)\to (s,x)$ as $k\to \infty$,  the dominated convergence theorem yields -- also using Point \ref{regularity} \ref{conx} in Definition \ref{sharpala} -- $J(s,x,a)=\lim_{k\to\infty}J(s_k,x_k,a)$. Since $a$ is chosen arbitrarily, recalling \eqref{semplicemente} we deduce that 
	\[
	v(s,x)=\sup_{a\in\mathcal{A}}\lim_{k\to \infty}J(s_k,x_k,a)\le \liminf_{k\to\infty}\sup_{a\in\mathcal{A}}J(s_k,x_k,a)=\liminf_{k\to\infty}v\left(s_k,x_k\right),
	\]
	which completes the proof.
\end{proof}

	We conclude this section by stating the \emph{dynamic programming principle} (or \emph{Bellman's principle}) for our stochastic control problem associated with \eqref{anchelei}.  Its proof is presented in Section \ref{sec_proofDPP}.
		\begin{theorem}\label{DPP_t}
			 Consider \eqref{anchelei} and \eqref{h22}. 
			Fix $s\in [0,T)$ and denote by $\mathcal{T}_{s,T}$ the set of $\mathbb{F}^{W,N_p}-$stopping times taking values in $(s,T)$. Then the following holds:
			\begin{align}\label{dpp1}
				\notag 
				v(s,x)&= \sup_{a\in{\mathcal{P}_T}} \inf_{\theta\in \mathcal{T}_{s,T}}
				\mathbb{E}\bigg[\int_{s}^{\theta}h\left(r,X_r^{s,x,a}, a_r\right)dr
				+v\left(\theta, X_\theta^{s,x,a}\right)\bigg]
				\\
				&=
				\sup_{a\in{\mathcal{P}_T}} \sup_{\theta\in \mathcal{T}_{s,T}}
				\mathbb{E}\bigg[\int_{s}^{\theta}h\left(r,X_r^{s,x,a}, a_r\right)dr
				+v\left(\theta, X_\theta^{s,x,a}\right)\bigg],    \quad s\in [0,T),\,x\in\mathbb{R}^d.				
			\end{align}
		\end{theorem}	
		\begin{rem} \label{pham2}
			 Equation \eqref{dpp1} in  Theorem \ref{DPP_t}  gives a stronger version  of the DPP for controlled SDEs with jumps such as \eqref{anchelei}. Indeed, for this class of equations,  the DPP is typically  formulated  as follows: for any $s\in [0,T),\,x\in\mathbb{R}^d$ and stopping time $\theta\in \mathcal{T}_{s,T}$,
			$
			v(s,x)= \sup_{a\in{\mathcal{P}_T}} 			\mathbb{E}\big[\int_{s}^{\theta}h\left(r,X_r^{s,x,a}, a_r\right)dr
			+v\left(\theta, X_\theta^{s,x,a}\right)\big].\\
			$  For   SDEs without jumps,   see  also  \cite[Remark 3.3.3]{Pham} and \cite[Remark 3.2]{PW}.
					\end{rem} 
					\section {Proof of the Dynamic Programming Principle}\label{sec_proofDPP}
					The objective of this section is to rigorously prove the DPP stated in Theorem \ref{DPP_t}. Specifically, to demonstrate the equalities in \eqref{dpp1}, the strategy that we follow consists in proving separately two inequalities. The first one is    
					\begin{equation}\label{less_general}
						v(s,x)\le  \sup_{a\in{\mathcal{P}_T}} \inf_{\theta\in \mathcal{T}_{s,T}}
						\mathbb{E}\bigg[\int_{s}^{\theta}h\left(r,X_r^{s,x,a}, a_r\right)dr
						+v\left(\theta, X_\theta^{s,x,a}\right)\bigg]
					\end{equation}
					and is shown in Subsection \ref{sub_firstDPP} by employing the regular controlled stochastic flow $X^{s,x,a}_t$ constructed in Subsection \ref{sub5.2}, see Lemma \ref{regular_control}.
					The second inequality 
					\begin{equation}\label{opposite_1}
						v(s,x)\ge \sup_{a\in{\mathcal{P}_T}} \sup_{\theta\in \mathcal{T}_{s,T}}
						\mathbb{E}\bigg[\int_{s}^{\theta}h\left(r,X_r^{s,x,a}, a_r\right)dr
						+v\left(\theta, X_\theta^{s,x,a}\right)\bigg].  
					\end{equation}
				is much more difficult to prove, see Subsection \ref{subsect_2}. In particular, to obtain \eqref{opposite_1}, we use a basic and classical measurable selection theorem from \cite{BP}, which we are able to apply by choosing controls in a suitable class of predictable processes  $\mathcal{B}^s\subset \mathcal{P}_T,\,s\in[0,T]$ (see Subsection \ref{sub_mst}). We believe that our approach is of independent interest and may be useful in proving other DPPs for different classes of controlled SDEs. 
					
					\subsection{On $\mathcal{F}^{W,N_p}_{0,t}-$measurable $\qq-$valued  random variables}\label{mtr}
	In this subsection, we first rewrite an $\mathcal{F}^{W,N_p}_{0,t}-$measurable $\qq$-valued random variable as a function of $W$ and $N_p$, for $t\in[0,T]$. Thanks to this, we introduce square--integrable processes in $\mathcal{P}_T$ of the form \eqref{udio} which we will employ in the next subsections to compute conditional expectations.
	Secondly, we consider separable Hilbert spaces (see Lemma \ref{l_21}) that will be crucial for proving the second part \eqref{opposite_1} of the DPP, see Subsections \ref{sub_mst}-\ref{subsect_2}.
	
		Let $\Omega_W$ be the space of continuous functions from $[0,T]$ to $\mathbb{R}^m$ and, for every $t\in [0,T]$, let  $\mathcal{B}^W_t$ be the smallest $\sigma-$algebra on $\Omega_W$ which makes the projections $\pi_s\colon\Omega_W\to \mathbb{R}^m,\,s\in[0,t],$ measurable. Notice that the Brownian motion $W\colon(\Omega,\mathcal{F}^{W,N_p}_{0,t})\to (\Omega_W,\mathcal{B}^W_t)$ is a measurable map.
	We denote by  $\Omega_N$ the set of integer--valued measures  defined on $E= (0,T]\times  U$ with values in ${\mathbb N} \cup { \{ \infty \}}$.  
	As in \cite{IW}, we endow $\Omega_N$ with the minimal $\sigma-$algebra $\mathcal    G$ which makes measurable all the mappings:
	$\mu  \mapsto \mu (A)$, with $A \in \mathcal{B}\left((0,T]\right)\otimes\mathcal{U}$. 
	The Poisson random measure $N_p$ discussed in the previous sections is a measurable map from 
	$\Omega$  \text{into} $\Omega_N$.
	\noindent For every $t\in [0,T]$, we  consider the minimal $\sigma-$algebra $\mathcal{B}_t^N$ on $\Omega_N$ 
	which makes measurable all the mappings
	$$ 
	\mu \mapsto \mu ((0,s] \times A),\;\;\; s \le t,\;\; A \in \mathcal{U};
	$$
	notice that $N_p\colon (\Omega,\mathcal{F}^{W,N_p}_{0,t})\to (\Omega_N, \mathcal{B}^N_t)$ is measurable.
	
	{ Inspired by the proof of \cite[Lemma 3.11]{SZab}, we now 
	clarify how to rewrite a random variable $Y: \big(\Omega, {\mathcal F}_{0,t}^{W,N_p}\big) \to \qq$ in terms of $W$ and $N_p$}, for all $t\in [0,T]$. We introduce the measurable function $\widetilde{T}_t\colon{ \big(\Omega,\mathcal {F}_{0,t}^{W,N_p}\big)}\to\left( \Omega_W\times\Omega_N ,\mathcal{B}_t^W\otimes \mathcal{B}_t^N\right)$ defined by
	\[
	\widetilde{T}_t(\omega)=\left(W(\omega), N_p(\omega)\right),\quad \omega\in \Omega.
	\]
	In the sequel, we write $\left(\Omega_\times, \mathcal{B}^{\times}_t\right)$ instead of $\left( \Omega_W\times\Omega_N ,\mathcal{B}_t^W\otimes \mathcal{B}_t^N\right)$ to keep the notation short. Denoting by $\mathbb{F}^{W,N_p,0}=(\mathcal{F}_t^{W,N_p,0})_{t\in [0,T]}$ the natural filtration generated by $W$ and $N_p$, namely
	\[
	\mathcal{F}^{W,N_p,0}_t=
	\sigma\left(\left\{W_r,\,N_p\left(\left(0,r\right]\times E\right),\,r\in \left[0,t\right],\,E\in\mathcal{U}\right\}\right),\quad t\in [0,T],
	\]
	we observe that
	\begin{equation}\label{tutto}
		\sigma\big(\widetilde{T}_t\big)=\mathcal{F}_{t}^{W,N_p,0}.
	\end{equation}
	As a consequence, 
	\cite[Theorem 1.7]{YZ}  yields the existence of a measurable mapping 
	$$
	F: \left(\Omega_\times,\mathcal{B}_t^\times\right)\to \mathbf{Q}\;\;\text{ such  that }\; Y(\omega) = F \big( \widetilde{T}_t(\omega)\big),\text{ for $\mathbb{P}-$a.s. }\omega\in \Omega.
	$$

	{ \color{black}{Consider an $\mathbb{F}^{W,N_p}-$stopping time $\theta$ with values in $[0,T]$} and denote by $\theta_s=\theta\wedge s$, for every $s\in [0,t]$. Note that, given $(w,\mu) \in \Omega_\times$, one can write, for every $\omega\in\Omega$,
		\begin{align*}
			&w=w(\theta_s\left(\omega\right)\wedge \cdot) + \left[w (\theta_s\left(\omega\right) \vee \cdot)-w(\theta_s\left(\omega\right)) \right]
			, 
			\qquad 
			\mu= \mu \left(\cdot\cap \left(\left(0,\theta_s(\omega)\right]\times U\right)\right)
			+ \mu \left(\cdot\cap \left(\left(\theta_s(\omega),T\right]\times U\right)\right)
			.
		\end{align*}
		It follows that, for every $\omega\in\Omega$,
		\begin{multline*}
			F\big(\widetilde{T}_t(\omega)\big) = F\left(W(\omega),N_p(\omega)\right)=
			F\left(
			W_{\theta_s\left(\omega\right)\wedge \cdot}(\omega) + \left[W _{\theta_s\left(\omega\right) \vee \cdot}-W_{\theta_s\left(\omega\right)} \right](\omega)
			,\right.
			\\\left.
			N_p	\left(\cdot\cap \left(\left(0,\theta_s(\omega)\right]\times U\right)\right)(\omega)
			+ N_p \left(\cdot\cap \left(\left(\theta_s(\omega),T\right]\times U\right)\right)(\omega)
			\right).
		\end{multline*}  
		We observe that the random variable from $(\Omega,\mathcal{F}_{0,t}^{W,N_p})$ to $\left(\Omega_\times, \mathcal{B}_t^\times\right)$ defined by
		\[
		\omega\mapsto \left(W_{\theta_s\left(\omega\right)\wedge \cdot}(\omega), 
		N_p	\left(\cdot\cap \left(\left(0,\theta_s(\omega)\right]\times U\right)\right)(\omega)
		\right)
		\]
		is measurable with respect to $\mathcal{F}^{W,N_p}_{\theta_s}$, the $\sigma-$algebra generated by the stopping time $\theta_s$ relative to the filtration $\mathbb{F}^{W,N_p}$.
		On the other hand, the $\left(\Omega_\times, \mathcal{B}_t^\times\right)-$valued random variable 
		\[
		\omega\mapsto \left(
		\left[W _{\theta_s\left(\omega\right) \vee \cdot}-W_{\theta_s\left(\omega\right)} \right](\omega)
		,
		N_p \left(\cdot\cap \left(\left(\theta_s(\omega),T\right]\times U\right)\right)(\omega)
		\right)
		\]
		is independent of $\mathcal{F}^{W,N_p}_{\theta_s}$. Therefore, { if $Y$ is integrable, }we can compute  its conditional expectation  with respect to $\mathcal{F}^{W,N_p}_{\theta_s}$ to deduce that, for $\mathbb{P}-$a.s. $\omega\in\Omega$,
		\begin{align*}
			\mathbb{E}\!\left[Y\big| {\mathcal F}_{\theta_s}^{W,N_p} \right]\!(\omega)\!
			&= \mathbb{E}\left[F\big(\widetilde{T}_t\big)\big| {\mathcal F}_{\theta_s}^{W,N_p}\right](\omega)
			\\
			&= \mathbb{E}\left[F\left(W_{\theta_s\left(\omega\right)\wedge \cdot}(\omega) \!+ \left[W _{\theta_s\vee \cdot}\!-W_{\theta_s} \right]\!, N_p	\left(\cdot\cap \left(\left(0,\theta_s(\omega)\right]\times\! U\right)\right)\!(\omega)
			+ N_p \left(\cdot\cap \left(\left(\theta_s,T\right]\!\times U\right)\right)\right)\right]\!.
		\end{align*}
		
		In particular, given $\mathcal{A}_n\ni a \sim Z\in \mathcal{Z}_n$, for every $i=0,\dots, 2^n-1$, $n\in\mathbb{N}$, there exists a measurable function
		\[
		F^{(n)}_i: \Big(\Omega_\times,\mathcal{B}_{t_i^{(n)}}^\times\Big)\to \mathbf{Q}\;\;\text{ such  that }\; Z_i(\omega) = F^{(n)}_i \left( \widetilde{T}_{t_i^{(n)}}(\omega)\right),\text{ for $\mathbb{P}-$a.s. }\omega\in \Omega.
		\]
		Moreover,  for every $\mathbb{F}^{W,N_p}-$stopping time $\theta$ we have, $\mathbb{P}-$a.s., 
		\begin{multline*}
			\mathbb{E}\left[Z_i\big| {\mathcal F}_{\theta_{s}}^{W,N_p} \right](\omega)
			=
			\mathbb{E}\Big[F^{(n)}_i\left(W_{\theta_{s}\left(\omega\right)\wedge \cdot}(\omega) + \left[W _{\theta_{s} \vee \cdot}-W_{\theta_{s}} \right],\right. \\
			\left. N_p	\left(\cdot\cap \left(\left(0,\theta_{s}(\omega)\right]\times U\right)\right)(\omega)
			+ N_p \left(\cdot\cap \left(\left(\theta_{s},T\right]\times U\right)\right)\right)\Big],	\quad s\in\big[0,t_i^{(n)}\big].	
		\end{multline*}
		Given $\bar{\omega}\in \Omega$ and $s\in [0,T]$, we denote by $a_{s,n}^{{\theta},\bar{\omega}}$ the following  $\mathbb{F}^{W,N_p}-$adapted $\qq-$valued simple process obtained from~$a$:
		\begin{multline}\label{udio}
			a_{s,n}^{{\theta},\bar{\omega}}(t,\omega)=1_{\left\{t\le s_n^+\right\}}a(t,\bar{\omega})
			+	1_{\left\{t> s_n^+\right\}}
			\sum_{i=0}^{2^n-1}
			F^{(n)}_i\left(W_{\theta_{s_n^+}\left(\bar{\omega}\right)\wedge \cdot}(\bar{\omega}) + \left[W _{\theta_{s_n^+}(\omega) \vee \cdot}-W_{\theta_{s_n^+}(\omega)} \right](\omega),\right.\\\left. N_p	\left(\cdot\cap \left(\left(0,\theta_{s_n^+}(\bar{\omega})\right]\times U\right)\right)(\bar{\omega})
			+ N_p \left(\cdot\cap \left(\left(\theta_{s_n^+}(\omega),T\right]\times U\right)\right)(\omega)\right)1_{\left(t^{(n)}_{i},t^{(n)}_{i+1}\right]}(t),
		\end{multline}
		where $ t\in[0,T],\,\omega\in \Omega.$
		
		\vspace{2mm}
     We conclude this subsection with a lemma that introduces separable Hilbert spaces of square--integrable random variables. These spaces form the foundation for the measurable selection argument that we will develop to prove the second part \eqref{opposite_1} of Theorem \ref{DPP_t} in Subsections \ref{sub_mst}-\ref{subsect_2}. 
	\begin{lemma}\label{l_21}
		For any $0\le s\le t\le T$ and $l \in \mathbb{N}$, the Hilbert space
		\begin{equation}\label{Kal1}
			H= L^2(\Omega,  \mathcal{F}^{W,N_p}_{s,t};\mathbb{R}^l) \;\; \text{is separable}.
		\end{equation}
	\end{lemma}
	\begin{proof}
		The claim is trivial for $s=t$, so we consider $0\le s<t\le T$. Let $\Omega_W^{s,t}  $  be the space of continuous functions from $[s,t]$ to $\mathbb{R}^m$ and, for every $r\in [s,t]$, let  $\mathcal{B}^W_{s,t}$ be the smallest $\sigma-$algebra on $\Omega_W^{s,t}$ which makes the projections $\pi_r - \pi_s \colon\Omega_W^{s,t}\to \mathbb{R}^m,\,r\in[s,t],$ measurable. 
		\\
		Similarly, we denote by  $\Omega_N^{s,t}$ the set of integer--valued measures  defined on $ (s,t]\times  U$ with values in ${\mathbb N} \cup { \{ \infty \}}$.  		We endow $\Omega_N^{s,t}$ with the minimal  
		$\sigma-$algebra $\mathcal{B}_{s,t}^N$  
		which makes measurable all the mappings
		$$ 
		\mu \mapsto \mu ((s,r] \times A),\quad s < r \le t,\;\; A \in \mathcal{U}.
		$$
		We write $\left(\Omega_\times^{s,t}, \mathcal{B}^{\times}_{s,t}\right)$ instead of $\left( \Omega_W^{s,t}\times\Omega_N^{s,t} ,\mathcal{B}_{s,t}^W\otimes \mathcal{B}_{s,t}^N\right)$ to keep the notation short. 
		
				Note  that $H$ defined in \eqref{Kal1} is isomorphic to $K=L^2\left(\Omega_\times^{s,t}, \mathcal{B}^{\times}_{s,t}, \mathbb{Q}^{s,t}; \mathbb{R}^l\right),$ where $\mathbb{Q}^{s,t}$ is the image law of $\mathbb{P}$ under the random variable
		$$
		\omega \mapsto \left(W_{\cdot }(\omega) -  W_{s }(\omega) , N_p\left(\left(s, \cdot \right]\times \cdot\right) (\omega) \right).
		$$
		We know from \cite[Theorem 4.2]{Kalle} that $(\Omega^{s,t}_\times, \mathcal{B}^{\times}_{s,t})$ is metrizable and that it can be considered as a Polish space (this fact has been also remarked in \cite[Section 2]{BLP}). It is not difficult to prove that  $L^2(E, {\mathcal B}, \mu; \mathbb{R}^l )$  is separable when $E$ is a Polish space and $\mu$ is a probability measure defined on the $\sigma-$algebra of Borel sets $\mathcal B$. This shows \eqref{Kal1}. 
	\end{proof}
		 
\subsection {Proof of the first part (\ref{less_general}) of the DPP}		\label{sub_firstDPP}
		
		Here, we use controls  in $\mathcal A$ and the corresponding regular controlled stochastic flows $X^{s,x,a}_t$ constructed in Subsection \ref{sub5.2}, see Lemma \ref{regular_control}. We show that, for every $s\in [0,T)$ and  $x\in\mathbb{R}^d$,
			\begin{equation}\label{less}
				v(s,x)\le \sup_{a\in{\mathcal{A}}} \inf_{\theta\in \mathcal{T}_{s,T}}
				\mathbb{E}\bigg[\int_{s}^{\theta}h\left(r,X_r^{s,x,a}, a_r\right)dr
				+v\left(\theta, X_\theta^{s,x,a}\right)\bigg].
			\end{equation}
		Since $\mathcal{A}\subset \mathcal{P}_T$, the estimate in \eqref{less} will hold replacing $\mathcal{A}$ with $\mathcal{P}_T$, i.e., \eqref{less} implies \eqref{less_general} . 
			Fix $s\in [0,T)$, $x\in\mathbb{R}^d$, $\theta \in\mathcal{T}_{s,T}$ and $a\in\mathcal{A}_{q},$ for some $q\in\mathbb{N}$. By  Lemma \ref{regular_control} and the flow property in Point \ref{flow_1} of Definition \ref{sharpala},
			\begin{align}\label{prim1} 
				\notag J(s,x,a)
				&=\mathbb{E}\bigg[\int_{s}^{\theta}h\left(r,X_r^{s,x,a}, a_r\right) dr \bigg]
				+\mathbb{E}\bigg[\int_{\theta}^{T}h\left(r,X_r^{\theta,X^{s,x,a}_\theta,a}, a_r\right)dr + 
				j\left(X_T^{\theta,X^{s,x,a}_\theta,a}\right)\bigg]
				\\
				&\notag=\mathbb{E}\bigg[\int_{s}^{\theta}h\left(r,X_r^{s,x,a}, a_r\right) dr \bigg]
				\\\notag &\quad \qquad +\sum_{k=0}^{2^q-1}
				\mathbb{E}\bigg[1_{\left({t_{k}^{(q)}, t_{k+1}^{(q)}}\right)}(\theta)\bigg(\int_{\theta_{k}}^{T}h\left(r,X_r^{\theta_{k},X^{s,x,a}_{\theta_{k}},a}, a_r\right)dr + 
				j\left(X_T^{\theta_{k},X^{s,x,a}_{\theta_{k}},a}\right)\bigg)\bigg]
				\\&
					\quad \qquad +\sum_{k=0}^{2^q-1}
				\mathbb{E}\bigg[1_{\left\{\theta= t_{k+1}^{(q)}\right\}}\bigg(\int_{t_{k+1}^{(q)}}^{T}h\bigg(r,X_r^{t_{k+1}^{(q)},X^{s,x,a}_{t_{k+1}^{(q)}},a}, a_r\bigg)dr + 
				j\bigg(X_T^{t_{k+1}^{(q)},X^{s,x,a}_{t_{k+1}^{(q)}},a}\bigg)\bigg)\bigg],
			\end{align}
			where we define $\theta_{k}=\max\{t^{(q)}_{k},\theta\wedge t^{(q)}_{k+1}\},\,k=0,\dots, 2^q-1$. \\
			We  focus on the second and third addends (the sums from 0 to $2^q-1$) in the last member of  \eqref{prim1}. Observe that $X^{s,x,a}_{\theta_{k}}$ is measurable with respect to the $\sigma-$algebra $\mathcal{F}^{W,N_p}_{\theta_{k}}$ generated by the stopping time $\theta_{k}$ relative to the filtration $\mathbb{F}^{W,N_p}$. Therefore, thanks to the arguments in Subsection \ref{mtr} (see, in particular, \eqref{udio}), \eqref{measQ} and \eqref{controlla_flow},  we compute the $\mathcal{F}^{W,N_p}_{\theta_{k}}-$conditional expectation   to deduce that, for every $k=0,\dots, 2^q-1$, for $\mathbb{P}-\text{a.s. }\omega\in\Omega$,
			\begin{multline*}
				\notag	\mathbb{E}\bigg[1_{\left({t_{k}^{(q)}, t_{k+1}^{(q)}}\right)}(\theta)\bigg(\int_{\theta_{k}}^{T}h\left(r,X_r^{\theta_{k},X_{{\theta_{k}}}^{s,x,a},a},  a_r\right)dr + 
				j\left(X_T^{\theta_{k},X_{{\theta_{k}}}^{s,x,a},a}\right)\bigg)\Big| \,\mathcal{F}^{W,N_p}_{\theta_{k}}\bigg]\left(\omega\right) 
				\\ 
				=
				1_{\left({t_{k}^{(q)}, t_{k+1}^{(q)}}\right)}(\theta_{k}(\omega))
				{J}\left(\theta_{k}(\omega),X_{{\theta_{{k}}}}^{s,x,a}(\omega),a_{t_k^{(q)},q}^{{\theta_{k},\omega}}\right)
			 . 
			\end{multline*} 
		Analogously, for the third addend in \eqref{prim1}, we compute the conditional expectation with respect to $\mathcal{F}^{W,N_p}_{t_{k+1}^{(q)}}$ to obtain, for $\mathbb{P}-$a.s. $\omega\in\Omega$,
		\begin{multline*}
				\mathbb{E}\bigg[1_{\left\{\theta= t_{k+1}^{(q)}\right\}}\bigg(\int_{t_{k+1}^{(q)}}^{T}h\bigg(r,X_r^{t_{k+1}^{(q)},X^{s,x,a}_{t_{k+1}^{(q)}},a}, a_r\bigg)dr + 
			j\bigg(X_T^{t_{k+1}^{(q)},X^{s,x,a}_{t_{k+1}^{(q)}},a}\bigg)\bigg) \Big | \mathcal{F}^{W,N_p}_{t_{k+1}^{(q)}}\bigg](\omega)\\=
1_{\left\{ \theta_k(\omega)=t_{k+1}^{(q)}\right\}}
{J}\left(\theta_{k}(\omega),X_{{\theta_{{k}}}}^{s,x,a}(\omega),a_{t^{(q)}_{k+1},q}^{{t^{(q)}_{k+1}{},\omega}}\right).
		\end{multline*}
					Since $a_{t_k^{(q)},q}^{\theta_{k},\omega}, \,a_{t^{(q)}_{k+1},q}^{{t^{(q)}_{k+1}{},\omega}}\in\mathcal{P}_T$, by the definition of the value function $v$ and the tower property of the conditional expectation, from the two  previous equations we deduce that
			\[
			\mathbb{E}\bigg[1_{\left({t_{k}^{(q)}, t_{k+1}^{(q)}}\right]}(\theta)\bigg(\int_{\theta_{k}}^{T}h\left(r,X_r^{\theta_{k},X_{{\theta_{k}}}^{s,x,a},a},  a_r\right)dr + 
			j\left(X_T^{\theta_{k},X_{{\theta_{k}}}^{s,x,a},a}\right)\bigg)\bigg]
			\le 
			\mathbb{E}\left[1_{\left({t_{k}^{(q)}, t_{k+1}^{(q)}}\right]}(\theta)
			v(\theta_{},X_{{\theta_{}}}^{s,x,a})\right].
			\]
			Going back to \eqref{prim1}, we  conclude that
			\[
			J(s,x,a)\le \mathbb{E}\bigg[\int_{s}^{\theta}h\left(r,X_r^{s,x,a}, a_r\right) dr \bigg]+\mathbb{E}\left[v(\theta,X_{{\theta}}^{s,x,a})\right].
			\]
			Since $a\in{\mathcal{A}}$ and $\theta\in \mathcal{T}_{s,T}$ are arbitrary, by \eqref{semplicemente} we obtain \eqref{less}.
			 

		\subsection {A basic measurable selection theorem involving  controls in  $ {\mathcal B}^s$} \label{sub_mst}
	For the proof of the second part \eqref{opposite_1} of the DPP in Theorem \ref{DPP_t}, we employ a basic and classical measurable selection theorem from \cite{BP},  see Theorem \ref{BP_Sim}. To rigorously apply it, we first introduce a suitable class of predictable  step controls which we will denote by ${\mathcal{B}}^s$ ($s\in[0,T]$).
	  
		  \vspace{1mm}	Fix $n\in\mathbb{N}$ and
		  recall the set $S_n=\{t^{(n)}_i,\,i=0,\dots, 2^n\}$ of dyadic points of $[0,T]$ with mesh $T2^{-n}$.
		  Note that, as $U$ is a Polish space, according to \eqref{Kal1}, 
		  \[
		  \text{the space $L_i^{(n)}=L^2\Big(\Omega,  \mathcal{F}^{W,N_p}_{0,t^{(n)}_i};\mathbb{R}^l\Big)$ is separable, $i=0,\dots,\,2^n-1$.}
		  \]
		  It is then possible to consider a Hilbert basis $(e^{(n)}_{i,m})_{m\in\mathbb{N}}$ for $L_i^{(n)}$, which we use to define
	  		  \begin{equation}\label{ref1}
		  	\text{the subspace $L_{i,M}^{(n)}$ of $L_i^{(n)}$ generated by $e^{(n)}_{i,1},\dots,e_{i,M}^{(n)}$, for every $M\in\mathbb{N}$. }
		  \end{equation}
	  Since $\text{proj}_{\bb}\colon \R^l\to\mathbf{C}$ in \eqref{projection} is $1-$Lipschitz continuous, we can introduce the map $\text{proj}_\bb'\colon L_i^{(n)}  \to   L^2\Big(\Omega,  \mathcal{F}^{W,N_p}_{0,t^{(n)}_i}; \bb  \Big)  $ 
	  	  given by
	 \begin{equation}\label{proj_abuse}
	  \text{proj}_\bb'(X) (\omega) = \pj(X(\omega)),\quad X\in  L_i^{(n)},\,\omega\in\Omega.
	 \end{equation}
	  Note that $\text{proj}_\bb'$ is the orthogonal
projection of $ L_i^{(n)} $	 onto its  closed convex subset $L^2\Big(\Omega,  \mathcal{F}^{W,N_p}_{0,t^{(n)}_i};\bb \Big)$ consisting of all random variables with values in $\bb$.  
	  With a slight abuse of notation, we will simply denote $\text{proj}_\bb'$ by $\pj$.
		  Letting $s\in [0,T]$, for every $i=0,\dots,2^n-1$ such that $t_i^{(n)}\ge s$, we define
	  \begin{equation}\label{Hn}
	  H_{s,i}^{(n)}=L^2\Big(\Omega,  \mathcal{F}^{W,N_p}_{s,t^{(n)}_i};\mathbb{R}^l\Big).
  \end{equation}
Lemma \ref{l_21} implies that also $H^{(n)}_{s,i}$ is separable, hence it admits a Hilbert basis $(e^{(n)}_{s,i,m})_{m\in\mathbb{N}}$. We denote by 
\[
\text{$H_{s,i,M}^{(n)}$ the subspace of $H_{s,i}^{(n)}$ generated by $e^{(n)}_{s,i,1},\dots,e_{s,i,M}^{(n)}$, for every $M\in\mathbb{N}$.}
\]
As in \eqref{proj_abuse}, the map  $\text{proj}_\bb\colon H_{s,i}^{(n)}  \to   L^2\Big(\Omega,  \mathcal{F}^{W,N_p}_{s,t^{(n)}_i}; \bb  \Big)$ is continuous and onto,  and $L^2\Big(\Omega,  \mathcal{F}^{W,N_p}_{s,t^{(n)}_i}; \bb  \Big)$ is a closed and convex subset of $H_{s,i}^{(n)}$.\\
	We now define, for all $M\in\mathbb{N}$, the set 
		\[
		{\mathcal{Z}}_{n,M}^s=\big\{\big(Z_{i}\big)_{i=0,\dots, 2^n-1}\text{ s.t. $Z_i\in H_{s,i,M}^{(n)}$ [resp., $L_{i,M}^{(n)}$], for all $i$ : $t_i^{(n)}\ge s$ [resp., $t_i^{(n)}< s$]}\}.
		\]
		The finitely generated step controls corresponding to $\pj \big ( \mathcal{Z}_{n,M}^s\big)$ (i.e., controls of the form \eqref{form} with 
		$
		Z\in \pj \big ( \mathcal{Z}_{n,M}^s
		\big)  
		$)
		are denoted by $\mathcal{B}_{n,M}^s$. We put 
		\begin{equation} \label{cvi}
			{\mathcal{B}}^s=\cup_{n,M\in\mathbb{N}}\mathcal{B}^s_{n,M},\qquad \mathcal{Z}_{\mathcal{B}}^s=\cup_{n,M\in\mathbb{N}}\,    \big(\mathcal{Z}^s_{n,M} \big).
		\end{equation}
		Consequently,  step controls in ${\mathcal{B}}^s$ are associated with $
		\pj (\mathcal{Z}_{\mathcal{B}}^s).
		$
		Define the function
		\[
		\tilde{v}(s,x)=\sup_{a\in{\mathcal{B}}^s}\,J(s,x,a),\quad s\in [0,T),\,x\in\mathbb{R}^d.
		\]
By the continuity of $\pj$,  $\pj (L_{i}^{(n)})=\overline{\cup_M  \pj  L^{(n) }_{i,M} }^2$ and 
		$\pj H_{s,i}^{(n)}= \overline{\cup_M  \pj      H^{(n)}_{s,i,M} }^2$,    	where $\overline{\hspace{.1cm}\cdot\hspace{.1cm}}^2$ denotes the closure relative to the $L^2-$norm. Then,   if we call   
		\begin{equation*}
		\mathcal{Z}_n^s=\big\{(Z_{i})_{i=0,\dots, 2^n-1}\subset L^2(\Omega;{\mathbb R}^l )\text{ s.t. $Z_{_i}$ is $\mathcal{F}^{W,N_p}_{0,t^{(n)}_i}$ [resp., $\mathcal{F}^{W,N_p}_{s,t^{(n)}_i}$]--measurable for $t^{(n)}_i <s$ [resp., $t_i^{(n)}\ge s$]}\}
		\end{equation*}
		and denote by $\mathcal{B}_n^s$ the step controls corresponding to $\pj (\mathcal{Z}_n^s)$,  Lemma \ref{lemma_cont} entails that, similarly to Corollary \ref{l20}, 
		\begin{equation}\label{norestr1}
			\tilde{v}(s,x)=\sup_{a\in\cup_{n\in\mathbb{N}}\mathcal{B}^s_n }J(s,x,a) =  \sup_{y\in\cup_{n\in\mathbb{N}}\mathcal{Z}^s_n }J(s,x, \pj y),\quad s\in[0,T),\,x\in\mathbb{R}^d,
		\end{equation}  
		since we can consider (cf. Remark \ref{ref8})
		\begin{equation}\label{ritorna} 
			J(s, \cdot, \pj(\cdot)) : {\mathbb R}^d \times \cup_{n\in\mathbb{N}}\mathcal{Z}^s_n     \to {\mathbb R}.
		\end{equation}
	The next lemma clarifies that  we can restrict our analysis to controls in $\mathcal{B}^s$, which involve finite sequences of random variables in $\mathcal{Z}_\mathcal{B}^s$ independent of $\mathcal{F}^{W,N_p}_{0,s}$ after time $s$, when considering $v(s,\cdot)$ (cf.  \cite[Remark 5.2]{touzi} and \cite[Remark 3.1]{PW}).

		\begin{lemma}\label{ind_c_l}
			The following equality holds for every $s\in[0,T)$ and $x\in\mathbb{R}^d$:
			\begin{equation}\label{indep_con}
				v(s,x)=\tilde{v}(s,x).
			\end{equation}  
		\end{lemma}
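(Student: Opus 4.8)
The plan is to prove the two inequalities $v(s,x) \ge \tilde v(s,x)$ and $v(s,x) \le \tilde v(s,x)$ separately, the first being essentially trivial. Since every control $a \in \widetilde{\mathcal A}^s$ belongs to $\widetilde{\mathcal A}$ (the $\sigma$-algebras $\mathcal F^{W,N_p}_{s,t_i^{(n)}}$ are contained in $\mathcal F^{W,N_p}_{0,t_i^{(n)}}$, so a control measurable with respect to the former is also measurable with respect to the latter), we immediately get $\tilde v(s,x) = \sup_{a \in \widetilde{\mathcal A}^s} J(s,x,a) \le \sup_{a \in \widetilde{\mathcal A}} J(s,x,a) = v(s,x)$ by \eqref{notre}. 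The substance of the lemma is the reverse inequality.

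For the reverse inequality, fix $s \in [0,T)$ and $x \in \mathbb R^d$ and, by \eqref{notre} and \eqref{norestr1}-type reasoning, it suffices to show that for every $n \in \mathbb N$ and every $a \in \mathcal A_n$ identified with $Z = (Z_i)_{i=0,\dots,2^n-1} \in \mathcal Z_n$ whose jump times lie in $S_n$ and which are relevant on $[s,T]$, one has $J(s,x,a) \le \tilde v(s,x)$. The key point is that the solution $X^{s,x,a}$ on $[s,T]$ only ``feels'' the control on the interval $[s,T]$; more precisely, by the flow property in Point \ref{flow_1} of Definition \ref{sharpala} together with \eqref{controlla_flow}, $X^{s,x,a}_t$ depends on $a$ only through the increments of $W$ and $N_p$ after time $s$. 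The idea is to replace each random variable $Z_i$ (which is $\mathcal F^{W,N_p}_{0,t_i^{(n)}}$-measurable) by its conditional expectation, or rather by a version rewritten using only the post-$s$ increments. Concretely, using the representation established in Subsection \ref{mtr} — namely $Z_i(\omega) = F_i^{(n)}(\widetilde T_{t_i^{(n)}}(\omega))$ for a measurable $F_i^{(n)}$ on $(\Omega_\times, \mathcal B^\times_{t_i^{(n)}})$ — and the independence of $\mathcal F^{W,N_p}_{0,s}$ from $\mathcal F^{W,N_p}_{s,T}$, one can define a new control $\hat a$, measurable with respect to the post-$s$ increments (i.e. with $\hat Z_i \in H^{(n)}_{s,i}$), by freezing the pre-$s$ part of the Brownian path and Poisson measure at a generic realization $\bar\omega$ and integrating it out — exactly the construction \eqref{udio}. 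Since the law of $(X^{s,x,\hat a(\bar\omega,\cdot)}, \hat a(\bar\omega,\cdot))$ averaged over $\bar\omega$ coincides with that of $(X^{s,x,a}, a)$ by the independence and the strong-solution property \eqref{strong}, a Fubini argument gives $J(s,x,a) = \int_\Omega J(s,x,\hat a(\bar\omega,\cdot)) \, \mathbb P(d\bar\omega)$, whence $J(s,x,a) \le \sup_{\bar\omega} J(s,x,\hat a(\bar\omega,\cdot)) \le \tilde v(s,x)$ — at least after one checks that $\hat a(\bar\omega,\cdot) \in \cup_n \mathcal A^s_n$ for $\mathbb P$-a.e. $\bar\omega$, and then invokes \eqref{norestr1} to pass from $\cup_n \mathcal A^s_n$ to $\widetilde{\mathcal A}^s$.

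I would carry this out in the following order: first record the easy inequality $\tilde v \le v$; second, fix $a \in \mathcal A_n$ and invoke the representation $Z_i = F_i^{(n)} \circ \widetilde T_{t_i^{(n)}}$ and the decomposition of $W$ and $N_p$ into pre-$s$ and post-$s$ parts from Subsection \ref{mtr}; third, define the frozen/averaged control $\hat a(\bar\omega, \cdot)$ as in \eqref{udio} with $\theta \equiv s$ and verify it is $\mathbb F^{W,N_p}$-adapted, simple with jump times in $S_n$, and independent of $\mathcal F^{W,N_p}_{0,s}$, so $\hat a(\bar\omega,\cdot) \in \mathcal A^s_n$ for a.e. $\bar\omega$; fourth, use the flow property \eqref{fl11}/Point \ref{flow_1} together with \eqref{controlla_flow} and the strong-solution property \eqref{strong} to identify, in law, $X^{s,x,a}$ with the $\bar\omega$-mixture of $X^{s,x,\hat a(\bar\omega,\cdot)}$, and apply Fubini to the bounded integrands $h$ and $j$ to get $J(s,x,a) = \mathbb E_{\bar\omega}[J(s,x,\hat a(\bar\omega,\cdot))]$; fifth, bound the right-hand side by $\sup_{\bar\omega} J(s,x,\hat a(\bar\omega,\cdot)) \le \sup_{a' \in \cup_n \mathcal A^s_n} J(s,x,a') = \tilde v(s,x)$ by \eqref{norestr1}; finally, take the supremum over $a \in \mathcal A_n$ and over $n$, using \eqref{notre}, to conclude $v(s,x) \le \tilde v(s,x)$.

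The main obstacle I expect is the rigorous justification of the identity $J(s,x,a) = \int_\Omega J(s,x,\hat a(\bar\omega,\cdot))\,\mathbb P(d\bar\omega)$: this requires carefully tracking that (i) $X^{s,x,a}_t$ genuinely depends on the control only through the post-$s$ data — which relies on the flow property holding in an a.s.\ event independent of the control, exactly one of the payoffs of Theorem \ref{main1} and \eqref{controlla_flow}; (ii) the measurable-selection representation of Subsection \ref{mtr} applies simultaneously to all the $Z_i$, which is fine since there are finitely many; and (iii) the joint measurability in $(\bar\omega,\omega)$ of the map $(\bar\omega,\omega) \mapsto X^{s,x,\hat a(\bar\omega,\cdot)(\omega)}_t(\omega)$, so that Fubini is legitimate — this follows from Lemma \ref{joint_meas} and the explicit form \eqref{udio}, but needs to be stated. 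Everything downstream (adaptedness of $\hat a$, its membership in $\mathcal A^s_n$, boundedness of $h,j$ so dominated/Fubini applies) is routine once this core identity is in place.
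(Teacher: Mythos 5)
Your proposal is correct and follows essentially the same route as the paper: the trivial inclusion gives $\tilde v\le v$, and the reverse inequality is obtained by rewriting each $Z_i$ via the factorization $Z_i=F_i^{(n)}\circ\widetilde T_{t_i^{(n)}}$, freezing the pre-$s$ data as in \eqref{udio} (with $\theta\equiv s$), and averaging — which the paper implements as the conditional expectation $\mathbb{E}\big[\,\cdot\,\big|\mathcal{F}^{W,N_p}_{0,s}\big]$, i.e.\ exactly your Fubini/independence identity $J(s,x,a)=\mathbb{E}\big[J(s,x,a^{s,\cdot}_{s,n})\big]$ — before bounding by $\tilde v(s,x)$ through \eqref{norestr1} and concluding with \eqref{notre}.
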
 
		\begin{proof}
			We only focus on the inequality $v(s,x)\le \tilde{v}(s,x)$, as the other one is trivial because ${ \mathcal{B}}^s\subset \mathcal{P}_T$.\\
			Fix $s\in [0,T)$, $x\in\mathbb{R}^d$ and a  $\mathbf{Q}-$valued step control $a\in \mathcal{A}_{{n}}$, with  $n\in\mathbb{N}$. 
		Recalling \eqref{udio}, we define
	\begin{multline*}
			a_{s,n}^{{s},\bar{\omega}}(t,\omega)=1_{\left\{t\le s_n^+\right\}}a(t,\bar{\omega})
			+	1_{\left\{t> s_n^+\right\}}
			\sum_{i=0}^{2^n-1}
			F^{(n)}_i\left(W_{{s}\wedge \cdot}(\bar{\omega}) + \left[W _{{s} \vee \cdot}-W_{{s}} \right](\omega),\right.\\\left. N_p	\left(\cdot\cap \left(\left(0,{s}\right]\times U\right)\right)(\bar{\omega})
			+ N_p \left(\cdot\cap \left(\left({s},T\right]\times U\right)\right)(\omega)\right)1_{\left(t^{(n)}_{i},t^{(n)}_{i+1}\right]}(t),
		\end{multline*} 
		where  $ t\in[0,T]$ and $\omega,\, \bar \omega \in \Omega$. 
		Observe that, for every $\mathbf{y}(n)\in (\mathbf{Q})^{2^n}$ and $r\in(s,T]$, by \cite[Theorem 117]{situ} and the construction carried out in the proof of Theorem \ref{thm_big}, the random variable  $(X^{S_n})^{{s},x,\mathbf{y}(n)}_r$ is independent from $\mathcal{F}^{W,N_p}_{0,s}$.
		Then, by the tower property of the conditional expectation, \eqref{measQ} and \eqref{controlla_flow},
		\[
			J(s,x,a)=
			\mathbb{E}\bigg[\mathbb{E}\bigg[\int_{s}^{T}h\left(r,X_r^{s,x,a}, a_r\right)dr + 
			j\left(X_T^{s,x,a}\right)\Big | \mathcal{F}^{W,N_p}_{0,s}\bigg]\bigg]
			=
			\mathbb{E}\left[ 
			J(s,x, a_{s,n}^{s,\cdot})
			\right]\le \tilde{v}(s,x).
			\]
	Here the last inequality is due to \eqref{norestr1} and the fact that $a_{s,n}^{s,\omega}\in \mathcal{B}_{{n}}^s$, for every $\omega\in\Omega$. Taking the supremum over $a\in{\mathcal{A}}$,  by \eqref{semplicemente} we deduce \eqref{indep_con} and complete the proof.
		\end{proof}
		
	We now move to a basic  \emph{measurable selection theorem} that will be fundamental in the proof of \eqref{opposite_1} presented in Subsection \ref{subsect_2}. In particular, we employ a simplified version of \cite[Theorem 2]{BP} (see Theorem \ref{BP_Sim} below), an important result which is also considered, in a more general form, in \cite[Proposition 7.50(b)]{BS}. Note that, according to \cite[Remark 2, Page 909]{BP}, we can work with functions defined in separable absolute Borel sets (or Borel spaces) instead of Polish spaces. Recall that a topological space $X$ is said to be a Borel space if there exists a Polish space $Z$ such that $X$ is homeomorphic to a member of the Borel $\sigma-$algebra of $Z$.  We also recall that, given two Borel spaces $X$ and $Y$, a function $g: X \to Y$ is \emph{universally measurable} if it is $(\mathcal G,\mathcal{B}(Y))-$measurable, where $\mathcal  G$  is the intersection of the completions of the Borel $\sigma-$algebra $\mathcal{B}(X)$ with respect to all the  Borel probability measures on $X. $ 
		\begin{theorem}[\cite{BP}]\label{BP_Sim}
			Let $X$ and $Y$ be Borel spaces and let  $f: X \times Y \to \mathbb{R}$ be a Borel measurable bounded function.
			Then, for any $\epsilon >0$, there exists a universally measurable function 
			$
			\varphi_{\epsilon} : X \to Y
			$
			such that
			\begin{gather*}
				f(x, \varphi_{\epsilon}(x)) \le  \inf_{y \in Y} f(x,y)+\epsilon,\quad x\in X.
			\end{gather*}
		\end{theorem}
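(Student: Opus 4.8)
The plan is to reduce Theorem~\ref{BP_Sim} to the original statement \cite[Theorem 2]{BP} by the standard trick of turning the prescribed approximation error into an exact optimum. First I would note that the case where $f$ is everywhere constant (or more generally where $\inf_{y\in Y} f(x,y)$ is not attained) is precisely the situation that forces an $\epsilon$ in the conclusion; the only input we have to play with is that $f$ is Borel measurable and \emph{bounded}, say $|f|\le C$ on $X\times Y$. Fix $\epsilon>0$. The key step is to consider the auxiliary function
\[
	g(x,y) = f(x,y) - \inf_{y'\in Y} f(x,y'),\qquad (x,y)\in X\times Y,
\]
which is nonnegative and bounded by $2C$. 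The map $x\mapsto \inf_{y'} f(x,y')$ is upper semi-analytic (being a pointwise infimum over a Borel-measurable family along the $Y$-fibres; here one uses that $Y$ is a Borel space and invokes the projection theorem as in \cite[Proposition 7.47]{BS}), hence universally measurable; consequently $g$ is a universally measurable function on the product of Borel spaces. The set
\[
	G_\epsilon = \{(x,y)\in X\times Y : g(x,y) \le \epsilon\}
\]
is then universally measurable, and for every $x\in X$ its $x$-section $\{y : g(x,y)\le\epsilon\}$ is nonempty, by the very definition of the infimum.

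The main work is then the measurable selection itself. I would apply the Jankov--von Neumann universal measurability selection theorem (see \cite[Proposition 7.49]{BS}, which is exactly the form \cite[Theorem 2]{BP} is used in) to the universally measurable set $G_\epsilon$ with nonempty $x$-sections: this yields a universally measurable map $\varphi_\epsilon : X\to Y$ whose graph is contained in $G_\epsilon$, i.e.
\[
	g(x,\varphi_\epsilon(x)) \le \epsilon\qquad\text{for all }x\in X,
\]
which is precisely $f(x,\varphi_\epsilon(x)) \le \inf_{y\in Y} f(x,y) + \epsilon$ for all $x\in X$. Strictly speaking \cite[Theorem 2]{BP} is stated for a Borel-measurable $f$ and produces a universally measurable near-minimizer directly; in that route one skips the auxiliary $g$ and simply quotes the theorem, observing (as recorded in \cite[Remark 2, Page 909]{BP}) that it remains valid when $X$ and $Y$ are separable absolute Borel sets rather than Polish spaces, which is the generality we need since the parameter spaces arising in the DPP (products of $[0,T)$, $\mathbb{R}^d$, and the finite-dimensional Hilbert subspaces of Lemma~\ref{l_21}) are Borel spaces.

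The step I expect to be the genuine obstacle — or at least the only place requiring care — is verifying the measurability hypotheses in the precise form demanded by the cited selection theorem: namely, that $f$ Borel measurable and bounded on $X\times Y$ with $Y$ a Borel space indeed guarantees that $x\mapsto\inf_{y} f(x,y)$ is universally measurable and that the near-minimizing graph is a universally measurable set with full-projection sections. This is classical but hinges on the projection theorem for analytic sets and on the fact that a Borel space, being Borel-isomorphic to a Borel subset of a Polish space, supports this machinery; once those facts are in hand the rest is bookkeeping. A short and self-contained alternative, if one wants to avoid re-deriving the upper-semianalyticity of the value, is simply to invoke \cite[Theorem 2]{BP} verbatim with the Borel-space generality of its Remark~2 and with $\epsilon$ as given, since that theorem already produces exactly the asserted $\varphi_\epsilon$; I would present the proof in that compressed form, spelling out only the reduction to a Borel space and the boundedness of $f$, which are the two hypotheses that need to be matched against the source.
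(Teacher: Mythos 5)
Your final, compressed route is exactly what the paper does: Theorem \ref{BP_Sim} is not proved there but quoted verbatim from \cite[Theorem 2]{BP}, with \cite[Remark 2, Page 909]{BP} invoked to pass from Polish spaces to Borel spaces, and with only the hypotheses (boundedness and Borel measurability of $f$) to be matched — so your proposal is correct and essentially identical in approach. One caution on your auxiliary sketch, which you rightly do not rely on: the Jankov--von Neumann theorem (\cite[Proposition 7.49]{BS}) selects from \emph{analytic} sets, not from merely universally measurable ones, so applying it directly to $G_\epsilon$ as defined via the universally measurable value function would need the finer level-set argument of \cite[Proposition 7.50]{BS} rather than the one-line appeal you describe.
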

\noindent
We refer to \cite{karoui} for more recent and very general  measurable selection theorems.

	We conclude this part with two remarks which will enable us to apply Theorem \ref{BP_Sim} in Subsection \ref{subsect_2} thanks to the finitely generated step controls in $\mathcal{B}^{\bar{\theta}},\,\bar{\theta}\in [0,T]$.
	\begin{rem}\label{seq}
		For every ${\bar \theta}\in [0,T]$, the set ${\mathcal{Z}}_\mathcal{B}^{\bar \theta}$ (see \eqref{cvi}) can be identified with a Borel subset of the separable space 
		\[  
		{\prod}_{\begin{subarray}{l}{i,n \,:\, t^{(n)}_i<{\bar \theta}}\\
				{M\in\mathbb{N}}\end{subarray}}\left[L^{(n)}_{i,M} \right]
		\,\,\times\,\, 
		{\prod}_{\begin{subarray}{l}{i,n \,:\, {\bar \theta}\le t^{(n)}_i<T}\\
				{M\in\mathbb{N}}\end{subarray}}\left[H^{(n)}_{{\bar \theta}, i,M}\right].
		\]  
		Therefore, by \cite[Proposition 7.12]{BS}, ${\mathcal{Z}}_\mathcal{B}^{\bar \theta}$ endowed with the trace topology is a Borel space.  Moreover, for every $n,M\in\mathbb{N}$ and $i=0,\dots,2^{n}-1$ such that $t_i^{(n)}\ge {\bar \theta}$ [resp., $t_i^{(n)}<{\bar \theta}$], the projection map $\pi_{{\bar \theta},i,M}^{(n)}\colon{\mathcal{Z}}_\mathcal{B}^{\bar \theta}\to  H^{(n)}_{{\bar \theta}, i,M}$ [resp., 
		$\pi_{{\bar \theta},i,M}^{(n)}\colon{\mathcal{Z}}_\mathcal{B}^{\bar \theta}\to  L^{(n)}_{i,M}$] is continuous, hence Borel measurable. 	
	\end{rem}
	  
	\begin{rem} \label{finite} 	
		Fix  a separable Hilbert space $H=L^2(\Omega, \mathcal{G}; \mathbb{R}^k)$ with  basis $\mathbf{e}=(e_m)_m$. We can choose a representative for every element $e_m,\,m\in\mathbb{N}$, of the basis $\mathbf{e}$. In this way,  it makes sense to consider $e_m(\omega),\,\omega\in\Omega$. Given $M\in\mathbb{N},$ we denote by $F_M=\text{span}\left\{e_1,\dots,e_M\right\}$;  for every $y\in F_M$, there exists a unique representative $\bar{y}$ of $y$ such that $\bar{y}(\omega)=\sum_{m=1}^{M}\langle y,e_m\rangle_2e_m(\omega),\,\omega\in\Omega$. 
		We then define a  map $T\colon F_M \times (\Omega, \mathcal{G}) \to \R^k$ by 
		$$ 
		T(y, \omega)= \bar{y}(\omega),\quad y\in F_M,\, \omega\in\Omega.
		$$
		Note that $T$ is well defined, and depends on the choice of the representatives for $(e_m)_m$. 
		Furthermore, $T$ is  \emph{measurable with respect to the product $\sigma-$algebra}. Indeed, this is a consequence of the fact that $T(y,\cdot)$ is $\mathcal{G}-$measurable for every $y\in F_M$, and that $T(\cdot,\omega)\colon F_M\to \mathbb{R}^k$ is continuous for every $\omega\in\Omega$.  This observation will be important in \eqref{richiama} (see also \eqref{eccolo}).   
	\end{rem}
	
	\subsection {Proof of the second part (\ref{opposite_1}) of the  DPP}\label{subsect_2}
	 Here, using the class of predictable controls $\mathcal{B}^s$ introduced in the previous subsection, we show that, for every $s\in [0,T)$ and  $x\in\mathbb{R}^d$,
			\begin{equation}\label{opposite}
				v(s,x)\ge \sup_{a\in{\mathcal{A}}} \sup_{\theta\in \mathcal{T}_{s,T}}
				\mathbb{E}\bigg[\int_{s}^{\theta}h\left(r,X_r^{s,x,a}, a_r\right)dr
				+v\left(\theta, X_\theta^{s,x,a}\right)\bigg].  
			\end{equation}
	 	Note that \eqref{opposite}  is equivalent to proving \eqref{opposite_1}, which is the same inequality with $\mathcal{P}_T$ instead of ${\mathcal{A}}$. Indeed, for every $s\in[0,T),$ $x\in\mathbb{R}^d$,  $\theta\in\mathcal{T}_{s,T}$ and $a\in\mathcal{P}_T$, also employing Corollary \ref{estrap}, we can find a sequence $(a_n)_n\subset{\mathcal{A}}$ such that, for $\mathbb{P}-$a.s. $\omega\in\Omega$,  $\lim_{n\to\infty} a_n(\cdot,\omega)= a(\cdot,\omega)$ a.e. in $[0,T]$ and $\lim_{n\to\infty}\sup_{0\le t\le T}|X^{s,x,a_n}_t-X^{s,x,a}_t|=0$.  Hence, by the dominated convergence theorem, 
		\[
			\lim_{n\to \infty}
				\mathbb{E}\bigg[\int_{s}^{\theta}h\left(r,X_r^{s,x,a_n}, a_{n,r}\right)dr
			\bigg]
			=
				\mathbb{E}\bigg[\int_{s}^{\theta}h\left(r,X_r^{s,x,a}, a_{r}\right)dr
			\bigg].
		\]
	Furthermore, 	by Lemma \ref{lsc} and Fatou's lemma,
	\[
		\liminf_{n\to \infty}\mathbb{E}\left[v(\theta,X_\theta^{s,x,a_n})\right]
		\ge
		\mathbb{E}\left[\liminf_{n\to \infty}v(\theta,X_\theta^{s,x,a_n})\right]
		\ge
		\mathbb{E}\left[v(\theta,X_\theta^{s,x,a})\right].
	\]
Combining the last two equations shows that \eqref{opposite} implies \eqref{opposite_1}, as desired.

\vspace{2mm}
Fix $s\in[0,T)$, $x\in\mathbb{R}^d$ and  $a\in{ \mathcal{A}}$. First, we assume that $\theta=\bar{\theta}\in  (s,T)\cap S$. 	As mentioned in Subsection \ref{sub_mst}, our idea is to apply a measurable selection argument, specifically Theorem \ref{BP_Sim}. Before doing so,  some  preparation is required.\\
	 		Consider $n,M\in\mathbb{N}$ and $i=0,\dots,2^{n}-1$ such that $t_i^{(n)}\ge {\bar \theta}$. Recall the space 
			\[
				H^{(n)}_{{\bar \theta}, i,M}\subset H_{{\bar \theta},i}^{(n)} \text{ generated by } e^{(n)}_{{\bar \theta},i,m},\, m=1,\dots,M,
			\] 
			see \eqref{Hn} and the subsequent equation. We write $e_m$ for $e^{(n)}_{{\bar \theta},i,m}$ to keep notation simple 
  and    fix a representative for every function $e_{m},\,m=1,\dots, M$. 
  By Remark \ref{finite} we  can consider the measurable map
  \begin{equation}\label{richiama} 
  	g_{t_i^{(n)}}\colon  H^{(n)}_{{\bar \theta}, i,M} \times \big(\Omega, \mathcal{F}^{W,N_p}_{{\bar \theta},t_i^{(n)}}\big) \to \R^l,\qquad g_{t_i^{(n)}}(y, \omega) = \bar{y}(\omega),
  \end{equation}
  where $ \bar{y}(\omega)=  \sum_{m=1}^{M}\langle y,e_m\rangle_{2}\,  e_m(\omega)$.   
  
			Denote by $\mathcal X = \mathcal{D}([s, \bar{\theta}]; \mathbb{R}^d)$ the usual space of càdlàg functions endowed with the Skorokhod topology: since  $\mathcal X$ is a Polish space, it is a Borel space too.
			%
			Recalling \eqref{cvi}, we introduce the function $f\colon \mathcal X\times \mathcal{Z}_\mathcal{B}^{\bar{\theta}}\to \mathbb{R}$ defined by, for  every $\xi \in \mathcal X$ and $y \in {\mathcal{Z}}_\mathcal{B}^{\bar{\theta}}$, 
	 		\begin{gather*} 
				f(\xi,y) = - J(\bar{\theta}, \pi_{\bar{\theta}}(\xi), \pj y),\quad\text{ where \,\,$\pi_{\bar{\theta}}(\xi) = \xi (\bar{\theta})$}. 
			\end{gather*}
			The map $f$ is Borel measurable and bounded. Thus, by Remark \ref{seq}, we  can apply Theorem \ref{BP_Sim}, which yields,  for any $\epsilon >0$,   the existence of a universally measurable function  $ c_{\epsilon} \colon \mathcal X \to {\mathcal{Z}}_\mathcal{B}^{\bar{\theta}}$ such that   
			\begin{gather}\label{sel_eq}
				J(\bar{\theta}, \pi_{\bar{\theta}}(\xi),  \pj c_{\epsilon}(\xi)) 
				\ge  \sup_{y \in {\mathcal{Z}}_\mathcal{B}^{\bar{\theta}}} \, J(\bar{\theta}, \pi_{\bar{\theta}}(\xi), \pj y) - \epsilon 
				= v(\bar{\theta}, \pi_{\bar{\theta}}(\xi)) - \epsilon,\quad  \xi \in \mathcal X.
			\end{gather}
			Note that  the last equality in \eqref{sel_eq} is due to Lemma \ref{ind_c_l}, see also \eqref{ritorna}.
			Consequently, 
			\begin{gather}\label{sisi}
				\mathbb{E} [  J(\bar{\theta}, X^{s,x,a}_{{\bar{\theta}}} ,  \pj c_{\epsilon}(X^{s,x,a}_{{\cdot }} ))  ]
				\ge \mathbb{E}  [v(\bar{\theta}, X^{s,x,a}_{{\bar{\theta}}})]  - \epsilon.
			\end{gather}
			 For every $\omega\in\Omega,$ we identify   
			 $$ {\mathcal{B}}^{\bar{\theta}}\ni\tilde{c}_\epsilon(\omega)\sim \pj c_\epsilon(X^{s,x,a}_\cdot(\omega))\in \pj ({\mathcal{Z}}_{\mathcal{B}}^{\bar{\theta}}). 
			 $$

						At this point, we modify the control $a\in{\mathcal{A}}$ after time $\bar{\theta}$ using the processes $\tilde{c}_\epsilon(\cdot)$, with the aim of invoking \eqref{sisi} and the flow property in Point \ref{flow_1} of Definition \ref{sharpala}. However, since $\tilde{c}_\epsilon(\omega)$ is not $\mathbf{Q}-$valued and belongs to $\mathcal{B}^{\bar{\theta}}_{n,M}$ for some integers $n,M$ \emph{depending on} $\omega$, we need to consider suitable approximated controls.\\
						More specifically, for every $\omega\in\Omega$ and $n,M\in\mathbb{N}$, define $c_{\epsilon,M}^{(n)}(\omega)\in\mathcal{Z}^{\bar{\theta}}_{n,M}$ by   (recall Remark \ref{seq})
			\[
			\big(c_{\epsilon,M}^{(n)}(\omega)\big)_i=\pi^{(n)}_{{\bar \theta},i,M}\left(c_\epsilon(X^{s,x,a}_{{\cdot }}(\omega))\right),\quad i=0,\dots,2^{n}-1;
			\]
			{we identify $\quad \mathcal{B}^{\bar{\theta}}_{n,M}\ni c_{\epsilon,M,n}(\omega)\sim \pj c_{\epsilon,M}^{(n)}(\omega)\in \pj(\mathcal{Z}^{\bar{\theta}}_{n,M})$. } \\
			Recalling \eqref{richiama}, for every $i=0,\dots,2^n-1$ such that $t_i^{(n)}\ge \bar{\theta}$, we define the $\mathcal{F}^{W,N_p}_{0,t_i^{(n)}}-$measurable random variable
			\[
			\tilde{c}_{\epsilon,M,i}^{(n)} (\omega)\coloneqq \pj\!\left(\big(c_{\epsilon,M}^{(n)}(\omega)\big)_i(\omega)\right)
			=\pj\!\left(g_{t_i^{(n)}}\left(\pi^{(n)}_{{\bar \theta},i,M}\left(c_\epsilon(X^{s,x,a}_{{\cdot }}(\omega))\right)\right),\omega\right),\quad \omega\in\Omega.
			\]
			As for the indexes $i=0,\dots, 2^{n}-1$ such that $t_i^{(n)}< \bar{\theta}$, we set  $\tilde{c}_{\epsilon,M,i}^{(n)} =0$ in $\Omega$. \\
			As in the proof of Corollary \ref{estrap}, since $\qq$ is countable we write $\qq=\{q_j\}_{j\in \mathbb{N}}$. For every $K\in\mathbb{N}$, we denote by $\Phi_K\colon \bb \to {\qq}$ the (measurable) function such that, given $x\in \bb$, 
			$\Phi_K(x)$  is  the first   element in $\{ q_1, ..., q_K \} $ having the smallest distance from $x$. Recalling that $\qq$ is dense in $\bb$, $\phi_K(x) \to x$ as $K \to \infty$.
			We define
			\begin{equation}\label{eccolo} 
				\tilde{c}^{(n)}_{\epsilon,M,i,K}(\omega)\coloneqq \Phi_K\left(\tilde{c}_{\epsilon,M,i}^{(n)} (\omega)\right)
				\left[=
				\Phi_K\left(\pj\left(g_{t_i^{(n)}}\left(\pi^{(n)}_{{\bar \theta},i,M}\left(c_\epsilon(X^{s,x,a}_{{\cdot }}(\omega))\right),\omega\right)\right)\right),\quad t_i^{(n)}\ge \bar{\theta}\right];
			\end{equation}
			the map $\Phi_K$ ensures that the $\mathcal{F}^{W,N_p}_{0,t_i^{(n)}}-$measurable random variables $\tilde{c}^{(n)}_{\epsilon,M,i,K}$ in \eqref{eccolo} are square--integrable and ${\qq}-$valued, which allows to identify   $\quad  \mathcal{Z}_{n}\ni\big(\tilde{c}^{(n)}_{\epsilon,M,i,K}\big)_i\sim \tilde{c}_{\epsilon,M,n,K}\in\mathcal{A}_{n}$ (see \eqref{aaa}). 
		 \vspace{1mm}\\
			Consider $n,M,K\in\mathbb{N}$ and suppose that $\bar{\theta}\in\Pi_n$.  Defining the step process  ${a}_{\epsilon,M,n,K}\in\mathcal{A}$ by 
			\[
					a_{\epsilon,M,n,K}(t,\omega)=1_{\{t\le \bar{\theta}\}}a(t,\omega)+
1_{\{t> \bar{\theta}\}}			\tilde{c}_{\epsilon,M,n,K}(t,\omega),\quad t\in[0,T],\,\omega\in \Omega,
			\]
		by	the flow property of $X^{s,x,({a}_{\epsilon,M,n,K})}$ in Point \ref{flow_1} of Definition \ref{sharpala} (see also \eqref{controlla_flow} in the proof of Lemma \ref{regular_control}),
			\begin{multline}\label{tr}
				v(s,x)\ge \mathbb{E}\bigg[\int_{s}^{\bar{\theta}}h\left(r,X_r^{s,x,a}, a_r\right)dr \bigg]
				\\+\mathbb{E}\bigg[\int_{\bar{\theta}}^{T}h\left(r,X_r^{\bar{\theta},X^{s,x,a}_{\bar{\theta}},(\tilde{c}_{\epsilon,M,n,K})}, {(\tilde c_{\epsilon,M,n,K})}_r\right)dr + 
				j\left(X_T^{\bar{\theta},X^{s,x,a}_{\bar{\theta}},(\tilde{c}_{\epsilon,M,n,K})}\right)\bigg].
			\end{multline}

			\noindent	 By \eqref{richiama} and  \eqref{eccolo}, for every $i=0,\dots,2^n-1$ such that $t_i^{(n)}\ge \bar{\theta}$,  $\text{for }\mathbb{P}-\text{a.s. }\omega\in\Omega,$ 
			\begin{multline*}
				\mathbb{E}\left[\tilde{c}^{(n)}_{\epsilon,M,i,K}\big | \mathcal{F}^{W,N_p}_{0,\bar\theta}\right]\left(\omega\right)
				=\mathbb{E}\left[\Phi_K	\left(\pj
			\left(g_{t_i^{(n)}}\left(\pi^{(n)}_{{\bar \theta},i,M}\left(c_\epsilon(\xi)\right),\cdot\right)\right)\right)
				\right]_{\xi = X^{s,x,a}_{{\cdot }}(\omega)}
				\\=
				\mathbb{E}\left[\Phi_K\big(\pj\big(c^{(n)}_{\epsilon,M}(\omega)\big)_i(\cdot)\big)
				\right]. 
			\end{multline*} 
			Therefore, $\mathbb{P}-$a.s., identifying $ \mathcal{Z}_{n}\ni\Phi_K\big(\pj{c}^{(n)}_{\epsilon,M}(\omega)\big)_i\sim {c}_{\epsilon,M,n,K}(\omega)\in\mathcal{A}_{n}$,
			\begin{equation*}
				\mathbb{E}\bigg[\int_{\bar{\theta}}^{T}h\left(r,X_r^{\bar{\theta},X^{s,x,a}_{\bar{\theta}},(\tilde{c}_{\epsilon,M,n,K})}, (\tilde{c}_{\epsilon,M,n,K})_r\right)dr + 
				j\left(X_T^{\bar{\theta},X^{s,x,a}_{\bar{\theta}},(\tilde{c}_{\epsilon,M,n,K})}\right)  \Big| {\mathcal F}^{W,N_p}_{0,\bar{\theta}}\bigg]
				=J\left(\bar{\theta},X^{s,x,a}_{\bar{\theta}}, c_{\epsilon,M,n,K}\right).
			\end{equation*}  
			Going back to \eqref{tr}, by the law of total expectation  we can write
			\begin{equation}\label{dais1} 
				v(s,x)\ge \mathbb{E}\bigg[\int_{s}^{\bar{\theta}}h\left(r,X_r^{s,x,a}, a_r\right)dr \bigg]+
				\mathbb{E}\left[J\left(\bar{\theta},X^{s,x,a}_{\bar{\theta}}(\cdot),c_{\epsilon,M,n,K}(\cdot)\right)\right].
			\end{equation}
			Observing that, for every $\omega\in\Omega$,  
			\[
			\lim_{K\to \infty} \Phi_K\big(\pj\big(c_{\epsilon,M}^{(n)}(\omega)\big)_i(\omega')\big)
			=
		\pj\!	\big(c_{\epsilon,M}^{(n)}(\omega)\big)_i(\omega')
			,\quad i=0,\dots,2^n-1,\,\omega'\in\Omega,
			\]
			by Lemma \ref{lemma_cont} we can pass to the limit as $K\to \infty$ in \eqref{dais1} to obtain, by dominated convergence, 
			\begin{equation}\label{dais}
				v(s,x)\ge \mathbb{E}\bigg[\int_{s}^{\bar{\theta}}h\left(r,X_r^{s,x,a}, a_r\right)dr \bigg]+
				\mathbb{E}\left[J\left(\bar{\theta},X^{s,x,a}_{\bar{\theta}}(\cdot),c_{\epsilon,M,n}(\cdot)\right)\right].
			\end{equation}
			Notice  that, for every $\omega\in \Omega$, when $n$ and $M$ are sufficiently large then ${\mathcal{B}}^{\bar{\theta}}\ni \tilde{c}_\epsilon(\omega)=c_{\epsilon,M,n}(\omega)\in\mathcal{B}_{n,M}^{\bar{\theta}} $.  Hence, choosing $M=n$, we have
			\begin{equation*}
				J \left(\bar{\theta}, X^{t,x,a}_{\bar{\theta}} (\omega), \tilde{c}_\epsilon(\omega) \right)
				=
				\lim_{n\to\infty }J\left(\bar{\theta},X^{s,x,a}_{\bar{\theta}}(\omega), c_{\epsilon,n,n}(\omega)\right),\quad \omega\in \Omega,
			\end{equation*}
			which by dominated convergence implies 
			\begin{equation}\label{maddai}
				\mathbb{E}\left[J \left(\bar{\theta}, X^{t,x,a}_{\bar{\theta}}(\cdot), \tilde{c}_\epsilon\left(\cdot\right)\right)\right]=\lim_{n\to\infty }
				\mathbb{E}\left[J\left(\bar{\theta},X^{s,x,a}_{\bar{\theta}}(\cdot),c_{\epsilon,n,n}(\cdot)\right)\right].
			\end{equation}
			Combining \eqref{dais}-\eqref{maddai}  with \eqref{sisi}  we conclude that 
			\begin{equation*}
				v(s,x)\ge \mathbb{E}\bigg[\int_{s}^{\bar{\theta}}h\left(r,X_r^{s,x,a}, a_r\right)dr \bigg]+
				\mathbb{E}\left[J \left(\bar{\theta}, X^{t,x,a}_{\bar{\theta}}, \tilde{c}_\epsilon \right)\right]
				\ge 
				\mathbb{E}\bigg[\int_{s}^{\bar{\theta}}h\left(r,X_r^{s,x,a}, a_r\right)dr +v(\bar{\theta}, X^{t,x,a}_{{\bar{\theta}}})\bigg] -\epsilon.
			\end{equation*}

			Suppose now that $\theta$ is a simple, $\mathbb{F}^{W,N_p}-$stopping time with values in $(s,T)\cap S$.  Then we can write $\theta=\sum_{k=1}^{N}\theta_k1_{A_k}$, where $\theta_k\in (s,T)\cap S$ and $(A_k)_k$ is a partition of $\Omega$ such that $A_k\in\mathcal{F}_{0,\theta_k}^{W,N_p},$ $k=1,\dots, N.$ We can invoke the measurable selection theorem (see Theorem \ref{BP_Sim}) $N-$times to deduce, for any $\epsilon>0$, the existence of a universally measurable function $c_{\epsilon,k} \colon \mathcal D\left([s,\theta_k];\mathbb{R}^d\right) \to {\mathcal{Z}}_{\mathcal{B}}^{{\theta}_k}$  such that (cf. \eqref{sel_eq})
			\[
			J({\theta}_k, X^{s,x,a}_{{{\theta}_k}}(\omega) , \pj  c_{\epsilon,k}(X^{s,x,a}_{{\cdot }}(\omega) ))
			\ge v\left({\theta}_k, X^{s,x,a}_{{{\theta}_k}}(\omega)\right)  - \epsilon,\quad k=1,\dots, N,\,\omega\in\Omega.
			\]
			If we identify, for every $k=1,\dots, N$,  $ {\mathcal{B}}^{{\theta}_k}\ni\tilde{c}_{\epsilon,k}(\omega)\sim\pj c_{\epsilon,k}(X^{s,x,a}_\cdot(\omega))\in \pj({\mathcal{Z}}_\mathcal{B}^{{\theta}_k}),\,\omega\in\Omega$, then conditioning with respect to $\mathcal{F}^{W,N_p}_{0,\theta_k}$ we can follow the previous arguments to obtain
			\begin{multline}\label{wed}
				v(s,x)\ge 
				\mathbb{E}\left[\int_{s}^{{\theta}}h\left(r,X_r^{s,x,a}, a_r\right)dr\right]+
				\sum_{k=1}^{N}\mathbb{E}\left[1_{A_k}J\left(\theta_k, X^{s,x,a}_{\theta_k}(\cdot),\tilde{c}_{\epsilon,k}(\cdot)\right)\right]\\
				\ge	\mathbb{E}\left[\int_{s}^{{\theta}}h\left(r,X_r^{s,x,a}, a_r\right)dr +v({\theta}, X^{s,x,a}_{{{\theta}}})\right] -\epsilon
				. 
			\end{multline}

			Finally, we show that \eqref{opposite} holds for all $\theta\in \mathcal{T}_{s,T}$.  We consider a sequence $(\tilde{\theta}_n)_{n\in\mathbb{N}}\subset \mathcal{T}_{s,T}$ of simple $\mathbb{F}^{W,N_p}-$stopping times with values in $(s,T)\cap S$ such that $\tilde{\theta}_n \downarrow \theta$ as $n\to \infty,\,\mathbb{P}-$a.s.
			An application of Lemma \ref{lsc} yields, for $\mathbb{P}-$a.s. $\omega\in\Omega$,
			\begin{equation*}
				v\left(\theta(\omega),X^{s,x,a}_{{\theta}}(\omega)\right)\le \liminf_{n\to \infty}v\left(\tilde{\theta}_n(\omega), X^{s,x,a}_{\tilde{\theta}_n}(\omega)\right).
			\end{equation*}
			Hence by \eqref{wed} and Fatou's lemma -- which can be applied because $v$ is bounded -- we deduce that
			\begin{multline*}
				v(s,x)\ge 
				\liminf_{n\to \infty}\left(\mathbb{E}\left[\int_{s}^{{\tilde{\theta}}_n}h\left(r,X_r^{s,x,a}, a_r\right)dr +v\left({\tilde{\theta}_n}, X^{s,x,a}_{{{\tilde{\theta}_n}}}\right)\right] \right)-\epsilon\\
				\ge
				\mathbb{E}\left[\int_{s}^{{\theta}}h\left(r,X_r^{s,x,a}, a_r\right)dr +v({\theta}, X^{s,x,a}_{{{\theta}}})\right] -\epsilon.
			\end{multline*}	
			Since $\epsilon>0$, $a\in\mathcal{A}$ and $\theta\in \mathcal{T}_{s,T}$ are chosen arbitrarily, the previous equation entails \eqref{opposite}, which in turn gives \eqref{opposite_1}.
			
			Combining \eqref{less_general} obtained in Subsection \ref{sub_firstDPP} with \eqref{opposite_1}, the proof of Theorem \ref{DPP_t} is  complete.
		
\vspace{-.4cm}
\begin{appendices}
	\section{On the $\sigma-$algebra $\mathcal{C}$}\label{ap_sigma}
Recall that $\mathcal{D}([0,T];\mathbb{R}^d)$ is the set of $\mathbb{R}^d-$valued, càdlàg functions defined on $[0,T]$, and that we denote by $\mathcal{D}_0=(\mathcal{D}([0,T];\mathbb{R}^d),\norm{\cdot}_0)$ and by $\mathcal{D}_S=(\mathcal{D}([0,T];\mathbb{R}^d),J_1)$. In particular,  the Skorokhod topology $J_1$ is generated by the metric $d_S$, see Remark \ref{skor}. \\	
Notice that $C(\mathbb{R}^d;\mathcal{D}_0)\!\subset\! C(\mathbb{R}^d;\mathcal{D}_S)$, because $d_S(x,y)\!\le \!\norm{x-y}_0$ for every $x,y\in\mathcal{D}([0,T];\mathbb{R}^d)$. Since $(C(\mathbb{R}^d;\mathcal{D}_S), d^{lu}_S)$ is separable (see \cite{Kh}), where 
$$
d^{lu}_{S}\left(f,g\right)= \sum_{N=1}^{\infty}{2^{-N}}\frac{\sup_{\left|x\right|\le N} d_S(f\left(x\right),g\left(x\right))}{1+\sup_{\left|x\right|\le N} d_S(f\left(x\right),g\left(x\right))},\quad f,g\in C\left(\mathbb{R}^d;\mathcal{D}_S\right),
$$
it follows that $(C(\mathbb{R}^d;\mathcal{D}_0), d^{lu}_S)$ is separable, too. We denote $\mathcal{C}_S$ the corresponding Borel $\sigma-$algebra.
\begin{lemma}\label{l=}
	The following equality between $\sigma-$algebras holds:
	\begin{equation}\label{=}
		\mathcal{C}=\mathcal{C}_S.
	\end{equation}
\end{lemma}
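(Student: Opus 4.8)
The plan is to prove the two inclusions $\mathcal{C}\subseteq\mathcal{C}_S$ and $\mathcal{C}_S\subseteq\mathcal{C}$ separately. Throughout I would rely on three facts already at hand: that $\mathcal{D}$ is exactly the Borel $\sigma-$algebra of $(\mathcal{D}([0,T];\R^d),J_1)=\mathcal{D}_S$ (see the discussion around \eqref{pil} and the references there); that $(C(\R^d;\mathcal{D}_0),d_S^{lu})$ is separable, being a subspace of the separable space $(C(\R^d;\mathcal{D}_S),d_S^{lu})$ (cf. \cite{Kh}); and that, since $d_S(x,y)\le\norm{x-y}_0$, every $f\in C(\R^d;\mathcal{D}_0)$ is also continuous as a map $\R^d\to\mathcal{D}_S$, and convergence in $d_S^{lu}$ forces, for each $N$, convergence of $\sup_{|y|\le N}d_S(\cdot(y),\cdot(y))$.

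For $\mathcal{C}\subseteq\mathcal{C}_S$ I would first observe that each projection $\pi_x\colon(C(\R^d;\mathcal{D}_0),d_S^{lu})\to(\mathcal{D}_S,d_S)$ is continuous: if $d_S^{lu}(f_n,f)\to0$ then $\sup_{|y|\le N}d_S(f_n(y),f(y))\to0$ for every $N$, hence in particular $d_S(f_n(x),f(x))\to0$. Therefore $\pi_x$ is Borel measurable, i.e. $(\mathcal{C}_S,\mathcal{B}(\mathcal{D}_S))-$measurable, and since $\mathcal{B}(\mathcal{D}_S)=\mathcal{D}$ it is $(\mathcal{C}_S,\mathcal{D})-$measurable. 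As $\mathcal{C}$ is by definition the $\sigma-$algebra generated by $\{\pi_x:x\in\R^d\}$, this yields $\mathcal{C}\subseteq\mathcal{C}_S$.

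For the reverse inclusion, which is the more delicate one, I would use separability to reduce matters to open balls: every $d_S^{lu}-$open subset of $C(\R^d;\mathcal{D}_0)$ is a countable union of open balls $B(f_0,r)=\{f:d_S^{lu}(f,f_0)<r\}$, so it suffices to prove $B(f_0,r)\in\mathcal{C}$ for all $f_0\in C(\R^d;\mathcal{D}_0)$ and $r>0$. Writing $\psi(t)=t/(1+t)$ and $s_N(f)=\sup_{|x|\le N}d_S(f(x),f_0(x))$, we have $d_S^{lu}(f,f_0)=\sum_{N\ge1}2^{-N}\psi(s_N(f))$, a uniformly convergent series of functions continuous in $s_N$; hence it is enough to show that each $s_N$ is $\mathcal{C}-$measurable. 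Fix $N$. Since $f$ and $f_0$ are continuous into $\mathcal{D}_S$, the map $x\mapsto d_S(f(x),f_0(x))$ is continuous on $\R^d$, so $s_N(f)=\sup\{d_S(f(x),f_0(x)):x\in\mathbb{Q}^d,\ |x|<N\}$ is a countable supremum. For each fixed such $x$, the map $f\mapsto d_S(f(x),f_0(x))=d_S(\pi_x(f),f_0(x))$ is $\mathcal{C}-$measurable, because $\pi_x\colon(C(\R^d;\mathcal{D}_0),\mathcal{C})\to(\mathcal{D}_0,\mathcal{D})$ is measurable by the very definition of $\mathcal{C}$, while $g\mapsto d_S(g,f_0(x))$ is continuous on $\mathcal{D}_S$, hence $\mathcal{B}(\mathcal{D}_S)=\mathcal{D}-$measurable. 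A countable supremum of $\mathcal{C}-$measurable functions is $\mathcal{C}-$measurable, so $s_N$, and therefore $d_S^{lu}(\cdot,f_0)$, is $\mathcal{C}-$measurable; consequently $B(f_0,r)$ is the preimage of $(-\infty,r)$ under a $\mathcal{C}-$measurable function, hence lies in $\mathcal{C}$, and $\mathcal{C}_S\subseteq\mathcal{C}$. Combining the two inclusions gives \eqref{=}. The only genuinely non-routine ingredients are the appeal to $\mathcal{D}=\mathcal{B}(\mathcal{D}_S)$ — which is what lets one move freely between the projection-generated $\sigma-$algebra on $\mathcal{D}_0$ and the Skorokhod Borel structure — and the passage from a supremum over $\{|x|\le N\}$ to a countable one via continuity of $f$ and $f_0$; the main thing to keep track of is which notion of continuity (uniform versus $J_1$) is being used at each step.
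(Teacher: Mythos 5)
Your proposal is correct and follows essentially the same route as the paper: continuity of the projections $\pi_x$ from $(C(\mathbb{R}^d;\mathcal{D}_0),d_S^{lu})$ into $\mathcal{D}_S$ together with $\mathcal{D}=\mathcal{B}(\mathcal{D}_S)$ for $\mathcal{C}\subseteq\mathcal{C}_S$, and, for the reverse inclusion, separability plus the $\mathcal{C}-$measurability of $f\mapsto d_S^{lu}(f_0,f)$ obtained by reducing the supremum over $|x|\le N$ to a countable one over rational points. The only cosmetic difference is that you reduce to open balls while the paper works with closed balls; the argument is otherwise the same.
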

\begin{proof}
	First we prove the inclusion $\mathcal{C}\subset \mathcal{C}_S$. Fix $x\in\mathbb{R}^d$ and consider the projection $\pi_x\colon C(\mathbb{R}^d;\mathcal{D}_0)\to \mathcal{D}_0$ defined by $\pi_x(f)=f(x),\,f\in C(\mathbb{R}^d;\mathcal{D}_0)$. Notice that 
	$\pi_x\colon (C(\mathbb{R}^d;\mathcal{D}_0), d^{lu}_S)\to \mathcal{D}_S$ is continuous, because if $f_n,f\in C(\mathbb{R}^d;\mathcal{D}_0),\,n\in\mathbb{N}$, such that $\lim_{n\to \infty}d^{lu}_S(f_n,f)=0$, then 
	$$\lim_{n\to \infty}d_S(f_n(y),f(y))=0,\quad  \text{for every $y\in\mathbb{R}^d$.}
	$$
	As $\mathcal{D}$ is the Borel $\sigma-$algebra generated by $J_1$, we infer that $\pi_x$ is $\mathcal{C}_S/\mathcal{D}-$measurable, whence $\mathcal{C}\subset \mathcal{C}_S$.
	
	Secondly, we focus on the inclusion $\mathcal{C}_S\subset \mathcal{C}$. Since $(C(\mathbb{R}^d;\mathcal{D}_0), d^{lu}_S)$ is separable, it suffices to show that 
	\[
	B(f_0,R)=\left\{f\in C(\mathbb{R}^d;\mathcal{D}_0): d^{lu}_S\left(f_0,f\right)\le R\right\}\in \mathcal{C},\quad f_0\in C(\mathbb{R}^d;\mathcal{D}_0),\,R>0.
	\]
	Hence we fix $f_0\in C(\mathbb{R}^d;\mathcal{D}_0)$ and consider the map $C(\mathbb{R}^d;\mathcal{D}_0) \ni f \mapsto   d^{lu}_S\left(f_0,f\right)\in\mathbb{R}$: we argue that it is $\mathcal{C}-$measurable. Indeed, this  is a consequence of the fact that, for every $m\in\mathbb{N}$, the map
	\[
	C(\mathbb{R}^d;\mathcal{D}_0) \ni f 
	\mapsto\frac{\sup_{\left|x\right|\le m} d_S(f_0\left(x\right),f\left(x\right))}{1+\sup_{\left|x\right|\le m} d_S(f_0\left(x\right),f\left(x\right))}\in \mathbb{R}
	\quad \text{is $\mathcal{C}-$measurable.}
	\]
	Note that the supremum can be computed over $x\in\mathbb{Q}^d$ because $f_0,\,f\colon \mathbb{R}^d\to \mathcal{D}_S$ are continuous. Since $x/(1+x),\,x\in\mathbb{R},$  is measurable, we only need to verify that 
	\begin{equation}\label{h}
		h_m\colon C(\mathbb{R}^d;\mathcal{D}_0)\to \mathbb{R} \quad \text{defined by}\quad h_m(f)=\sup_{\left|x\right|\le m,\,x\in\mathbb{Q}^d} d_S(f_0\left(x\right),f\left(x\right))\quad \text{is $\mathcal{C}-$measurable}.
	\end{equation}
	For every $c\in \mathbb{R}$ and $r>0$, denoting by $B(c,r)\subset \mathbb{R}$ the closed ball of radius $R$ and center $c$ in $\mathbb{R}$,
	\begin{align*}
		h_m^{-1}\left(B_{}(c,r)\right)&=
		\left\{f\in C(\mathbb{R}^d;\mathcal{D}_0) : \sup_{\left|x\right|\le m,\,x\in\mathbb{Q}^d} d_S(f_0\left(x\right),f\left(x\right))\in  B_{}(c,r)\right\}
		\\&
		=	\bigcap_{\left|x\right|\le m,\,x\in\mathbb{Q}^d}
		\pi_x^{-1}\left(d^{-1}_S(f_0(x),\cdot)(B_{}(c,r))\right)\in\mathcal{C},
	\end{align*}
	where in the last step we use  that $d_S(f_0(x),\cdot)\colon \mathcal{D}_S\to \mathbb{R}$ is continuous (hence measurable) and that $\mathcal{D}$ is the Borel $\sigma-$algebra generated by $J_1$. 
	Thus, \eqref{h} is satisfied, whence $\mathcal{C}_S\subset \mathcal{C}$.
	
	The double inclusion proves \eqref{=}, completing the proof. 
\end{proof}

\section{Proof of Theorem \ref{thm_BZ}}\label{ap_BZ}
In this section we use the same notation and work under the same hypotheses as in Theorem \ref{thm_BZ}. First of all, by Jensen's inequality, which can be invoked since $\theta$ is concave,  we have
\begin{equation}\label{prop_theta}
	\sup\left\{\sum_{j=1}^{n}\theta\left(p_j\right), \,p_j\ge 0\text{ such that }\sum_{j=1}^{n}p_j=1\right\}=n\theta\left(\frac{1}{n}\right),\quad n\in\mathbb{N}.
\end{equation} 
 Before presenting the proof of Theorem \ref{thm_BZ}, we introduce an approximation scheme. For every $n\in\mathbb{N}$, we denote by $S_n$ the  set of dyadic points
 \[
 	S_n=\left\{
 	Tj2^{-n},\,j=0,1,\dots,2^n
 	\right\}.
 \] 
 For any $t\in[0,T]$, let $t_n^-=\max\{s\in S_n : s\le t\}$ and $t_n^+=\min\{s\in S_n : s>t\}$, where we set $T_n^+=\infty$. Next, for every $n\in\mathbb{N}$ we define the function $g_n\colon \Xi\times S_{n+1}\to S_n$ by 
\[
	g_n\left(\omega, t\right)=\begin{cases}
		t_n^-,&\Delta\left(X_t\left(\omega\right), X_{t_n^-}\left(\omega\right)\right)=\Delta\left(t_{n}^-,t,t_n^+\right)\left(\omega\right)\\
		t_n^+,&\text{otherwise}
	\end{cases},\quad \omega\in \Xi,\,t\in S_{n+1}.
\]
In our framework, the map  $\omega\mapsto \Delta(X_{t_n^\pm}(\omega),X_t(\omega))$ is $\mathcal{G}-$measurable. It follows that also the function $\omega\mapsto \Delta(X_t(\omega), X_{g_n\left(w,t\right)}\left(\omega\right))=\Delta(t_n^{-},t,t_n^+)(\omega)$ is $\mathcal{G}-$measurable, because it is the minimum of two random variables. Finally, for all $m,n\in \mathbb{N}$ such that $n\ge m$, define $f_{m,n}=g_{m}\circ\dots\circ g_{n}$. Note that $f_{m,n}$ maps $\Xi\times S_{n+1}$ into $S_m$. The following lemma is a fundamental tool in the proof of Theorem \ref{thm_BZ}.
\begin{lemma}\label{l1.3}
	Let $m,n\in\mathbb{N}$ be such that $n\ge m$ and $\omega\in \Xi$. Then the map $g_n(\omega,\cdot)$ is non--decreasing in $S_{n+1}$ and, restricted to $S_n$, is the identity. As a consequence, the map $f_{m,n}(\omega,\cdot)$ is non--decreasing in $S_{n+1}$.
	
	Furthermore, there exists a family of increasing, càdlàg step functions $f_n\colon \Xi\times [0,T]\to S_n,\,n\in\mathbb{N}$, such that 
	\begin{equation}\label{comp_f}
		f_{m,n}(\omega,\cdot)\circ f_{n+1}(\omega,\cdot)=f_m(\omega,\cdot),\quad n\ge m,\,\omega\in\Xi,
	\end{equation}
and that 
\begin{equation}\label{dist_T}
	\left|t-f_n\left(\omega,t\right)\right|\le T2^{-n},\quad \omega\in \Xi,\,t\in\left[0,T\right].
\end{equation}
\end{lemma}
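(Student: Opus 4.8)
\textbf{Proof plan for Lemma \ref{l1.3}.}
The plan is to verify the three assertions in turn, keeping the proof essentially combinatorial and path-by-path. First I would fix $\omega \in \Xi$ and $n \ge m$ and check the properties of $g_n(\omega,\cdot)\colon S_{n+1}\to S_n$. By construction, if $t \in S_n$, then $t_n^- = t$ and the first branch of the definition gives $g_n(\omega,t) = t_n^- = t$ (here one uses that $\Delta(X_t(\omega),X_t(\omega)) = 0 = \Delta(t,t,t_n^+)(\omega)$, so the defining equality holds trivially); hence $g_n(\omega,\cdot)$ restricted to $S_n$ is the identity. For monotonicity, take $s < t$ in $S_{n+1}$. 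If $s$ and $t$ lie in different dyadic subintervals $[kT2^{-n},(k+1)T2^{-n}]$ then $s_n^+ \le t_n^-$, and since $g_n(\omega,s) \in \{s_n^-,s_n^+\}$ and $g_n(\omega,t)\in\{t_n^-,t_n^+\}$ we get $g_n(\omega,s) \le s_n^+ \le t_n^- \le g_n(\omega,t)$. If instead $s,t$ lie in the same subinterval, then since $S_{n+1}$ has mesh $T2^{-(n+1)}$, exactly one of $s,t$ is a point of $S_n$ (the left endpoint) and the other is the midpoint; the first case already handled by the identity property, and in the remaining case $s = s_n^-$, $t$ is the midpoint, so $g_n(\omega,s) = s = t_n^- \le g_n(\omega,t)$. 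Composition of non-decreasing maps is non-decreasing, so $f_{m,n}(\omega,\cdot) = g_m(\omega,\cdot)\circ\cdots\circ g_n(\omega,\cdot)$ is non-decreasing on $S_{n+1}$, and it is the identity on $S_m$ since each $g_j$ restricted to $S_j \supset S_m$ is.

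Next I would construct the functions $f_n\colon \Xi\times[0,T]\to S_n$. For $t \in [0,T]$, its dyadic approximants $t_{n+1}^-$ lie in $S_{n+1}$, and I set
\begin{equation*}
  f_n(\omega,t) = f_{n,n}(\omega, t_{n+1}^-) = g_n(\omega, t_{n+1}^-),\qquad \omega\in\Xi,\ t\in[0,T],
\end{equation*}
wait — more carefully, I want $f_n$ to take values in $S_n$ starting from the refinement $S_{n+1}$, so I would define $f_n(\omega,t) := f_{n,n}(\omega, t^-_{n+1})$ where $t^-_{n+1}$ is the largest point of $S_{n+1}$ that is $\le t$; but since $f_{n,n} = g_n$ this is $g_n(\omega, t^-_{n+1})$. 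To get a genuinely nested family I instead build them downward: more standard is to set $f_N(\omega,t) = t^-_N$ for a large index and $f_n = g_n\circ g_{n+1}\circ\cdots$, but to make \eqref{comp_f} exact I would define, for each fixed $t$, the sequence $u_{n}(\omega) = f_{n,n}(\omega,t^-_{n+1})\in S_n$ and check that $u_m(\omega) = f_{m,n}(\omega, u_{n}(\omega)\ \text{lifted to}\ S_{n+1})$. The clean way: set $f_n(\omega,t)$ to be the image of $t$ under $g_n\circ g_{n+1}\circ\cdots\circ g_{k}$ applied to $t^-_{k+1}$ in the limit $k\to\infty$; this limit stabilizes because once $k$ is large enough that $t^-_{k+1}$ has been pushed into $S_n$ the maps act as the identity. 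Then $f_n(\omega,\cdot)$ is non-decreasing (monotone limit/composition of the monotone $g_j$), takes finitely many values in $S_n$, and is right-continuous with left limits because it is constant on each dyadic subinterval of $S_{n+1}$ except possibly at endpoints, hence a càdlàg step function. Property \eqref{comp_f}, $f_{m,n}(\omega,\cdot)\circ f_{n+1}(\omega,\cdot) = f_m(\omega,\cdot)$, then follows by the very definition of $f_{m,n}$ as the telescoping composition $g_m\circ\cdots\circ g_n$ together with the identity-on-$S_m$ property.

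Finally, for the estimate \eqref{dist_T}: since $f_n(\omega,t)\in S_n$ and $f_n(\omega,\cdot)$ is non-decreasing and equals the identity on $S_n$, for $t$ in the dyadic subinterval $[kT2^{-n},(k+1)T2^{-n}]$ we have $f_n(\omega,kT2^{-n}) = kT2^{-n}$ and $f_n(\omega,(k+1)T2^{-n}) = (k+1)T2^{-n}$, so monotonicity forces $f_n(\omega,t)\in\{kT2^{-n},(k+1)T2^{-n}\}$, whence $|t - f_n(\omega,t)|\le T2^{-n}$. I would organize the write-up so that the monotonicity and identity properties of the $g_n$ are established first, as these feed into every subsequent claim. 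The main obstacle I anticipate is not any single estimate but getting the bookkeeping of the nested dyadic grids exactly right — in particular pinning down a definition of the family $(f_n)$ that simultaneously makes \eqref{comp_f} an identity (not just an approximation) and keeps each $f_n$ a bona fide càdlàg step function; this requires care about whether one pushes forward from $S_{n+1}$ or takes a stabilizing limit, and about the behaviour at the dyadic endpoints.
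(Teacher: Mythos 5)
Your first paragraph (identity of $g_n(\omega,\cdot)$ on $S_n$, monotonicity on $S_{n+1}$, hence monotonicity of $f_{m,n}(\omega,\cdot)$) is correct and is essentially the paper's argument. The gap is in the construction of the family $(f_n)$, which is the real content of the lemma. You never settle on a definition (three tentative versions are offered), and for the one you finally adopt, $f_n(\omega,t)=\lim_{k\to\infty}f_{n,k}(\omega,t^-_{k+1})$, the reason you give for the limit to stabilize (``$t^-_{k+1}$ has been pushed into $S_n$, where the maps act as the identity'') is not what happens; the correct reason is that $k\mapsto f_{n,k}(\omega,t^-_{k+1})$ is non--decreasing (since $(t^-_{k+2})^-_{k+1}=t^-_{k+1}$, so $g_{k+1}(\omega,t^-_{k+2})\ge t^-_{k+1}$, and $f_{n,k}(\omega,\cdot)$ is non--decreasing) with values in the finite set $S_n$ --- a fact you would have to state and prove. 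More seriously, the resulting function need not be right--continuous, so your claim that $f_n(\omega,\cdot)$ is a càdlàg step function ``because it is constant on each dyadic subinterval of $S_{n+1}$'' is false: constancy on dyadic intervals is destroyed by the limit in $k$. Concretely, if every comparison selects the right neighbour, i.e. $g_j(\omega,u)=u_j^+$ for all $u\in S_{j+1}\setminus S_j$ (a configuration realizable, e.g., by the deterministic path $X_t=\sqrt{t}$ in $E=\mathbb{R}$, for which each dyadic midpoint is strictly closer to the right endpoint; this path satisfies the hypotheses of Theorem \ref{thm_BZ}), then $f_{0,k}(\omega,u)=T$ for every $u\in S_{k+1}$ with $u>0$, so your $f_0(\omega,\cdot)$ equals $0$ at $t=0$ and $T$ on $(0,T]$: it is not right--continuous at $0$. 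Finally, \eqref{comp_f} is only asserted; for your limit definition it would in fact follow from $f_{m,k}=f_{m,n}\circ f_{n+1,k}$ together with the (unproved) eventual constancy, but you give no argument, and any modification of the construction made to restore right--continuity forces \eqref{comp_f} and \eqref{dist_T} to be re--examined.

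The paper proceeds differently at exactly this point: for $s\in S_n$ and $k\ge n$ it sets $T(k,n,s)(\omega)=\min\{t\in S_{k+1}:f_{n,k}(\omega,t)=s\}$, observes that this is non--increasing in $k$ (because $g_{k+1}$ is the identity on $S_{k+1}$), passes to the limit $T(n,s)(\omega)$, and defines $f_n(\omega,\cdot)=s$ on $\left[T(n,s)(\omega),T(n,s_n^+)(\omega)\right)$. Right--continuity is thereby built into the definition, \eqref{dist_T} follows from $T(n,s)\in[s-T2^{-n},s]$, and the genuinely delicate step is \eqref{comp_f}, which the paper proves via the two inequalities $f_{m,n}(\omega,f_{n+1}(\omega,t))\le f_m(\omega,t)$ and the converse, using the chain $T(n+1,s)\ge T(n,g_n(s))\ge\dots\ge T(m,f_{m,n}(s))$ and a case analysis. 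Note also that the paper's $f_n(\omega,\cdot)$ need not be the identity on $S_n$ (in the example above one gets $f_0(\omega,\cdot)\equiv T$), so your proof of \eqref{dist_T}, which leans on that identity property plus monotonicity, does not transfer to a corrected construction; this is why \eqref{dist_T} is obtained in the paper directly from the location of the thresholds $T(n,s)$.
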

\begin{proof}
	Fix $m,n\in\mathbb{N}$ such that $n\ge m$ and $\omega\in \Xi$. In the sequel, we do not explicitly write the dependence of $g_n$ and $f_{m,n}$ on $\omega$ to keep notation simple. By definition, $S_n\subset S_{n+1}$ and $t=t_n^-$ for every $t\in S_n$. This implies that $g_n(t)=t$. Consider now $s, t\in S_{n+1}$ such that $s<t$: since $s_n^+\le t_n^-$, the function $g_n$ is non--decreasing in $S_{n+1}$, as desired. Therefore the same property  holds for $f_{m,n}=g_m\circ\dots\circ g_n$, too.
	
	For the second part of the statement, consider $n\in\mathbb{N},\,\omega\in \Xi,\,s\in S_n$, and for every  integer $k\ge n$ define $T(k,n,s)(\omega)=\min\{t\in S_{k+1} : f_{n,k}(\omega,t)=s\}.$ Since $g_{k+1}$ restricted to $S_{k+1}$ is the identity, the sequence $(T(k,n,s)(\omega))_{k\ge n}$ is non--increasing, hence there exists $T(n,s)(\omega)=\lim_{k\to\infty}T(k,n,s)(\omega)$. The monotonicity of the map $f_{n,k}$ proved in the previous point yields 
	\begin{equation}\label{T_space}
		T\left(n,s\right)\left(\omega\right)\in \left[s-T2^{-n},s\right],\quad s\in S_n\setminus\{0\},
	\end{equation}
	and 
	\begin{equation}\label{T_incr}
		T\left(n,s\right)\left(\omega\right)\le T\left(n,s_n^+\right)\left(\omega\right),\quad \text{setting }T\left(n,T_n^+\right)=T.
	\end{equation}
	Thanks to \eqref{T_incr}, we can construct the function $f_n\colon \Xi\times [0,T]\to S_n$ defining, for all $\omega\in\Xi,$
	\[
		f_n\left(\omega, t\right)=\begin{cases}
			s, & t\in \left[T\left(n,s\right)\left(\omega\right), T\left(n, s_n^+\right)\left(\omega\right)\right),\,S_n\ni s<T,\\
			T, & t\in\left[T\left(n,T\right)\left(\omega\right),T\right].
		\end{cases}
	\]
	It is immediate to notice that $f_n$ is a càdlàg, increasing step function, while  \eqref{dist_T} is guaranteed by \eqref{T_space}.\\
	 It only remains to prove the composition property in \eqref{comp_f}. Consider $\omega\in\Xi$, two integers $n\ge m$ and $s\in S_{n+1}.$ Note that, by definition (omitting $\omega$ as before), $$T\left(k,n+1,s\right)\ge T\left(k,n,g_n\left(s\right)\right)\ge \dots \ge T\left(k,m,f_{m,n}\left(s\right)\right),\quad k>n, 
	$$ 
	 hence passing to the limit as $k\to \infty$,
	\begin{equation}\label{mon_J}
		T\left(n+1,s\right)\ge T\left(n,g_n\left(s\right)\right)\ge 	\dots \ge T\left(m,f_{m,n}\left(s\right)\right).
	\end{equation}
Take $t\in [0,T]$ and denote by $\tilde{s}\in S_{n+1}$ the unique element in $S_{n+1}$ such that $t\in [T(n+1,\tilde{s}),T(n+1,\tilde{s}_{n+1}^+)[$ (this interval is closed when $\tilde{s}=T$). Analogously, let $\bar{s}\in S_m$ be such that $t\in [T(m,\bar{s}),T(m,\bar{s}_{m}^+)[$, again closing the interval when $\bar{s}=T$. Since, by \eqref{mon_J}, $t\ge T(n+1,\tilde{s})\ge T(m,f_{m,n}(\tilde{s}))$, from \eqref{T_incr} we infer that $f_{m,n}(\tilde{s})\le\bar{s}$, whence 
\begin{equation}\label{in_1}
	f_{m,n}\left(f_{n+1}\left(t\right)\right)=f_{m,n}(\tilde{s})\le \bar{s}=f_m\left(t\right).
\end{equation}
On the other hand, we argue by cases. Firstly, note that $T(m,\bar{s})\le T(n+1,\bar{s})$ by \eqref{mon_J}. Secondly, we observe that either $t\ge T(n+1,\bar{s})$ or $t\in [T(m,\bar{s}), T(n+1,\bar{s}))$. In the former case, \eqref{T_incr} implies that $\bar{s}\le f_{m,n}(\tilde{s})$. In the latter (where in particular $\bar{s}\neq 0$),  there exists $u\in [T(m,\bar{s}), T(n+1,\bar{s})) \cap S_{k+1} $, for some integer $k>n$, such that 
\[
	\bar{s}=f_{m,k}\left(u\right)=f_{m,n}\left(f_{n+1,k}\left(u\right)\right).
\]
	Since $u<T(n+1,\bar{s})$, $f_{n+1,k}\left(u\right)\in (\bar{s}-T2^{-m},\bar{s})\cap S_{n+1}$. Hence we can define  $\bar{u}=\min\{v\in (\bar{s}-T2^{-m},\bar{s})\cap S_{n+1} : f_{m,n}(v)=\bar{s}\}$, and it is easy to argue by contradiction that $T(n+1, \bar{u})\le T(m,\bar{s}).$ Consequently,
\begin{equation}\label{in_2}
	f_{m,n}\left(f_{n+1}\left(t\right)\right)\ge f_{m,n}\left(\bar{u}\right)=\bar{s}=f_m\left(t\right).
\end{equation}
Combining \eqref{in_1} and \eqref{in_2} we obtain \eqref{comp_f}, completing the proof.
\end{proof}
We are now ready to prove Theorem \ref{thm_BZ}.
\begin{myproof}{Theorem \ref{thm_BZ}\!}
For every $n\in\mathbb{N}$, consider the finite partition $\{A^{(n)}_t,\,t\in S_{n+1}\setminus S_n\}$ of $\Xi$, defined by
\begin{align*}
	A^{(n)}_t&=\left\{\omega\in\Xi : \Delta\left(X_t\left(\omega\right), X_{g_n\left(\omega, t\right)}\left(\omega\right)\right)=\max_{u\in S_{n+1}}\Delta\left(X_u\left(\omega\right), X_{g_n\left(\omega, u\right)}\left(\omega\right)\right)\right\}
	\\&=
	\left(\Delta\left(X_t, X_{g_n\left(\cdot, t\right)}\right)-
	\max_{u\in S_{n+1}}\Delta\left(X_u, X_{g_n\left(\cdot, u\right)}\right)\right)^{-1}\left\{0\right\},\quad t\in S_{n+1}\setminus S_n.
\end{align*}
Noticing that  $\Xi\ni\omega\mapsto\max_{u\in S_{n+1}}\Delta\left(X_u\left(\omega\right), X_{g_n\left(\omega, u\right)}\left(\omega\right)\right)$ is $\mathcal{G}-$measurable because it is the maximum of $2^{n+1}+1$ random variables, we deduce that $A^{(n)}_t\in \mathcal{G}$ for every $t\in S_{n+1}\setminus S_n$. Then, by the hypothesis in~\eqref{class_BZ},
\begin{multline}\label{1.2.2.}
	\mathbb{E}_\mathbb{Q}\left[\max_{u\in S_{n+1}}\Delta\left(X_u, X_{g_n\left(\cdot, u\right)}\right)\right]
	=
	\sum_{t\in S_{n+1}\setminus S_n} \mathbb{E}_\mathbb{Q}\left[\Delta\left(t_n^-,t,t_n^+\right)1_{A^{(n)}_t}\right]
	\\
	\le 
	\delta\left(T2^{-n}\right)
	\sum_{t\in S_{n+1}\setminus S_n}\theta\left(\mathbb{Q}\left(A^{(n)}_t\right)\right)
	\le 
		\delta\left(T2^{-n}\right)
	2^n\theta\left(2^{-n}\right),
\end{multline}
where in the last step we use \eqref{prop_theta} and the fact that the cardinality of $S_{n+1}\setminus S_n$ is $2^n$. Now, since for every $\omega\in\Xi$ and $m,n\in\mathbb{N}$ such that $n\ge m$, (omitting $\omega$ to save space)
\begin{align*}
	\Delta\left(X_t,X_{f_{m,n}\left(t\right)}\right)&\le 
	\Delta\left(X_t,X_{g_{n}\left(t\right)}\right)
	+\Delta\left(X_{g_n\left(t\right)},X_{g_{n-1}\left(g_n\left(t\right)\right)}\right)+\dots
	+\Delta\left(X_{f_{m+1,n}\left(t\right)},X_{f_{m,n}\left(t\right)}\right)\\&
	\le
	\sum_{j=m}^{n}
	\max_{u\in S_{j+1}}\Delta\left(X_u, X_{g_{j}\left( u\right)}\right),\quad t\in S_{n+1},
\end{align*}
we obtain
\begin{equation}\label{up_boun_c}
	\sup_{n\ge m}\max_{t\in S_{n+1}}\Delta\left(X_t\left(\omega\right),X_{f_{m,n}\left(\omega,t\right)}\left(\omega\right)\right)
	\le 
	\sum_{n=m}^{\infty}
	\max_{u\in S_{n+1}}\Delta\left(X_u\left(\omega\right), X_{g_{n}\left( \omega, u\right)}\left(\omega\right)\right)\eqqcolon\mathbf{\upperRomannumeral{1}}_m\left(\omega\right). 
\end{equation}
Consider $m$ so big that $2^{1-m}\le T$ and denote by $\widebar{T}=T\vee 1$. From the integral condition in \eqref{cond_inte}, \eqref{1.2.2.} and recalling that the functions $\delta, \,\theta$ are non--decreasing, we invoke the dominated convergence theorem to conclude that
\begin{multline*}
	\mathbb{E}_\mathbb{Q}\left[\mathbf{\upperRomannumeral{1}}_m\right]
	\le 
\sum_{n=m}^{\infty}\delta\left(T2^{-n}\right)2^n\theta\left(2^{-n}\right)\le
\widebar{T}\sum_{n=m}^{\infty}\delta\left(\widebar{T}2^{-n}\right)\frac{2^n}{\widebar{T}}\theta\left(\widebar{T}2^{-n}\right) 	
\le 2\widebar{T}
\int_{m-1}^{\infty}\delta\left(\widebar{T}2^{-x}\right)\frac{2^x}{\widebar{T}}\theta\left(\widebar{T}2^{-x}\right) dx
\\=
\frac{2\widebar{T}}{\log 2}\int_{0}^{T}1_{\left\{y\le \widebar{T}2^{1-m}\right\}}y^{-2}\delta\left(y\right)\theta\left(y\right) dy\underset{m\to \infty}{\longrightarrow} 0.
\end{multline*}
Therefore, there exists a subsequence $(\mathbf{\upperRomannumeral{1}}_{m_k})_k$ such that $\lim_{k\to\infty}\mathbf{\upperRomannumeral{1}}_{m_k}=0$ almost uniformly in $\Omega$. This means that there exists a sequence of $\mathcal{G}-$measurable sets $(\Xi_N)_N$, with $\mathbb{Q}(\Xi_N)<N^{-1}$ and $\Xi_{N+1}\subset \Xi_N$, such that 
\[
	\lim_{k\to \infty}\sup_{\omega\in\Xi\setminus \Xi_N}\mathbf{\upperRomannumeral{1}}_{m_k}\left(\omega\right)=0.
\]
Using a diagonalization argument, we can find a subsequence $(\mathbf{\upperRomannumeral{1}}_{m_{k_p}})_p$ with the following property:
\[
	\mathbf{\upperRomannumeral{1}}_{m_{k_q}}\left(\omega\right)<2^{-q},\quad \omega\in\Xi\setminus \Xi_p,\,q\ge p,  \text{ for every }p\in\mathbb{N}.
\]
Let us define the almost sure event $\Xi_0=\Xi\setminus\cap_{N=1}^\infty\Xi_N$: note that for any $\omega\in\Xi_0$, $\omega\in \Xi\setminus\Xi_N$ for all $N$ sufficiently large. Going back to \eqref{up_boun_c}, the previous relation gives
\[
	\sup_{n\ge m_{k_p}}\max_{t\in S_{n+1}}\Delta\left(X_t\left(\omega\right),X_{f_{m_{k_p},n}\left(\omega,t\right)}\left(\omega\right)\right)\le \mathbf{\upperRomannumeral{1}}_{m_{k_p}}\left(\omega\right)<2^{-p},
	\quad \omega\in \Xi_0,\,p\ge p\left(\omega\right)\in \mathbb{N},
\]
which is equivalent to writing, for every $\omega\in \Xi\setminus \Xi_{p(\omega)}$, 
\[
	\Delta\left(X_t\left(\omega\right),X_{f_{m_{k_p},n}\left(\omega,t\right)}\left(\omega\right)\right)
	<2^{-p},\quad t\in S_{n+1},\,n\ge m_{k_{p}},\,p\ge p\left(\omega\right).
\]
The composition property in Lemma \ref{l1.3} (see \eqref{comp_f}) yields
\begin{equation}\label{ok_cauchy}
	\Delta\left(X_{f_{n+1}\left(\omega,t\right)}\left(\omega\right),X_{f_{m_{k_p}}\left(\omega,t\right)}\left(\omega\right)\right)<2^{-p},\quad t\in\left[0,T\right],\,n\ge m_{k_p},\,p\ge p\left(\omega\right).
\end{equation}
Note that $X_{f_n(\omega,\cdot)}\left(\omega\right)\colon[0,T]\to E$ is càdlàg for every $n\in\mathbb{N}$ and $\omega\in \Xi$ because $f_n(\omega,\cdot)$ is a càdlàg, step function. Thus, \eqref{ok_cauchy} shows that  the sequence $(X_{f_{m_{k_p}}(\omega, \cdot)}(\omega))_p$ is Cauchy in the metric space of the $E-$valued  càdlàg functions defined in $[0,T]$  endowed with the uniform distance. This space is complete since $E$ is complete, hence there exists a càdlàg function $\widetilde{X}(\omega)\colon[0,T]\to E$ such that \[\lim_{p\to\infty}\sup_{0\le t \le T}\Delta\left(\widetilde{X}\left(\omega\right)\left(t\right),X_{f_{m_{k_p}}(\omega,t)}(\omega)\right)=0,\quad \omega\in \Xi_0.
\]
Finally, we define the function  $Z\colon\Xi \times [0,T]\to E$ by
\[
	Z_t\left(\omega\right)=\begin{cases}
		\widetilde{X}\left(w\right)(t),& \omega\in \Xi_0,\\
		0,&\text{otherwise}.
	\end{cases}
\]
By construction $Z$ is càdlàg. Moreover, notice that $f_n(\omega,t)\in\{t_n^-,t_n^+\}$ by \eqref{dist_T}, for every $\omega\in\Xi,\,n\in\mathbb{N}$ and $t\in [0,T]$. Recall that $X$ is continuous in probability, hence for every $t\in[0,T]$ there exists a full probability set $\Xi_t$ and a subsequence $(m_{k_{p_q}})_q$ depending on $t$ such that, denoting by $\tilde{q}=\tilde{q}(q)=m_{k_{p_q}}$, 
\[
	\lim_{q\to \infty}\Delta\left(X_t\left(\omega\right),X_{t_{\tilde{q}}^-}\left(\omega\right)\right)=
	\lim_{q\to \infty}\Delta\left(X_t\left(\omega\right),X_{t_{\tilde{q}}^+}\left(\omega\right)\right)
	=0,\quad \omega\in\Xi_t.
\] 
Then, for all $\omega\in \Xi_0\cap \Xi_t$, 
\begin{equation*}
	\Delta\!\left(X_t\left(\omega\right)\!,Z_t\left(\omega\right)\right)\!
	=\!\lim_{q\to \infty}\!	\Delta\Big(\!X_t\left(\omega\right),X_{f_{m_{k_{p_q}}}\left(\omega,t\right)}\left(\omega\right)\!\!\Big)
	\le 
	\lim_{q\to \infty}
	\Delta\left(X_t\left(\omega\right),X_{t_{\tilde{q}}^-}\left(\omega\right)\right)
	\!+\!
	\Delta\left(X_t\left(\omega\right),X_{t_{\tilde{q}}^+}\left(\omega\right)\right)
	=0.
\end{equation*}
In conclusion, $Z$ is the càdlàg version of $X$ that we are looking for as $\mathbb{Q}(\Xi_0\cap \Xi_t)=1$. The proof is now complete.
\end{myproof}
 
\section{Proofs of Lemma \ref{joint_meas}-\ref{le_se}}\label{ap_lemmi}
\begin{myproof}{Lemma \ref{joint_meas}}
	In order not to complicate the notation, we carry out the proof in the case $\bar{s}={\bar{t}}=T$, hence $s,t\in [0,T]$. The assumption $\bar{t}=T$ is not restrictive because the $\mathbb{R}^d-$valued random variables $Z^{s,x}_t$ are $\mathcal{F}_{\bar{t}}-$measurable for every $t\in [0,{\bar{t}}]$, so that all the following passages can be easily adapted to treat a general ${\bar{t}}\in [0,T]$.  As for $\bar{s}\in[0,T]$, this case can be treated by  considering only dyadic points in $[0,T]$ up to $\bar{s}$ in the procedure that we are about to explain.
	
	Consider the function $g_1\colon (\Omega\times [0,T],\mathcal{F}_T\otimes \mathcal{B}([0,T]))\to (\mathcal{C}_0,\mathcal{C})$ defined by $g_1(\omega,s)=Z_s(\omega)$. Let $S_n=\{Tj2^{-n}, j=0,\dots,2^n\}$ be the set of dyadic points and denote by $s_n^+=\min\{t\in S_n : t\ge s\},\,s\in [0,T]$. Then, we define $g_{1,n}(\omega,s)=g_1(\omega, s_n^+)$. For every $A\in\mathcal{B}(\mathbb{R}^d)$, $n\in\mathbb{N}$, $x\in\mathbb{R}^d$ and $t\in [0,T]$, we have
	\begin{multline*}
		g_{1,n}^{-1}\left(\pi_x^{-1}\left(\pi_t^{-1}\left(A\right)\right)\right)
		=\left(\left(Z^{0,x}_t\right)^{-1}\left(A\right)\times\left\{0\right\} \right)\\\bigcup\bigg(\bigcup_{s\in S_n\setminus \{0\}}\left(\left(Z^{s,x}_t\right)^{-1}\left(A\right)\times \left(s-T2^{-n},s\right]\right)\bigg)
		\in  \mathcal{F}_T\otimes\mathcal{B}\left(\left[0,T\right]\right).
	\end{multline*}
	Recalling that $\mathcal{C}$ is the $\sigma-$algebra generated by $\pi_x\colon \mathcal{C}_0\to(\mathcal{D}_0,\mathcal{D}),\,x\in\mathbb{R}^d$, and that $\mathcal{D}$ is the $\sigma-$algebra generated by $\pi_t\colon\mathcal{D}_0\to \mathbb{R}^d,\,t\in [0,T]$, the previous computation shows that $g_{1,n}$ is $\mathcal{F}_T\otimes\mathcal{B}([0,T])/\mathcal{C}-$measurable. 
	The càdlàg property of $g_1(\omega,\cdot)$ ensured by Theorem \ref{cadlag} yields $\lim_{n\to \infty}g_{1,n}=g_1$ pointwise in $\Omega\times [0,T]$, hence $g_1$ is measurable, as well. Next, consider $g_2\colon (\mathcal{C}_0\times \mathbb{R}^d,\mathcal{C}\otimes \mathcal{B}(\mathbb{R}^d))\to (\mathcal{D}_0,\mathcal{D})$ given by $g_2(f,x)=\pi_x(f)=f(x)$. For every $n\in\mathbb{N},$ let $\Pi_n=\{2^{-n}z,  z\in \mathbb{Z}^d\}$ be the set of lattice points in $\mathbb{R}^d$ with mesh $2^{-n}$.  Denoting by $\bar{x}_n=2^{-n}[2^nx]\in\Pi_n,\,x\in\mathbb{R}^d$, we define $g_{2,n}(f,x)=g_2(f,\bar{x}_n)$. Note that $g_{2,n}$ is $\mathcal{C}\otimes \mathcal{B}(\mathbb{R}^d)/\mathcal{D}-$measurable for all $n\in\mathbb{N}.$  Indeed, for every $A\in \mathcal{B}( \mathbb{R}^d)$ and $t\in [0,T]$,
	\begin{align*}
		g_{2,n}^{-1}\left(\pi_t^{-1}\left(A\right)\right)
		&=\left\{(f,x)\in \mathcal{C}_0\times \mathbb{R}^d : [g_{2,n}(f,x)](t)\in A\right\}
		\\&
		=
		\bigcup_{x\in\Pi_n}
		\left(\pi_x^{-1}\left(\pi_t^{-1}(A)\right)
		\times 	
		\left(\left[x_1+2^{-n}\right)\times\dots\times\left[x_d+2^{-n}\right) \right)\right)\in \mathcal{C}\otimes \mathcal{B}\left(\mathbb{R}^d\right).
	\end{align*}
	Since, by continuity,  $\lim_{n\to \infty}g_{2,n}=g_2$ pointwise in $\mathcal{C}_0\times \mathbb{R}^d$, we conclude that $g_2$ is measurable. Finally, we introduce the map $g_3\colon (\mathcal{D}_0\times  [0,T], \mathcal{D}\otimes \mathcal{B}([0,T]))\to \mathbb{R}^d$ defined by $g_3(f,t)=f(t)$: arguing  as we have done for $g_1$, we infer that $g_3$ is measurable. At this point, we read the function $Z$ in the statement of this lemma as the following composition, where $\text{Id}\colon [0,T] \to [0,T]$ and $\text{Id}_d\colon \mathbb{R}^d\to \mathbb{R}^d$ are the identity maps:
	\[
	Z=g_3\circ (g_2, \text{Id})\circ (g_1, \text{Id}_d, \text{Id}).
	\]
	The previous argument allows then to deduce that $Z$ is $\mathcal{F}_{T}\otimes \mathcal{B}([0,T]\times \mathbb{R}^d\times [0,T])-$measurable.
\end{myproof}

\begin{myproof}{Lemma \ref{le_se}}
	{\color{black} Since \eqref{serve_1} can be obtained from \eqref{SDEJ} by setting $f=0$, the existence of a pathwise unique solution of \eqref{serve_1} can be argued as in Remark \ref{nonbanale}. Thus, we only focus on showing that the process $Z^{s,\eta}$ solves \eqref{serve_1}.}
	
	All the assertions of the lemma are trivially satisfied when $s=T$. Thus, we fix $s\in[0,T)$ and take $\eta\in L^0(\mathcal{F}_s)$. By construction, $Z^{s,\eta}_t=\eta$ for every $t\in [0,s],\,\mathbb{P}-$a.s., so we  only focus on $t\in [s,T]$. First of all, notice that $Z^{s,\eta}_t$ is $\mathcal{F}_t-$measurable by Lemma \ref{joint_meas}. Next, consider a sequence of simple, $\mathcal{F}_s-$measurable, $\mathbb{R}^d-$valued random variables $(\eta_n)_n$ such that $\eta_n\to \eta$ as $n\to\infty$, $\mathbb{P}-$a.s. Specifically, for every $n\in\mathbb{N}$, let  $\eta_n=\sum_{k=1}^{N_n}a_k^n1_{A_k^n}$, for some $N_n\in\mathbb{N}$,  $(a^n_k)_k\subset \mathbb{R}^d$ and some partition $(A_k^n)_k\subset \mathcal{F}_s,\,k=1,\dots, N_n$. By \eqref{common_eq}, using \cite[Section 3, Chapter \upperRomannumeral{2}]{IW}  and 
	\cite[Property 4.37, Chapter \upperRomannumeral{1}]{js}	we have, $\mathbb{P}-$a.s., for any $n\in\mathbb{N},$
	\begin{align}\label{limit_semplice}
		\notag&Z_t^{s, \eta_n}=
		\sum_{k=1}^{N_{n}}Z_t^{s,a^n_k}1_{A^n_k}
		\\&\notag=\sum_{k=1}^{N_{n}}\Bigg[
		{a^n_k}+
		\int_{s}^{t}b\left(r, Z_{r}^{s,a^n_k}\right) dr + 
		\int_{s}^{t}\alpha\left(r, Z_{r}^{s,a^n_k} \right) dW_r+
		\int_{s}^{t}\!\int_{U_0}g\left(Z_{r-}^{s,a^n_k},r,z\right)\widetilde{N}_p\left(dr,dz\right)\Bigg]1_{A^n_k}\\&
		=\eta_n+ 
		\int_{s}^{t}b\left(r, Z_{r}^{s,\eta_n}\right) dr + 
		\int_{s}^{t}\alpha\left(r, Z_{r}^{s,\eta_n} \right) dW_r+
		\int_{s}^{t}\!\int_{U_0}g\left(Z_{r-}^{s,\eta_n},r,z\right)\widetilde{N}_p\left(dr,dz\right)
		,\quad t\in[s,T].
	\end{align}
	In order to recover \eqref{serve_1}, we want to take  limits in \eqref{limit_semplice} as $n\to\infty$. From the continuity of $Z^{s,x}_t$ in $x$ (see \ref{conx} in Definition \ref{sharpala}), we infer that $\lim_{n\to \infty} Z^{s,\eta_n}_t=Z^{s,\eta}_t$ uniformly in $t\in [s,T],\,\mathbb{P}-$a.s.  Next, by dominated convergence and \eqref{lip1}, 
	\begin{equation}\label{b1}
	\lim_{n\to\infty}\int_{s}^{t}b\left(r, Z_{r}^{s,\eta_n}\right) dr
	=
	\int_{s}^{t}b\left(r, Z_{r}^{s,\eta}\right) dr,\quad t\in [s,T],\,\text{$\mathbb{P}-$a.s.}
	\end{equation}
	The convergence of the stochastic integrals in \eqref{limit_semplice} is studied via a localization procedure. As for the integral with respect to the Brownian motion, for every $\epsilon>0$ we define
	\begin{equation*}
		\sigma_n(\epsilon)=\inf\left\{
		u\in [s,T] :  \int_{s}^{u}
		\left|\alpha\left(r, Z_{r}^{s,\eta_n} \right)
		-\alpha\left(r, Z_{r}^{s,\eta} \right)\right|^2
		dr \ge \epsilon
		\right\},\quad \text{with }\inf\emptyset=\infty.
	\end{equation*}
	Since
	$
	\lim_{n\to\infty}\int_{s}^{T}\left|\alpha\left(r, Z_{r}^{s,\eta_n} \right)
	-\alpha\left(r, Z_{r}^{s,\eta} \right)\right|^2dr = 0,\, \text{$\mathbb{P}-$a.s.},
	$
	we have $\sigma_n(\epsilon)\to \infty$ as $n\to \infty$, $\mathbb{P}-$a.s. In particular, $\lim_{n\to \infty}\mathbb{P}\left(\sigma_n\left(\epsilon\right)\le  T\right)=0$.  Hence by Markov's inequality, for every $\delta>0$, { for some $c>0$,} 
	\begin{gather*}
	\mathbb{P}\Big(\sup_{t\in [s,T]}\left|\int_{s}^{t} 
		\left(\alpha\left(r, Z_{r}^{s,\eta_n} \right)-\alpha\left(r, Z_{r}^{s,\eta} \right)\right)
		dW_r\right|\ge \delta\Big)
		\\ \qquad \le \frac{{ c}}{\delta^2}
		\mathbb{E}\bigg[\int_{s}^{T}1_{\left[s,\sigma_n(\epsilon)\right]}(r)\left|\alpha\left(r, Z_{r}^{s,\eta_n} \right)-\alpha\left(r, Z_{r}^{s,\eta} \right)\right|^2dr\bigg] 
		\!\!+\mathbb{P}\left(\sigma_n\left(\epsilon\right)\le  T\right)		 
		\le \frac{c\epsilon}{\delta^2}+\mathbb{P}\left(\sigma_n\left(\epsilon\right)\le  T\right)
		,\quad \epsilon>0,
	\end{gather*}
	which proves that $\lim_{n\to\infty}\int_{s}^{t}\alpha\left(r, Z_{r}^{s,\eta_n} \right)dW_r=\int_{s}^{t}\alpha\left(r, Z_{r}^{s,\eta} \right)dW_r$ uniformly on $[s,T]$ in probability. This in turn yields the existence of a subsequence such that
	\begin{equation}\label{b2}
		\lim_{k\to\infty}\int_{s}^{t}\alpha\left(r, Z_{r}^{s,\eta_{n_k}} \right)dW_r=\int_{s}^{t}\alpha\left(r, Z_{r}^{s,\eta} \right)dW_r,\quad \text{uniformly in } t\in\left[s,T\right],\,\mathbb{P}-\text{a.s.}
	\end{equation}
	The convergence of the integral with respect to $\widetilde{N}_p$ in \eqref{limit_semplice} can be treated analogously. More precisely, by~\eqref{lip1},
	$
		\lim_{n\to \infty} 
		\int_{s}^{T}\!\int_{U_0}\left|g\left(Z_{r-}^{s,\eta_n},r,z\right)-g\left(Z_{r-}^{s,\eta},r,z\right)
		\right|^2\nu(dz)\, dr=0,\, \mathbb{P}-\text{a.s.}
	$
	If we introduce  the stopping times
	\[
	\widetilde{\sigma}_n(\epsilon)=\inf\left\{
	u\in [s,T] :  \int_{s}^{u}\!\int_{U_0}\left|g\left(Z_{r-}^{s,\eta_n},r,z\right)-g\left(Z_{r-}^{s,\eta},r,z\right)
	\right| ^2 \nu(dz)\,dr \ge \epsilon
	\right\},\quad \epsilon>0,
	\]
	then we can proceed as before to deduce that, $\mathbb{P}-$a.s., 
	\begin{equation}\label{b3}
		\lim_{k\to\infty}\int_{s}^{t}\!\int_{U_0}g\left(Z_{r-}^{s,\eta_{n_k}},r,z\right)\widetilde{N}_p(dr,dz)
		=\int_{s}^{t}\!\int_{U_0}g\left(Z_{r-}^{s,\eta},r,z\right)\widetilde{N}_p(dr,dz),\quad\text{uniformly in } t\in\left[s,T\right]. 
	\end{equation}
	Thus, combining \eqref{b1}-\eqref{b2}-\eqref{b3}, we can pass to the limit in \eqref{limit_semplice} along a suitable subsequence to get~\eqref{serve_1}. 
	
	The equalities in \eqref{dep_Zmeas} are obtained using \eqref{b1}-\eqref{b2}-\eqref{b3} and recalling the continuity of $Z^{s,x}_{i}$ in the space variable $x$.
	The lemma is now completely proved.
\end{myproof}
\end{appendices} 
 
\vskip 3mm  \noindent {\bf Acknowledgments.}  The authors wish to
thank  M. Fuhrman for useful
comments on a preliminary version of the DPP.

\end{document}